\documentclass[a4paper,11pt,reqno]{amsart}
\usepackage{amssymb,amsmath,hyperref}
\title[On a generalized Euler identity and mock theta functions]{New polar-finite forms of generalized Euler identities\\for $A_{1}^{(1)}$-string functions \\and mock theta conjecture-like identities}

\author{Stepan Konenkov}
\address{Department of Mathematics and Computer Science, Saint Petersburg State University, Saint Petersburg,  Russia, 199178}
\email{konenkov.stepan@yandex.ru}

\author{Eric T. Mortenson}
\address{Department of Mathematics and Computer Science, Saint Petersburg State University, Saint Petersburg,  Russia, 199178}
\email{etmortenson@gmail.com}

\renewcommand\theta{\vartheta}

\newtheorem{theorem}{Theorem}
\newtheorem{lemma}[theorem]{Lemma}
\newtheorem{corollary}[theorem]{Corollary}
\newtheorem{proposition}[theorem]{Proposition}
\theoremstyle{definition}

\newtheorem{remark}[theorem]{Remark}

\numberwithin{theorem}{section} 
\numberwithin{equation}{section}

\newcommand{\Z}{\mathbb{Z}}

\newcommand{\im}{\textnormal{Im}}

\makeatletter
\@namedef{subjclassname@2020}{\textup{2020} Mathematics Subject Classification}
\makeatother

\begin{document}

\date{2 February 2026}

\subjclass[2020]{Primary 11F37, 11F27, 33D90, 11B65; Secondary 17B67, 81R10, 81T40}

\keywords{string functions, parafermionic characters, admissible characters, polar-finite decomposition of Jacobi forms, Appell functions, mock theta functions}

\begin{abstract}
Determining the explicit forms and modularity for string functions and branching coefficients for Kac--Moody algebras after Kac, Peterson, and Wakimoto is an important problem.  For positive admissible-level string functions for the affine Kac--Moody algebra $A_{1}^{(1)}$, very little is known.  Here we apply the notion of quasi-periodicity to a generalized Euler identity of Schilling and Warnaar for the affine Kac--Moody algebra $A_{1}^{(1)}$.  For integral-level string functions the classical periodicity reduces the infinite sum of string functions in the generalized Euler identity to a finite sum of string functions with theta function coefficients.  For admissible-level, we similarly reduce to an analogous finite sum of string functions, but we also gain an additional finite sum of the form
\begin{equation*}
\sum_{i}\Phi_{i}(q)\Psi_{i}(q),
\end{equation*}
where the $\Phi_i(q)$'s are modular and depend only on the spin and the $\Psi_{i}(q)$'s are (mixed) mock modular Hecke-type double-sums and depend only on the quantum number.  For levels $1/2$, $1/3$, and $2/3$, we shall also see that the $\Psi_{i}(q)$'s give us families of mock theta conjecture-like identities for symmetric Hecke-type double-sums. Our work here focuses on evaluating the $\Psi_{i}(q)$'s, and our expressions utilize Ramanujan's second-order mock theta function $\mu_2(q)$ and third-order mock theta functions $f_{3}(q)$, $\omega_3(q)$, $\psi_{3}(q)$, and $\chi_3(q)$. 
\end{abstract}

\maketitle

 \tableofcontents

\section{Introduction}

Determining the explicit forms and modularity for string functions and branching coefficients for Kac--Moody algebras after Kac, Peterson, and Wakimoto is an important problem \cite{KP84, KW88advmath, KW88, KW90}.   Kac and Peterson \cite{KP84} derived the modular properties of string functions of the integrable highest weight representations on the whole modular group, and calculated them for certain cases in terms of theta functions. Subsequently, Kac and Wakimoto generalized these results and computed certain branching functions using methods of modular and conformal invariance \cite{KW88advmath}. 

\smallskip
The work of Kac, Peterson, and Wakimoto has been taken in multiple directions by physicists and mathematicians. In one direction, Ahn, Chung, and Tye \cite{ACT} introduced the generalized Fateev--Zamolodchikov parafermionic theories, which have applications in statistical physics and string theory. In another direction, Auger, Creutzig, and Ridout more recently pursued the corresponding logarithmic parafermionic vertex algebra for negative admissible-levels  \cite{ACR}. 

\smallskip
In a recent work, Borozenets and Mortenson \cite{BoMo2026, BoMo2025} and then Konenkov and Mortenson \cite{KM25} determined explicit forms for certain admissible-level string functions for the affine Kac--Moody algebra $A_{1}^{(1)}$.  For positive fractional-level string functions Borozenets and Mortenson obtained mock theta conjecture-like identities, and for negative fractional-level string functions, they obtained mixed false theta function expressions, see also \cite{AM09, ACR, BKMZ, BM, BKMN}.   Techniques included the use Hecke-type double-sums expressions \cite{HM, MZ2023}, mock modularity \cite{Zw2}, quasi-periodicity \cite{BoMo2026}, and polar-finite decompositions after Zwegers \cite{Zw2} and Dabholkar, Murthy, and Zagier \cite{DMZ}.  One is thus motivated to find applications of quasi-periodicity and polar-finite decompositions in other settings. 

\smallskip
 Dabholkar, Murthy, and Zagier \cite{DMZ} built upon work of Zwegers \cite{Zw2} and introduced a canonical decomposition of a meromorphic Jacobi form into a ``finite-part'' and a ``polar-part.''  The finite-part is a finite linear combination of theta functions with mock modular forms as coefficients, and the polar-part is entirely determined by the poles of the meromorphic Jacobi form.  Recently, Borozenets and Mortenson \cite{BoMo2025} introduced quasi-periodic relations to extend Zagier--Zwegers' analysis to the case of admissible characters, which are vector-valued meromorphic Jacobi forms \cite{KW88advmath}. Using the polar-finite decomposition of admissible characters they discovered mock theta conjecture-like identities for string functions of certain positive admissible-levels, thus extending their results in \cite{BoMo2026}. 

\smallskip
In this paper, we will apply quasi-periodicity to a generalized Euler identity of Schilling and Warnaar \cite{SW} after work of Kac and Wakimoto on branching functions \cite{KW90}.  We will thus obtain new polar-finite forms of generalized Euler identities for $A_{1}^{(1)}$-string functions, and we will extract new mock theta conjecture-like identities. 

\smallskip
For integral-level string functions it is classical that periodicity reduces the infinite sum of string functions in the generalized Euler identity to a finite sum.  For admissible-level, we similarly reduce to a finite sum of string functions, but we also gain an additional finite sum of the form
\begin{equation*}
\sum_{i}\Phi_{i}(q)\Psi_{i}(q),
\end{equation*}
where the $\Phi_i(q)$'s are modular and depend only on the spin and the $\Psi_{i}(q)$'s are (mixed) mock modular and depend only on the quantum number.  For levels $1/2$, $1/3$, and $2/3$, we shall also see that the $\Psi_{i}(q)$'s give us families of mock theta conjecture-like identities for symmetric Hecke-type double-sums similar to a result in the initial paper of Borozenets and Mortenson \cite[Theorem $2.1$]{BoMo2026}.  Our results are in terms of Ramanujan's second-order and third-order mock theta functions found in his last letter to G. H. Hardy \cite[Chapter 14]{AB2018} and in his lost notebook \cite{RLN}.


 \subsection{On string functions for $A_{1}^{(1)}$}

We will focus on the case of the affine Kac--Moody algebra $A_1^{(1)}$.  For integral-level, string functions and branching coefficients have been well-studied.  For negative admissible-level, a good deal of research has been done, but for positive admissible-level, very little is known.  We will continue the focus on positive admissible-level as in \cite{BoMo2026, BoMo2025, KM25}, but first we review the notation and basic tools.

\smallskip
  We begin by following the terminology in Schilling and Warnaar \cite[Section 3]{SW} and introduce the string function as follows.  We let $p^{\prime}\ge 2$, $p \ge 1$ be coprime integers, and we define the admissible-level to be
\begin{equation} \label{equation:admlevel}
N:=\frac{p^{\prime}}{p}-2.
\end{equation}
Let $q := e^{2\pi i \tau}$ with $\im(\tau) >0$ and $z$ a non-zero complex number. For an admissible character of level $N$ and spin $\ell$, we define the string function for level $N$, quantum number $m$, and spin $\ell$ with $m+\ell \equiv 0 \pmod 2$, through
\begin{equation} \label{equation:fourcoefexp}
\chi_{\ell}^N (z;q)=\sum_{m\in 2\mathbb{Z}+\ell}
C_{m,\ell}^{N}(q) q^{\frac{m^2}{4N}}z^{-\frac{1}{2}m}.
\end{equation}

For the purpose of the Weyl--Kac formula, we will define the theta function as
\begin{equation}\label{equation:SW-thetaDef}
\Theta_{n,m}(z;q):=\sum_{j\in\mathbb{Z}+n/2m}q^{mj^2}z^{-mj};
\end{equation}
however, we will soon use a different definition.   Using the Weyl--Kac formula, one can express the admissible $A_{1}^{(1)}$ character as
\begin{equation}\label{equation:WK-formula}
\chi_{\ell}^N(z;q)=\frac{\sum_{\sigma=\pm 1}\sigma \Theta_{\sigma (\ell+1),p^{\prime}}(z;q^{p})}
{\sum_{\sigma=\pm 1}\sigma\Theta_{\sigma,2}(z;q)}.
\end{equation}

\smallskip

String functions enjoy many symmetries \cite[(3.4), (3.5)]{SW}, \cite[(2.40)]{ACT}
\begin{equation*}
C_{m,\ell}^{N}(q) = C_{-m,\ell}^{N}(q), \ C_{m,\ell}^{N}(q) = C_{N-m,N-\ell}^{N}(q).
\end{equation*}
For the integral-level $N$ we have the important periodicity property \cite[(3.5)]{SW}
\begin{equation} \label{eq:inglevelperiod}
C_{m,\ell}^{N}(q) = C_{m+2N,\ell}^{N}(q),
\end{equation}
and hence from \eqref{equation:fourcoefexp} the theta-expansion
\begin{equation}\label{eq:intlevelthetadecomp}
\chi_{\ell}^N(z;q)=\sum_{\substack{0\le m <2N\\m \in 2\Z + \ell}}C_{m,l}^{N}(q)\Theta_{m,N}(z,q).
\end{equation}
In  \cite{BoMo2025}, Borozenets and Mortenson developed quasi-periodicity \cite[Theorem 2.1]{BoMo2025} and applied it to \eqref{equation:fourcoefexp}.  They obtained a polar-finite expansion for characters of even-spin.  Their result \cite[Theorem 2.3]{BoMo2025} was similar to (\ref{eq:intlevelthetadecomp}) but with an extra term that was a finite sum of binary products of theta functions and Appell functions.   Appell functions are the building blocks of Ramanujan's mock theta functions \cite{AB2018, HM, RLN}.

\smallskip
To state some examples of integral-level string functions, let us first recall the $q$-Pochhammer notation
\begin{equation*}
(x)_n=(x;q)_n:=\prod_{i=0}^{n-1}(1-q^ix), \ \ (x)_{\infty}=(x;q)_{\infty}:=\prod_{i\ge 0}(1-q^ix),
\end{equation*}
and the theta function
\begin{equation}\label{equation:JTPid}
j(x;q):=(x)_{\infty}(q/x)_{\infty}(q)_{\infty}=\sum_{n=-\infty}^{\infty}(-1)^nq^{\binom{n}{2}}x^n,
\end{equation}
where the last equality is the Jacobi triple product identity.   We will frequently use the notation,
\begin{equation*} 
J_{a,b}:=j(q^a;q^b),
 \ \overline{J}_{a,b}:=j(-q^a;q^b), \ {\text{and }}J_a:=J_{a,3a}=\prod_{i\ge 1}(1-q^{ai}),
\end{equation*}
where $a,b$ are positive integers.    One notes that the two definitions for theta functions are equivalent via the identity
\begin{equation*}
\Theta_{n,m}(z;q)=z^{-\frac{n}{2}}q^{\frac{n^2}{4m}}j\big( {-}q^{n+m}z^{-m};q^{2m}\big ).
\end{equation*}

An important example of a theta functions is the Dedekind eta-function.  We define the Dedekind eta-function as follows
\begin{equation*} 
\eta(q) :=q^{1/24}\prod_{n\ge 1}(1-q^{n}).
\end{equation*}
Among more general results, Kac and Peterson \cite[p. 220]{KP84} showed
{\allowdisplaybreaks \begin{gather*}
C_{0,0}^{1}(q) = \eta(q)^{-1},\\
C_{1,1}^{2}(q)= \eta(q)^{-2}\eta(q^2),\\
C_{1,1}^{3}(q)= \eta(q)^{-2} q^{3/40} J_{6,15},\\
C_{2,0}^{4}(q) = \eta(q)^{-2} \eta(q^6)^{-1} \eta(q^{12})^{2}.
\end{gather*}}%

To make our subsequent results easier to state, we introduce the additional notation
\begin{equation}
\mathcal{C}_{m,\ell}^{N}(q) := q^{-s_{\lambda}}C_{m,\ell}^{N}(q) \in \Z[[q]],\label{equation:mathCalCtoStringC}
\end{equation} 
where
\begin{equation*}
s_{\lambda} := -\frac{1}{8}+\frac{(\ell+1)^2}{4(N+2)} - \frac{m^2}{4N}.
\end{equation*}

For our purposes, writing string functions in terms of Hecke-type double-sum notation will be crucial.  Andrews famously related Hecke-type double-sums to Ramanujan's mock theta functions  \cite{And1986}.  We will use the following definition for Hecke-type double-sums:  let $x,y$ be non-zero complex numbers, then
\begin{equation} \label{equation:fabc-def2}
f_{a,b,c}(x,y;q):=\left( \sum_{r,s\ge 0 }-\sum_{r,s<0}\right)(-1)^{r+s}x^ry^sq^{a\binom{r}{2}+brs+c\binom{s}{2}}.
\end{equation}

One way to obtain a Hecke-type double-sum form for admissible-level string functions is by using the classical partial fraction expansion for the reciprocal of Jacobi's theta product
\begin{equation*}
\frac{1}{j(z;q)}=\frac{1}{(q;q)_{\infty}^3}\sum_{n\in\mathbb{Z}}\frac{(-1)^nq^{\binom{n+1}{2}}}{1-q^{n}z}
\end{equation*}
and the Weyl--Kac formula \eqref{equation:WK-formula} as in for example \cite[Section 2.4]{ACT}, \cite[Proposition 3]{L},  \cite[(3.8)]{SW}:  for $p'\geq 2$, $p\geq 1$ coprime integers, $0\leq \ell \leq p'-2$ and $m\in 2\Z +\ell$, then
\begin{equation}
\mathcal{C}_{m,\ell}^{N}(q)
 =\frac{1}{(q)_{\infty}^3}\left ( f_{1,p^{\prime},2pp^{\prime}}(q^{1+\frac{m+\ell}{2}},-q^{p(p^{\prime}+\ell+1)};q)
 -f_{1,p^{\prime},2pp^{\prime}}(q^{\frac{m-\ell}{2}},-q^{p(p^{\prime}-(\ell+1))};q)\right).
 \label{equation:modStringFnHeckeForm}
\end{equation}      
In the case of positive integer level $N>0$, we have the compact form \cite[Example $1.3$]{HM}, \cite{SW}
\begin{equation*}
\mathcal{C}_{m,\ell}^{N}(q)=\frac{1}{(q)_{\infty}^3}
f_{1,1+N,1}(q^{1+\tfrac{1}{2}(m+\ell)},q^{1-\tfrac{1}{2}(m-\ell)};q).
\end{equation*}

Hickerson and Mortenson \cite{HM} and Mortenson and Zwegers \cite{MZ2023} obtained formulas expanding Hecke-type double-sums in terms of Appell functions and theta functions.   We will use the Hecke-type double-sum formula found in \cite[Theorem 1.4]{HM}.  Following Hickerson and Mortenson \cite{HM} we define the Appell function as
\begin{equation}
m(x,z;q):=\frac{1}{j(z;q)}\sum_{r\in\Z}\frac{(-1)^rq^{\binom{r}{2}}z^r}{1-q^{r-1}xz}.\label{equation:m-def}
\end{equation}
Appell functions are building blocks for Ramanujan's mock theta functions \cite[Section 5]{HM}. 
\subsection{On Ramanujan's mock theta functions}
In Ramanujan's last letter to Hardy \cite[Chapter 14]{AB2018}, he gave a list of seventeen so-called mock theta functions.   Each function was defined as a $q$-series convergent for $|q|<1$.  For an example of a mock theta function in terms of Appell functions, Ramanujan's second-order mock theta function $\mu_{2}(q)$ can be written \cite[p. 8]{RLN}, \cite[Section 5]{HM}: 
\begin{equation}
\mu_2(q)
:=\sum_{n\ge 0}\frac{(-1)^nq^{n^2}(q;q^2)_n}{(-q^2;q^2)_{n}^2}
=2m(-q,-1;q^4)+2m(-q,q;q^4),\label{equation:2nd-mu(q)Alt}
\end{equation}
Mock theta functions have certain asymptotic properties similar to those of ordinary theta functions, but they are not theta functions.  In the letter, one finds four third-order mock theta functions, ten fifth-order functions, and three seventh-order functions, as well as several identities relating the mock theta functions to each other.   The notion of order is a mess.

\smallskip
For decades, mock theta functions had been shrouded in mystery and their development had been somewhat sleepy, but they still attracted the attention of luminaries such as Watson \cite{W3}, Selberg \cite{Selb1938}, Dyson \cite{Dyson1944}, and Andrews \cite{And1981, And1986}.   With the advent of the discovery of the lost notebook \cite{RLN}, Hickerson's proof of the mock theta conjectures \cite{H1,H2},  Zwegers breakthrough work on mock modularity \cite{Zw1, Zw2}, and subsequent work of Bringmann and Ono \cite{BrO1, BrO2}, see also Zagier \cite{Zag2007}, mock theta functions became more central to mathematics.

\smallskip
While on a visit to Trinity College Library at Cambridge University, George Andrews discovered a collection of papers of Ramanujan now known as the lost notebook \cite{RLN}.   The collection of papers contained more mock theta functions and mock theta function identities.  In particular, we find the mock theta conjectures, which are ten identities in which each identity expresses a different fifth-order mock theta function in terms of a building-block mock theta function and a simple quotient of theta functions.  This particular building-block is the so-called universal mock theta function $g_3(x;q)$, which is defined
\begin{equation*}
g_3(x;q):=x^{-1}\Big ( -1 +\sum_{n=0}^{\infty}\frac{q^{n^2}}{(x)_{n+1}(q/x)_{n}} \Big ).
\end{equation*}
Using our notation, two of the ten mock theta conjectures read \cite{AG, H1}
{\allowdisplaybreaks \begin{align*}
f_0(q)&:=\sum_{n= 0}^{\infty}\frac{q^{n^2}}{(-q;q)_n}=\frac{J_{5,10}J_{2,5}}{J_1}-2q^2g_3(q^2;q^{10}),\\
f_1(q)&:=\sum_{n= 0}^{\infty}\frac{q^{n(n+1)}}{(-q;q)_n}=\frac{J_{5,10}J_{1,5}}{J_1}-2q^3g_3(q^4;q^{10}).
\end{align*}}%

The universal mock theta function $g_3(x;q)$ is only related to the odd-ordered mock theta functions; however,  both even and odd-ordered mock theta functions can be expressed in terms of Appell functions \cite[Section 5]{HM}, \cite{Zw2}.  For another type of even-ordered building-blocks see \cite{GM, M2}.

\subsection{On admissible-level string functions and mock theta functions}
 In \cite{BoMo2026, BoMo2025}, Borozenets and Mortenson argued that in principle, positive admissible-level string functions can be expressed in terms of Appell functions.  In particular, they used a Hecke-type double-sum expansion \cite[Theorem $1.4$]{HM} and the theory of mock modularity \cite{Zw2} to show that in the case of $1/2$-level string functions:

 \begin{theorem} \label{theorem:fractionalLevelPlus12alt}\cite[Theorem 2.1]{BoMo2026}  Let $(p,p')=(2,5),$ $0\le \ell\le 3$ and $m\in2\mathbb{Z}+\ell$.  We have that
\begin{align*}
 (q)_{\infty}^3 \mathcal{C}_{m,\ell}^{1/2}(q) 
 & = q^{\frac{1}{2}(m-\ell)}  j(q^{1+\ell};q^5)
 \left ( (-1)^{m} q^{\binom{m}{2}} \frac{1}{2}\mu_2(q) 
 +\sum_{k=0}^{m-1}(-1)^kq^{mk-\binom{k+1}{2}}\right )\\
 &\qquad +(-1)^{m} q^{\frac{1}{2}(m-\ell)} q^{\binom{m}{2}} \Theta_{\ell}(q),
\end{align*}
where
\begin{equation*}
\Theta_{\ell}:=
\begin{cases}
\frac{1}{2}\cdot \frac{J_{1}^3}{J_{2}J_{4}}\cdot j(-q;-q^5), & \textup{for} \ \ell \in \{0,3 \},\\
-\frac{1}{2}\cdot \frac{J_{1}^3}{J_{2}J_{4}}\cdot j(-q^3;-q^5), & \textup{for} \ \ell \in \{1,2 \},
\end{cases}
\end{equation*} 
where $\mu_2(q)$ is Ramanujan's second-order mock theta function (\ref{equation:2nd-mu(q)Alt}).
\end{theorem}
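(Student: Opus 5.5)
The plan is to start from the Hecke-type double-sum form \eqref{equation:modStringFnHeckeForm} with $(p,p')=(2,5)$, which gives
\begin{equation*}
(q)_\infty^3\mathcal{C}_{m,\ell}^{1/2}(q)=f_{1,5,20}(q^{1+\frac{m+\ell}{2}},-q^{2(6+\ell)};q)-f_{1,5,20}(q^{\frac{m-\ell}{2}},-q^{2(4-\ell)};q).
\end{equation*}
The two double-sums will be handled in two stages. First, the $m$-dependence is reduced to the base case $m\in\{0,1\}$, or more precisely $m=\ell$. Then the base case is evaluated using the Hickerson--Mortenson expansion \cite[Theorem 1.4]{HM}.

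For the reduction in $m$, I would use the functional equation of $f_{a,b,c}(x,y;q)$ under $x\mapsto q^a x$. Shifting $r\mapsto r+1$ in \eqref{equation:fabc-def2} gives
\begin{equation*}
f_{a,b,c}(x,y;q)=-x\,f_{a,b,c}(q^ax,q^by;q)+\sum_s(\cdots),
\end{equation*}
where the correction is a single theta-type sum over $s$. Shifting $m\mapsto m+2$ multiplies the first argument by $q$, so $a=1$. Iterating this $m$ times, and taking the difference of the two double-sums, produces two pieces. The first is the prefactor $(-1)^mq^{\binom m2}$ times the base double-sum. The second is a telescoping finite sum of theta functions. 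Because the second argument changes by $q^{b}=q^5$ at each step, the two theta corrections combine, by the quintuple/Jacobi triple product, into $j(q^{1+\ell};q^5)\sum_{k=0}^{m-1}(-1)^kq^{mk-\binom{k+1}2}$ times $q^{\frac12(m-\ell)}$. Checking that the correction really collapses to this single $j(q^{1+\ell};q^5)$ factor, for both terms simultaneously, is routine but must be done carefully. Negative $m$ follows from the symmetry $C_{m,\ell}=C_{-m,\ell}$.

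For the base case I would apply \cite[Theorem 1.4]{HM} to $f_{1,5,20}$. Here the discriminant $b^2-ac=5>0$, so $f_{1,5,20}$ becomes a sum of Appell functions $m(\cdot,\cdot;q^{5})$ (and $q^{20}$-type) plus a theta quotient. I would then use the standard Appell-function properties: changing the $z$-variable, the relation $m(x,z;q)$ versus $m(x,z;q^n)$-splitting, and $m(x,z)=m(q/x,...)$ symmetries. The goal is to identify the mock part as $\frac12 j(q^{1+\ell};q^5)\mu_2(q)$ using \eqref{equation:2nd-mu(q)Alt}, i.e.\ as $m(-q,-1;q^4)+m(-q,q;q^4)$.

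The main obstacle is the remaining pure theta part. It must be shown to equal $\Theta_\ell$, namely $\pm\frac12\frac{J_1^3}{J_2J_4}j(\mp\ldots)$. I would prove this by the standard method of checking that the difference is a theta function (a weakly holomorphic modular form) that vanishes. One way is to verify the transformation of both sides under $z$-shifts of an auxiliary elliptic variable, keeping $x$ generic in the HM expansion. The other is to compare enough $q$-coefficients against a Sturm-type bound. I would first try the first route: keep the first argument general, show both sides satisfy the same quasi-periodicity and have matching poles, then specialize. The cases $\ell\in\{0,3\}$ and $\{1,2\}$ are related by $C_{m,\ell}=C_{N-m,N-\ell}$-type symmetry together with $j(x;q)=j(q/x;q)$, so only one case of each needs full computation.
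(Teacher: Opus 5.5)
The paper only quotes this theorem from \cite{BoMo2026}, so there is no in-paper proof to compare with. Your outline (reduce in $m$, evaluate one base case per spin, check a theta identity) is sensible, but there is a genuine gap at the step that is supposed to produce $\mu_2$.

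\cite[Theorem 1.4]{HM} requires $a\mid b$ and $c\mid b$; the paper only uses it as Theorem \ref{theorem:main-acdivb}, with $(a,b,c)=(n,n,1)$. For $f_{1,5,20}$ you have $c=20\nmid 5$, and the would-be second modulus $b^2/c-a=1/4$ is not even an integer. Moreover, the one legitimate piece of that template, $j(x;q)\,m(\cdot,\cdot;q^5)$, vanishes identically, because both first arguments $q^{1+\frac{m+\ell}{2}}$ and $q^{\frac{m-\ell}{2}}$ are integral powers of $q$. So all the mock content sits on the $y$-side. The general expansion of \cite{MZ2023} writes that side as twenty Appell functions of modulus $q^{100}$, and nothing in your plan gets from there to $m(-q,-1;q^4)$.

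What works is to pass to $f_{5,5,1}$-sums, where Corollary \ref{corollary:f551-HeckeExpansion} legitimately produces $m(\cdot,-1;q^4)$, hence $\mu_2$ via (\ref{equation:2nd-mu(q)}); compare the $f_{5,5,1}$ identities from \cite{BoMo2026} quoted in the concluding remarks. This is available inside the paper. Corollaries \ref{corollary:levelOneHalfEvenSpin} and \ref{corollary:levelOneHalfOddSpin} at $s=0$ express $j(q^2;q^5)(q)_\infty^3\mathcal{C}_{0,2r}$ and $j(q^4;q^5)(q)_\infty^3\mathcal{C}_{1,2r+1}$ through exactly the $f_{5,5,1}$ combinations evaluated in Corollary \ref{corollary:genEulerOneHalfEvenSpin} and Theorem \ref{theorem:genEulerOneHalfOddSpin}. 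After dividing, the $\mu_2$ terms match, and for $\ell\in\{0,1\}$ what remains is the theta identity $2J_2J_4=j(q^2;q^5)j(-q;-q^5)+j(q;q^5)j(-q^3;-q^5)$.

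The reduction in $m$ is also mis-specified. Identity (\ref{equation:fabc-fnq-2}) sends $(x,y)\mapsto(qx,q^5y)$, so iterating it never returns you to the base double-sum: the second arguments $-q^{12+2\ell}$ and $-q^{8-2\ell}$ are pinned by $\ell$. Also, one step moves $m$ by $2$, so $(-1)^mq^{\binom m2}$ is not a product of $m$ factors $-x$.

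The shift that keeps $y$ fixed is Proposition \ref{proposition:f-functionaleqn} with shift parameters $(\ell,k)=(-4,1)$, i.e.\ the $(-2p,1)$ of Proposition \ref{proposition:quasiPeriodicity-step1}. For $x\in q^{\mathbb{Z}}$ it reads
\[
f_{1,5,20}(x,y;q)=-x^{-4}yq^{-10}f_{1,5,20}(qx,y;q)-\sum_{n=-4}^{-1}(-x)^nq^{\binom n2}j(q^{5n}y;q^{20}).
\]
The multiplier $-x^{-4}yq^{-10}$ equals $q^{-2m-2}$ for both double-sums. Starting from $m=\ell$, the product of these multipliers is $q^{\binom m2-\binom\ell2+\frac{m-\ell}{2}}$, which is where the factor in the statement comes from. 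This is just Theorems \ref{theorem:generalQuasiPeriodicityEvenSpin} and \ref{theorem:generalQuasiPeriodicityOddSpin} with $(p,j)=(2,1)$. The theta factor collapses to $j(q^{1+\ell};q^5)$ via (\ref{equation:j-flip}), (\ref{equation:j-elliptic}) and (\ref{equation:j-split-m2}), as in the proofs of Corollaries \ref{corollary:levelOneHalfEvenSpin} and \ref{corollary:levelOneHalfOddSpin}.

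Finally, your shortcut via $C_{m,\ell}=C_{N-m,N-\ell}$ is not available. That is an integral-level symmetry (here $N-m\notin\mathbb{Z}$), and at level $1/2$ the spins $\ell$ and $3-\ell$ differ by a genuine theta term. The statement itself gives $(q)_\infty^3\mathcal{C}_{1,3}=-q^{-1}(q)_\infty^3\mathcal{C}_{0,0}+q^{-1}j(q;q^5)$. So all four spins have to be treated.
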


As corollaries, they found mock theta conjecture-like identities \cite[Corollary $2.6$]{BoMo2026}.  For example,  for $(p,p^{\prime})=(2,5)$, $r\in\{0,1\}$, it holds
\begin{gather}
(q)_{\infty}^3\mathcal{C}_{0,2r}^{1/2}(q)
=(-q)^{-r}\frac{1}{2}
\frac{J_{1}^3}{J_{2}J_{4}}j(-q^{2r+1};-q^{5})
+ q^{-r} \frac{1}{2}  j(q^{1+2r};q^{5}) \mu_2(q).
 \label{equation:mockThetaConj2502r-2ndmu}
\end{gather}

It appeared that the methods of \cite{BoMo2026} did not readily work for other positive admissible-levels.  One could find results, but nothing like Theorem \ref{theorem:fractionalLevelPlus12alt}.  This led to the search for alternate proofs of Theorem \ref{theorem:fractionalLevelPlus12alt} and thus \cite{BoMo2025}, where the authors developed quasi-periodicity, i.e. they developed a positive admissible-level analog of (\ref{eq:inglevelperiod}).  For even-spin they found 
\begin{theorem}\label{theorem:generalQuasiPeriodicityEvenSpin}
\cite[Theorem 2.1]{BoMo2025} For $(p,p^{\prime})=(p,2p+j)$, we have the quasi-periodic relation for even-spin 
\begin{align*}
& (q)_{\infty}^{3}C_{2jt+2s,2r}^{(p,2p+j)}(q)
 -(q)_{\infty}^{3}C_{2s,2r}^{(p,2p+j)}(q)\\
& \  = (-1)^{p}q^{-\frac{1}{8}+\frac{p(2r+1)^2}{4(2p+j)}}q^{\binom{p}{2}-p(r-s)-\frac{p}{j}s^2}\sum_{i=1}^{t}q^{-2pj\binom{i}{2}-2psi} \\
&\quad  \times 
\sum_{m=1}^{p-1}(-1)^{m}q^{\binom{m+1}{2}+m(r-p)}
 \left ( q^{m(ji+s-j)}-q^{-m(ji+s)}\right )\\
&\quad   \times 
\Big (  j(-q^{m(2p+j)+p(2r+1)};q^{2p(2p+j)} )
  -  q^{m(2p+j)-m(2r+1)}j(-q^{-m(2p+j)+p(2r+1)};q^{2p(2p+j)})\Big ).
\end{align*}
\end{theorem}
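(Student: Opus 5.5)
It suffices to prove the case $t=1$, i.e. a one-step relation between $C^{(p,2p+j)}_{2j+2s,2r}$ and $C^{(p,2p+j)}_{2s,2r}$. The general statement then follows by telescoping: replace $s$ by $s+j(i-1)$ for $i=1,\dots,t$ and sum. One checks that the prefactor $q^{-p(r-s)-\frac{p}{j}s^2}$ evaluated at $s+j(i-1)$ becomes $q^{-p(r-s)-\frac{p}{j}s^2}q^{-2pj\binom{i}{2}-2psi}$ times a factor absorbed into the inner sum. That inner factor is $q^{p j (i-1)}$ combined with the $q^{\pm m(\cdot)}$ terms, and it is what produces $q^{m(ji+s-j)}-q^{-m(ji+s)}$. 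This bookkeeping is routine.

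For the one-step relation I work with the Hecke-type form \eqref{equation:modStringFnHeckeForm} with $p'=2p+j$, $\ell=2r$, $m=2s$. Each string function is a difference of two double-sums $f_{1,p',2pp'}(x,y;q)$, and shifting $m\mapsto m+2j$ multiplies $x$ by $q^{j}$. The key tool is the standard shift identity for $f_{a,b,c}$ under $x\mapsto q^{a}x$ or $y\mapsto q^{c}y$, which produces a finite theta-type correction. Here, however, the shift $q^j$ is not a multiple of $a=1$ in a form that directly matches. So I first combine it with the $y$-shift structure: the discriminant condition $b^2-ac=p'^2-2pp'=p'j$ indicates that $x\mapsto q^{j}x$ together with $y\mapsto q^{-?}y$ acts as a symmetry of the quadratic form $\binom{r}{2}+p'rs+2pp'\binom{s}{2}$, up to a linear change of summation variables. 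Concretely, I would substitute $(r,s)\mapsto(r+p's',\ldots)$, or use the quasi-periodicity of the Weyl--Kac quotient \eqref{equation:WK-formula} in $z\mapsto zq^{j}$ type translations. This gives $f(q^jx,y)$ as a sum of $f(x,y)$ and a boundary term: the terms that cross the $r\ge0$/$r<0$ boundary over a strip of width $p$ in one variable.

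The boundary terms are finite sums over $m=1,\dots,p-1$ (the $m=0$ term cancels between the two double-sums) of single infinite sums over the other variable. By the Jacobi triple product \eqref{equation:JTPid}, these are theta functions $j(-q^{m(2p+j)\pm p(2r+1)};q^{2p(2p+j)})$. The two double-sums in \eqref{equation:modStringFnHeckeForm}, with $\ell+1$ replaced by $\pm(2r+1)$, yield the two theta terms in the bracket, and the exponent $m(2p+j)-m(2r+1)$ arises from normalizing the second. Finally I convert from $\mathcal{C}$ back to $C$ via \eqref{equation:mathCalCtoStringC}, which supplies $q^{-\frac18+\frac{p(2r+1)^2}{4(2p+j)}}$ and the $s^2$ term $-\frac{p}{j}s^2=-\frac{(2s)^2}{4N}$ with $N=j/p$.

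The main obstacle is the middle step: finding the correct change of summation variables for which the shift by $2j$ in $m$ is an exact symmetry of the quadratic form up to boundary strips, and then identifying precisely which $p-1$ strips survive and what signs they carry. I would first test this on $(p,p')=(2,5)$, comparing with Theorem~\ref{theorem:fractionalLevelPlus12alt}, where the right side should reproduce the finite sum $\sum_k(-1)^kq^{mk-\binom{k+1}{2}}$ difference. After that I would carry out the general manipulation and collect the sign $(-1)^p$ and the exponent $\binom{p}{2}$ from the boundary reindexing.
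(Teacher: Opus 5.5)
Your outer frame matches the paper's: Hecke-type form \eqref{equation:modStringFnHeckeForm}, a shift producing theta-function boundary terms, renormalization via \eqref{equation:mathCalCtoStringC}, and telescoping in $t$. The telescoping is routine. Substituting $s\mapsto s+j(i-1)$ into a $t=1$ relation with prefactor $q^{\binom{p}{2}-p(r-s)-\frac{p}{j}s^2-2ps}$ and inner factor $q^{ms}-q^{-m(s+j)}$ reproduces the $i$-th summand exactly, with nothing left to absorb.

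However, the one-step relation, which is the whole content, is left open. The missing idea is concrete.

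Every integer translation of the summation indices in \eqref{equation:fabc-def2} preserves the quadratic part of the exponent, so the discriminant is irrelevant. A non-translation substitution like the one you float would not preserve the cones $r,s\ge 0$ and $r,s<0$. What you need is the translation $(\ell,k)$ solving $a\ell+bk=j$ and $b\ell+ck=0$ for $(a,b,c)=(1,2p+j,2p(2p+j))$. This forces $(\ell,k)=(-2p,1)$.

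Proposition \ref{proposition:f-functionaleqn} with this choice does the following.
\begin{itemize}
\item It sends $x\mapsto q^{j}x$ and fixes $y$.
\item Both double-sums acquire the same prefactor $q^{-p(2s+j)}$. This is exactly the ratio of the normalizations \eqref{equation:mathCalCtoStringC} of $C_{2s+2j,2r}$ and $C_{2s,2r}$.
\item The $k$-boundary term is $j(q^{n};q)=0$.
\end{itemize}
Then \eqref{equation:sumconvention} and \eqref{equation:j-flip} show that $(q)_{\infty}^3\mathcal{C}_{2s,2r}^{(p,2p+j)}(q)-q^{-p(2s+j)}(q)_{\infty}^3\mathcal{C}_{2s+2j,2r}^{(p,2p+j)}(q)$ equals
\begin{align*}
&-\sum_{m=1}^{2p}(-1)^mq^{\binom{m+1}{2}-m(1+s+r)}j\big(-q^{(p-m)(2p+j)+p(2r+1)};q^{2p(2p+j)}\big)\\
&\qquad +\sum_{m=1}^{2p}(-1)^mq^{\binom{m+1}{2}-m(s-r)}j\big(-q^{(p+m)(2p+j)+p(2r+1)};q^{2p(2p+j)}\big).
\end{align*}

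So there are $2p$ boundary terms per double-sum, not $p-1$, and nothing happens at $m=0$. The reduction goes in three steps.
\begin{enumerate}
\item The two $m=p$ terms cancel each other by \eqref{equation:j-elliptic}.
\item Split into $1\le m\le p-1$ and $p+1\le m\le 2p$, then reindex $m\to p-m$ and $m\to p+m$, using \eqref{equation:j-elliptic} again. A second pair of $m=p$ terms cancels.
\item The survivors pair \emph{crosswise}. The small-$m$ terms of the first double-sum combine with the large-$m$ terms of the second to form the coefficient of $j(-q^{m(2p+j)+p(2r+1)};q^{2p(2p+j)})$. Symmetrically, the remaining terms form the coefficient of $j(-q^{-m(2p+j)+p(2r+1)};q^{2p(2p+j)})$.
\end{enumerate}
This crosswise pairing is precisely what creates the factor $1-q^{m(2s+j)}=-q^{m(s+j)}\big(q^{ms}-q^{-m(s+j)}\big)$.

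Your claim that each double-sum supplies one of the two theta functions in the bracket is therefore wrong. Under that claim the difference $q^{m(ji+s-j)}-q^{-m(ji+s)}$ has no source, because telescoping only shifts $s$ in a factor that must already be present at $t=1$. This is exactly the computation the paper carries out for odd spin in Propositions \ref{proposition:quasiPeriodicity-step1} and \ref{proposition:quasiPeriodicity-step2}, with $2r+2$ in place of $2r+1$. It transfers verbatim to the even-spin statement.
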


Borozenets and Mortenson then applied the notion of quasi-periodicity in Theorem \ref{theorem:generalQuasiPeriodicityEvenSpin} to the Weyl--Kac formula in (\ref{equation:fourcoefexp}) and  (\ref{equation:WK-formula}) to obtain polar-finite decompositions after Zwegers \cite{Zw2} and Dabholkar, Murthy, and Zagier \cite{DMZ}.  From such polar-finite decompositions, they were also able to write characters for levels $1/2$, $1/3$, $2/3$ and $1/5$ in terms of mock theta conjecture-like identities.  In order state some of their results, we recall two of the third-order mock theta functions found in Ramanujan's last letter to Hardy \cite[Chapter 14]{AB2018}:
\begin{gather*}
f_3(q):=\sum_{n\ge 0}\frac{q^{n^2}}{(-q)_n^2}, \ \ 
\omega_3(q)
:=\sum_{n\ge 0}\frac{q^{2n(n+1)}}{(q;q^2)_{n+1}^2}.
\end{gather*}

 For the even-spin, $1/3$-level string functions Borozenets and Mortenson obtained
\begin{theorem}\label{theorem:newMockThetaIdentitiespP37m0ell2r}\cite[Theorem 2.6]{BoMo2025}
For $(p,p^{\prime})=(3,7)$, $r\in \{0,1,2\}$ we have
\begin{align*}
(q)_{\infty}^3&\mathcal{C}_{0,2r}^{1/3}(q)
=(-q)^{-r}\frac{(q)_{\infty}^3}{J_{2}}
\frac{j(-q^{1+2r};q^{14})j(q^{16+4r};q^{28})}
{j(-1;q)J_{28}} \\
&\qquad   -q^{2-2r}\frac{j(q^{6-2r};q^{14})j(q^{26-4r};q^{28})}{J_{28}} \omega_3(-q)
+  \frac{q^{-r}}{2} \frac{j(q^{1+2r};q^{14})j(q^{16+4r};q^{28})}{J_{28}} 
f_{3}(q^2).
\end{align*}
\end{theorem}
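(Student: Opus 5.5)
The plan is to follow the polar-finite route of \cite{BoMo2025}, specialized to $(p,p')=(3,7)$, so $j=1$ and $N=1/3$. The first step is to start from the Weyl--Kac formula (\ref{equation:WK-formula}) and the Fourier expansion (\ref{equation:fourcoefexp}). We then apply the even-spin quasi-periodicity of Theorem \ref{theorem:generalQuasiPeriodicityEvenSpin} with $j=1$, which expresses $C_{2t+2s,2r}^{1/3}$ in terms of $C_{2s,2r}^{1/3}$ plus explicit theta-type corrections. With $p=3$, the inner sum over $m$ runs over $m\in\{1,2\}$ only. Summing over $t$ against $q^{m^2/(4N)}z^{-m/2}$ rewrites $\chi_{2r}^{1/3}(z;q)$ as the string function $C_{0,2r}$ times a theta function in $z$, plus a polar part. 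Since $2N$ is not an integer here, the period in the quantum number is $2$ and only $s=0$ survives. After summing the geometric sums in $i$, the polar part is a finite combination of Appell functions $m(x,z;q^{a})$ multiplied by the theta functions $j(-q^{7m+3(2r+1)};q^{42})$ and their partners.

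Rather than going through the $z$-dependence, I would work directly at $m=0$. By (\ref{equation:modStringFnHeckeForm}),
\begin{equation*}
(q)_\infty^3\mathcal{C}_{0,2r}^{1/3}(q)=f_{1,7,42}(q^{1+r},-q^{3(8+2r)};q)-f_{1,7,42}(q^{-r},-q^{3(6-2r)};q).
\end{equation*}
We expand each Hecke-type double-sum with \cite[Theorem 1.4]{HM}. For $f_{a,b,c}$ with $(a,b,c)=(1,7,42)$ we have $b^2-ac=7$, so the expansion gives a sum of Appell functions $m(\cdot,\cdot;q^{7})$ and $m(\cdot,\cdot;q^{294})$ with theta coefficients, plus a theta-function remainder $\theta_{a,b,c}$. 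The Appell functions are reduced using the standard identities for changing $z$ and for shifting $x\mapsto q^{k}x$, together with the $n$-fold splitting $m(x,z;q)=\sum_k\ldots m(\cdot,\cdot;q^{n^2})$. The aim is to collapse everything onto the two building blocks $m(q,-q;q^{4})$-type and $m(-q,q;q^{3})$-type expressions that represent $f_3(q^2)$ and $\omega_3(-q)$ in \cite[Section 5]{HM}. Concretely, $f_3(q)=4m(-q,q;q^3)+\ldots$ (with $q\to q^2$) and $\omega_3$ corresponds to $m(q,q^2;q^3)$-type terms (with $q\to -q$).

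The coefficients should come out as the products $j(q^{1+2r};q^{14})j(q^{16+4r};q^{28})/J_{28}$ and $j(q^{6-2r};q^{14})j(q^{26-4r};q^{28})/J_{28}$. I would identify them by matching the $m$-sum in Theorem \ref{theorem:generalQuasiPeriodicityEvenSpin} after applying quintuple-product-type manipulations, which combine $j(x;q^{42})$-pairs into $j(\cdot;q^{14})j(\cdot;q^{28})$. What remains is a purely modular expression. We then show it equals the stated theta quotient $(-q)^{-r}(q)_\infty^3 j(-q^{1+2r};q^{14})j(q^{16+4r};q^{28})/(J_2\,j(-1;q)J_{28})$. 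To do this, I would verify the identity among theta quotients by the standard method: check that both sides are modular forms of the same weight on a common congruence subgroup (or quasi-periodic in an auxiliary variable with matching poles/zeros), then compare enough $q$-coefficients via a Sturm bound. The check is done separately for each $r\in\{0,1,2\}$.

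The main obstacle is the middle step: arranging the Appell-function terms from \cite[Theorem 1.4]{HM} so that all the mixed mock pieces combine into exactly the two third-order functions, with the correct theta coefficients. Stray Appell functions must cancel, and that requires careful use of the change-of-$z$ identity, whose theta-quotient byproducts feed into the modular remainder. If the cancellation is not transparent, I would first fix the Appell coefficients by matching shadows, that is, the nonholomorphic completions via \cite{Zw2}. The difference is then a holomorphic weight-$1/2$ object, and it can be pinned down by the modular/Sturm comparison above.
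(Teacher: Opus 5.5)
The paper does not prove this statement. It is quoted from \cite{BoMo2025}, where it comes out of the polar-finite decomposition of $\chi^{1/3}_{2r}(z;q)$; that is the route of your first paragraph, which you then drop. The concluding remarks indicate how to recover it here. Put $s=0$ in Corollary \ref{corollary:levelOneThirdEvenSpin}, insert the two evaluations of Corollary \ref{corollary:genEulerOneThirdEvenSpin}, and divide by $j(q^3;q^7)$. The $\omega_3(-q)$ and $f_3(q^2)$ terms then appear exactly as stated, after $j(q^{8+2r};q^{14})=j(q^{6-2r};q^{14})$ and $j(q^{2+4r};q^{28})=j(q^{26-4r};q^{28})$. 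What remains is a theta-quotient identity checked with \cite{FG}. The route you actually commit to, a direct Appell expansion of $f_{1,7,42}$, differs from both, and as written it has a gap.

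First, the tool is misapplied. \cite[Theorem 1.4]{HM} needs $a\mid b$ and $c\mid b$, which is why the paper uses it only for $f_{n,n,1}$ (Theorem \ref{theorem:main-acdivb}). Here $42\nmid 7$, and its second base would be $q^{b^2/c-a}=q^{1/6}$. The bases $q^{7}$ and $q^{294}$ you quote belong instead to the general expansion for arbitrary $(a,b,c)$ \cite{MZ2023}. For $(1,7,42)$ its $q^7$-part is killed by $j(q^{1+r};q)=j(q^{-r};q)=0$. Each double-sum then leaves forty-two terms $(-x)^tq^{\binom{t}{2}}j(q^{7t}y;q^{42})\,m(-q^{147-7t},-1;q^{294})$ with $0\le t\le 41$. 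With $z=-1$, the term $t=21$ is $m(-1,-1;q^{294})$, which is a pole. That term, and every term with $t\equiv 0\pmod 3$, cancels between your two double-sums. You still have to set this up, either with generic $z$ or via a limit in $x,y$.

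More seriously, the step that carries all the content is only announced. You would need to carry out the following, and none of it is done.
\begin{itemize}
\item Group the surviving $t$ by residue mod $6$.
\item Reassemble each group of seven into a single $m(-q^{e},z;q^{6})$ by running the $7$-fold splitting formula backwards. This requires checking that the theta coefficients stand in exactly the ratios that formula demands, and tracking the theta corrections it produces.
\item Match $e\equiv\pm1$ to $\omega_3(-q)$ and $e\equiv\pm2$ to $f_3(q^2)$, with coefficients that the quintuple product collapses to $j(q^{6-2r};q^{14})j(q^{26-4r};q^{28})/J_{28}$ and $j(q^{1+2r};q^{14})j(q^{16+4r};q^{28})/J_{28}$.
\end{itemize}
Reading these coefficients off the $m$-sum of Theorem \ref{theorem:generalQuasiPeriodicityEvenSpin} is not available to you, since that sum never arises when you expand $f_{1,7,42}$ directly.

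The shadow-matching fallback does not close the gap either. It needs the transformation law of the completed $f_{1,7,42}$-difference, via Zwegers' indefinite theta functions \cite{Zw2}, which you have not set up. It also still leaves a holomorphic remainder to identify, and that remainder is again the theta-quotient identity.

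The paper's route avoids all of this because quasi-periodicity plus the Euler identity trade $f_{1,7,42}$ for $f_{7,7,1}$. For $f_{7,7,1}$, \cite[Theorem 1.4]{HM} does apply (Corollary \ref{corollary:f771-HeckeExpansion}), and the base-$q^{42}$ Appell terms vanish since $j(q^n;q)=0$. Lemmas \ref{lemma:generalAppellSums} and \ref{lemma:alternateAppellForms} then land directly on $q\omega_3(-q)$ and $\frac12 f_3(q^2)$.
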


We recall the floor function $\lfloor \cdot \rfloor$.  For the even-spin, $2/3$-level string functions, they found
\begin{theorem}\label{theorem:newMockThetaIdentitiespP38m0ell2r}
\cite[Theorem 2.7]{BoMo2025}
 For $(p,p^{\prime})=(3,8)$, $r\in\{0,1,2,3\}$, we have
\begin{align*}
(q)_{\infty}^3&\mathcal{C}_{0,2r}^{2/3}(q) 
=(-1)^{\lfloor (r+1)/2\rfloor}\cdot
\frac{q^{-r}}{2}\frac{J_{1}^2J_{2}}{J_{4}^2J_{8}}
j(-q^{7-2r};q^{16})j(q^{1+2r};q^{8})\\
&\qquad 
- q^{3-2r}    
\frac{j(q^{7-2r};q^{16})j(q^{30-4r};q^{32})}{J_{32}}
\omega_{3}(-q^{2}) 
 + q^{-r} 
\frac{j(q^{1+2r};q^{16})j(q^{18+4r};q^{32})}{J_{32}} 
 \frac{1}{2}f_{3}(q^4).
 \end{align*}
\end{theorem}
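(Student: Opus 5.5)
Theorem \ref{theorem:newMockThetaIdentitiespP38m0ell2r} is quoted from \cite[Theorem 2.7]{BoMo2025}, so the plan is to retrace the route used there for level $1/3$ in Theorem \ref{theorem:newMockThetaIdentitiespP37m0ell2r}, now with $(p,p')=(3,8)$, i.e.\ $j=2$ in Theorem \ref{theorem:generalQuasiPeriodicityEvenSpin}. The first step is to start from the Weyl--Kac formula (\ref{equation:WK-formula}) with $p=3$, $p'=8$. Write the even-spin character as a meromorphic Jacobi form in $z$ whose denominator is $z^{-1/2}q^{1/8}j(z;q)$, up to normalization. Its poles sit at $z=q^n$.

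Next, insert the Fourier expansion (\ref{equation:fourcoefexp}) and group the quantum numbers $m=2jt+2s=4t+2s$ with $s\in\{0,1\}$. Theorem \ref{theorem:generalQuasiPeriodicityEvenSpin} then lets us replace each $C_{4t+2s,2r}$ by $C_{2s,2r}$ plus an explicit theta correction. Summing over $t$ produces the finite part $\sum_s \mathcal{C}_{2s,2r}\,\Theta_{2s,N}$-type terms. The corrections, summed over $t$ with the inner sum over $i$, telescope into Appell functions $m(x,z;q^{32})$. These Appell functions carry coefficients $j(\cdot;q^{16})$ and $j(\cdot;q^{32})$, and this is the polar part, which must reproduce the pole contributions of $1/j(z;q)$.

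The key step is to compare both sides at special values of $z$, or to extract the coefficient of $z^0$. This gives $(q)_\infty^3\mathcal{C}_{0,2r}$ as a theta quotient plus a sum $\sum_{m=1}^{2}(\ldots)\,m(\ldots;q^{32})$. The alternative is to work directly from the Hecke form (\ref{equation:modStringFnHeckeForm}) with $f_{1,8,48}$ and expand it via \cite[Theorem 1.4]{HM} into Appell functions plus a theta remainder. The $m=1$ and $m=2$ terms then need to be identified with $\omega_3(-q^2)$ and $\tfrac12 f_3(q^4)$. For this we use the known Appell forms $f_3(q)=2m(-q,q;q^3)\cdot(\text{theta})+\cdots$ and $\omega_3(q)=-q^{-1}m(q,q^2;q^6)$ from \cite[Section 5]{HM}, after substituting $q\to q^4$ and $q\to -q^2$. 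We also need the standard Appell-function identities: the change of $z$ argument, and the reduction $m(x,z;q^{n})$ in terms of $m(\cdot;q)$ with moduli $32\to 8\to 4$.

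The main obstacle is the final theta-function evaluation. After isolating the mock parts we are left with a combination of theta quotients, including the $\Theta$-terms from the quasi-periodicity and the Appell function change-of-$z$ corrections. This combination must collapse to $(-1)^{\lfloor (r+1)/2\rfloor}\tfrac{q^{-r}}{2}\frac{J_1^2J_2}{J_4^2J_8}j(-q^{7-2r};q^{16})j(q^{1+2r};q^8)$. I would first handle $r=0$ and then use the symmetry $C_{m,\ell}=C_{N-m,N-\ell}$ together with $\ell\mapsto p'-2-\ell$ to pair $r\leftrightarrow 3-r$. The residual theta identities I would verify by the standard method of checking poles and zeros, or by comparing enough $q$-expansion coefficients after proving both sides are modular forms of the same weight on a common congruence subgroup, via a Sturm-bound argument.
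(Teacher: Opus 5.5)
This paper does not prove the statement. It quotes it from \cite{BoMo2025}, and its concluding section says the generalized-Euler method developed here is unlikely to recover it. Your main route is the one the introduction attributes to \cite{BoMo2025}: feed Theorem~\ref{theorem:generalQuasiPeriodicityEvenSpin} into (\ref{equation:fourcoefexp})--(\ref{equation:WK-formula}), take the polar-finite decomposition, specialize $z$, and finish with a theta identity. However, several of the concrete steps you describe would fail.

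The polar part does not involve Appell functions of modulus $q^{32}$. For $(p,j)=(3,2)$, multiply $q^{3(2t+s)^2/2}z^{-(2t+s)}$ by the factor $q^{-12\binom{i}{2}-6si}$ from Theorem~\ref{theorem:generalQuasiPeriodicityEvenSpin} and substitute $t=i+k$. The result is a multiple of $q^{6k^2+6ks}z^{-2k}(q^{12k+6}z^{-2})^{i}$. Including the extra $q^{\pm 2mi}$, the sum over $i$ is geometric with ratio $q^{12k+6\pm 2m}z^{-2}$. So you get Appell functions of modulus $q^{2pj}=q^{12}$ in the variable $z^{2}$. The moduli $16$ and $32$ enter only through $\Phi^{r}_{m}$ and the quintuple product, as in the proof of Corollary~\ref{corollary:levelTwoThirdsEvenSpin}.

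Consequently your ``$32\to 8\to 4$'' reduction has no mechanism and is not needed; the splitting of Corollary~\ref{corollary:msplit-n2} raises the modulus rather than lowering it. The mock parts are $m(-q^{2},-1;q^{12})$, coming from $m=1$, and $m(-q^{4},-1;q^{12})$, coming from $m=2$. By Lemma~\ref{lemma:alternateAppellForms} with $q\to q^2$, these are $\tfrac12 q^2\omega_3(-q^2)$ and $\tfrac14 f_3(q^4)$ up to theta quotients. Your Appell forms are also misquoted. The correct ones are $f_3(q)=4m(-q,q;q^3)+J_{3,6}^2/J_1$ and $\omega_3(q)=-2q^{-1}m(q,q^2;q^6)+J_6^3/(J_2J_{3,6})$, and these constants feed directly into the theta quotient you must match.

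The extraction step also needs to be pinned down. For $j=2$ the finite part has two unknowns, $C_{0,2r}$ and $C_{2,2r}$. They multiply $j(-q^{6}z^{-2};q^{12})$ and $q^{3/2}z^{-1}j(-q^{12}z^{-2};q^{12})$ respectively. Taking the coefficient of $z^0$ is circular. In the annulus where (\ref{equation:fourcoefexp}) holds, it returns $C_{0,2r}$ on both sides and only says the polar part has no $z^0$ term. Instead, evaluate at a non-polar zero of the second theta function, e.g.\ $z=i$. There $j(-q^{12}z^{-2};q^{12})=j(q^{12};q^{12})=0$, while $j(q^{6};q^{12})\neq 0$. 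After (\ref{equation:mxqz-flip}), (\ref{equation:mxqz-fnq-x}) and Proposition~\ref{proposition:changing-z-theorem}, this choice yields the arguments $-q^{2}$ and $-q^{4}$ with $z$-parameter $-1$.

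The reduction $r\leftrightarrow 3-r$ is not available. The symmetry $C_{m,\ell}=C_{N-m,N-\ell}$ is an integral-level statement; for $N=2/3$ the labels $N-m$ and $N-\ell$ are not integers. The target formula itself rules out such a pairing. Replacing $r$ by $3-r$ interchanges the theta products $j(q^{7-2r};q^{16})j(q^{30-4r};q^{32})$ and $j(q^{1+2r};q^{16})j(q^{18+4r};q^{32})$ attached to $\omega_3(-q^2)$ and $f_3(q^4)$. So $r=0$ does not give $r=3$. The polar-finite computation is uniform in $r$, but the final theta identity must be checked separately for each $r\in\{0,1,2,3\}$.

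Finally, your fallback of applying \cite[Theorem 1.4]{HM} directly to $f_{1,8,48}$ does not work as stated. That theorem needs $a\mid b$ and $c\mid b$, and $48\nmid 8$.
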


\begin{theorem}\label{theorem:newMockThetaIdentitiespP38m2ell2r} 
\cite[Theorem 2.8]{BoMo2025}
For $(p,p^{\prime})=(3,8)$, $r\in\{0,1,2,3\}$, we have
\begin{align*}
(q)_{\infty}^3\mathcal{C}_{2,2r}^{2/3}(q) 
&=(-1)^{\lfloor (r+1)/2\rfloor}\cdot 
\frac{q^{3-2r}}{2}\frac{J_{1}^2J_{2}}{J_{4}^2J_{32}}j(q^{2+4r};q^{32})j(q^{7-2r};q^{16})\\
&\qquad 
- q^{3-2r}   
\frac{j(q^{7-2r};q^{16})j(q^{30-4r};q^{32})}{J_{32}}
\left (1- \frac{1}{2}f_{3}(q^4) \right )  \\
&\qquad + q^{1-r} 
\frac{j(q^{1+2r};q^{16})j(q^{18+4r};q^{32})}{J_{32}} 
 \left (1-q^2\omega_{3}(-q^{2}) \right ).
\end{align*}
\end{theorem}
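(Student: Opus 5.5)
The most direct route is to reduce the $m=2$ identity to the $m=0$ identity of Theorem \ref{theorem:newMockThetaIdentitiespP38m0ell2r} by quasi-periodicity. For $(p,p')=(3,8)$ we have $j=2$, so Theorem \ref{theorem:generalQuasiPeriodicityEvenSpin} with $t=1$ relates $C_{4+2s,2r}$ to $C_{2s,2r}$. This does not connect $m=2$ to $m=0$ directly. Instead I will take $s=-1$: then $C_{2,2r}-C_{-2,2r}$ is given by an explicit theta expression. Combined with the symmetry $C_{-2,2r}=C_{2,2r}$, this only yields a theta identity, so it carries no information about $C_{2,2r}$ itself. The plan therefore needs a different link.

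I will work from the Hecke-type form \eqref{equation:modStringFnHeckeForm} with $(p,p')=(3,8)$. Here $(q)_\infty^3\mathcal{C}_{m,2r}^{2/3}$ is a difference of two sums $f_{1,8,48}(x,y;q)$, with $x=q^{1+\frac{m}{2}+r}$, $y=-q^{3(9+2r)}$ in the first term, and $x=q^{\frac m2-r}$, $y=-q^{3(7-2r)}$ in the second. Passing from $m=0$ to $m=2$ replaces $x$ by $qx$. The standard shift identity for $f_{a,b,c}$ (substituting $r\mapsto r+1$, or equivalently the functional equation $f_{a,b,c}(x,y)=-\frac{x}{y}\cdots$ plus boundary single sums from \cite{HM}) writes $f_{1,8,48}(qx,y)$ as $x^{-1}$ times $f_{1,8,48}(x,yq^{-8})$ plus a single theta-type sum. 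The cleanest form is to express each double sum by \cite[Theorem 1.4]{HM} as Appell functions $m(\cdot,\cdot;q^{\cdot})$ plus theta quotients, exactly as in the derivation of Theorem \ref{theorem:newMockThetaIdentitiespP38m0ell2r}, and then apply $m(qx,z;q)=1-x\,m(x,z;q)$ together with $m(x,z;q)=x^{-1}m(x^{-1},z^{-1};q)$ to the Appell arguments.

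Concretely, I expect the $m=0$ computation to produce Appell functions of the form $m(-q^{a},\ast;q^{32})$ that assemble into $\omega_3(-q^2)$ and $\tfrac12 f_3(q^4)$, using the Appell representations of $f_3$ and $\omega_3$ from \cite[Section 5]{HM}. Shifting $m$ by $2$ moves each argument by one step of the $q^{32}$ lattice. Via $m(qx,z)=1-x\,m(x,z)$ this interchanges the roles: the coefficient of $\omega_3(-q^2)$ becomes $q^{1-r}(\cdots)(1-q^2\omega_3(-q^2))$, and the coefficient of $\tfrac12 f_3(q^4)$ becomes $q^{3-2r}(\cdots)(1-\tfrac12 f_3(q^4))$. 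This matches the shape of the statement, where the theta prefactors of the two mock pieces are exchanged relative to the $m=0$ identity. The constant $1$'s come from the boundary term of the shift identity.

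The remaining task is to collect all leftover theta quotients and show that they equal the single product $(-1)^{\lfloor (r+1)/2\rfloor}\frac{q^{3-2r}}{2}\frac{J_1^2J_2}{J_4^2J_{32}}j(q^{2+4r};q^{32})j(q^{7-2r};q^{16})$. I expect this step to be the main obstacle. I would reduce it to a theta identity for each $r\in\{0,1,2,3\}$ and prove it by showing that both sides, viewed as functions of an auxiliary elliptic variable, satisfy the same quasi-periodicity and agree at enough points. Failing that, I would verify each case by the modular-forms method of checking sufficiently many $q$-coefficients (Sturm bound). The sign $(-1)^{\lfloor (r+1)/2\rfloor}$ should be inherited directly from the corresponding term in Theorem \ref{theorem:newMockThetaIdentitiespP38m0ell2r}.
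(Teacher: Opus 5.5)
Your first paragraph is right: Theorem~\ref{theorem:generalQuasiPeriodicityEvenSpin} moves $m$ in steps of $2j=4$, so it cannot connect $m=2$ to $m=0$. The replacement you propose, however, is not yet an argument.

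\textbf{The tool you cite does not apply.} \cite[Theorem 1.4]{HM} requires $b$ to be divisible by both $a$ and $c$. This fails for $f_{1,8,48}$: the Appell base $q^{b^2/c-a}$ would be $q^{1/3}$. You would have to use the general expansions of \cite{HM,MZ2023}. Here $b^2-ac=16$, and after using $j(x;q)=0$, their $g_{a,b,c}$-part is a sum of $48$ Appell functions of base $q^{768}$ weighted by $j(q^{8t}y;q^{48})$. Recombining these into the base-$q^{12}$ functions $m(-q^2,-1;q^{12})$ and $m(-q^4,-1;q^{12})$ that carry $\omega_3(-q^2)$ and $f_3(q^4)$ (Lemma~\ref{lemma:alternateAppellForms} with $q\to q^2$) is the entire difficulty.

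\textbf{There is no derivation to imitate.} The paper notes that this direct Hecke-type route of \cite{BoMo2026} ``did not readily work'' beyond level $1/2$. That is why \cite{BoMo2025} obtained Theorem~\ref{theorem:newMockThetaIdentitiespP38m0ell2r} and the present statement from the polar-finite decomposition of the Weyl--Kac character via quasi-periodicity. This paper only quotes both results.

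\textbf{The mechanism is read off from the targets, not derived.} $q^{32}$ enters only through the theta prefactors (the quintuple product applied to $j(\cdot;q^{48})$), never as an Appell base. So ``one step of the $q^{32}$ lattice'' describes nothing. Also, the boundary terms of (\ref{equation:Gen1}) are theta functions such as $j(yq^{-8};q^{48})$, not the constants $1$. The swap, the $1$'s, the residual theta identity and the sign $(-1)^{\lfloor (r+1)/2\rfloor}$ are therefore all unsupported.

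\textbf{The link you were looking for.} It comes from the generalized Euler identity (Proposition~\ref{propostion:genEulerIdentitySW}), not from quasi-periodicity alone. Corollary~\ref{corollary:levelTwoThirdsEvenSpin} with $s=0$ is a linear relation between $\mathcal{C}_{0,2r}^{(3,8)}(q)$ and $\mathcal{C}_{2,2r}^{(3,8)}(q)$. Its coefficients are $(q)_\infty^3j(-q^{7};q^{16})$ and $-(q)_\infty^3j(-q^{15};q^{16})$, with $j(-q^{15};q^{16})=j(-q;q^{16})$. Its two quads of $f_{4,4,1}$'s, to which \cite[Theorem 1.4]{HM} does apply, are evaluated in Corollary~\ref{corollary:genEulerTwoThirdsEvenSpin}.

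Put $A_r:=j(q^{7-2r};q^{16})j(q^{30-4r};q^{32})/J_{32}$ (the first prefactor there, after (\ref{equation:j-flip})) and $B_r:=j(q^{1+2r};q^{16})j(q^{18+4r};q^{32})/J_{32}$. Substitute Theorem~\ref{theorem:newMockThetaIdentitiespP38m0ell2r}. The mock terms attached to $j(-q^7;q^{16})$ cancel exactly: $-q^{3-2r}A_r\omega_3(-q^2)$ against $q^{1-2r}A_r\cdot q^2\omega_3(-q^2)$, and $q^{-r}B_r\tfrac12 f_3(q^4)$ against $-q^{-r}B_r\cdot\tfrac12 f_3(q^4)$. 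Dividing by $j(-q;q^{16})$ leaves $-q^{3-2r}A_r\bigl(1-\tfrac12 f_3(q^4)\bigr)+q^{1-r}B_r\bigl(1-q^2\omega_3(-q^2)\bigr)$ plus theta quotients. This is the actual source of the interchange you noticed.

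What remains is a pure theta-function identity for each $r$, which \cite{FG} or your Sturm-bound check can settle. For instance, for $r=1$ it collapses to $j(-q^5;q^{16})j(q^3;q^8)/J_8=j(q^3;q^{16})j(q^{10};q^{32})/J_{32}$. This follows from (\ref{equation:1.10}) and $j(x;q)j(-x;q)=J_1^2j(x^2;q^2)/J_2$.
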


Borozenets and Mortenson \cite{BoMo2025} and then Konenkov and Mortenson \cite{KM25} also found mock theta conjecture-like identities for $1/5$-level string functions \cite[Theorem $2.9$]{BoMo2025} and $2/5$-level string functions \cite[Theorem $2.5$, $2.6$]{KM25} but in terms of Ramanujan's four tenth-order mock theta functions, where the tenth-order functions were first found in the lost notebook \cite{RLN}.

\smallskip
In this work, we will first obtain the complimentary formula for quasi-periodicity for odd-spin admissible-level string functions.  We will then apply the formulas for quasi-peridocity for both even and odd-spin string functions to a generalized Euler identity found in Schilling and Warnaar \cite[Proposition 8.2]{SW}.  We obtain formulas similar to Theorem \ref{theorem:fractionalLevelPlus12alt}, i.e. \cite[Theorem $2.1$]{BoMo2026}, and we discover new mock theta conjecture-like identities.  In addition to the previously encountered mock theta functions $\mu_{2}(q)$, $f_{3}(q)$, and $\omega_{3}(q)$ we encounter two more of Ramanujan's classical mock theta functions in $\psi_{3}(q)$ and $\chi_{3}(q)$!

\subsection{On a generalized Euler identity}
We define $\mathbb{Z}_{p}:=\{ 0,1,2,\dots,p-1\}$.  We state Schilling and Warnaar's generalized Euler identity for modified string functions:
  \begin{proposition}\cite[Proposition 8.2]{SW}\label{propostion:genEulerIdentitySW}  For $1\le p < p^{\prime}$, $\ell\in\mathbb{Z}_{p^{\prime}-1}$, $\eta\in \mathbb{Z}_{p^{\prime}}$ such that $\ell+\eta$ is even.  Then
  \begin{equation}
  \sum_{L=-\infty}^{\infty}(-1)^{L}q^{\binom{L}{2}}\mathcal{C}_{2L+\eta,\ell}^{p,p^{\prime}}(q)=\delta_{\ell,\eta}.
  \label{equation:genEulerIdentitySW}
  \end{equation}
  \end{proposition}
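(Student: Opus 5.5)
The plan is to read \eqref{equation:genEulerIdentitySW} off by comparing Fourier coefficients in $z$ on the two sides of the Weyl--Kac formula \eqref{equation:WK-formula} after clearing the denominator. Write $D(z;q):=\sum_{\sigma=\pm1}\sigma\Theta_{\sigma,2}(z;q)$ and $N_\ell(z;q):=\sum_{\sigma=\pm1}\sigma\Theta_{\sigma(\ell+1),p'}(z;q^p)$. Then \eqref{equation:WK-formula} gives $N_\ell=\chi^N_\ell\cdot D$. I will work in an annulus such as $|q|<|z|<1$, where the string function expansion \eqref{equation:fourcoefexp} converges; there the identity can be treated as one between Laurent series in $z^{1/2}$ whose coefficients are $q$-series.

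The first step is to expand the denominator. From \eqref{equation:SW-thetaDef},
\[
\Theta_{1,2}(z;q)=\sum_{k\in\Z}q^{2(k+\frac14)^2}z^{-2k-\frac12},\qquad
\Theta_{-1,2}(z;q)=\sum_{k\in\Z}q^{2(k-\frac14)^2}z^{-2k+\frac12}.
\]
Combining these, $D=\sum_{n\in\Z}\epsilon_n q^{n^2/8}z^{-n/2}$ over odd $n$. Here the sign $\epsilon_n=\pm1$ is determined by $n \bmod 4$, which amounts to a factor $(-1)^{L}$ after the reindexing $n=2L'+1$.

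Next I multiply by \eqref{equation:fourcoefexp} and extract the coefficient of $z^{-(\eta+1)/2}$. Setting $m=2L+\eta$, the partner index is $n=1-2L$. This gives a sum of the form
\[
\sum_{L}\pm\, C^N_{2L+\eta,\ell}(q)\,q^{\frac{(2L+\eta)^2}{4N}+\frac{(2L-1)^2}{8}}.
\]
Passing to $\mathcal C$ via \eqref{equation:mathCalCtoStringC}, the $m^2/(4N)$ terms cancel. The remaining exponent is
\[
-\tfrac18+\tfrac{(\ell+1)^2}{4(N+2)}+\tfrac{(2L-1)^2}{8}=\tfrac{(\ell+1)^2p}{4p'}+\binom{L}{2}.
\]
The sign works out to $(-1)^L$, up to a global sign that I will fix by checking the $L=0$ term. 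Hence the coefficient equals $q^{p(\ell+1)^2/(4p')}$ times the left side of \eqref{equation:genEulerIdentitySW}.

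Then I compute the same coefficient of $N_\ell$. The term $\Theta_{\pm(\ell+1),p'}(z;q^p)$ contributes only powers $z^{-(2p'k\pm(\ell+1))/2}$, with coefficient $q^{p(2p'k\pm(\ell+1))^2/(4p')}$. We need $2p'k+(\ell+1)=\eta+1$ in the $+$ branch, or $2p'k-(\ell+1)=\eta+1$ in the $-$ branch. The constraints $1\le \eta+1\le p'$ and $1\le \ell+1\le p'-1$ rule out all but $k=0$ in the $+$ branch, which forces $\eta=\ell$. In the $-$ branch, $k=0$ gives a negative value and $k=1$ gives $2p'-\ell-1\ge p'+1$, so it never occurs. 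Thus the coefficient is $\delta_{\ell,\eta}\,q^{p(\ell+1)^2/(4p')}$, and cancelling the common power of $q$ gives the claim.

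The main obstacle is the bookkeeping in the previous two steps: getting the sign $\epsilon_n$ and the quadratic exponent to collapse exactly to $(-1)^Lq^{\binom L2}$ with the normalization $s_\lambda$. A secondary issue is justifying the coefficient comparison: the product of the two Laurent expansions must converge absolutely in the chosen annulus, so that multiplying out and regrouping by powers of $z$ is legitimate. Since the $C_{m,\ell}$ grow at most like the coefficients of a meromorphic function with poles only at $z\in q^{\Z}$, I expect that step to be routine.
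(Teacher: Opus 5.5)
The paper does not prove this proposition. It is quoted from \cite[Proposition 8.2]{SW}, with the remark that for $p=1$ it is the $A_1^{(1)}$ case of \cite[(2.1.16)]{KW90}. Your argument is therefore not a different route to the paper's proof. It is a correct, self-contained proof of a result the paper takes as a black box.

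The bookkeeping you flagged as the main obstacle closes up exactly:
\begin{itemize}
\item The denominator is $\sum_{\sigma}\sigma\Theta_{\sigma,2}(z;q)=q^{1/8}z^{-1/2}j(z;q)$.
\item The partner index of $m=2L+\eta$ is $n=1-2L$, and $\epsilon_{1-2L}=(-1)^L$ on the nose, so there is no global sign to fix.
\item After passing to $\mathcal{C}$, the exponent is exactly $p(\ell+1)^2/(4p')+\binom{L}{2}$.
\item The ranges $1\le \ell+1\le p'-1$ and $1\le\eta+1\le p'$ leave only the $k=0$ term of $\Theta_{\ell+1,p'}(z;q^p)$, and only when $\eta=\ell$.
\item The convergence step is indeed routine. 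Both theta series converge absolutely on all of $\mathbb{C}^*$, so a Cauchy product followed by uniqueness of Laurent coefficients suffices.
\end{itemize}

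Compared with simply citing \cite{SW}, your route buys two things.
\begin{itemize}
\item \emph{Transparency.} The Euler sum is literally a Fourier coefficient of $\chi^N_\ell(z;q)\,j(z;q)$. This explains the name and recovers the pentagonal number theorem for $(p,p')=(1,3)$.
\item \emph{Robustness.} You never use coprimality. The factors $q^{m^2/(4N)}$ cancel identically against $s_\lambda$, so nothing depends on the sign of $N$. The coefficient comparison also works verbatim in any annulus on which $\chi^N_\ell$ is holomorphic, because the Weyl--Kac numerator and denominator are entire in $z$ on $\mathbb{C}^*$.
\end{itemize}

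That last point is worth stating explicitly rather than fixing $|q|<|z|<1$ by fiat. For $p>1$ the character is in general genuinely meromorphic in $z$, so its Fourier coefficients depend on the annulus. Your argument shows the identity holds for whichever annulus is used to define the $C^N_{m,\ell}$.
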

  \begin{remark}  Schilling and Warnaar point out that this is the classical Euler identity for $(p,p^{\prime})=(1,3)$, and that for $p=1$ and arbitrary $p^{\prime}$ this is the case $A_{1}^{(1)}$ of equation $(2.1.16)$ of \cite{KW90}, which is a corollary of Kac and Wakimoto \cite[Theorem $2.1$]{KW90}.
  \end{remark}
  
  For even-spin, integral-level string functions, it is straight-forward to use the classical periodicity to obtain
\begin{align}
 \delta_{2r,2s}
  &=\sum_{a=0}^{N-1}(-1)^{a}q^{\binom{a}{2}} 
j\big ({-}(-1)^{N}q^{Na+\binom{N+1}{2}+2(a+s)};q^{N(N+2)}\big )
\mathcal{C}_{2(a+s),2r}^{N}(q).\label{equation:intLevelGenEulerEvenSpin}
\end{align}
For $N=1$ the above reduces to the elegant
\begin{align*}
 \delta_{2r,2s}
  &=
j\big (q^{1+2s};q^{3}\big )
\mathcal{C}_{2s,2r}^{1}(q),
\end{align*}
where $r=0$ and $s\in\{0,1\}$.  We remind the reader that $j(q^n;q)=0$ for $n\in\mathbb{Z}$.

\smallskip
When the string functions are of positive fractional-level, the results on quasi-periodicity yield very different results.  We do obtain a finite sum of admissible-level string functions similar to what is found in (\ref{equation:intLevelGenEulerEvenSpin}); however, we also obtain an additional finite sum of the form
\begin{equation*}
\sum_{i}\Phi_{i}(q)\Psi_{i}(q),
\end{equation*}
where the $\Phi_i(q)$'s are modular and depend only on the spin and the $\Psi_{i}(q)$'s are (mixed) mock modular Hecke-type double-sums and depend only on the quantum number.  For levels $1/2$, $1/3$, and $2/3$, we shall also see that the $\Psi_{i}(q)$'s give us families of mock theta conjecture-like identities for symmetric Hecke-type double-sums. Our expressions are in terms of Ramanujan's second-order mock theta function $\mu_2(q)$ and third-order mock theta functions $f_{3}(q)$, $\omega_3(q)$, $\psi_{3}(q)$, and $\chi_3(q)$.  Obtaining the mock theta conjecture-like identities for the $\Psi_{i}(q)$'s will be the focus of our paper.


\section{Results: On the generalized Euler identity for even-spin}\label{section:resultsEvenSpin}

We apply the quasi-periodicity for even-spin in Theorem \ref{theorem:generalQuasiPeriodicityEvenSpin} to the generalized Euler identity (\ref{equation:genEulerIdentitySW}).  This gives
\begin{theorem} \label{theorem:genEulerIdentityEvenSpin} For $2r\in\mathbb{Z}_{2p+j-1}$ and $2s\in\mathbb{Z}_{2p+j}$, we have
\begin{align*}
(q)_{\infty}^{3}\delta_{2r,2s}
&=(q)_{\infty}^{3}\sum_{a=0}^{j-1}(-1)^{a} q^{\binom{a}{2}} j(-(-1)^{j}q^{j(a+p)+\binom{j}{2}+2p(a+s)};q^{j(2p+j)})
\mathcal{C}_{2(a+s),2r}^{(p,2p+j)}(q)\\
&\qquad 
+(-1)^{p}q^{\binom{p}{2}-p(r-s)} \sum_{a=0}^{j-1} (-1)^{a}q^{pa}\sum_{m=1}^{p-1}(-1)^{m}q^{\binom{m+1}{2}+m(r-p)}
\Phi_{m}^{r}(q)\Psi_{a,m}^{s}(q),
\end{align*}
where
\begin{equation*}
\Phi_{m}^{r}(q):= j(-q^{m(2p+j)+p(2r+1)};q^{2p(2p+j)} )   -  q^{m(2p+j)-m(2r+1)}j(-q^{-m(2p+j)+p(2r+1)};q^{2p(2p+j)}),
\end{equation*}
\noindent and
\begin{align*}
\Psi_{a,m}^{s}(q)&:=(-1)^{j}q^{\binom{a}{2}+\binom{j}{2}+j(a+p)+m(a+s)}\\
&\qquad \times f_{2p+j,2p+j,j}(-(-1)^{j}q^{a(2p+j)+2ps+3pj+j+3\binom{j}{2}},-(-1)^{j}q^{3\binom{j}{2}+j(a+m+p+1)};q^j)\\
&\quad - (-1)^{j}q^{\binom{a}{2}+\binom{j}{2}+j(a+p)-m(a+s+j)}\\
&\qquad \times f_{2p+j,2p+j,j}(-(-1)^{j}q^{a(2p+j)+2ps+3pj+j+3\binom{j}{2}},-(-1)^{j}q^{3\binom{j}{2}+j(a-m+p+1)};q^j).
\end{align*}
\end{theorem}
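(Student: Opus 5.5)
Start from the generalized Euler identity (\ref{equation:genEulerIdentitySW}) with $(p,p')=(p,2p+j)$, even spin $\ell=2r$ and $\eta=2s$, and multiply through by $(q)_\infty^3$. The summation index is $L$, and the quantum number $2L+2s$ runs over all even shifts of $2s$. We split $L$ by residue modulo $j$: write $L=jt+a$ with $0\le a\le j-1$ and $t\in\mathbb{Z}$. Then the quantum number is $2jt+2(a+s)$, which has exactly the shape required by the quasi-periodicity of Theorem \ref{theorem:generalQuasiPeriodicityEvenSpin} with $s\mapsto a+s$. We use that theorem to replace $(q)_\infty^3C^{(p,2p+j)}_{2jt+2(a+s),2r}(q)$ by $(q)_\infty^3C^{(p,2p+j)}_{2(a+s),2r}(q)$ plus the explicit correction term. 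Since the theorem is stated for $C$ rather than $\mathcal{C}$, we first convert using (\ref{equation:mathCalCtoStringC}). The ratio of normalizations is $q^{m^2/4N}$ with $N=j/p$, giving $q^{p((a+s)^2-(jt+a+s)^2)/j}$. Because this depends on $t$, it combines with $(-1)^Lq^{\binom{L}{2}}$.

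For the first (string function) part, collect for fixed $a$ the factor
\[
\sum_{t\in\mathbb{Z}}(-1)^{jt+a}q^{\binom{jt+a}{2}}\,q^{-p(2jt(a+s)+j^2t^2)/j}.
\]
Expand $\binom{jt+a}{2}=j^2\binom{t}{2}+\binom{j}{2}t+jat+\binom{a}{2}$. This is a quadratic in $t$ with leading coefficient $j(j+2p)/2$, and by the Jacobi triple product (\ref{equation:JTPid}) it equals
\[
(-1)^aq^{\binom{a}{2}}\,j\bigl(-(-1)^jq^{j(a+p)+\binom{j}{2}+2p(a+s)};q^{j(2p+j)}\bigr).
\]
The sign $(-1)^{jt}$ accounts for the factor $-(-1)^j$. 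This yields the first sum in the statement, and it is routine exponent matching.

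For the correction part, the $\Phi^r_m(q)$ factor of Theorem \ref{theorem:generalQuasiPeriodicityEvenSpin} depends only on $r$ and $m$, so it pulls out. The prefactor $q^{-1/8+p(2r+1)^2/(4(2p+j))}$ cancels against the $s_\lambda$ normalization, leaving $(-1)^pq^{\binom p2-p(r-s)}$ together with $q^{pa}$ from $-p(r-(a+s))$. The term $-\tfrac pj(a+s)^2$ cancels the matching piece of the normalization. What remains, for each $a$ and $m$, is a double sum over $t\in\mathbb{Z}$ and $1\le i\le t$ of a Gaussian exponent times $q^{m(ji+a+s-j)}-q^{-m(ji+a+s)}$. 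For $t\le 0$ we interpret $\sum_{i=1}^t$ as minus the sum over $t<i\le 0$, which is the standard convention; one should check it is consistent with how quasi-periodicity extends to negative $t$.

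Next reindex with $i$ and $k:=t-i$. The region $t\ge i\ge1$ becomes $i\ge 1,\ k\ge 0$, and the negative-$t$ region becomes $i\le 0,\ k<0$. After shifting $i\to i+1$, these are exactly the two cones $r,s\ge0$ and $r,s<0$, with the correct relative sign, as in the definition (\ref{equation:fabc-def2}) of $f_{a,b,c}$. It remains to check the quadratic form. The exponent combines $j^2\binom{t}{2}$, $-pj t^2$, the $t$-linear terms, and $-2pj\binom i2$. In the variables $(k,i)$ this should become $(2p+j)\binom{k}{2}+(2p+j)ki+j\binom{i}{2}$ up to the base $q^j$ scaling, which matches the indices of $f_{2p+j,2p+j,j}(\cdot,\cdot;q^j)$. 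The linear terms then give the $x,y$ arguments shown, with the $\pm m$ entering only the $y$-argument. The $\pm m(a+s)$ and $-mj$ pieces go into the prefactors of $\Psi^s_{a,m}$.

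The main obstacle is this last bookkeeping step: verifying that the quadratic form diagonalizes to $(2p+j,2p+j,j)$ after the change of variables, and pinning down the precise linear exponents, constants like $3\binom j2+3pj+j$, and signs $-(-1)^j$. A secondary issue is justifying the negative-$t$ extension of Theorem \ref{theorem:generalQuasiPeriodicityEvenSpin}. I would first try proving it directly by running its proof with $t<0$. If that fails, I would instead apply the theorem in reverse, from $2s$ to $2jt+2s$, to obtain the $r,s<0$ cone.
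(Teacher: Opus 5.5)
Your plan is the paper's proof. It rewrites the quasi-periodicity theorem in the $\mathcal{C}$-normalization, inserts $L=jt+a$, and sums the string-function part over $t$ by the Jacobi triple product. It then reindexes the $(t,i)$ correction sum into the two cones of $f_{2p+j,2p+j,j}(\cdot,\cdot;q^j)$, and your substitution $k=t-i$ is just a compressed form of the paper's chain of reindexings. The negative-$t$ convention $\sum_{i=1}^{t}=-\sum_{i=t+1}^{0}$ is legitimate because the one-step relation between $C_{2jt+2s,2r}$ and $C_{2j(t-1)+2s,2r}$ holds for every integer $t$.

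One sign slip to fix: the normalization is $\mathcal{C}^{(p,2p+j)}_{2jt+2(a+s),2r}=q^{pjt^2+2pt(a+s)}\mathcal{C}^{(p,2p+j)}_{2(a+s),2r}+\cdots$, i.e.\ a factor $q^{+p((jt+a+s)^2-(a+s)^2)/j}$, not its reciprocal as you wrote (so $+pjt^2$, not $-pjt^2$). Only with this sign do you get the leading coefficient $j(j+2p)/2$ and the $(2p+j,2p+j,j)$ quadratic form that you correctly claim.
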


\noindent We note that the variable $j$ determining the level in $(p,p^{\prime})=(p,2p+j)$ should not be confused with the notation for the theta function $j(z;q)$.  This will be clear from the usage.

We first state a useful corollary for the case when the variable $j=1$ and we have even-spin:

\begin{corollary}\label{corollary:genEulerIdentityEvenSpinCorollary} For $2r\in\mathbb{Z}_{2p}$ and $2s\in\mathbb{Z}_{2p+1}$, we have
{\allowdisplaybreaks \begin{align*}
&(q)_{\infty}^{3}\delta_{2r,2s}\\
&\quad =(q)_{\infty}^{3}j(q^{p+2ps};q^{(2p+1)})
\mathcal{C}_{2s,2r}^{(p,2p+1)}(q)\\
&\qquad 
+(-1)^{p}q^{\binom{p}{2}-p(r-s)} \sum_{m=1}^{p-1}(-1)^{m}q^{\binom{m+1}{2}+m(r-p)}\\
&\qquad   \times 
\Big (  j(-q^{m(2p+1)+p(2r+1)};q^{2p(2p+1)} )
  -  q^{m(2p+1)-m(2r+1)}j(-q^{-m(2p+1)+p(2r+1)};q^{2p(2p+1)})\Big )\\
&\qquad \times \Big ( -q^{p+ms} f_{2p+1,2p+1,1}(q^{2ps+3p+1},q^{m+p+1};q)\\
&\qquad \qquad   +q^{p-m(s+1)} f_{2p+1,2p+1,1}(q^{2ps+3p+1},q^{-m+p+1};q)\Big ). 
\end{align*}}%
\end{corollary}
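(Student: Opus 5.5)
This is the specialization $j=1$ of Theorem \ref{theorem:genEulerIdentityEvenSpin}, so the plan is to substitute $j=1$ and simplify the two pieces separately.

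\textbf{The string function term.} With $j=1$ the sum over $a$ collapses to the single term $a=0$. The prefactors $(-1)^a q^{\binom{a}{2}}$ become $1$. The theta argument becomes
\[
-(-1)^{1}q^{(0+p)+\binom{1}{2}+2p(0+s)}=q^{p+2ps},
\]
with modulus $q^{2p+1}$. This gives exactly $j(q^{p+2ps};q^{2p+1})\,\mathcal{C}_{2s,2r}^{(p,2p+1)}(q)$. The ranges $2r\in\mathbb{Z}_{2p}$ and $2s\in\mathbb{Z}_{2p+1}$ are those of the theorem with $j=1$.

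\textbf{The mock part.} Again only $a=0$ survives, so the outer prefactor is $(-1)^p q^{\binom{p}{2}-p(r-s)}$. The $m$-sum together with $\Phi_m^r(q)$ (with $2p+j=2p+1$) is copied verbatim.

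It remains to simplify $\Psi_{0,m}^{s}(q)$ at $j=1$:
\begin{itemize}
\item The sign $(-1)^j$ becomes $-1$.
\item Since $\binom{0}{2}=\binom{1}{2}=0$ and $j(a+p)=p$, the exponent prefactors are $q^{p+ms}$ and $q^{p-m(s+1)}$.
\item The first argument $-(-1)^j q^{a(2p+j)+2ps+3pj+j+3\binom{j}{2}}$ becomes $q^{2ps+3p+1}$.
\item The second arguments become $q^{m+p+1}$ and $q^{-m+p+1}$, since $3\binom{1}{2}=0$ and $j(a\pm m+p+1)=\pm m+p+1$.
\item The double-sum becomes $f_{2p+1,2p+1,1}(\cdot,\cdot;q)$.
\end{itemize}
This yields exactly
\[
-q^{p+ms}f(\dots)+q^{p-m(s+1)}f(\dots),
\]
the bracketed expression in the corollary.

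There is no real obstacle here. The only care needed is sign bookkeeping, in particular the double minus in $-(-1)^j$, which produces positive arguments. One should also note that the relabeling $a+s\mapsto s$ in the string function index is trivial, since $a=0$.
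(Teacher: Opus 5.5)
Your proposal is correct and matches the paper, which states this corollary as the $j=1$ specialization of Theorem~\ref{theorem:genEulerIdentityEvenSpin} with no separate argument. Only $a=0$ survives, the sign $-(-1)^j=1$ makes all arguments positive, and your exponents $q^{p+ms}$, $q^{p-m(s+1)}$, $q^{2ps+3p+1}$, $q^{\pm m+p+1}$ and the base $q^j=q$ all check out.
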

We provide some examples so that we can compare with Theorems \ref{theorem:fractionalLevelPlus12alt},   
\ref{theorem:newMockThetaIdentitiespP37m0ell2r}, \ref{theorem:newMockThetaIdentitiespP38m0ell2r}, \ref{theorem:newMockThetaIdentitiespP38m2ell2r}.

\subsection{On string functions of admissible-level $1/2$ and the function $\mu_{2}(q)$}

We start off with the easiest example.  We have
\begin{corollary}\label{corollary:levelOneHalfEvenSpin}  For $p=2$, $j=1$, $r\in\{0,1\}$ and $s\in\{0,1,2\}$ we have
\begin{align*}
(q)_{\infty}^{3}\delta_{2r,2s}
&=(q)_{\infty}^{3} j(q^{2+4s};q^{5})
\mathcal{C}_{2s,2r}^{1/2}(q)\\
&\qquad    -q^{-r} j(q^{1+2r};q^5)  \times  \Big ( -q^{2+3s} f_{5,5,1}(q^{4s+7},q^{4};q) +q^{1+s} f_{5,5,1}(q^{4s+7},q^{2};q)\Big ).
\end{align*}
\end{corollary}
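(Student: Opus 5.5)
The plan is to specialize Corollary~\ref{corollary:genEulerIdentityEvenSpinCorollary} to $p=2$, which is the level $N=5/2-2=1/2$ with $(p,p^{\prime})=(2,5)$, and then simplify the one theta-function factor that does not match. Most of this is bookkeeping. The first term specializes directly: $j(q^{p+2ps};q^{2p+1})$ becomes $j(q^{2+4s};q^5)$, and $\mathcal{C}^{(2,5)}_{2s,2r}=\mathcal{C}^{1/2}_{2s,2r}$. The sum over $m$ runs over $1\le m\le p-1$, so only $m=1$ survives. The Hecke-type double-sums become $f_{5,5,1}(q^{4s+7},q^{4};q)$ and $f_{5,5,1}(q^{4s+7},q^{2};q)$, because $2ps+3p+1=4s+7$ and $\pm m+p+1\in\{4,2\}$.

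Next I would collect the monomial prefactor. The factors are $(-1)^p=1$, $q^{\binom{p}{2}-p(r-s)}=q^{1-2r+2s}$, $(-1)^m=-1$ and $q^{\binom{m+1}{2}+m(r-p)}=q^{r-1}$. Their product is $-q^{s-r}$. Multiplying this into the bracket with inner powers $-q^{p+ms}=-q^{2+s}$ and $q^{p-m(s+1)}=q^{1-s}$ should give the displayed coefficients $-q^{-r}\cdot(-q^{2+3s})$ and $-q^{-r}\cdot q^{1+s}$. I would recheck this step carefully against the exact exponent conventions in Corollary~\ref{corollary:genEulerIdentityEvenSpinCorollary}. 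My first pass gives $q^{2s}$ and $q^{0}$ for the two $s$-dependences rather than $q^{3s}$ and $q^{s}$. The most likely source is the $s$-dependence carried from the $\Psi^{s}_{a,m}$ prefactor $q^{m(a+s)}$ in Theorem~\ref{theorem:genEulerIdentityEvenSpin}, so I would trace that factor back through the specialization $j=1$, $a=0$.

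The one genuinely non-mechanical step, and the main point to verify, is collapsing the spin factor to $\Phi_{1}^{r}(q)=j(q^{1+2r};q^5)$. With $p=2$, $j=1$, $m=1$ we have
\begin{equation*}
\Phi_1^r(q)=j(-q^{7+4r};q^{20})-q^{4-2r}j(-q^{4r-3};q^{20}).
\end{equation*}
I would use the even/odd splitting of the Jacobi triple product, $j(z;q)=j(-qz^2;q^4)-z\,j(-q^3z^2;q^4)$, with $q\mapsto q^5$ and $z=q^{1+2r}$. This gives
\begin{equation*}
j(q^{1+2r};q^5)=j(-q^{7+4r};q^{20})-q^{1+2r}j(-q^{17+4r};q^{20}).
\end{equation*}
Then $j(x;q)=j(q/x;q)$ gives $j(-q^{17+4r};q^{20})=j(-q^{3-4r};q^{20})$, and $j(x^{-1};q)=-x^{-1}j(x;q)$ gives $j(-q^{3-4r};q^{20})=q^{3-4r}j(-q^{4r-3};q^{20})$. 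The resulting power is $q^{1+2r}q^{3-4r}=q^{4-2r}$, which matches the second term of $\Phi_1^r(q)$ exactly, so $\Phi_1^r(q)=j(q^{1+2r};q^5)$.

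Substituting this and the collected prefactors into Corollary~\ref{corollary:genEulerIdentityEvenSpinCorollary} yields the stated identity. I expect the only real obstacle to be the exponent bookkeeping in the $q^{ms}$-type factors, not any structural step.
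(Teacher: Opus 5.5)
Your route is the paper's own proof: specialize Corollary~\ref{corollary:genEulerIdentityEvenSpinCorollary} to $p=2$, so that only $m=1$ survives, and then collapse $\Phi_1^r(q)=j(-q^{7+4r};q^{20})-q^{4-2r}j(-q^{4r-3};q^{20})$ to $j(q^{1+2r};q^5)$ using (\ref{equation:j-split-m2}), (\ref{equation:j-flip}) and (\ref{equation:j-elliptic}). The paper runs your chain of theta manipulations in the reverse direction, and your verification of that step is correct.

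The exponent mismatch you flagged does not come from the factor $q^{m(a+s)}$ in $\Psi^{s}_{a,m}$, so there is nothing to trace back. It is an arithmetic slip in your prefactor. Since $q^{1-2r+2s}\cdot q^{r-1}=q^{2s-r}$, the prefactor is $-q^{2s-r}$, not $-q^{s-r}$; this is exactly the $-q^{-r+2s}$ in the paper's proof. With it, $-q^{2s-r}\cdot(-q^{2+s})=-q^{-r}\cdot(-q^{2+3s})$ and $-q^{2s-r}\cdot q^{1-s}=-q^{-r}\cdot q^{1+s}$, which are precisely the stated coefficients.
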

\noindent For $s=2$, this implies that the double-sum coefficient vanishes, see also Lemma \ref{lemma:degenerateDoubleSumCoeffs}.   

\smallskip
Throughout the paper, we will be focusing on evaluating the Hecke-type double-sum coefficients of the theta functions, and we will be expressing them in terms of mock theta function identities.  The next theorem is probably the closest to the previous result in Theorem \ref{theorem:fractionalLevelPlus12alt}, \cite[Theorem 2.1]{BoMo2026}.
\begin{theorem}\label{theorem:genEulerOneHalfEvenSpin} It holds
\begin{align*}
-q^{2+3s}&f_{5,5,1}(q^{4s+7},q^4;q)+q^{1+s}f_{5,5,1}(q^{4s+7},q^2;q)\\
& =(-1)^{s}j(q^{s+3};q^5)
\left (q^{\binom{s+1}{2}}\frac{1}{2}\mu_2(q)+ q^{-3\binom{s}{2}}\sum_{k=0}^{2s-1}( -1)^{k}q^{2sk-\binom{k+1}{2}}\right )\\
&\qquad \qquad -(-1)^s(-q)^{\binom{s+1}{2}}\frac{1}{2}\frac{J_{1}^3}{J_{2}J_{4}}j((-q)^{s+3};-q^5).
\end{align*}
\end{theorem}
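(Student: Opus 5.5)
The plan is to leverage Corollary \ref{corollary:levelOneHalfEvenSpin} together with Theorem \ref{theorem:fractionalLevelPlus12alt}, rather than expanding the double-sums from scratch. Denote the left-hand side of the theorem by $\Psi^{s}(q)$. Corollary \ref{corollary:levelOneHalfEvenSpin} holds for both $r=0$ and $r=1$ with the same $\Psi^{s}(q)$. Choose, for each $s$, a value of $r$ with $2r\neq 2s$ (so the Kronecker delta vanishes), e.g.\ $r=0$ when $s\ge1$, and $r=1$ when $s=0$. This gives
\begin{equation*}
q^{-r}j(q^{1+2r};q^5)\,\Psi^{s}(q)=(q)_\infty^3\, j(q^{2+4s};q^5)\,\mathcal{C}^{1/2}_{2s,2r}(q).
\end{equation*}
Now substitute the explicit formula of Theorem \ref{theorem:fractionalLevelPlus12alt} with $m=2s$, $\ell=2r$ for $(q)_\infty^3\mathcal{C}^{1/2}_{2s,2r}(q)$, and divide by $q^{-r}j(q^{1+2r};q^5)$. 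That theorem contributes a factor $j(q^{1+2r};q^5)$ in front of the $\mu_2$ and finite-sum part, which cancels exactly. Its $\Theta_\ell$ part carries $j(-q;-q^5)$ or $j(-q^3;-q^5)$, which will be rewritten as $j((-q)^{s+3};-q^5)$.

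The key steps, in order, are as follows.

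First, simplify the theta prefactor $j(q^{2+4s};q^5)$ for $s=0,1,2$ using $j(q^n x;q)=(-1)^nq^{-\binom n2}x^{-n}j(x;q)$ and $j(x;q)=j(q/x;q)$. This gives $j(q^2;q^5)$, $j(q^6;q^5)=-q^{-1}j(q;q^5)$ and $j(q^{10};q^5)=0$. These should match $j(q^{s+3};q^5)$, namely $j(q^3;q^5)=j(q^2;q^5)$, $j(q^4;q^5)=j(q;q^5)$ and $j(q^5;q^5)=0$, up to the stated powers of $q$ and signs.

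Second, compare the $q$-exponents. Theorem \ref{theorem:fractionalLevelPlus12alt} gives $q^{\frac12(m-\ell)}q^{\binom m2}=q^{s-r}q^{\binom{2s}{2}}$ times $\frac12\mu_2$. Combined with the factors $q^{r}$ and $q^{-\binom{2s}{?}}$ coming from step one, this should collapse to $(-1)^s q^{\binom{s+1}{2}}$. Similarly, the finite sum $\sum_{k=0}^{2s-1}(-1)^kq^{2sk-\binom{k+1}{2}}$ should acquire the prefactor $q^{-3\binom s2}$. I would check these exponent identities case by case for $s=0,1$.

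Third, for the $\Theta_\ell$ term, use the relations between $j(-q;-q^5)$, $j(-q^3;-q^5)$ and $j((-q)^{s+3};-q^5)$, again via quasi-periodicity in base $-q^5$, to obtain $-(-1)^s(-q)^{\binom{s+1}{2}}\frac12\frac{J_1^3}{J_2J_4}j((-q)^{s+3};-q^5)$.

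The case $s=2$ is degenerate. There the corollary only gives $0\cdot\Psi^2=\cdots$, so this route yields nothing. Instead I would show directly that $\Psi^{2}(q)$ equals the right side with $j(q^5;q^5)=0$, i.e.\ that it reduces to $-(-q)^{3}\frac12\frac{J_1^3}{J_2J_4}j(-q^5\cdot q^0;-q^5)$-type terms, which also vanish since $j((-q)^5;-q^5)=0$. So one must prove $\Psi^2(q)=0$ directly. The plan is to apply the standard functional equations for $f_{a,b,c}$ (e.g.\ $f_{a,b,c}(x,y;q)$ under $x\mapsto q^{a}x$, $y\mapsto q^b y$, from \cite{HM}) to $f_{5,5,1}(q^{15},q^4;q)$ and $f_{5,5,1}(q^{15},q^2;q)$ to show that $q^{8}f_{5,5,1}(q^{15},q^4;q)=q^{3}f_{5,5,1}(q^{15},q^2;q)$. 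Alternatively, one can use the symmetry $f_{a,b,c}(x,y;q)=-\frac{q^{a+b+c}}{xy}f_{a,b,c}(q^{2a+b}/x,q^{2c+b}/y;q)$ (with appropriate normalization) to pair the two sums.

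The main obstacle is the exponent and sign bookkeeping in steps two and three, together with this degenerate $s=2$ case. Because the statement is uniform in $s$, the uniform formula most likely arises from a cleaner route in which $\Psi^{s}$ is expanded via \cite[Theorem 1.4]{HM} into Appell functions $m(x,z;q^{5})$-type terms and then matched to $\mu_2$ via \eqref{equation:2nd-mu(q)Alt}. I would fall back on that expansion if the case checks become unwieldy.
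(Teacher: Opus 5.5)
\textbf{Verdict: there is a genuine gap.} Your argument only reaches $s\in\{0,1,2\}$, while the theorem is stated for arbitrary $s$.

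\textbf{What you do carry out is correct.} For $s=0$ (take $r=1$) and $s=1$ (take $r=0$), Corollary~\ref{corollary:levelOneHalfEvenSpin} combined with Theorem~\ref{theorem:fractionalLevelPlus12alt} reproduces the right-hand side. For $s=1$ one gets $\Psi^1=-q^{-1}(q)_\infty^3\mathcal{C}^{1/2}_{2,0}(q)=-j(q;q^5)\bigl(\tfrac12 q\mu_2(q)+1-q\bigr)-\tfrac q2\frac{J_1^3}{J_2J_4}j(-q;-q^5)$, which is the claimed formula. This is the shortcut the paper mentions in its concluding remarks and deliberately avoids.

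Your treatment of $s=2$ is also more laborious than needed. The factor that vanishes there is $j(q^{10};q^5)$, which multiplies $\mathcal{C}^{1/2}_{4,2r}$. The coefficient of $\Psi^2$ is $q^{-r}j(q^{1+2r};q^5)$, which is nonzero. So the corollary gives $\Psi^2(q)=0$ outright.

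\textbf{Where the argument stops.} The finite sum runs to $2s-1$, and the paper's proof reduces modulo $5$ and explicitly treats $s=3,4$, so the statement is meant for all $s$. This cannot be patched by bookkeeping. Proposition~\ref{propostion:genEulerIdentitySW} is available only for $\eta=2s\in\mathbb{Z}_{p'}=\mathbb{Z}_5$, so Corollary~\ref{corollary:levelOneHalfEvenSpin} says nothing about $\Psi^s$ when $s\ge 3$ or $s<0$.

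\textbf{The missing idea is a relation in $s$ satisfied by both sides.}
\begin{itemize}
\item The paper shows (Proposition~\ref{proposition:level12evenSpinFuncEqn}) that both sides satisfy the same inhomogeneous equation $H(s+5)-q^{12+4s}H(s)=(\text{explicit multiple of } j(q^{s+3};q^5))$.
\item For the double sums this comes from (\ref{equation:Gen1}) with $(\ell,k)=(-1,5)$.
\item For the right side it comes from (\ref{equation:j-elliptic}) together with a shift and reversal of the finite sum.
\end{itemize}
This reduces everything to $0\le s\le 4$. The residues $s=3,4$ still need an argument that does not pass through the Euler identity. The paper uses Corollary~\ref{corollary:f551-HeckeExpansion}, Proposition~\ref{proposition:level12EvenSpinAppellForm}, Lemma~\ref{lemma:generalAppellSums} and the theta identity of Proposition~\ref{proposition:level12EvenSpinThetaId}.

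Your fallback of expanding via \cite[Theorem 1.4]{HM} would handle the Appell part uniformly. However, the leftover $\theta_{5,5,1}$ terms still have to be matched residue by residue, so a reduction to finitely many $s$ is unavoidable.

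\textbf{A possible repair within your route.} Applying (\ref{equation:H7eq1.14}) and then (\ref{equation:Gen1}) with $(\ell,k)=(-1,4)$ gives $\Psi^{s}=-q^{4s-8}\Psi^{4-s}+(\text{explicit theta terms})$. This would carry $s=3,4$ back to $s=1,0$. You would still have to check that the right-hand side obeys the same reflection, and you would still need a periodicity statement like the paper's to cover all $s$.
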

For the case $s=0$, the Hecke-type double-sum in Theorem \ref{theorem:genEulerOneHalfEvenSpin}  gives us the following mock theta conjecture-like identity:
\begin{corollary}\label{corollary:genEulerOneHalfEvenSpin} It holds
\begin{equation}
-q^{2} f_{5,5,1}(q^{7},q^{4};q) +q f_{5,5,1}(q^{7},q^{2};q)
=\frac{1}{2}j(q^2;q^5)\mu_2(q)-\frac{1}{2}\frac{J_{1}^3}{J_{2}J_{4}}j({-}q^{3};-q^5).
\label{equation:genEulerOneHalfEvenSpinMockThetaId}
\end{equation}
\end{corollary}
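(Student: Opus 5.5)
The quickest route is to specialize Theorem \ref{theorem:genEulerOneHalfEvenSpin} to $s=0$. The statement of Corollary \ref{corollary:genEulerOneHalfEvenSpin} is exactly its left-hand side at $s=0$, namely $-q^{2}f_{5,5,1}(q^{7},q^{4};q)+qf_{5,5,1}(q^{7},q^{2};q)$. So the plan is to take Theorem \ref{theorem:genEulerOneHalfEvenSpin} as given and evaluate each piece of its right-hand side at $s=0$.

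\emph{The $\mu_2$ term.} Since $(-1)^{0}=1$ and $q^{\binom{1}{2}}=1$, this term becomes $j(q^{3};q^5)\cdot\tfrac12\mu_2(q)$. The finite sum $\sum_{k=0}^{2s-1}$ is empty at $s=0$ and contributes nothing. Using the symmetry $j(x;q)=j(q/x;q)$, which follows directly from the product form \eqref{equation:JTPid}, we get $j(q^3;q^5)=j(q^2;q^5)$. This gives the first term $\tfrac12 j(q^2;q^5)\mu_2(q)$ of \eqref{equation:genEulerOneHalfEvenSpinMockThetaId}.

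\emph{The theta quotient term.} At $s=0$ we have $(-1)^0(-q)^{0}=1$, and $j((-q)^{3};-q^5)=j(-q^3;-q^5)$. This term therefore becomes $-\tfrac12\frac{J_1^3}{J_2J_4}j(-q^3;-q^5)$, matching the stated right-hand side exactly.

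There is no real obstacle here. The only points needing care are that the empty sum vanishes and that $j(q^3;q^5)=j(q^2;q^5)$; both are routine.

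As an independent check, one could also proceed without Theorem \ref{theorem:genEulerOneHalfEvenSpin}:
\begin{enumerate}
\item Expand both $f_{5,5,1}$ sums into Appell functions via \cite[Theorem 1.4]{HM}.
\item Match the resulting Appell functions with $\mu_2$ using \eqref{equation:2nd-mu(q)Alt}.
\item Verify the remaining theta identity by a finite check of $q$-expansion coefficients, justified by modularity.
\end{enumerate}
This route is much heavier. The work in it lies in step 2, identifying the Appell pieces, so the specialization argument above is the one I would use.
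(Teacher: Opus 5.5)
Your proposal is correct and matches the paper: the corollary is stated there as the $s=0$ specialization of Theorem~\ref{theorem:genEulerOneHalfEvenSpin}. At $s=0$ the finite sum is empty and $j(q^{3};q^{5})=j(q^{2};q^{5})$ by \eqref{equation:j-flip}. Your optional ``independent check'' is essentially what the paper's proof of that theorem does at $s=0$: the Hecke expansion of Corollary~\ref{corollary:f551-HeckeExpansion}, the Appell form of $\mu_2$, and a theta identity verified with Frye--Garvan (Proposition~\ref{proposition:level12EvenSpinThetaId}). So relying on the theorem involves no circularity.
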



\subsection{On string functions of admissible-level $1/3$ and the functions $f_{3}(q)$ and $\omega_{3}(q)$}
The examples in this section have the same form as Theorem \ref{theorem:fractionalLevelPlus12alt}, i.e. \cite[Theorem 2.1]{BoMo2026}.   The examples also share the same mock theta functions as in Theorem \ref{theorem:newMockThetaIdentitiespP37m0ell2r}, i.e. \cite[Theorem 2.6]{BoMo2025}, but the identities have a different form.  Instead of Appell functions, the theta function coefficients are Hecke-type double-sums which evaluate to the same mock theta functions $f_{3}(q)$ and $\omega_{3}(q)$ plus a simple quotient of theta functions.

\begin{corollary}\label{corollary:levelOneThirdEvenSpin}  For $p=3$, $j=1$, $r\in\{ 0,1,2\}$ and $s\in\{ 0,1,2,3\}$ we have
{\allowdisplaybreaks \begin{align*}
(q)_{\infty}^{3}\delta_{2r,2s}
&=(q)_{\infty}^{3}j(q^{3+6s};q^{7})
\mathcal{C}_{2s,2r}^{1/3}(q)\\
&\qquad 
+q^{1-2r} \frac{j(q^{8+2r};q^{14})j(q^{2+4r};q^{28})}{J_{28}}\\
&\qquad \qquad  \times \Big ( -q^{3+4s}
 f_{7,7,1}(q^{6s+10},q^{5};q) +q^{2+2s} f_{7,7,1}(q^{6s+10},q^{3};q)\Big )\\
&\qquad  -q^{-r}
\frac{j(q^{1+2r};q^{14})j(q^{16+4r};q^{28})}{J_{28}}\\
&\qquad \qquad \times \Big ( -q^{3+5s}
 f_{7,7,1}(q^{6s+10},q^{6};q) +q^{1+s} f_{7,7,1}(q^{6s+10},q^{2};q)\Big ).
\end{align*}}%
\end{corollary}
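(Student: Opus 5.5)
The plan is to obtain Corollary~\ref{corollary:levelOneThirdEvenSpin} by specializing Corollary~\ref{corollary:genEulerIdentityEvenSpinCorollary} to $p=3$, $j=1$, so that $2p+1=7$ and $(p,p')=(3,7)$. The first term becomes $(q)_\infty^3 j(q^{3+6s};q^7)\mathcal{C}^{1/3}_{2s,2r}(q)$ directly. In the double-sum bracket we have $q^{2ps+3p+1}=q^{6s+10}$ and $q^{\pm m+p+1}$. For $m=1$ this gives the pair of second arguments $q^5$ and $q^3$, and for $m=2$ the pair $q^6$ and $q^2$. These are exactly the arguments in the statement.

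Next I collect the powers of $q$. The global prefactor is $(-1)^3q^{\binom32-3(r-s)}=-q^{3-3r+3s}$. The inner factor $(-1)^mq^{\binom{m+1}{2}+m(r-3)}$ equals $-q^{r-2}$ for $m=1$ and $q^{2r-3}$ for $m=2$.
\begin{itemize}
\item For $m=1$ the product of prefactors is $q^{1-2r+3s}$. Multiplying by $-q^{3+s}$ and $q^{3-(s+1)}$ gives $q^{1-2r}\cdot(-q^{3+4s})$ and $q^{1-2r}\cdot q^{2+2s}$, matching the first bracket.
\item For $m=2$ the product of prefactors is $-q^{2s-r}$. Multiplying by $-q^{3+2s}$ and $q^{3-2(s+1)}$ gives $-q^{-r}\cdot(-q^{3+5s})$ and $-q^{-r}\cdot q^{1+s}$, matching the second bracket.
\end{itemize}
So, apart from the theta factors, the statement is a term-by-term rewriting.

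What remains is to evaluate
\[
\Phi^r_m(q)=j(-q^{7m+3(2r+1)};q^{42})-q^{7m-m(2r+1)}j(-q^{-7m+3(2r+1)};q^{42})
\]
for $m=1,2$. The goal is to show
\[
\Phi^r_1(q)=\frac{j(q^{8+2r};q^{14})\,j(q^{2+4r};q^{28})}{J_{28}},
\qquad
\Phi^r_2(q)=\frac{j(q^{1+2r};q^{14})\,j(q^{16+4r};q^{28})}{J_{28}}.
\]
This step is the main (though routine) obstacle. I would use the quintuple product identity in the form
\[
j(qx^3;q^3)+x\,j(q^2x^3;q^3)=\frac{j(-x;q)\,j(qx^2;q^2)}{J_2},
\]
with $q\mapsto q^{14}$, so that the moduli $q^{42}$, $q^{14}$, $q^{28}$ appear. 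The substitution is $x=-q^{a}$ with $a$ chosen so that $q^{14}x^3$ or $q^{28}x^3$ equals $-q^{7m+6r+3}$. Using $j(qx;q)=-x^{-1}j(x;q)$ and $j(x;q)=j(q/x;q)$, one then moves the second theta $j(-q^{-7m+6r+3};q^{42})$ into the form $j(q^{28}x^3;q^{42})$ with the correct prefactor $x$.

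The bookkeeping is the delicate part: the exponent of $x$, the sign, and any leftover power of $q$ must match $q^{7m-m(2r+1)}$. For $m=1$ I expect $x=-q^{2r-6}$ up to a shift, and for $m=2$ a value giving $j(q^{1+2r};q^{14})$. If a stray monomial appears, I would absorb it using the elliptic transformation of $j$ in the $q^{14}$ and $q^{28}$ factors. I would finally check the identities numerically for $r=0,1,2$ by comparing a few $q$-coefficients.
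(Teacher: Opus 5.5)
Your proposal is correct and is essentially the paper's proof: specialize Corollary~\ref{corollary:genEulerIdentityEvenSpinCorollary} to $p=3$, collect the powers of $q$, and collapse each $\Phi^r_m$ using the quintuple product (\ref{equation:H1Thm1.0}) with $q\to q^{14}$, together with (\ref{equation:j-flip}) and (\ref{equation:j-elliptic}). For $m=1$ the paper flips first and takes $x=-q^{6-2r}$, while your $x=-q^{2r-6}$ also works after one elliptic shift of $j(q^{2r-6};q^{14})$, and $x=-q^{1+2r}$ handles $m=2$. One slip to fix: the $m=2$ prefactor is $-q^{3s-r}$, not $-q^{2s-r}$, although the two bracket terms you then write down are the correct ones.
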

\noindent For the case $s=3$ of Corollary \ref{corollary:levelOneThirdEvenSpin}, we have that the two double-sum coefficients vanish, see also Lemma \ref{lemma:degenerateDoubleSumCoeffs}.  

We evaluate the two Hecke-type double-sum coefficients separately.  For the first Hecke-type double-sum coefficient, we have
\begin{theorem} \label{theorem:genEulerOneThirdFirstPairEvenSpin} It holds
\begin{align*}
 -q^{3+4s}&
 f_{7,7,1}(q^{6s+10},q^{5};q) +q^{2+2s} f_{7,7,1}(q^{6s+10},q^{3};q)\\
 &=(-1)^sj(q^{s+4};q^{7})
 \left ( q^{\binom{s+1}{2}}\cdot q\omega_3(-q)
 +q^{-5\binom{s}{2}}\sum_{k=0}^{s-1}q^{6ks-6\binom{k+1}{2}}\left( q^{k}-q^{-k+2s-1}\right ) \right ) \\
 &\qquad \qquad -q^{5s^2-3s+1}\frac{J_{1}^3J_{4}}{J_{2}^3}j(q^{8+16s};q^{28}).
\end{align*}
\end{theorem}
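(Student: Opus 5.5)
The plan is to first reduce to the case $s=0$ via the functional equations of $f_{a,b,c}$, then evaluate the $s=0$ case with the Hickerson--Mortenson machinery. Write
\[
F(s):=-q^{3+4s}f_{7,7,1}(q^{6s+10},q^{5};q)+q^{2+2s}f_{7,7,1}(q^{6s+10},q^{3};q).
\]
We use the standard shift identities for Hecke-type double-sums \cite{HM}. In particular, replacing $x\mapsto xq^{a}$ (equivalently $r\mapsto r+1$) in $f_{a,b,c}(x,y;q)$ produces the original sum multiplied by a monomial, plus a single theta-type sum $\sum_{s}(-1)^sy^sq^{c\binom{s}{2}}$ over a half-line. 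With $a=7,b=7,c=1$, we will shift the first argument $q^{6s+10}$ by $q^{6}$. Because $a=b$, this shift can be realized as $r\mapsto r+1$ combined with a compensating change $y\mapsto yq^{-7}$. Re-indexing the second variable then couples the $q^5$ and $q^3$ sums.

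Using these identities, we aim for a recursion
\[
F(s+1)=-q^{\alpha(s)}F(s)+(\text{finite polynomial terms}).
\]
Iterating from $s=0$ should produce the prefactor $(-1)^sq^{\binom{s+1}{2}}$. The finite sum $q^{-5\binom{s}{2}}\sum_{k<s}\cdots$ should come from the boundary terms. The theta term $q^{5s^2-3s+1}j(q^{8+16s};q^{28})$ should be consistent because $j(q^{8+16s};q^{28})$ transforms by a monomial under $s\mapsto s+1$ via $j(q^nx;q^n)$-type relations. As a check, the result is consistent with Lemma~\ref{lemma:degenerateDoubleSumCoeffs}: at $s=3$, $j(q^{7};q^7)=0$ and $j(q^{56};q^{28})=0$, so the finite polynomial must vanish as well.

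For $s=0$ we must prove
\[
F(0)=j(q^4;q^7)\,q\,\omega_3(-q)-q\frac{J_1^3J_4}{J_2^3}J_{8,28}.
\]
We apply \cite[Theorem 1.4]{HM} with $(a,b,c)=(7,7,1)$. Since $b^2-ac=42>0$, it expresses each $f_{7,7,1}$ as a sum of $c=1$ and $a=7$ terms of the shape $j(\cdot)\,m(\cdot,\cdot;q^{42})$ plus a theta quotient $\theta_{7,7,1}$. We then choose the free parameters $z$ in the Appell functions so that, using $m(x,z;q)$ changes of $z$ and $m(x,z;q)=m(x,zq)$-type relations together with the three-term $m$-identities, all Appell functions collapse to $m(q^{\cdot},\cdot;q^{6})$-type functions expressible through $\omega_3(-q)$. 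Here we use the known Appell form of $\omega_3$ from \cite[Section 5]{HM}, namely $q\omega_3(q)=-m(q,q^2;q^6)\cdot(\ldots)$. The mock parts of the two double-sums must combine into exactly $j(q^4;q^7)\,q\omega_3(-q)$; this can be confirmed via Zwegers' completion by matching the non-holomorphic parts. The remainder is then a weakly holomorphic modular form, i.e.\ a theta quotient.

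The main obstacle is the last identification: proving that the leftover theta-function expression equals $-q\,J_1^3J_4J_{8,28}/J_2^3$. We would first try to show that both sides are theta functions of the same quasi-periodicity in an auxiliary variable (by inserting $x$ in place of $q^4$ and $q^8$) and compare enough coefficients or zeros. Failing that, we would verify the identity to sufficiently many $q$-coefficients using a Sturm-bound argument on the quotient by $J_{28}$.
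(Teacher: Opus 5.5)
Your $s=0$ evaluation is sound and matches the paper, but the reduction from general $s$ to $s=0$ cannot work.

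\textbf{Why a period-one recursion fails.} There is no relation of the form $F(s+1)=-q^{\alpha(s)}F(s)+(\text{polynomial})$.
\begin{itemize}
\item The functional equations (\ref{equation:Gen1}) for $f_{7,7,1}$ change $(x,y)$ by $(q^{7\ell+7k},q^{7\ell+k})$. So the exponent of $x$ can only move by multiples of $7$, and the shift $q^{6s+10}\mapsto q^{6s+16}$ is unreachable. Your ``$r\mapsto r+1$ with $y\mapsto yq^{-7}$'' moves $x$ by $q^{7}$, not $q^{6}$.
\item Keeping $y$ fixed forces $(\ell,k)=(-1,7)$, i.e.\ $x\mapsto q^{42}x$, which is $s\mapsto s+7$.
\item The statement itself rules out period one. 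The mock part of $F(s)$ is $(-1)^sq^{\binom{s+1}{2}}j(q^{s+4};q^7)\,q\omega_3(-q)$. Since $j(q^{5};q^7)/j(q^{4};q^7)=j(q^{2};q^7)/j(q^{3};q^7)$ is not a monomial, $F(1)$ cannot equal a monomial times $F(0)$ plus a non-mock term.
\item Your claim that $j(q^{8+16s};q^{28})$ changes by a monomial under $s\mapsto s+1$ is false. Up to monomials it runs through $J_{8,28},J_{4,28},J_{12,28},0,J_{12,28},J_{4,28},J_{8,28}$ for $s=0,\dots,6$. It is only monomially periodic under $s\mapsto s+7$.
\item A smaller point: at $s=3$ the finite sum does not vanish (it is $1-q^5+q^{13}-\cdots$). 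It is killed by the factor $j(q^{7};q^7)=0$.
\end{itemize}

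\textbf{What you need instead} is the paper's structure:
\begin{itemize}
\item Show that $F$ and the right-hand side $G$ satisfy the same period-seven equation $H(r+7)-q^{24+6r}H(r)=(\text{explicit Laurent polynomial})\cdot j(q^{r+4};q^7)$. This is Proposition~\ref{proposition:level13evenSpinFirstPairFuncEqn}, obtained from $(\ell,k)=(-1,7)$.
\item Treat the Appell part for every $s$ via the recursion for $m(-q^{1+6s},-1;q^6)$ in Lemma~\ref{lemma:generalAppellSums}. This is where the finite sum in the statement originates.
\item Verify the theta-function remainder involving $\theta_{7,7,1}$ separately for each $s\in\{0,1,2,4,5,6\}$, as in Proposition~\ref{proposition:level13EvenSpinFirstPairThetaId}. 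The case $s=3$ is supplied by Lemma~\ref{lemma:degenerateDoubleSumCoeffs}.
\end{itemize}

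\textbf{Your $s=0$ computation} is correct and matches the paper. After Corollary~\ref{corollary:f771-HeckeExpansion} the $j(y;q)$ terms vanish. The surviving Appell functions are already in base $q^6$, and (\ref{equation:mxqz-flip}) combines them into $2j(q^3;q^7)m(-q,-1;q^6)$. Then (\ref{equation:alternateAppellForm3rd-w}) produces $q\omega_3(-q)$, so no completion argument is needed. A Sturm-bound verification of the leftover theta identity is an acceptable substitute for Frye--Garvan, but it must be carried out for all six residues, not just $s=0$.
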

\noindent For the second Hecke-type double-sum coefficient, we have
\begin{theorem} \label{theorem:genEulerOneThirdSecondPairEvenSpin} It holds
\begin{align*}
 -q^{3+5s}&
 f_{7,7,1}(q^{6s+10},q^{6};q) +q^{1+s} f_{7,7,1}(q^{6s+10},q^{2};q)\\
 &=(-1)^sj(q^{s+4};q^{7})
 \left ( q^{\binom{s+1}{2}}\frac{1}{2}f_{3}(q^2)
 +q^{-5\binom{s}{2}-s}\sum_{k=0}^{s-1}q^{6ks-6\binom{k+1}{2}}\left( q^{2k}-q^{-2k+4s-2}\right ) \right ) \\
 &\qquad \qquad -(-1)^{s}\frac{(-q)^{\binom{s+1}{2}}}{2}\frac{J_{1}^2}{J_{4}}j((-q)^{s+4};-q^7).
\end{align*}
\end{theorem}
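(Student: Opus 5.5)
We prove Theorem~\ref{theorem:genEulerOneThirdSecondPairEvenSpin} in two stages. First we establish the base case $s=0$. Then we pass from $s$ to $s+1$ using the functional equations of $f_{a,b,c}$, which makes the finite sums $\sum_{k=0}^{s-1}$ appear.

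\textbf{Shifting in $s$.} The first argument $x=q^{6s+10}$ of $f_{7,7,1}(x,y;q)$ is the only place where $s$ enters, apart from the prefactors. We use the standard shift identity for Hecke-type double-sums from \cite{HM}:
\[
f_{a,b,c}(x,y;q)=-x\,f_{a,b,c}(q^{a}x,q^{b}y;q)+j(y;q^{c})
\]
and its companion with $x$ and $y$ interchanged, obtained by reindexing $r\mapsto r+1$. With $(a,b,c)=(7,7,1)$ we apply the $y$-shift $y\mapsto q^{c}y$ to move $q^{6s+10}$ to $q^{6s+16}$. Because $c=1$, the boundary theta term $j(x;q^{a})$ is a genuine theta function. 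The $x$-shift changes $y$ by $q^{b}=q^{7}$. Combining six applications of the $y$-shift, one $x$-shift, and the symmetry $f_{a,b,c}(x,y;q)=f_{c,b,a}(y,x;q)$, we express $f_{7,7,1}(q^{6(s+1)+10},q^{2};q)$ and $f_{7,7,1}(q^{6(s+1)+10},q^{6};q)$ in terms of the $s$-versions, up to monomials and finite sums of theta-boundary terms. The boundary terms of the form $j(q^{n};q)$ vanish, and the surviving monomials assemble into the finite sum $q^{-5\binom{s}{2}-s}\sum_{k}q^{6ks-6\binom{k+1}{2}}(q^{2k}-q^{-2k+4s-2})$. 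We then check that the right-hand side of the theorem satisfies the same recursion. This uses $j(q^{s+5};q^{7})=-q^{-s-4}\cdot(\text{stuff})\,j(q^{s+4};q^7)$ only through the relation $j(q^{n+7};q^7)=-q^{-n}j(q^{n};q^7)$ together with reflection $j(x;q)=j(q/x;q)$. The theta quotient $\tfrac12\frac{J_1^2}{J_4}j((-q)^{s+4};-q^7)$ transforms consistently in the same way. This reduces the theorem to $s=0$. As a consistency check, at $s=3$ the left-hand side must vanish (Lemma~\ref{lemma:degenerateDoubleSumCoeffs}); this holds on the right because $j(q^{7};q^7)=0$ and the $(-q)^{7}$ theta term vanishes.

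\textbf{Base case $s=0$.} We need
\[
-q^{3}f_{7,7,1}(q^{10},q^{6};q)+q f_{7,7,1}(q^{10},q^{2};q)=j(q^4;q^7)\tfrac12 f_3(q^2)-\tfrac12\tfrac{J_1^2}{J_4}j(q^4;-q^7),
\]
where we used $(-q)^{4}=q^4$. We expand each $f_{7,7,1}$ with \cite[Theorem 1.4]{HM}. Since $b^2-ac=42>0$ and the case is non-degenerate, this gives a sum of Appell functions $m(\cdot,\cdot;q^{42})$ with theta coefficients, plus a theta remainder $\theta_{7,7,1}$. We then use the $m$-function identities of \cite{HM}: changing $z$, reduction from $q^{42}$ to smaller moduli, and the standard Appell forms of $f_3(q)$, namely $f_3(q)=4m(-q,q;q^3)+\cdots$ with $q\mapsto q^2$. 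With these we rewrite the Appell parts as $\tfrac12 j(q^4;q^7)f_3(q^2)$.

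The remaining step is to verify a pure theta-function identity. Both sides, after multiplying by a suitable eta quotient, are holomorphic modular forms on some $\Gamma_1(N)$, so we can check the identity by comparing enough $q$-coefficients against the Sturm bound, or by computing residues and poles in an auxiliary elliptic variable.

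\textbf{Main obstacle.} The main obstacle is this base-case evaluation: matching the $42$-modulus Appell functions to $f_3(q^2)$ and identifying the leftover theta terms with $-\tfrac12\frac{J_1^2}{J_4}j(q^4;-q^7)$. The first thing we will try is the following. The analogous $s=0$ identity of Theorem~\ref{theorem:newMockThetaIdentitiespP37m0ell2r} already features $f_3(q^2)$ with coefficient $j(q^{1+2r};q^{14})j(q^{16+4r};q^{28})/J_{28}$. We plug that theorem into Corollary~\ref{corollary:levelOneThirdEvenSpin} with $s=0$, for each $r$, and use Theorem~\ref{theorem:genEulerOneThirdFirstPairEvenSpin} for the other pair. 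Solving the resulting linear system across $r\in\{0,1,2\}$ isolates our double-sum combination. This reduces the base case to theta-function identities only, which we then verify as above.
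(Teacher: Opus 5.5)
\textbf{Gap: there is no recursion from $s$ to $s+1$.} In Proposition~\ref{proposition:f-functionaleqn} with $(a,b,c)=(7,7,1)$, the first argument transforms as $x\mapsto q^{7\ell+7k}x$. So both (\ref{equation:fabc-fnq-1}) and (\ref{equation:fabc-fnq-2}) multiply $x$ by a power of $q^{7}$. Your ``$y$-shift'' sends $(x,y)\mapsto(q^{7}x,qy)$, so six of them move $y$, not $x$, and $q^{6s+10}$ can never be carried to $q^{6s+16}$. The reflection (\ref{equation:H7eq1.14}) sends $x\mapsto q^{21}/x$; combined with shifts it reaches $q^{6s+16}$ only when $7\mid 12s+5$. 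The symmetry $f_{a,b,c}(x,y;q)=f_{c,b,a}(y,x;q)$ leaves the family $f_{7,7,1}$ altogether.

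The right-hand side shows the same obstruction. The coefficient of $\tfrac12 f_3(q^2)$ passes from $j(q^{s+4};q^7)$ to $j(q^{s+5};q^7)$. By (\ref{equation:j-elliptic}) and (\ref{equation:j-flip}) these are monomial multiples of each other only for special $s$ modulo $7$; for instance $j(q^5;q^7)$ is not a monomial times $j(q^4;q^7)$. So the relation you assert for $j(q^{s+5};q^7)$ is false, and no recursion $G(s+1)=(\text{monomial})\cdot G(s)+(\text{theta terms})$ can hold.

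The finite sum $\sum_{k=0}^{s-1}$ also does not come from boundary theta terms of $f_{7,7,1}$. It comes from the Appell part of Corollary~\ref{corollary:f771-HeckeExpansion}. There the $q^{42}$-Appell terms vanish because $j(q^n;q)=0$, leaving
$q^{1+s}j(q^{6s+10};q^7)\bigl(m(-q^{-2-6s},-1;q^6)-q^{2+4s}m(-q^{2-6s},-1;q^6)\bigr)$.
The step-one recursion (\ref{equation:mxqz-fnq-x}) in base $q^6$ then yields Lemma~\ref{lemma:generalAppellSums}. The theta remainder $\theta_{7,7,1}(q^{6s+10},\cdot\,;q)$ has no such recursion.

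\textbf{What works instead.} Take $(\ell,k)=(-1,7)$ in Proposition~\ref{proposition:f-functionaleqn}. This gives $x\mapsto q^{42}x$ with $y$ fixed, i.e.\ $s\mapsto s+7$. Both sides then satisfy
$H(s+7)-q^{24+6s}H(s)=j(q^{s+4};q^7)\times(\text{explicit finite sum})$,
which is Proposition~\ref{proposition:level13EvenSpinSecondPairFuncEqn}. Run in both directions, this reduces the theorem for all integers $s$ to $0\le s\le 6$, not to $s=0$.

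The case $s=3$ is Lemma~\ref{lemma:degenerateDoubleSumCoeffs}. The cases $s=1,2,4,5,6$ each need their own evaluation: the Appell computation of Proposition~\ref{proposition:level13EvenSpinSecondPairAppellForm}, plus the theta identity of Proposition~\ref{proposition:level13EvenSpinSecondPairThetaId}, which is checked for $s\in\{0,1,2,4,5,6\}$.

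Your $s=0$ outline is fine: HM expansion, the Appell form of $f_3$, and a theta identity checked by a Sturm-bound argument. The linear-system shortcut through Theorem~\ref{theorem:newMockThetaIdentitiespP37m0ell2r} and Corollary~\ref{corollary:levelOneThirdEvenSpin} is also consistent there, since the $\omega_3(-q)$ and $f_3(q^2)$ terms cancel as they should. But it cannot supply the remaining base cases: for $s\neq0$, Corollary~\ref{corollary:levelOneThirdEvenSpin} involves $\mathcal{C}^{1/3}_{2s,2r}$ rather than $\mathcal{C}^{1/3}_{0,2r}$.
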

 
The cases $s=0$ of Theorems  \ref{theorem:genEulerOneThirdFirstPairEvenSpin} and \ref{theorem:genEulerOneThirdSecondPairEvenSpin} give us the following mock theta conjecture-like identities:
\begin{corollary}\label{corollary:genEulerOneThirdEvenSpin} It holds
{\allowdisplaybreaks \begin{gather*}
-q^3f_{7,7,1}(q^{10},q^{5};q)+q^2f_{7,7,1}(q^{10},q^{3};q)
=j(q^3;q^7)\cdot q\omega_3(-q)
-q\frac{J_{1}^3J_{4}}{J_{2}^3}\cdot J_{8,28},\\
-q^3f_{7,7,1}(q^{10},q^{6};q)+qf_{7,7,1}(q^{10},q^{2};q)
=j(q^3;q^7)\frac{1}{2}f_{3}(q^2)
-\frac{1}{2}\frac{J_{1}^2}{J_{4}}\cdot j(-q^3;-q^7).
\end{gather*}}%
\end{corollary}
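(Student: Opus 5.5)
This corollary is the specialization $s=0$ of Theorems \ref{theorem:genEulerOneThirdFirstPairEvenSpin} and \ref{theorem:genEulerOneThirdSecondPairEvenSpin}. The plan is to substitute $s=0$ into each right-hand side and simplify, checking that the finite $k$-sums disappear and that the theta quotients reduce to the stated forms.

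For the first identity, put $s=0$ in Theorem \ref{theorem:genEulerOneThirdFirstPairEvenSpin}. The left-hand side becomes $-q^3f_{7,7,1}(q^{10},q^5;q)+q^2f_{7,7,1}(q^{10},q^3;q)$, as stated. On the right, $(-1)^0=1$ and $q^{\binom{1}{2}}=1$. The sum $\sum_{k=0}^{-1}$ is empty, so the bracket reduces to $q\omega_3(-q)$. The theta prefactor is $j(q^4;q^7)$. The reflection $j(x;q)=j(q/x;q)$, which follows immediately from the product form in \eqref{equation:JTPid}, turns this into $j(q^3;q^7)$. In the last term the exponent is $5\cdot0-0+1=1$ and $j(q^{8};q^{28})=J_{8,28}$. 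This gives $-q\frac{J_1^3J_4}{J_2^3}J_{8,28}$, which matches.

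For the second identity, put $s=0$ in Theorem \ref{theorem:genEulerOneThirdSecondPairEvenSpin}. The left-hand side becomes $-q^3f_{7,7,1}(q^{10},q^6;q)+qf_{7,7,1}(q^{10},q^2;q)$. Again the $k$-sum is empty, so the mock part is $j(q^4;q^7)\tfrac12 f_3(q^2)=j(q^3;q^7)\tfrac12 f_3(q^2)$. In the theta term $(-q)^{\binom{1}{2}}=1$, leaving $-\tfrac12\frac{J_1^2}{J_4}j((-q)^4;-q^7)=-\tfrac12\frac{J_1^2}{J_4}j(q^4;-q^7)$.

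The only step that needs care is rewriting this last theta function. Apply the reflection $j(x;Q)=j(Q/x;Q)$ with $Q=-q^7$. This gives $j(q^4;-q^7)=j(-q^7/q^4;-q^7)=j(-q^3;-q^7)$, which is the stated form. Nothing here is a genuine obstacle; the only things to watch are the empty-sum convention and signs in the base $-q^7$.
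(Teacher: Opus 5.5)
Your proposal is correct and is exactly the paper's argument: the corollary is obtained as the case $s=0$ of Theorems \ref{theorem:genEulerOneThirdFirstPairEvenSpin} and \ref{theorem:genEulerOneThirdSecondPairEvenSpin}. At $s=0$ the finite $k$-sums are empty, and the reflection \eqref{equation:j-flip}, applied in base $q^7$ and in base $-q^7$, turns $j(q^4;q^7)$ into $j(q^3;q^7)$ and $j(q^4;-q^7)$ into $j(-q^3;-q^7)$.
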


We note that the simple quotients of theta functions can often be written over a common base, where all of the information is located in the numerator.  In particular, the identities in Corollary \ref{corollary:genEulerOneThirdEvenSpin} can be rewritten as follows:
\begin{align}
-&q^3f_{7,7,1}(q^{10},q^{5};q)+q^2f_{7,7,1}(q^{10},q^{3};q)\\
&=j(q^3;q^7)\cdot q\omega_3(-q)
-q\cdot J_{1,28}^3J_{3,28}^3J_{4,28}J_{5,28}^3J_{7,28}^3J_{8,28}^2J_{9,28}^3J_{11,28}^3J_{12,28}J_{13,28}^3
\frac{1}{J_{28}^{23}},\notag
\end{align}
and
\begin{align}
-&q^3f_{7,7,1}(q^{10},q^{6};q)+qf_{7,7,1}(q^{10},q^{2};q)\\
&=\frac{1}{2}j(q^3;q^7)f_3(q^2)
-\frac{1}{2}
J_{1,28}^2J_{2,28}^2J_{3,28}J_{4,28}^2J_{5,28}^2J_{6,28}^3J_{7,28}J_{8,28}J_{9,28}^2J_{10,28}^3J_{11,28}J_{12,28}J_{13,28}^2\frac{J_{14}^4}{J_{28}^{25}}.\notag
\end{align}

\subsection{On string functions of admissible-level $2/3$ and the functions $f_{3}(q)$ and $\omega_{3}(q)$}

The examples for even-spin, $2/3$-level again have the same form as Theorem \ref{theorem:fractionalLevelPlus12alt}, i.e. \cite[Theorem 2.1]{BoMo2026}.   The examples also share the same mock theta functions as in Theorems \ref{theorem:newMockThetaIdentitiespP38m0ell2r}, \ref {theorem:newMockThetaIdentitiespP38m2ell2r} , i.e. \cite[Theorems 2.7, 2.8]{BoMo2025}, but the identities again have a different form.  Instead of Appell functions, the theta function coefficients are Hecke-type double-sums which again evaluate to the same mock theta functions $f_{3}(q)$ and $\omega_{3}(q)$ plus a simple quotient of theta functions.

\begin{corollary}\label{corollary:levelTwoThirdsEvenSpin} For $p=3$ and $j=2$, $r\in\{0,1,2,3\}$ and $s\in\{0,1,2,3\}$ we have
{\allowdisplaybreaks \begin{align*}
(q)_{\infty}^{3}\delta_{2r,2s}
&=(q)_{\infty}^{3}  j(-q^{7+6s};q^{16})
\mathcal{C}_{2s,2r}^{(3,8)}(q)
-(q)_{\infty}^{3} j(-q^{15+6s};q^{16})
\mathcal{C}_{2(1+s),2r}^{(3,8)}(q)\\
&\qquad + q^{1-2r}
\frac{j(q^{9+2r};q^{16})j(q^{2+4r};q^{32})}{J_{32}}\\
&\qquad   \times\Big ( q^{7+4s}
 f_{4,4,1}(-q^{6s+23},-q^{13};q^4)
   - q^{5+2s} f_{4,4,1}(-q^{6s+23},-q^{9};q^4)\\
&\qquad \qquad -q^{13+4s}
 f_{4,4,1}(-q^{31+6s},-q^{15};q^4)
  + q^{9+2s} f_{4,4,1}(-q^{31+6s},-q^{11};q^4)\Big ) \\
&\qquad -q^{-r}
\frac{j(q^{1+2r};q^{16})j(q^{18+4r};q^{32})}{J_{32}}\\
&\qquad   \times\Big ( q^{7+5s}
 f_{4,4,1}(-q^{6s+23},-q^{15};q^4)
  - q^{3+s} f_{4,4,1}(-q^{6s+23},-q^{7};q^4)\\
&\qquad \qquad - q^{14+5s}
 f_{4,4,1}(-q^{31+6s},-q^{17};q^4)
   + q^{6+s} f_{4,4,1}(-q^{31+6s},-q^{9};q^4)
\Big ).
\end{align*}}%
\end{corollary}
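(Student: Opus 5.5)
The statement is the specialization $p=3$, $j=2$ of Theorem \ref{theorem:genEulerIdentityEvenSpin}, so the plan is to substitute and simplify each of the three pieces separately.

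\emph{Step 1: the string-function part.} The first sum runs over $a\in\{0,1\}$. Since $j=2$ is even, $-(-1)^{j}=-1$, so the theta coefficients are
\begin{equation*}
j(-q^{2(a+3)+1+6(a+s)};q^{16}).
\end{equation*}
For $a=0$ this is $j(-q^{7+6s};q^{16})$. For $a=1$ it is $j(-q^{15+6s};q^{16})$, which comes with the sign $(-1)^{1}q^{0}=-1$. This reproduces the two string-function terms, with $\mathcal{C}_{2s,2r}$ and $\mathcal{C}_{2(1+s),2r}$.

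\emph{Step 2: the theta functions $\Phi_m^r$.} Here $m\in\{1,2\}$ and the base is $q^{2p(2p+j)}=q^{48}$. The claimed formula has base $q^{16}$ and $q^{32}$ products divided by $J_{32}$, so each $\Phi_m^r$ is a difference of two theta functions that must be rewritten. Concretely,
\begin{align*}
\Phi_1^r&=j(-q^{11+6r};q^{48})-q^{6-2r}j(-q^{6r-5};q^{48}),\\
\Phi_2^r&=j(-q^{19+6r};q^{48})-q^{12-4r}j(-q^{6r-13};q^{48}).
\end{align*}
I would apply the quintuple product identity in the form
\begin{equation*}
j(-x^{3}q;q^{3})-x\,j(-x^{-3}q^{2}\cdots)\;=\;j(x;q)\,j(x^{2};q^{2})/J_{2}\cdots
\end{equation*}
with base $q^{16}$, i.e.\ $q\mapsto q^{16}$ and $x$ a suitable power of $q$. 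This should give
\begin{equation*}
\Phi_1^r\propto \frac{q^{*}j(q^{9+2r};q^{16})j(q^{2+4r};q^{32})}{J_{32}},\qquad
\Phi_2^r\propto \frac{j(q^{1+2r};q^{16})j(q^{18+4r};q^{32})}{J_{32}},
\end{equation*}
after using $j(x;q)=j(q/x;q)$ and $j(qx;q)=-x^{-1}j(x;q)$. The prefactor $(-1)^{p}q^{\binom{p}{2}-p(r-s)}(-1)^{m}q^{\binom{m+1}{2}+m(r-p)}$ then combines with the powers of $q$ produced by the quintuple product to give $q^{1-2r}$ and $-q^{-r}$, up to powers of $q^{s}$ that are pushed into the double-sum brackets.

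\emph{Step 3: the double sums $\Psi^{s}_{a,m}$.} Here $f_{7,7,2}(\cdot,\cdot;q^2)$ appears with $j=2$, whereas the statement uses $f_{4,4,1}(\cdot,\cdot;q^4)$. These match only if the base is rescaled, so I expect the $q^{j}$ base to be interpreted as $q^{2}$ and then halved or doubled appropriately. The alternative would need splitting $r,s$ by parity, which seems unlikely to give this form. I would first check by direct substitution that, for $a=0$, the arguments $q^{a(2p+j)+2ps+3pj+j+3\binom{j}{2}}=q^{6s+23}$ and $q^{3+j(a\pm m+p+1)}=q^{11\pm 2m}$ match $-q^{6s+23}$ and $-q^{13},-q^{9}$ (for $m=1$) and $-q^{15},-q^{7}$ (for $m=2$). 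They do, up to the overall rescaling $q^{j}\to q^{4}$, which I would take to be the paper's convention in Theorem \ref{theorem:genEulerIdentityEvenSpin} for $j=2$. For $a=1$ the first argument becomes $q^{31+6s}$ and the second shifts by $2$, which matches the last two terms in each bracket. The prefactors $q^{\binom{a}{2}+1+2(a+3)\pm m(\cdots)}$ multiplied by $(-1)^{a}q^{3a}$ must then be shown to give $q^{7+4s},q^{5+2s},q^{13+4s},q^{9+2s}$ (and similarly for $m=2$), after factoring out the common powers of $q$ from Step 2.

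\emph{Main obstacle.} The hardest part is Step 2: choosing the right quintuple product specialization and tracking every power of $q$ and every sign so that the leftover exponents land exactly as stated. I would verify the final bookkeeping numerically for small $r,s$.
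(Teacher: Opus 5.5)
Your route is the paper's: specialize Theorem~\ref{theorem:genEulerIdentityEvenSpin} to $(p,j)=(3,2)$, collapse each $\Phi_m^r$ with the quintuple product, and rewrite the double sums. Step~1 is correct. So is your bookkeeping of the double-sum arguments and prefactors in Step~3, including the factor $q^{3s}$ coming from $(-1)^pq^{\binom{p}{2}-p(r-s)}$. However, two computations are wrong, and as written each one breaks its step.

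\textbf{Step 2.} In $\Phi_m^r$ the power is $q^{m(2p+j)-m(2r+1)}$. This gives $q^{7-2r}$ for $m=1$ and $q^{14-4r}$ for $m=2$. Your $q^{6-2r}$ and $q^{12-4r}$ are the exponents $m(2p+j)-m(2r+2)$ of the odd-spin $\Phi_m^r$. With your exponents the quintuple product does not match. With the correct ones it does. By \eqref{equation:j-flip},
\[
\Phi_1^r=j(-q^{37-6r};q^{48})-q^{7-2r}j(-q^{53-6r};q^{48}).
\]
This is exactly the left side of
\[
j(-q^{16}x^3;q^{48})-x\,j(-q^{32}x^3;q^{48})=\frac{j(x;q^{16})\,j(q^{16}x^2;q^{32})}{J_{32}},
\]
which is \eqref{equation:H1Thm1.0} with $q\to q^{16}$ and $x\to -x$, taken at $x=q^{7-2r}$. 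Flipping both factors then gives $j(q^{9+2r};q^{16})j(q^{2+4r};q^{32})/J_{32}$. For $m=2$, first use \eqref{equation:j-elliptic} to rewrite $q^{14-4r}j(-q^{6r-13};q^{48})=q^{1+2r}j(-q^{35+6r};q^{48})$. Then take $x=q^{1+2r}$. In neither case is there a leftover power of $q$, so the $q^{1-2r}$ and $-q^{-r}$ come entirely from the prefactors.

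\textbf{Step 3.} The double sum is $f_{2p+j,2p+j,j}(\cdot,\cdot;q^{j})=f_{8,8,2}(\cdot,\cdot;q^2)$, not $f_{7,7,2}$. There is also no ``convention'' to appeal to. By \eqref{equation:fabc-def2}, $f_{a,b,c}$ depends on $q$ only through $q^{a\binom{r}{2}+brs+c\binom{s}{2}}$. Hence $f_{ka,kb,kc}(x,y;q)=f_{a,b,c}(x,y;q^k)$, and in particular $f_{8,8,2}(x,y;q^2)=f_{4,4,1}(x,y;q^4)$ exactly; this is the identity the paper invokes. With $f_{7,7,2}$, no rescaling of $q$ could produce $f_{4,4,1}$, so Step~3 as you wrote it has no valid justification. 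With this identity and the corrected $\Phi_m^r$, your argument coincides with the paper's proof.
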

In order to obtain our mock theta conjecture-like identities from the two Hecke-type double-sum coefficients, we need to consider the cases $s$ even and $s$ odd.   For the first set of four double-sums, we have for $s\to 2s$ even that
\begin{theorem}\label{theorem:level23EvenSpinFirstQuad-sEven}   It holds
{\allowdisplaybreaks \begin{align*}
&q^{7+8s}f_{4,4,1}(-q^{23+12s},-q^{13};q^4)-q^{5+4s}f_{4,4,1}(-q^{23+12s},-q^9;q^4)\\
&\quad -q^{13+8s}f_{4,4,1}(-q^{31+12s},-q^{15};q^4)+q^{9+4s}f_{4,4,1}(-q^{31+12s},-q^{11};q^{4})\\
&=q^{7+16s}j(-q^{23+12s};q^{16})
\left(q^{2s+12\binom{s}{2}}\cdot q^{2}\omega_{3}(-q^2)
+\sum_{k=0}^{s-1}q^{12sk-12\binom{k+1}{2}}\left ( q^{2k}-q^{-2k+4s-2}\right )  \right)\\
&\quad -q^{17+16s}j(-q^{31+12s};q^{16})\\
&\qquad \times \left ( 1-q^{4s}\left( q^{4s+12\binom{s}{2}}\frac{1}{2}f_{3}(q^4)
+\sum_{k=0}^{s-1}q^{12sk-12\binom{k+1}{2}}\left ( q^{4k}-q^{-4k+8s-4}\right )\right) \right )\\
&\quad -(-1)^{s}\frac{q^{\binom{2s}{2}+2}}{2}\frac{J_{1}^2J_{2}}{J_{4}^2}j(-q^{1+4s};q^{16}).
\end{align*}}%
\end{theorem}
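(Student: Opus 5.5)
The plan is to split the four double-sums into the two pairs sharing a first argument, namely the pair with $x=-q^{23+12s}$ and the pair with $x=-q^{31+12s}$, and to evaluate each pair separately. Each pair is an instance of the symmetric Hecke-type double-sum $f_{4,4,1}(x,y;q^4)$ with $a=b=4$, $c=1$ in base $q^4$. I would expand each pair with the formula of \cite[Theorem 1.4]{HM} for $f_{a,b,c}$ with $b^2-ac>0$. Here $b^2-ac=12$, and the expansion gives a sum of Appell functions $m(\cdot,\cdot;q^{4\cdot 12})$-type terms plus a theta quotient.

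The key simplification I expect is that, after combining the two members of each pair, the Appell-function parts collapse. I would rather avoid a direct expansion for general $s$. So the first step is to reduce to $s=0$ via the functional equations of $f_{a,b,c}$: the identity $f_{a,b,c}(x,y;q)=-\frac{q^{a+b+c}}{xy}f_{a,b,c}(q^{2a+b}/x,q^{2c+b}/y;q)$ and, more importantly, the shift relation
\[
f_{a,b,c}(x,y;q)=(-x)^{\ell}q^{a\binom{\ell}{2}}f_{a,b,c}(q^{a\ell}x,q^{b\ell}y;q)+\sum_{m=0}^{\ell-1}(-x)^m q^{a\binom{m}{2}}j(q^{bm}y;q^c).
\]
I would use the analogous relation in the $y$-variable. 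Shifting $x\mapsto q^{12}x$ repeatedly ($s$ times, since $12s$ steps in the exponent correspond to shifts by the $x$-period combined with $y$ shifts) produces exactly the finite sums $\sum_{k=0}^{s-1}q^{12sk-12\binom{k+1}{2}}(q^{2k}-q^{-2k+4s-2})$ and the analogous $q^{4k}$ sum. These come as theta-function boundary terms which, because $c=1$ in base $q^4$, become single theta functions $j(-q^{23+12s};q^{16})$ and $j(-q^{31+12s};q^{16})$ times monomials. The ``$1-$'' in the second bracket should arise from one extra boundary term that does not cancel in the pair with $x=-q^{31+12s}$.

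This reduces the theorem to two $s=0$ identities:
\[
q^{7}f_{4,4,1}(-q^{23},-q^{13};q^4)-q^{5}f_{4,4,1}(-q^{23},-q^{9};q^4)
\]
expressed via $q^{2}\omega_3(-q^2)$, and the analogous pair at $x=-q^{31}$ expressed via $f_3(q^4)$, with the theta quotient $\frac{J_1^2J_2}{J_4^2}j(-q;q^{16})$ distributed between them.

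For these base cases I would apply \cite[Theorem 1.4]{HM}, collecting Appell functions of the form $m(-q^{\alpha},z;q^{48})$. I would then use the standard Appell identities, such as the change-of-$z$ formula and $m(x,z;q)=m(x,q/z\cdot\ldots)$, together with the known expressions \cite[Section 5]{HM}: $f_3(q)=4m(-q,q;q^3)$-type and $\omega_3(q)=-q^{-1}m(q,q^2;q^6)$-type formulas, after the substitutions $q\to q^4$ and $q\to -q^2$. These identify the mock parts. Comparing with Theorems \ref{theorem:newMockThetaIdentitiespP38m0ell2r} and \ref{theorem:newMockThetaIdentitiespP38m2ell2r}, which contain the same theta coefficients $j(q^{7-2r};q^{16})$ and the same mock functions, is a useful guide.

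The remaining theta-function residue must be shown equal to $-\frac{q^2}{2}\frac{J_1^2J_2}{J_4^2}j(-q^{1+4s};q^{16})$. I expect this to be the main obstacle. My first attempt would be the Zwegers-style approach: show that both sides, after subtracting the mock parts, are holomorphic theta functions in a suitable Jacobi variable (replacing $q^{12s}$ by a generic $z$), with matching quasi-periodicity and matching poles. Then I would verify the identity with finitely many $q$-expansion coefficients, using Sturm-type bounds, or by the standard ``compare at $d+1$ points'' theta-function lemma.
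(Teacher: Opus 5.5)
Your reduction from general $s$ to $s=0$ fails. In base $q^4$ the shift relation for $f_{4,4,1}$ sends $(x,y)\mapsto(q^{16(\ell+k)}x,\,q^{16\ell+4k}y)$. So the first argument only moves by powers of $q^{16}$. Keeping the second arguments $-q^{13},-q^{9},-q^{15},-q^{11}$ fixed forces $k=-4\ell$, i.e.\ $x\mapsto q^{-48\ell}x$. There is no way to realize $x\mapsto q^{12}x$. What you actually get (with $\ell=-1$, $k=4$) relates $s$ to $s+4$, which is the paper's functional equation $H(r+4)=q^{27+12r}H(r)+\cdots$ in Proposition \ref{proposition:level23evenSpinFirstQuad-sEvenFuncEqn}. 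This is not a technicality. The theta quotient $(-1)^sq^{\binom{2s}{2}+2}j(-q^{1+4s};q^{16})$ gives the four inequivalent theta functions $j(-q;q^{16})$, $j(-q^{5};q^{16})$, $j(-q^{7};q^{16})$, $j(-q^{3};q^{16})$ for $s=0,1,2,3$. So an $s=0$ computation only yields the case $s\equiv 0\pmod 4$, and you need four separate theta identities; these are the paper's Proposition \ref{proposition:level23EvenSpinFirstQuad-sEvenThetaId}, checked with Frye--Garvan.

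Relatedly, the finite $k$-sums are not boundary terms of the double-sum shift; those boundary terms are the four-term sums $\sum_{m=0}^{3}$. The $k$-sums come from the one-step Appell recursion $m(qx,z;q)=1-xm(x,z;q)$ in base $q^{12}$, applied to the $j(x;q^{16})m(-q^{12}yx^{-1},-1;q^{12})$ part of $h_{4,4,1}$ (Lemma \ref{lemma:generalAppellSums}). That part does work for every $s$. The correct architecture is therefore: the Appell part for general $s$, the theta part for $s\in\{0,1,2,3\}$, and the $s\mapsto s+4$ equation satisfied by both sides.

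Second, you cannot evaluate the two pairs with a common first argument separately. Write $h_{4,4,1}(x,y;q^4)=j(x;q^{16})m(-q^{12}yx^{-1},-1;q^{12})+j(y;q^4)m(q^{24}xy^{-4},-1;q^{48})$. The second terms do not cancel inside a pair: for $x=-q^{23+12s}$ their Appell arguments are $-q^{-5+12s}$ and $-q^{11+12s}$, which differ by $q^{16}$, not by a power of the base $q^{48}$. They cancel across the pairs instead. Both $q^{7+8s}j(-q^{13};q^4)$ and $q^{13+8s}j(-q^{15};q^4)$ equal $q^{8s-8}j(-q;q^4)$, and they multiply the same $m(-q^{-5+12s},-1;q^{48})$ with opposite signs. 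Likewise $q^{5+4s}j(-q^{9};q^4)$ and $q^{9+4s}j(-q^{11};q^4)$ both equal $q^{4s-1}j(-q;q^4)$ and multiply $m(-q^{11+12s},-1;q^{48})$ with opposite signs. So each pair on its own carries extra mock terms in base $q^{48}$. It is not of the form theta function times $\omega_3(-q^2)$ (resp.\ $f_3(q^4)$) plus a theta quotient. All four double-sums, and all four $\theta_{4,4,1}$ pieces, must be treated as one combination, as the paper does. The rest of your toolkit matches the paper: the Hickerson--Mortenson expansion, the change of $z$ to reach $\omega_3(-q^2)$ and $f_3(q^4)$, and a finite modular check of the theta residue.
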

For the first set of four double-sums, we have for $s\to 2s+1$ odd that
\begin{theorem} \label{theorem:level23EvenSpinFirstQuad-sOdd} It holds
{\allowdisplaybreaks \begin{align*}
&q^{11+8s}f_{4,4,1}(-q^{29+12s},-q^{13};q^4)-q^{7+4s}f_{4,4,1}(-q^{29+12s},-q^9;q^4)\\
&\quad -q^{17+8s}f_{4,4,1}(-q^{37+12s},-q^{15};q^4)+q^{11+4s}f_{4,4,1}(-q^{37+12s},-q^{11};q^{4})\\
&=q^{15+16s}j(-q^{29+12s};q^{16})\\
&\qquad \times \left(1-q^{4s}\left ( \sum_{k=0}^{s-1}q^{12sk-12\binom{k+1}{2}}\left (q^{4k} -q^{-4k+8s-4}\right ) 
 +q^{4s+12\binom{s}{2}}\frac{1}{2}f_3(q^4)\right ) \right)\\
&\quad -q^{25+16s}j(-q^{37+12s};q^{16})\\
&\qquad \times \left ( 1-q^{4s+2}\left ( 1-q^{8s} \left ( \sum_{k=0}^{s-1}q^{12sk-12\binom{k+1}{2}}\left (q^{2k} -q^{-2k+4s-2}\right ) 
 +q^{2s+12\binom{s}{2}}\cdot q^2\omega_{3}(-q^2)\right )\right )   \right )\\
&\quad -(-1)^{s}\frac{q^{\binom{2s+1}{2}+2}}{2}\frac{J_{1}^2J_{2}}{J_{4}^2}j(-q^{3+4s};q^{16}).
\end{align*}}%
\end{theorem}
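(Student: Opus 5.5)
The identity in Theorem \ref{theorem:level23EvenSpinFirstQuad-sOdd} should follow by reduction to the even case, Theorem \ref{theorem:level23EvenSpinFirstQuad-sEven}, rather than by a fresh Appell-function evaluation. The four double-sums on the left have first arguments $-q^{29+12s}=-q^{6(2s+1)+23}$ and $-q^{37+12s}$. These are exactly the $s\to 2s+1$ specializations of the quadruple in Corollary \ref{corollary:levelTwoThirdsEvenSpin}.

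\textbf{Step 1: shift the first argument.} Use the standard functional equation for $f_{a,b,c}(x,y;q)$ under $x\to xq^{a}$ (equivalently, $r\to r+1$ in the definition (\ref{equation:fabc-def2})). For $f_{4,4,1}(x,y;q^4)$ this gives
\begin{equation*}
f_{4,4,1}(x,y;q^4)=-x\,f_{4,4,1}(xq^{16},yq^{16};q^4)+\sum_{s'}(-1)^{s'}y^{s'}q^{4\binom{s'}{2}}\cdot(\text{boundary row}).
\end{equation*}
The boundary row has $r=0$ fixed, so it collapses to a one-variable sum. Splitting that sum at the sign of $s'$ turns it into a theta function $j(y;q^4)$ plus a finite correction.

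A cleaner route is the shift $x\to xq^{a+b}$ with $y$ fixed, or a combination $(r,s)\to(r+1,s-1)$. Using $b=a$ here, together with the symmetry $f_{a,b,c}(x,y;q)=-\tfrac{q^{a+b+c}}{xy}\,f_{a,b,c}(q^{2a+b}/x,\dots)$, one moves $-q^{29+12s}$ to the even-case value $-q^{23+12(s+\cdot)}$ and $-q^{37+12s}$ to $-q^{31+12(\cdot)}$. Explicitly, $29+12s=23+12s+6$, a half-step. So I would instead pair the terms crosswise: the odd-case first-argument $-q^{29+12s}$ should match the even-case second family after an $x\to x q^{\pm 8}$ and $y$ shift. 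This accounts for the interchange of roles visible in the statement: $f_3$ now accompanies the first theta function $j(-q^{29+12s};q^{16})$, and $\omega_3$ accompanies the second.

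\textbf{Step 2: collect the remainder terms.} The shifts produce boundary terms. I would group these into two kinds. The first kind is the constants ``$1-\dots$'' and the extra factor $q^{4s+2}$. The second kind is the theta quotient $\tfrac{J_1^2J_2}{J_4^2}\,j(-q^{3+4s};q^{16})$, obtained from $j(-q^{1+4s'};q^{16})$ at $s'=s+\tfrac12$, interpreted as $\binom{2s+1}{2}$ in the exponent. After that, I would substitute the even-case formula of Theorem \ref{theorem:level23EvenSpinFirstQuad-sEven}, possibly at $s$ or $s+1$, for the shifted quadruple.

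\textbf{Main obstacle and fallback.} The main obstacle is bookkeeping: getting the finite sums $\sum_{k=0}^{s-1}$ and the powers $q^{16s}$ and $q^{8s}$ to line up exactly. If the reduction does not close cleanly, I would fall back on the method presumably used for the even case. In that approach, I would:
\begin{enumerate}
\item apply \cite[Theorem 1.4]{HM} to each $f_{4,4,1}$, expressing it as Appell functions $m(x,z;q^{\cdot})$ plus theta terms;
\item use quasi-periodicity $m(q^{k}x,z)$ to normalize the Appell functions to $m(-q,\cdot;q^4)$-type pieces, identified via \cite[Section 5]{HM} with $f_3(q^4)$ and $\omega_3(-q^2)$;
\item verify the leftover theta-function identity by checking enough $q$-coefficients, using modularity of theta quotients (Sturm-type bound).
\end{enumerate}
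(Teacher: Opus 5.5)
Step 1 fails at its central move. In \eqref{equation:Gen1} with $q\to q^4$ and $(a,b,c)=(4,4,1)$, every shift multiplies $x$ by $q^{16(\ell+k)}$. The shifts that fix $y$ are $x\to q^{48n}x$, and there is no move $x\to xq^{\pm 8}$. So the exponent of $x$ is invariant modulo $4$. Your exponents $29+12s$ and $37+12s$ are $\equiv 1\pmod 4$, while those of Theorem \ref{theorem:level23EvenSpinFirstQuad-sEven} are $23+12s'$ and $31+12s'\equiv 3\pmod 4$. Hence no shift links the two families, whether at $s$, at $s+1$, or at any other $s'$.

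Only the reflection \eqref{equation:H7eq1.14}, $(x,y)\mapsto(q^{48}/x,q^{24}/y)$, changes that residue. Composed with \eqref{equation:Gen1} at $(\ell,k)=(-1,4)$, i.e.\ $(x,y)\mapsto(q^{48}x,y)$, it does give a reduction. The left-hand side here equals $-q^{12s-15}$ times the left-hand side of Theorem \ref{theorem:level23EvenSpinFirstQuad-sEven} at $3-s$, plus boundary terms. Your four double-sums go to its fourth, third, second and first double-sums, and the boundary terms involving $j(\cdot;q^4)$ cancel in pairs. The $\omega_3(-q^2)$, $f_3(q^4)$ and $\tfrac{J_1^2J_2}{J_4^2}$ terms then match exactly; the last one matches through $j(-q^{13-4s};q^{16})=j(-q^{3+4s};q^{16})$, not through a half-integer $s'$. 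What remains is an identity among Laurent-polynomial multiples of $j(-q^{c};q^{16})$ with $c$ odd, and at $s=0$ it cancels identically.

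That would be a genuinely different route from the paper's. It derives the odd case from the even one, whereas the paper redoes the Appell evaluation (Corollary \ref{corollary:level23EvenSpinFirstQuad-sOddAppellForm}) and a separate Frye--Garvan check (Proposition \ref{proposition:level23EvenSpinFirstQuad-sOddThetaId}). But the reflection and the pairing $s\leftrightarrow 3-s$ are exactly what your proposal lacks, and as written Steps 1--2 do not go through.

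Your fallback is the paper's method for a single $s$. The statement, however, is an infinite family in $s$, and a Sturm-bound check settles the leftover theta identity only one $s$ at a time. The missing ingredient is Proposition \ref{proposition:level23evenSpinFirstQuad-sOddFuncEqn}: both sides satisfy $H(r+4)=q^{33+12r}H(r)+(\text{explicit theta terms})$. The left side satisfies it by \eqref{equation:Gen1} with $(\ell,k)=(-1,4)$; the right side by \eqref{equation:j-elliptic} and reindexing the finite sums. So their difference vanishes for all $s\ge 0$ once it vanishes for $s\in\{0,1,2,3\}$, and only then are four machine verifications enough. The reflection route needs the same device, or a pairing with $4t-1-s$, to get past $s=3$.

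Note also that the Appell functions that actually occur are $m(-q^{2},-1;q^{12})$ and $m(-q^{4},-1;q^{12})$, not $m(-q,\cdot;q^4)$. They are normalized by Lemma \ref{lemma:generalAppellSums} and converted to $\omega_3(-q^2)$ and $f_3(q^4)$ by Lemma \ref{lemma:alternateAppellForms}.
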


For the second set of four double-sums, we have for $s\to 2s$ even that
\begin{theorem} \label{theorem:level23EvenSpinSecondQuad-sEven} For $s$ even, i.e. we replace $s$ by $2s$, we have
{\allowdisplaybreaks \begin{align*}
&q^{7+10s}
 f_{4,4,1}(-q^{12s+23},-q^{15};q^4)
   - q^{3+2s} f_{4,4,1}(-q^{12s+23},-q^{7};q^4)\\
&\qquad  -q^{14+10s}
 f_{4,4,1}(-q^{31+12s},-q^{17};q^4)
  + q^{6+2s} f_{4,4,1}(-q^{31+12s},-q^{9};q^4)\\
  &= q^{7+14s}  j(-q^{12s+23};q^{16})\left ( \sum_{k=0}^{s-1}q^{12sk-12\binom{k+1}{2}}\left (q^{4k} -q^{-4k+8s-4}\right ) 
 +q^{4s+12\binom{s}{2}}\frac{1}{2}f_{3}(q^4) \right ) \\
 &\qquad - q^{16+14s}  j(-q^{31+12s};q^{16})\\
 &\qquad \qquad \times  \left ( 1-q^{8s}\left (\sum_{k=0}^{s-1}q^{12sk-12\binom{k+1}{2}}\left (q^{2k} -q^{-2k+4s-2}\right ) 
 +q^{2s+12\binom{s}{2}}\cdot q^2\omega_{3}(-q^2) \right ) \right ) \\
 &\qquad -(-1)^{s}\frac{q^{\binom{2s+1}{2}}}{2}\frac{J_{1}^2J_{2}}{J_{4}^2}j(-q^{9+4s};q^{16}).
\end{align*}}%
\end{theorem}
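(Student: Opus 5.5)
The plan follows the pattern of the earlier even-$s$ result, Theorem \ref{theorem:level23EvenSpinFirstQuad-sEven}. The identity is linear in four Hecke-type double-sums $f_{4,4,1}(x,y;q^4)$, so I will expand each one in Appell and theta functions using the Hickerson--Mortenson formula \cite[Theorem 1.4]{HM}. For $f_{a,b,c}$ with $(a,b,c)=(4,4,1)$ we have $n=b^2-ac=12>0$. Because $b=a$, the formula should collapse to a small number of Appell-function terms. These come from $m(\cdot,\cdot;q^{4\cdot 12})$ in one variable and $m(\cdot,\cdot;q^{12})$ in the other, together with an explicit theta-function remainder term $\theta_{4,4,1}(x,y;q^4)$.

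The first step is to remove the dependence on $s$ from the $x$-arguments, which are $-q^{12s+23}$ and $-q^{12s+31}$. I will use the functional equations of $f_{a,b,c}$ from \cite{HM}, namely the shifts $x\to xq^{a}$ and $y\to yq^{b}$. Shifting $x$ by $q^{12}$ (up to the base $q^4$) repeatedly reduces everything to $s=0$ at the cost of finite sums. Those finite sums are the truncated sums $\sum_{k=0}^{s-1}q^{12sk-12\binom{k+1}{2}}(\cdots)$ in the statement.

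Since the $x$-argument is constant within each pair, the difference of the two $f$'s in each pair should cancel the Appell terms in the $y$-variable. What remains is one Appell function $m(\cdot,\cdot;q^{48})$, multiplied by $j(-q^{12s+23};q^{16})$ or $j(-q^{31+12s};q^{16})$. This explains those theta prefactors. Alternatively, I can match the shifted sums to the $s=0$ case, where only single-sum evaluations are needed.

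For $s=0$, I will identify the resulting Appell functions with the third-order mock theta functions. I will use the known representations from \cite[Section 5]{HM}: $f_3(q^4)$ is a combination of $m(q^{4},-q^{12};q^{48})$-type terms, and $q^2\omega_3(-q^2)$ is expressed by $m(\cdot,\cdot;q^{12})$-type terms after a $q\to -q^2$ substitution. I will also use the $m$-function identities of \cite{HM}: the shift $m(x,z)=m(x,zq)$ up to theta quotients, changes of $z$, and $m(x,z;q)$ in terms of $m(x^{k},\cdot;q^{k^2})$. With these, each Appell function can be matched to exactly one of $\frac12 f_3(q^4)$ or $q^2\omega_3(-q^2)$. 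The constant terms $1-\dots$ will arise from the small boundary terms of the shift identities.

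The main obstacle is the leftover theta part. After all shifts, it consists of the $\theta_{4,4,1}$ terms plus theta quotients from changing $z$ in the Appell functions. The claim is that this collapses to the single term $-(-1)^s\frac{q^{\binom{2s+1}{2}}}{2}\frac{J_1^2J_2}{J_4^2}j(-q^{9+4s};q^{16})$. My approach is to first check the $s=0$ case numerically to fix signs. I will then prove it by showing both sides are theta functions of the same weight and multiplier, using a common base such as $q^{48}$ or $q^{96}$, and comparing enough coefficients (a Sturm-type bound). Alternatively, I will reduce it with the three-term Weierstrass relation for $j$. The general-$s$ dependence $(-1)^sq^{\binom{2s+1}{2}}j(-q^{9+4s};q^{16})$ should then follow from the quasi-periodicity $j(q^{n}x;q)=(-1)^nq^{-\binom n2}x^{-n}j(x;q)$ applied to the $s=0$ theta term.
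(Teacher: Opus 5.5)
The overall architecture is the paper's: Hickerson--Mortenson expansion, conversion of the Appell part to $f_3$ and $\omega_3$, and a theta identity for the rest. However, your reduction to $s=0$ cannot be carried out, and this is a genuine gap.

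For $f_{4,4,1}(x,y;q^4)$, Proposition~\ref{proposition:f-functionaleqn} sends $(x,y)\mapsto(q^{16(\ell+k)}x,\,q^{4(4\ell+k)}y)$. So $x$ moves only by powers of $q^{16}$, and keeping $y$ fixed forces $k=-4\ell$, i.e.\ $x\mapsto q^{-48\ell}x$. Since $x=-q^{12s+23}$ or $-q^{12s+31}$, there is no shift $x\mapsto q^{12}x$ (that is, $s\mapsto s+1$) at all. The best available is $s\mapsto s+4$ (the paper's $\ell=-1$, $k=4$), which leaves four base cases $s\in\{0,1,2,3\}$.

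The same obstruction breaks your last step. Under $s\mapsto s+1$ the argument of $j(-q^{9+4s};q^{16})$ moves by $q^4$, a quarter of the period. For $s=0,1,2,3$ the right-hand theta terms are multiples of four different functions: $j(-q^{7};q^{16})$, $j(-q^{3};q^{16})$, $j(-q;q^{16})$, $j(-q^{5};q^{16})$. Likewise the $\theta_{4,4,1}$ terms live on base $q^{48}$ while $x$ moves by $q^{12}$. So an $s=0$ theta identity, however verified, does not yield $s=1,2,3$. You need the four separate identities of Proposition~\ref{proposition:level23EvenSpinSecondQuad-sEvenThetaId}.

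Moreover, even the legitimate shift $s\mapsto s+4$ produces boundary terms of the form (theta function)$\times\sum_{m=0}^{3}q^{-6m^2-12ms-\cdots}$, not the truncated sums $\sum_{k=0}^{s-1}$ of the statement. You must therefore separately check that the right-hand side obeys the same inhomogeneous period-$4$ functional equation as the double-sum side. This is Proposition~\ref{proposition:level23evenSpinSecondQuad-sEvenFuncEqn}, and your plan has nothing corresponding to it.

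Your account of the Appell part is also off, though this is repairable. The terms $j(y;q^4)\,m(q^{24}xy^{-4},-1;q^{48})$ do not cancel within a pair of equal $x$. They cancel between the first and third (resp.\ second and fourth) double-sums, which have different $x$ but the same $q^{24}xy^{-4}=-q^{12s-13}$ (resp.\ $-q^{12s+19}$). This uses (\ref{equation:j-elliptic}) and (\ref{equation:j-flip}) applied to $j(-q^{15};q^4)$, $j(-q^{17};q^4)$, $j(-q^{7};q^4)$, $j(-q^{9};q^4)$.

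What survives is $j(x;q^{16})$ times Appell functions on base $q^{12}$, not $q^{48}$:
\begin{itemize}
\item $m(-q^{4+12s},-1;q^{12})-q^{8s-4}m(-q^{-4+12s},-1;q^{12})$, and
\item $1-q^{8s}\bigl(m(-q^{2+12s},-1;q^{12})-q^{4s-2}m(-q^{-2+12s},-1;q^{12})\bigr)$,
\end{itemize}
where the constant $1$ comes from (\ref{equation:mxqz-fnq-x}). The truncated sums arise by iterating (\ref{equation:mxqz-fnq-x}) on these, which is Lemma~\ref{lemma:generalAppellSums} with $q\to q^2$. That reduces everything to $m(-q^4,-1;q^{12})$ and $m(-q^2,-1;q^{12})$. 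Lemma~\ref{lemma:alternateAppellForms} with $q\to q^2$ then turns these into $\frac12 f_3(q^4)$ and $q^2\omega_3(-q^2)$ plus theta quotients.

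This Appell part works uniformly in $s$. It is only the theta part that forces the case split modulo $4$.
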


For the second set of four double-sums, we have for $s\to 2s+1$ odd that
\begin{theorem}  \label{theorem:level23EvenSpinSecondQuad-sOdd}
For $s$ odd, i.e. we replace $s$ by $2s+1$, we have
{\allowdisplaybreaks \begin{align*}
&q^{12+10s}
 f_{4,4,1}(-q^{12s+29},-q^{15};q^4)
   - q^{4+2s} f_{4,4,1}(-q^{12s+29},-q^{7};q^4)\\
&\qquad  -q^{19+10s}
 f_{4,4,1}(-q^{37+12s},-q^{17};q^4)
  + q^{7+2s} f_{4,4,1}(-q^{37+12s},-q^{9};q^4)\\
&=
 q^{14+14s}  j(-q^{12s+29};q^{16})\\
 &\qquad \times \left ( 1- q^{8s}\left (\sum_{k=0}^{s-1}q^{12sk-12\binom{k+1}{2}}\left (q^{2k} -q^{-2k+4s-2}\right ) 
 +q^{2s+12\binom{s}{2}}\cdot q^{2}\omega_{3}(-q^2)\right ) \right ) \\
&\quad  
 - q^{23+14s}  j(-q^{37+12s};q^{16})\\
 &\qquad \times \left ( 1-q^{8s+4} \left ( 1-q^{4s}\left ( \sum_{k=0}^{s-1}q^{12sk-12\binom{k+1}{2}}\left (q^{4k} -q^{-4k+8s-4}\right ) 
 +q^{4s+12\binom{s}{2}}\frac{1}{2}f_{3}(q^4) \right ) \right ) \right )\\
 & \quad - (-1)^{s}\frac{q^{\binom{2s+2}{2}}}{2}\frac{J_{1}^2J_{2}}{J_{4}^2}j(-q^{11+4s};q^{16}).
\end{align*}}%
\end{theorem}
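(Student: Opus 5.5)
The plan is to reduce the $s$-dependent statement to the base case $s=0$ and then evaluate that base case with the Hickerson--Mortenson machinery. Write $F_s(q)$ for the left-hand side: the four $f_{4,4,1}$ double-sums with $s$ replaced by $2s+1$.

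\textbf{Step 1: a recurrence in $s$.} Shifting $s\mapsto s+1$ multiplies the first argument $-q^{12s+29}$ (resp. $-q^{37+12s}$) by $q^{12}$, which equals $x^{a}$-type shifts in base $q^4$. Apply the standard functional equation for $f_{a,b,c}(x,y;q)$ under $x\mapsto xq^{a}$ (and the companion one moving $y$), as in the Hickerson--Mortenson paper \cite{HM}. The result expresses $f_{4,4,1}(-q^{12s+41},\cdot;q^4)$ as a multiple of $f_{4,4,1}(-q^{12s+29},\cdot;q^4)$ plus a single theta-type sum. Those boundary sums, after reindexing, produce exactly the finite sums $\sum_{k=0}^{s-1}q^{12sk-12\binom{k+1}{2}}(q^{2k}-q^{-2k+4s-2})$ and $\sum_{k}(\dots q^{4k}-q^{-4k+8s-4})$ in the statement. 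They also produce the nested ``$1-q^{8s+4}(1-q^{4s}(\cdots))$'' structure, which reflects that the odd-$s$ sums feed back into the even-$s$ Theorem \ref{theorem:level23EvenSpinSecondQuad-sEven}. Concretely, the odd case will be obtained by applying one half-step of the functional equation to the expression of Theorem \ref{theorem:level23EvenSpinSecondQuad-sEven}, taking that theorem as proved first by the same method. This is why $\omega_3$ and $f_3$ swap places relative to the even case. The theta quotient term $-(-1)^s q^{\binom{2s+2}{2}}\tfrac12 \tfrac{J_1^2J_2}{J_4^2}j(-q^{11+4s};q^{16})$ should transform consistently, since $j(-q^{11+4s};q^{16})$ shifts by the elementary quasi-periodicity $j(q^nx;q)=(-1)^nq^{-\binom n2}x^{-n}j(x;q)$. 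Everything then follows by induction from $s=0$.

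\textbf{Step 2: the base case $s=0$.} Here I expand each $f_{4,4,1}$ using \cite[Theorem 1.4]{HM}, which writes $f_{a,b,c}$ (with $b^2>ac$, here $16>4$) as a sum of Appell functions $m(x,z;q)$ with theta coefficients plus a theta-function correction $\theta_{a,b,c}$. With $a=b=4$, $c=1$ in base $q^4$, the Appell functions come out in base $q^{12}$-type moduli. Using the Appell function identities of \cite{HM}, namely changing $z$, the relation $m(x,z;q)=x^{-1}m(x^{-1},z^{-1};q)$, and the $m(x,z;q)$-to-$q^n$ decomposition, I reduce them to $m(-q^{2},\cdot;q^{12})$-type functions. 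By \cite[Section 5]{HM} these are $q^2\omega_3(-q^2)$ and $\tfrac12 f_3(q^4)$, matching the forms used in Theorems \ref{theorem:newMockThetaIdentitiespP38m0ell2r} and \ref{theorem:newMockThetaIdentitiespP38m2ell2r}. The theta coefficients multiplying $\omega_3$ and $f_3$ should assemble, via the three-term Weierstrass relation and product rearrangements, into $j(-q^{29};q^{16})$ and $j(-q^{37};q^{16})$.

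\textbf{Main obstacle.} The hardest step is the remaining pure-theta part: the $\theta_{4,4,1}$ corrections from all four double-sums, together with the leftovers from the Appell reductions, must collapse to $-\tfrac12 q\,\tfrac{J_1^2J_2}{J_4^2}j(-q^{11};q^{16})$. My first attempt is to multiply through so that every term is a modular form of one fixed weight and level, presumably level dividing $64$ after $q\mapsto q^{\cdot}$ normalization. I would then verify the identity by the Sturm bound, checking enough coefficients. Failing that, I would try to prove it directly via dissections of $j(x;q)$ into base $q^{16}$ and the quintuple product identity.
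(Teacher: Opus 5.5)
Your Step 1 has a genuine gap: the one-step recurrence $s\mapsto s+1$ you describe does not exist.

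In base $q^4$, the functional equation (\ref{equation:Gen1}) with parameters $(\ell,k)$ replaces $(x,y)$ by $(q^{16(\ell+k)}x,\,q^{4(4\ell+k)}y)$. In particular, the ``$x\mapsto q^{a}x$'' move (\ref{equation:fabc-fnq-2}) is $x\mapsto q^{16}x$, and it also moves $y$. So the exponent of $x$ is invariant modulo $16$, and with $y$ held fixed the smallest available move is $x\mapsto q^{\pm 48}x$. Since $12s+29\equiv 13,9,5,1\pmod{16}$ for $s=0,1,2,3$ (and likewise for $37+12s$), nothing relates $F_s$ to $F_{s+1}$. Taking $(\ell,k)=(-1,4)$ gives only $F_{s+4}=q^{33+12s}F_s+(\text{theta terms})$, which is the paper's Proposition~\ref{proposition:level23evenSpinSecondQuad-sOddFuncEqn}.

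The same obstruction appears on the right-hand side. Under $s\mapsto s+1$, $j(-q^{11+4s};q^{16})$ becomes $j(-q^{15+4s};q^{16})$, which is not a quasi-periodic image of it; only $s\mapsto s+4$ is. Hence induction from $s=0$ proves the theorem only for $s\equiv 0\pmod 4$. You need four base cases $s=0,1,2,3$, each with its own pure-theta identity for the $\theta_{4,4,1}$ corrections. This is Proposition~\ref{proposition:level23EvenSpinSecondQuad-sOddThetaId}, which the paper verifies with the Frye--Garvan packages. Your Sturm-bound check is the right tool there, but it must be run four times, not once.

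Relatedly, the finite sums $\sum_{k=0}^{s-1}$ do not come from boundary terms of the double-sum functional equation. Those boundary terms are the $\sum_{m=0}^{3}$ sums in the period-$4$ recurrence, and they must be matched against a reindexing of the right-hand side. The $\sum_{k=0}^{s-1}$ sums come from the Appell part: apply $m(qx,z;q)=1-xm(x,z;q)$ to $m(-q^{2+12s},-1;q^{12})$ and $m(-q^{4+12s},-1;q^{12})$ (Lemma~\ref{lemma:generalAppellSums}). That is the only place a step-one induction is available. It controls $h_{4,4,1}$ for every $s$, but says nothing about $\theta_{4,4,1}$.

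Your ``half-step'' from Theorem~\ref{theorem:level23EvenSpinSecondQuad-sEven} fails for the same reason. The even-case arguments $12s+23$ and $31+12s$ occupy the residues $\{3,7,11,15\}$ modulo $16$, so no translation reaches them. What does connect the two families is the reflection (\ref{equation:H7eq1.14}), which in base $q^4$ is $x\mapsto q^{48}/x$, $y\mapsto q^{24}/y$. A direct check gives $F_s=-\widetilde F_{-1-s}$, where $\widetilde F_{s'}$ is the left side of Theorem~\ref{theorem:level23EvenSpinSecondQuad-sEven}. This is a legitimate way to transfer the odd statement to the even one, once you also check that the right-hand sides correspond at negative index under the summation convention. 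But it is a reflection, not a half-step, and the even theorem has the same period-$4$ recurrence, so you still need four independent base-case theta identities.

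Your Step 2 is what the paper does: the Hecke-type expansion via Corollary~\ref{corollary:f441-HeckeExpansion}, reduction to $m(-q^2,-1;q^{12})$ and $m(-q^4,-1;q^{12})$, and conversion via Lemma~\ref{lemma:alternateAppellForms}.
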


For the cases $s=0$, the Hecke-type double-sums of Theorems \ref{theorem:level23EvenSpinFirstQuad-sEven} and  \ref{theorem:level23EvenSpinSecondQuad-sEven} give us the following mock theta conjecture-like identities:
\begin{corollary} \label{corollary:genEulerTwoThirdsEvenSpin} It holds 
{\allowdisplaybreaks \begin{align}
q^{7} &f_{4,4,1}(-q^{23},-q^{13};q^4)
   - q^{5} f_{4,4,1}(-q^{23},-q^{9};q^4)\\
&\quad  -q^{13} f_{4,4,1}(-q^{31},-q^{15};q^4)
  + q^{9} f_{4,4,1}(-q^{31},-q^{11};q^4)\notag \\
&=j(-q^7;q^{16})\cdot q^2\omega_3(-q^2)-q^2j(-q;q^{16})\left ( 1- \frac{1}{2}\left ( f_3(q^4)- \frac{J_{1}^2J_{2}}{J_{4}^2}\right )\right ),
\notag\\
q^{7}& f_{4,4,1}(-q^{23},-q^{15};q^4)
  - q^{3} f_{4,4,1}(-q^{23},-q^{7};q^4)\\
&\quad - q^{14} f_{4,4,1}(-q^{31},-q^{17};q^4)
   + q^{6} f_{4,4,1}(-q^{31},-q^{9};q^4)\notag\\
&=-qj(-q;q^{16})\left ( 1- q^2\omega_3(-q^2)\right)+  \frac{1}{2}j(-q^{7};q^{16})\left ( f_3(q^4)- \frac{J_{1}^2J_{2}}{J_{4}^2}\right ).
\notag
\end{align}}%
\end{corollary}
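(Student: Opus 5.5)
The plan is to obtain Corollary \ref{corollary:genEulerTwoThirdsEvenSpin} as the specialization $s=0$ of Theorems \ref{theorem:level23EvenSpinFirstQuad-sEven} and \ref{theorem:level23EvenSpinSecondQuad-sEven}. With $s=0$ every finite sum $\sum_{k=0}^{s-1}$ is empty, $\binom{0}{2}=0$, $\binom{1}{2}=0$, and all powers $q^{4s},q^{8s},q^{2s+12\binom{s}{2}}$ equal $1$.

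For the first identity, the left side of Theorem \ref{theorem:level23EvenSpinFirstQuad-sEven} at $s=0$ is exactly the stated quadruple of $f_{4,4,1}$'s. The right side becomes
\[
q^{7}j(-q^{23};q^{16})\,q^2\omega_3(-q^2)-q^{17}j(-q^{31};q^{16})\Big(1-\tfrac12 f_3(q^4)\Big)-\tfrac{q^{2}}{2}\frac{J_1^2J_2}{J_4^2}j(-q;q^{16}).
\]
We then normalize the theta functions with the elliptic transformation $j(q^nx;q^n)=-x^{-1}j(x;q^n)$, which with $x=-q^{7}$, respectively $x=-q^{15}$, gives
\[
j(-q^{23};q^{16})=q^{-7}j(-q^7;q^{16}),\qquad j(-q^{31};q^{16})=q^{-15}j(-q^{15};q^{16}).
\]
Combining the second of these with the inversion $j(x;q)=j(q/x;q)$, which gives $j(-q^{15};q^{16})=j(-q;q^{16})$, we get $q^{17}j(-q^{31};q^{16})=q^2j(-q;q^{16})$. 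Collecting the two $j(-q;q^{16})$ terms then yields $-q^2j(-q;q^{16})\bigl(1-\tfrac12(f_3(q^4)-J_1^2J_2/J_4^2)\bigr)$, which is the claimed form.

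For the second identity, Theorem \ref{theorem:level23EvenSpinSecondQuad-sEven} at $s=0$ gives
\[
q^{7}j(-q^{23};q^{16})\,\tfrac12 f_3(q^4)-q^{16}j(-q^{31};q^{16})\bigl(1-q^2\omega_3(-q^2)\bigr)-\tfrac12\frac{J_1^2J_2}{J_4^2}j(-q^9;q^{16}).
\]
The same reductions give $q^7j(-q^{23};q^{16})=j(-q^7;q^{16})$ and $q^{16}j(-q^{31};q^{16})=q\,j(-q;q^{16})$. Inversion also gives $j(-q^9;q^{16})=j(-q^7;q^{16})$, so the two $j(-q^7;q^{16})$ terms combine into $\tfrac12 j(-q^7;q^{16})(f_3(q^4)-J_1^2J_2/J_4^2)$, as claimed.

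There is no real obstacle once the two theorems are granted; the only care needed is tracking signs and exponents in the quasi-periodicity step. The substantive work lies in the theorems themselves. They would be proved by expanding each $f_{4,4,1}$ via the Hickerson--Mortenson formula \cite[Theorem 1.4]{HM} into Appell functions plus theta quotients, and identifying the Appell functions with $\omega_3$ and $f_3$. Doing that directly for $s=0$ is the fallback if one prefers to avoid the general-$s$ theorems.
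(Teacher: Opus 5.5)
Your proposal is correct and follows the paper's own route. The corollary is the case $s=0$ of Theorems \ref{theorem:level23EvenSpinFirstQuad-sEven} and \ref{theorem:level23EvenSpinSecondQuad-sEven}, with the theta functions normalized via (\ref{equation:j-elliptic}) and (\ref{equation:j-flip}) so that the simple theta quotient merges with the $f_3(q^4)$ term. Your normalizations $q^{7}j(-q^{23};q^{16})=j(-q^{7};q^{16})$, $q^{17}j(-q^{31};q^{16})=q^{2}j(-q;q^{16})$ and $j(-q^{9};q^{16})=j(-q^{7};q^{16})$ are all correct.
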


The reader will notice that we have combined the simple quotients of theta functions with the term involving $f_{3}(q^4)$.  For the four even-spin, $2/3$-level Theorems \ref{theorem:level23EvenSpinFirstQuad-sEven}, \ref{theorem:level23EvenSpinFirstQuad-sOdd}, \ref{theorem:level23EvenSpinSecondQuad-sEven}, \ref{theorem:level23EvenSpinSecondQuad-sOdd}, the same is possible.  In all four theorems the simple quotient of theta functions can be combined with $f_{3}(q^4)$.  In Theorems \ref{theorem:level23EvenSpinFirstQuad-sEven} and \ref{theorem:level23EvenSpinSecondQuad-sEven}, one then replaces $f_{3}(q^4)$ with
\begin{equation*}
f_{3}(q^4)-(-1)^{s}\frac{J_{1}^2J_{2}}{J_{4}^2};
\end{equation*}
however, in Theorems \ref{theorem:level23EvenSpinFirstQuad-sOdd} and \ref{theorem:level23EvenSpinSecondQuad-sOdd}, one  replaces $f_{3}(q^4)$ with
\begin{equation*}
f_{3}(q^4)+(-1)^{s}\frac{J_{1}^2J_{2}}{J_{4}^2}.
\end{equation*}
For all four theorems this is easy to see using the quasi-elliptic transformation property of the theta function (\ref{equation:j-elliptic}).  For example, for Theorem \ref{theorem:level23EvenSpinFirstQuad-sEven} one notes that
\begin{align*}
q^{17+16s}j(-q^{31+12s};q^{16})q^{4s}q^{4s+12\binom{s}{2}}&=q^{17+16s}j(-q^{15+16(s+1)-4s};q^{16})q^{4s}q^{4s+12\binom{s}{2}}\\
&=q^{17+16s}q^{-16\binom{s+1}{2}-(s+1)(15-4s)}j(-q^{15-4s};q^{16})q^{4s}q^{4s+12\binom{s}{2}}\\
&=q^{\binom{2s}{2}+2}j(-q^{1+4s};q^{16}).
\end{align*}

The identities in Corollary \ref{corollary:genEulerTwoThirdsEvenSpin} can also be rewritten over a common base as follows:
\begin{align}
q^{7} &f_{4,4,1}(-q^{23},-q^{13};q^4)
   - q^{5} f_{4,4,1}(-q^{23},-q^{9};q^4)\\
&\qquad  -q^{13} f_{4,4,1}(-q^{31},-q^{15};q^4)
  + q^{9} f_{4,4,1}(-q^{31},-q^{11};q^4)\notag\\
&=q^2j(-q^7;q^{16})\omega_3(-q^2)-q^2j(-q^{15};q^{16})\left ( 1- \frac{1}{2}f_3(q^4)\right )
\notag\\
&\quad -\frac{q^2}{2}\cdot J_{1,32}J_{2,32}^4J_{3,32}^2J_{4,32}J_{5,32}^2J_{6,32}^3J_{7,32}^2J_{8,32}J_{9,32}^2J_{10,32}^3J_{11,32}^2J_{12,32}J_{13,32}^2J_{14,32}^3J_{15,32}
\cdot \frac{J_{16}^2}{J_{32}^{30}},\notag
\end{align}
and
\begin{align}
q^{7}& f_{4,4,1}(-q^{23},-q^{15};q^4)
  - q^{3} f_{4,4,1}(-q^{23},-q^{7};q^4)\\
&\qquad - q^{14} f_{4,4,1}(-q^{31},-q^{17};q^4)
   + q^{6} f_{4,4,1}(-q^{31},-q^{9};q^4)
   \notag\\
&=  \frac{1}{2}j(-q^{7};q^{16})f_3(q^4)-qj(-q^{15};q^{16})\left ( 1- q^2\omega_3(-q^2)\right)
\notag\\
&\quad  -\frac{1}{2}\cdot J_{1,32}^2J_{2,32}^{3}J_{3,32}^{2}J_{4,32}J_{5,32}^2J_{6,32}^{3}J_{7,32}J_{8,32}J_{9,32}J_{10,32}^3J_{11,32}^2J_{12,32}J_{13,32}^2J_{14,32}^4J_{15,32}^2
\cdot \frac{J_{16}^2}{J_{32}^{30}}.\notag
\end{align}

We close the section by pointing out that the identities in Corollary \ref{corollary:genEulerTwoThirdsEvenSpin} are reminiscent of an identity found in Mortenson and Sahu \cite[(2.18)]{MoSa}.  The identity expresses a double-sum in terms of two mock theta functions  and a sum of theta quotients. The $q$-hypergeometric form of the double-sum was originally found in Andrews \cite[(1.11)]{And2012}.

\section{Results: On the generalized Euler identity for odd-spin}\label{section:resultsOddSpin}
We first obtain the quasi-periodic relation for odd-spin admissible-level string functions.  The proof and statement are analogous to Theorem \ref{theorem:generalQuasiPeriodicityEvenSpin}.

\begin{theorem}\label{theorem:generalQuasiPeriodicityOddSpin}
For $(p,p^{\prime})=(p,2p+j)$, we have the quasi-periodic relation for odd-spin 
{\allowdisplaybreaks \begin{align*}
& (q)_{\infty}^{3}C_{2jt+2s+1,2r+1}^{(p,2p+j)}(q)
 -(q)_{\infty}^{3}C_{2s+1,2r+1}^{(p,2p+j)}(q)\\
& \ = (-1)^{p}q^{-\frac{1}{8}+\frac{p(2r+2)^2}{4(2p+j)}}q^{\binom{p+1}{2}-p(r+1-s)-\frac{p}{4j}(2s+1)^2}
\sum_{i=1}^{t}q^{-2pj\binom{i}{2}-p(2s+1)i}\\
&\quad \times \sum_{m=1}^{p-1}(-1)^{m}q^{\binom{m+1}{2}+m(r-p)}
\left (q^{m(ji+s-j+1)} -q^{-m(ji+s)}\right ) \\
&\quad  \times\Big (  j(-q^{m(2p+j)+p(2r+2)};q^{2p(2p+j)}) 
 -q^{m(2p+j)-m(2r+2)}j(-q^{-m(2p+j)+p(2r+2)};q^{2p(2p+j)})\Big ).
\end{align*}}
\end{theorem}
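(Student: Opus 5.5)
Since the statement says the proof is analogous to Theorem \ref{theorem:generalQuasiPeriodicityEvenSpin}, I will repeat the even-spin argument of \cite{BoMo2025} with the spin shifted by one. The starting point is the Hecke-type form \eqref{equation:modStringFnHeckeForm}. With $(p,p')=(p,2p+j)$, $\ell=2r+1$, and quantum number $m=2s+1$ or $m=2jt+2s+1$, it gives
\begin{equation*}
(q)_\infty^3\mathcal{C}_{m,2r+1}^{N}(q)=f_{1,2p+j,2p(2p+j)}(q^{1+\frac{m+2r+1}{2}},-q^{p(2p+j+2r+2)};q)-f_{1,2p+j,2p(2p+j)}(q^{\frac{m-2r-1}{2}},-q^{p(2p+j-2r-2)};q).
\end{equation*}
I then convert to $C$ using \eqref{equation:mathCalCtoStringC}. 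Here $s_\lambda$ contains $(\ell+1)^2/(4(N+2))=p(2r+2)^2/(4(2p+j))$ and $m^2/(4N)=p(2s+1)^2/(4j)$ at $t=0$. This accounts for the prefactor $q^{-\frac18+\frac{p(2r+2)^2}{4(2p+j)}-\frac{p}{4j}(2s+1)^2}$, in place of the even-spin $(2r+1)^2$ and $s^2/j$.

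The core step is a one-step quasi-periodicity, i.e. the case $t=1$. Replacing $m$ by $m+2j$ multiplies the $x$-argument of each $f_{1,2p+j,2p(2p+j)}$ by $q^{j}$. I will use the standard shift identity for Hecke-type double sums (the functional equation $f_{a,b,c}(x,y;q)=-\frac{x}{y}f_{a,b,c}(q^{b}x^{2}/y\ldots)$ plus boundary theta terms, as in \cite[Proposition 6.3]{HM}). To make the $x$-argument shift by an integral multiple of the $b$-period, I change summation index $r\mapsto r+?$ in $f_{1,b,c}$ with $b=2p+j$.

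Concretely, in $f_{1,2p+j,2p(2p+j)}(x,y;q)$ I split the $r$-sum by residue modulo $2p+j$ (equivalently, I treat the $r$-direction as having quadratic form $\binom{r}{2}+(2p+j)rs$). Shifting $s\mapsto s-1$ changes $x$ by $q^{2p+j}$ and $y$ by $q^{2p(2p+j)}$. The corrections are finitely many single sums over $r$, which are theta functions in base $q^{2p(2p+j)}$. The combination of the two double sums then collapses: the main double-sum parts cancel against the string function at $m$, leaving a sum over $m'=1,\dots,p-1$ of $(-1)^{m'}q^{\binom{m'+1}{2}+m'(r-p)}$ times
\begin{equation*}
j(-q^{m'(2p+j)+p(2r+2)};q^{2p(2p+j)})-q^{m'(2p+j)-m'(2r+2)}j(-q^{-m'(2p+j)+p(2r+2)};q^{2p(2p+j)}).
\end{equation*}
The two $j$-factors come from the two double sums with $y=-q^{p(2p+j\pm(2r+2))}$. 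The $m'=0$ and $m'=p$ terms vanish or cancel by \eqref{equation:j-elliptic}-type symmetry, exactly as in the even case. The $s$-dependence sits in $q^{m'(s+1)}-q^{-m'(s)}$, coming from $x=q^{1+r+s+1}$ versus $q^{s-r}$.

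With the $t=1$ relation in hand, I telescope: write $C_{2jt+2s+1}-C_{2s+1}=\sum_{i=1}^{t}(C_{2ji+2s+1}-C_{2j(i-1)+2s+1})$. Each summand is the $t=1$ relation with $s$ replaced by $s+j(i-1)$. Substituting $s\to s+j(i-1)$ in $q^{-\frac{p}{4j}(2s+1)^2}$ and in $q^{m'(s+1)}-q^{-m's}$ produces $q^{-2pj\binom{i}{2}-p(2s+1)i}$ (after pulling out the $i$-independent part) and $q^{m'(ji+s-j+1)}-q^{-m'(ji+s)}$. Combined with the $q^{\binom{p+1}{2}-p(r+1-s)}$ normalization, these match the stated formula.

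The main obstacle is the bookkeeping in the one-step shift. The correction terms from the double-sum shift must assemble into precisely the $p-1$ theta pairs, with the exact exponents $\binom{m'+1}{2}+m'(r-p)$ and the normalization $\binom{p+1}{2}-p(r+1-s)$. Odd spin changes the parities of $\frac{m\pm\ell}{2}$ relative to the even case, which moves the offsets. I would first do the computation symbolically for $t=1$, compare it to the even-spin result under the formal substitution $2r+1\to2r+2$, $s\to s+\tfrac12$, and check it numerically for $(p,p')=(2,5),(3,7)$ before the telescoping.
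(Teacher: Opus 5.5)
Your outline matches the paper's: the Hecke form, the functional equation of Proposition~\ref{proposition:f-functionaleqn}, reduction to $p-1$ theta pairs, the passage from $\mathcal{C}$ to $C$, and telescoping. But the step that carries the proof is set up with the wrong shift and then deferred. Shifting only the second summation index of $f_{1,2p+j,2p(2p+j)}$, i.e.\ $(\ell,k)=(0,\pm1)$ in Proposition~\ref{proposition:f-functionaleqn}, sends $(x,y)\mapsto(q^{\pm(2p+j)}x,q^{\pm 2p(2p+j)}y)$. This changes $y$, so the shifted double sums are no longer those of a string function of spin $2r+1$. Its only boundary term is $j(x;q)=0$, so it gives the trivial relation $f(x,y)=-y\,f(q^{2p+j}x,q^{2p(2p+j)}y)$ and produces no theta functions at all. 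Splitting the first index modulo $2p+j$ does not change this.

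The shift you need is $(\ell,k)=(-2p,1)$, the primitive solution of $b\ell+ck=(2p+j)(\ell+2pk)=0$. It does the following:
\begin{itemize}
\item it fixes $y=-q^{p(2p+j\pm(2r+2))}$ and sends $x\mapsto q^{a\ell+bk}x=q^{j}x$, with prefactor exactly $q^{-p(2s+1+j)}$, which is what $s_\lambda$ absorbs when you pass to $C$;
\item it kills the second boundary sum, since $j(x;q)=0$;
\item it leaves $2p$ boundary terms per double sum. These are single sums over the \emph{second} index at the fixed first-index values $-2p,\dots,-1$, namely $j(-q^{(p-m)(2p+j)+p(2r+2)};q^{2p(2p+j)})$ and, after \eqref{equation:j-flip}, $j(-q^{(p+m)(2p+j)+p(2r+2)};q^{2p(2p+j)})$, for $1\le m\le 2p$.
\end{itemize}
Turning these $4p$ terms into $p-1$ pairs is the real content of the theorem. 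You cancel the two $m=p$ terms by \eqref{equation:j-elliptic}, reindex $m\to p-m$ on $1\le m\le p-1$ and $m\to p+m$ on $p+1\le m\le 2p$, and apply \eqref{equation:j-elliptic} again. You then cancel the new $m=p$ terms (the old $m=2p$ ones) and pair what remains to get the factor $1-q^{m(2s+1+j)}$. Your proposal asserts this outcome rather than deriving it.

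The exponents you predict confirm that the derivation is missing. The one-step ($t=1$) relation carries $q^{m(s+1)}-q^{-m(s+j)}$, not $q^{m(s+1)}-q^{-ms}$. With your factor, the substitution $s\mapsto s+j(i-1)$ gives $q^{-m(ji+s-j)}$ instead of the stated $q^{-m(ji+s)}$, so your telescoping does not close as written. With the correct one-step relation (valid for every integer $s$), your telescoping is the same as the paper's iteration. The prefactor bookkeeping then yields $q^{-2pj\binom{i}{2}-p(2s+1)i}$ once the $-p(r+1-s)$ term is included.

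Your formal check $(r,s)\mapsto(r+\tfrac12,s+\tfrac12)$ in the even-spin formula does reproduce the odd-spin formula exactly, because the Hecke form depends on $(m,\ell)$ only through linear $q$-exponents. Turning that into a proof, however, requires verifying that every step of the even-spin computation is uniform in this sense, which amounts to doing the computation above.
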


Using our new quasi-periodic relation for odd-spin, we obtain the odd-spin companion to our new even-spin Theorem  \ref{theorem:genEulerIdentityEvenSpin}.

\begin{theorem} \label{theorem:genEulerIdentityOddSpin} For $2r+1\in\mathbb{Z}_{2p+j-1}$ and $2s+1\in\mathbb{Z}_{2p+j}$, we have
{\allowdisplaybreaks \begin{align*}
(q)_{\infty}^{3}\delta_{2r+1,2s+1}
& =(q)_{\infty}^{3}\sum_{a=0}^{j-1} (-1)^{a}q^{\binom{a}{2}}j(-(-1)^jq^{j(a+p)+\binom{j}{2}+p(2(a+s)+1)};q^{j(2p+j)})
\mathcal{C}_{2(a+s)+1,2r+1}^{(p,2p+j)}(q)\\
&\qquad 
+(-1)^{p}q^{\binom{p}{2}-p(r-s)}\sum_{a=0}^{j-1}(-1)^{a} q^{pa}
 \sum_{m=1}^{p-1}(-1)^{m}q^{\binom{m+1}{2}+m(r-p)}\Phi_{m}^{r}(q)\Psi_{a,m}^{s}(q),
\end{align*}}%
where
\begin{equation*}
\Phi_{m}^{r}(q):= j(-q^{m(2p+j)+p(2r+2)};q^{2p(2p+j)})   -q^{m(2p+j)-m(2r+2)}j(-q^{-m(2p+j)+p(2r+2)};q^{2p(2p+j)})
\end{equation*}
\noindent and
{\allowdisplaybreaks \begin{align*}
\Psi_{a,m}^{s}(q)&=(-1)^{j}q^{\binom{a}{2}+\binom{j}{2}+j(a+p)+m(a+s+1)}\\
&\qquad \times f_{2p+j,2p+j,j}(-(-1)^{j}q^{p(2(a+s)+1)+aj+3pj+j+3\binom{j}{2}},-(-1)^{j}q^{3\binom{j}{2}+j(a+m+p+1)};q^j)\\
&\quad - (-1)^{j}q^{\binom{a}{2}+\binom{j}{2}+j(a+p)-m(a+s+j)}\\
&\qquad \times f_{2p+j,2p+j,j}(-(-1)^{j}q^{p(2(a+s)+1)+aj+3pj+j+3\binom{j}{2}},-(-1)^{j}q^{3\binom{j}{2}+j(a-m+p+1)};q^j).
\end{align*}}%
\end{theorem}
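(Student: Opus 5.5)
The plan is to run the same argument as for Theorem \ref{theorem:genEulerIdentityEvenSpin}, now feeding in the odd-spin quasi-periodicity of Theorem \ref{theorem:generalQuasiPeriodicityOddSpin}. Start from Proposition \ref{propostion:genEulerIdentitySW} with $\ell=2r+1$ and $\eta=2s+1$. Write the summation index as $L=jt+a+s'$ with $a\in\{0,\dots,j-1\}$ and $t\in\mathbb{Z}$, where $s'$ absorbs the shift $s$. Then the quantum number becomes $2L+\eta=2jt+2(a+s)+1$. Apply Theorem \ref{theorem:generalQuasiPeriodicityOddSpin} with $s\mapsto a+s$ to replace each $C_{2jt+2(a+s)+1,2r+1}$ by $C_{2(a+s)+1,2r+1}$ plus the explicit correction term. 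For $t\le 0$ we need the relation with negative $t$. The cleanest route is to read the finite sum $\sum_{i=1}^{t}$ with the standard convention $\sum_{i=1}^{t}=-\sum_{i=t+1}^{0}$ for $t<0$, and to check that the relation still holds. This follows either from the same derivation as Theorem \ref{theorem:generalQuasiPeriodicityOddSpin} or from applying it with $s\mapsto s+jt$ and subtracting. Before summing, convert the $C$'s to $\mathcal{C}$'s via \eqref{equation:mathCalCtoStringC}. The factor $q^{s_\lambda}$ produces a $q$-power $-\frac{(2(a+s)+1)^2}{4N}$ that cancels the $-\frac{p}{4j}(2s+1)^2$ appearing in the quasi-periodic relation. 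This cancellation is what yields the clean prefactor $q^{\binom{p}{2}-p(r-s)}$.

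With this in place, split the Euler sum into two pieces. The first piece is the sum, over $a$, of $\mathcal{C}_{2(a+s)+1,2r+1}$ times $\sum_t(-1)^{jt+a}q^{\binom{jt+a}{2}}q^{(\text{shift})}$. The shift comes from $s_\lambda$, namely $\frac{(2jt+2(a+s)+1)^2-(2(a+s)+1)^2}{4N}$ with $N=j/p$. This $t$-sum is a single theta series. Completing the square and using \eqref{equation:JTPid}, it becomes $j(-(-1)^jq^{j(a+p)+\binom{j}{2}+p(2(a+s)+1)};q^{j(2p+j)})$. This is the odd analogue of the first sum in Theorem \ref{theorem:genEulerIdentityEvenSpin}; the only change is that $2p(a+s)$ is replaced by $p(2(a+s)+1)$.

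The second piece is a triple sum over $t\in\mathbb{Z}$, $i$ between $1$ and $t$ (signed as above), and $m$. Interchange the $m$-sum to the outside. Then $\Phi^r_m$ factors out, since it depends only on $r$ and $m$. What remains is a double sum over $(t,i)$ of a quadratic exponential, restricted to the regions $1\le i\le t$ and $t<i\le 0$, with opposite signs. This is precisely the shape $\sum_{r,s\ge0}-\sum_{r,s<0}$ of \eqref{equation:fabc-def2}. Substitute $t=i+u$ and reindex so that the two regions become $u\ge0,i\ge1$ and $u<0,i\le 0$, then shift $i\mapsto i+1$. The two terms $q^{m(ji+s-j+1)}$ and $-q^{-m(ji+s)}$ give the two double sums in $\Psi^{s}_{a,m}$, with second argument carrying $+m$ and $-m$ respectively.

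The main obstacle is the bookkeeping of the quadratic form and the linear exponents. We must verify that $\binom{jt+a}{2}$ plus the $s_\lambda$ shift plus $-2pj\binom{i}{2}-p(2(a+s)+1)i$ produces, in the variables $(u,i)$, the form $(2p+j)\binom{\cdot}{2}+(2p+j)\cdot\cdot\,+j\binom{\cdot}{2}$ in base $q^j$. We must also verify that the linear terms produce exactly the stated arguments, with sign $-(-1)^j$ coming from $(-1)^{jt}$, and the prefactors $(-1)^jq^{\binom{a}{2}+\binom{j}{2}+j(a+p)\pm\cdots}$. To keep this tractable, I would first redo it for $j=1$ as a sanity check against the even-spin Corollary \ref{corollary:genEulerIdentityEvenSpinCorollary}, and then match the odd-spin exponents by noting that they arise from the even case under $2s\mapsto 2s+1$ and $2r+1\mapsto 2r+2$.

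Finally, justify the rearrangements. All series converge absolutely for $|q|<1$, because the quadratic forms are positive definite on the relevant cones and the string functions are bounded $q$-series. So interchanging the $t$, $i$ and $m$ sums is legitimate.
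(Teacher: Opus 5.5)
Your proposal is correct and is essentially the paper's proof. Insert $L=jt+a$ into the Euler identity; your extra $s'$ should be dropped, since with $\eta=2s+1$ the quantum number is already $2jt+2(a+s)+1$. Apply the $\mathcal{C}$-normalized odd-spin quasi-periodicity (Corollary~\ref{corollary:quasiPeriodicityOddSpinMathCal}) with $s\mapsto a+s$ for all $t\in\mathbb{Z}$ via the summation convention, sum the pure $t$-series to $j(-(-1)^jq^{j(a+p)+\binom{j}{2}+p(2(a+s)+1)};q^{j(2p+j)})$, and convert the signed sum over $\{1\le i\le t\}\cup\{t<i\le 0\}$ into the two $f_{2p+j,2p+j,j}(\cdot,\cdot;q^j)$ sums. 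Your direct substitution $t=i+u$, $i\mapsto i+1$ is a shorter version of the paper's flip--shift--interchange--unflip reindexing, and it does reproduce exactly the stated arguments and prefactors. Do that bookkeeping directly rather than via the heuristic $2s\mapsto 2s+1$ from the even case: the odd relation carries $q^{m(ji+s-j+1)}$, which yields $q^{m(a+s+1)}$, not the $q^{m(a+s+\frac12)}$ that formal substitution would give.
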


We present a useful corollary for the case when the variable $j=1$ and we have odd-spin:
\begin{corollary}\label{corollary:genEulerIdentityOddSpinCorollary} For $2r+1\in\mathbb{Z}_{2p+j-1}$ and $2s+1\in\mathbb{Z}_{2p+j}$, we have
{\allowdisplaybreaks \begin{align*}
&(q)_{\infty}^{3}\delta_{2r+1,2s+1}\\
&\quad =(q)_{\infty}^{3}j(q^{p+p(2s+1)};q^{(2p+1)})
\mathcal{C}_{2s+1,2r+1}^{(p,2p+1)}(q)\\
&\qquad 
+(-1)^{p}q^{\binom{p}{2}-p(r-s)}
 \sum_{m=1}^{p-1}(-1)^{m}q^{\binom{m+1}{2}+m(r-p)}\\
&\qquad  \times\Big (  j(-q^{m(2p+1)+p(2r+2)};q^{2p(2p+1)}) 
  -q^{m(2p+1)-m(2r+2)}j(-q^{-m(2p+1)+p(2r+2)};q^{2p(2p+1)})\Big )\\
&\qquad \times \Big (  -q^{p+m(s+1)} f_{2p+1,2p+1,1}(q^{p(2s+1)+3p+1},q^{m+p+1};q)\\
&\qquad \qquad +q^{p-m(s+1)} f_{2p+1,2p+1,1}(q^{p(2s+1)+3p+1},q^{-m+p+1};q)\Big ).
\end{align*}}%
\end{corollary}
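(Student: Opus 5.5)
This corollary is the specialization $j=1$ of Theorem \ref{theorem:genEulerIdentityOddSpin}, so the plan is to substitute $j=1$ and simplify each piece. With $j=1$ the outer sum over $a$ collapses to the single term $a=0$, because $a\in\{0,\dots,j-1\}=\{0\}$. This gives $(-1)^a q^{\binom{a}{2}}=1$ and $q^{pa}=1$. It also gives $\binom{j}{2}=0$, so $3\binom{j}{2}=0$, together with $-(-1)^j=1$ and $(-1)^j=-1$.

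First I treat the string-function term. The theta coefficient becomes
\[
j\big(-(-1)^1q^{p+0+p(2s+1)};q^{2p+1}\big)=j\big(q^{p+p(2s+1)};q^{2p+1}\big),
\]
which matches the first line of the corollary. Next, $\Phi_m^r(q)$ with $j=1$ is exactly the bracketed difference of theta functions with modulus $q^{2p(2p+1)}$. The prefactor $(-1)^pq^{\binom{p}{2}-p(r-s)}\sum_{m}(-1)^mq^{\binom{m+1}{2}+m(r-p)}$ is unchanged.

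Then I simplify $\Psi_{0,m}^s(q)$ at $j=1$. The first summand has prefactor $(-1)\,q^{0+0+p+m(s+1)}=-q^{p+m(s+1)}$. Its double-sum is
\[
f_{2p+1,2p+1,1}\big(q^{p(2s+1)+3p+1},\,q^{m+p+1};q\big),
\]
since the first argument exponent is $p(2s+1)+0+3p+1+0$ and the second is $0+(0+m+p+1)$. The second summand has prefactor $-(-1)q^{p-m(s+1)}=+q^{p-m(s+1)}$. Its double-sum has first argument $q^{p(2s+1)+3p+1}$ and second argument $q^{-m+p+1}$. This reproduces the last two lines of the statement.

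The argument is purely bookkeeping, and there is no real obstacle. The only step needing care is tracking the signs $(-1)^j$ and $-(-1)^j$, together with the vanishing of the $\binom{a}{2}$, $\binom{j}{2}$ and $aj$ contributions. One more check concerns the hypothesis: the statement is written with $j$ in $\mathbb{Z}_{2p+j-1}$ and $\mathbb{Z}_{2p+j}$, and these should be read as $\mathbb{Z}_{2p}$ and $\mathbb{Z}_{2p+1}$, consistent with the theorem at $j=1$.
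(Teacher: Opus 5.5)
Your proposal is correct and follows the paper's route: the corollary is presented as the $j=1$ case of Theorem~\ref{theorem:genEulerIdentityOddSpin}. Specialize by collapsing the $a$-sum to $a=0$ and using $\binom{j}{2}=0$, $(-1)^j=-1$, $-(-1)^j=1$, exactly as you do; your exponent and sign bookkeeping for the theta coefficient, $\Phi_m^r(q)$, and both terms of $\Psi_{0,m}^s(q)$ checks out. Your reading of the index sets as $\mathbb{Z}_{2p}$ and $\mathbb{Z}_{2p+1}$ agrees with the even-spin analogue, Corollary~\ref{corollary:genEulerIdentityEvenSpinCorollary}.
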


For odd-spin, we will only obtain the analogs of Corollaries \ref{corollary:levelOneHalfEvenSpin},  \ref{corollary:levelOneThirdEvenSpin},  and \ref{corollary:levelTwoThirdsEvenSpin}.  We will not obtain any of analogs of Theorem \ref{theorem:fractionalLevelPlus12alt}.  Instead, we will obtain the mock theta function identities for the cases $s=0$ of the Hecke-type double-sum coefficients in Corollaries  \ref{corollary:levelOneHalfOddSpin},  \ref{corollary:levelOneThirdOddSpin},  and \ref{corollary:levelTwoThirdsOddSpin}.  These are the analogs of the cases $s=0$ of Theorems \ref{theorem:genEulerIdentityEvenSpin}, \ref{theorem:genEulerOneThirdFirstPairEvenSpin}, \ref{theorem:genEulerOneThirdSecondPairEvenSpin}, \ref{theorem:level23EvenSpinFirstQuad-sEven}, and \ref{theorem:level23EvenSpinSecondQuad-sEven}.

\subsection{On string functions of admissible-level $1/2$ and the function $\mu_{2}(q)$}

We begin with the odd-spin analog of Corollary \ref{corollary:levelOneHalfEvenSpin}.
 
\begin{corollary}\label{corollary:levelOneHalfOddSpin}  For $p=2$, $j=1$, $r\in\{0,1\}$ and $s\in\{0,1\}$ we have
\begin{align*}
(q)_{\infty}^{3}\delta_{2r+1,2s+1}
& =(q)_{\infty}^{3}j(q^{4+4s};q^{5})
\mathcal{C}_{2s+1,2r+1}^{(2,5)}(q)\\
&\qquad 
-q^{-r}  j(q^{2+2r};q^{5}) 
 \Big (  -q^{3+3s} f_{5,5,1}(q^{4s+9},q^{4};q)
 +q^{1+s} f_{5,5,1}(q^{4s+9},q^{2};q)\Big ).
\end{align*}
\end{corollary}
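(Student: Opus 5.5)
This is a direct specialization of Corollary \ref{corollary:genEulerIdentityOddSpinCorollary} (the case $j=1$ of Theorem \ref{theorem:genEulerIdentityOddSpin}) to $p=2$. The plan is to substitute $p=2$, $j=1$ and then simplify each of the three pieces of the right-hand side separately.

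\emph{Range of the parameters.} With $p=2$, $j=1$ we have $p'=5$, so $2r+1\in\mathbb{Z}_{4}$ forces $r\in\{0,1\}$. Likewise $2s+1\in\mathbb{Z}_5$ gives $s\in\{0,1\}$.

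\emph{String function term.} The theta coefficient in the corollary is $j(q^{p+p(2s+1)};q^{2p+1})$, which at $p=2$ becomes $j(q^{4+4s};q^5)$, as stated.

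\emph{Sum over $m$.} Since $p-1=1$, only $m=1$ survives. The prefactor is
\[
(-1)^{p}q^{\binom{p}{2}-p(r-s)}(-1)^m q^{\binom{m+1}{2}+m(r-p)} = -q^{1-2r+2s}\,q^{1+r-2} = -q^{2s-r}.
\]
The theta combination is
\[
\Phi_1^r(q)=j(-q^{9+4r};q^{20})-q^{3-2r}j(-q^{-1+4r};q^{20}).
\]
The double-sum factor is
\[
-q^{3+s}f_{5,5,1}(q^{4s+9},q^{4};q)+q^{1-s}f_{5,5,1}(q^{4s+9},q^{2};q).
\]
Pulling $q^{-2s}$ out of this bracket turns it into $q^{-2s}$ times the bracket in the statement, namely $-q^{3+3s}f_{5,5,1}(q^{4s+9},q^4;q)+q^{1+s}f_{5,5,1}(q^{4s+9},q^2;q)$. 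Combined with the prefactor, the factor $q^{2s}$ cancels and leaves $-q^{-r}$.

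\emph{Key step: collapsing $\Phi_1^r(q)$.} It remains to show $\Phi_1^r(q)=j(q^{2+2r};q^5)$. This is a quintuple product identity, in the form used by Hickerson--Mortenson:
\[
j(x;q)=j(-x^3q;q^{4}) \text{-type dissection}, \qquad
j(z;q)\,\text{(up to } J_1/J_2\text{ factors)} = j(-qz^3;q^3)+z\,j(-q^2z^3;q^3)\ \text{with}\ q\to q^5.
\]
The cleanest route is the two-term dissection
\[
j(x;q)=j(-x^2q;q^4)-x\,j(-x^2q^3;q^4)
\]
(Jacobi triple product split into even and odd $n$), taken with $q\to q^5$ and $x=q^{2+2r}$. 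This gives
\[
j(q^{2+2r};q^5)=j(-q^{9+4r};q^{20})-q^{2+2r}j(-q^{19+4r};q^{20}).
\]
The quasi-periodicity $j(qx;q)=-x^{-1}j(x;q)$, together with $j(x;q)=j(q/x;q)$, converts $j(-q^{19+4r};q^{20})$ into $j(-q^{1-4r};q^{20})=j(-q^{19+4r};q^{20})$ for $r=0$. For $r=1$ one instead uses $j(-q^{23};q^{20})=q^{-3}j(-q^{3};q^{20})$. In both cases one should recover $q^{3-2r}j(-q^{-1+4r};q^{20})$, and I will check the two values $r\in\{0,1\}$ directly.

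This exponent bookkeeping under quasi-periodicity is the only real obstacle. The same verification already appears implicitly in the even-spin Corollary \ref{corollary:levelOneHalfEvenSpin}, with $2r+1$ in place of $2r+2$, so I would mirror that computation.

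Assembling the three pieces yields exactly the stated identity.
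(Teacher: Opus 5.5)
Your proposal is correct and is essentially the paper's argument: specialize Corollary~\ref{corollary:genEulerIdentityOddSpinCorollary} to $p=2$ (your prefactor $-q^{2s-r}$ and the $q^{-2s}$ rebalancing are right), then collapse $\Phi_1^r$ with the two-term dissection (\ref{equation:j-split-m2}). The only difference is cosmetic: the paper first flips both thetas and dissects $j(q^{3-2r};q^5)$, whereas you dissect $j(q^{2+2r};q^5)$ directly, and then the matching step is just $j(-q^{19+4r};q^{20})=q^{1-4r}j(-q^{-1+4r};q^{20})$ from (\ref{equation:j-elliptic}) with $n=1$, valid for both values of $r$ at once. Your $r=0$ sentence invoking the flip is circular and should be replaced by this step.
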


For the case $s=0$, the Hecke-type double-sum coefficient in Corollary \ref{corollary:levelOneHalfOddSpin} evaluates to the following:
\begin{theorem}\label{theorem:genEulerOneHalfOddSpin} It holds
\begin{equation}
 -q^{3} f_{5,5,1}(q^{9},q^{4};q)
 +q f_{5,5,1}(q^{9},q^{2};q)=j(q;q^5)\left ( 1-\frac{1}{2}\mu_2(q)\right ) -\frac{1}{2}\frac{J_{1}^3}{J_{2}J_{4}}j(-q;-q^5).
 \label{equation:genEulerOneHalfOddSpinMockThetaId}
\end{equation}
\end{theorem}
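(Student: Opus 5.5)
The cleanest route avoids expanding the Hecke-type double-sums directly. Instead, I would read the identity off Corollary \ref{corollary:levelOneHalfOddSpin} together with the known evaluation of the odd-spin $1/2$-level string functions.

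\textbf{Step 1: isolate the double-sum combination.} Take $s=0$ and $r=1$ in Corollary \ref{corollary:levelOneHalfOddSpin}. Then $\delta_{3,1}=0$ and $q^{-r}j(q^{2+2r};q^5)=q^{-1}j(q^4;q^5)=q^{-1}j(q;q^5)$, so
\begin{equation*}
-q^{3}f_{5,5,1}(q^{9},q^{4};q)+qf_{5,5,1}(q^{9},q^{2};q)
=\frac{q\,(q)_\infty^3\, j(q^4;q^5)}{j(q;q^5)}\,\mathcal{C}^{1/2}_{1,3}(q)
=q\,(q)_\infty^3\,\mathcal{C}^{1/2}_{1,3}(q).
\end{equation*}
As a consistency check, I would also take $r=0$. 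Here $\delta_{1,1}=1$ and $j(q^2;q^5)=j(q^3;q^5)$, and this gives a second expression in terms of $\mathcal{C}^{1/2}_{1,1}$. Comparing the two tests the symmetry $\mathcal{C}_{1,1}$ versus $\mathcal{C}_{1,3}$, using $C_{m,\ell}=C_{N-m,N-\ell}$ together with the $q$-power normalizations in $s_\lambda$.

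\textbf{Step 2: substitute the known evaluation.} Insert Theorem \ref{theorem:fractionalLevelPlus12alt} with $m=1$ and $\ell=3$. Since $\frac12(m-\ell)=-1$, we get $q^{-1}j(q^4;q^5)=q^{-1}j(q;q^5)$. The bracket becomes $(-1)\cdot\frac12\mu_2(q)+1$, because the $k=0$ term of the finite sum equals $1$. The theta term is $-q^{-1}\Theta_3(q)=-q^{-1}\cdot\frac12\frac{J_1^3}{J_2J_4}j(-q;-q^5)$. Multiplying by $q$ then gives exactly
\begin{equation*}
j(q;q^5)\left(1-\tfrac12\mu_2(q)\right)-\tfrac12\frac{J_1^3}{J_2J_4}j(-q;-q^5),
\end{equation*}
which is \eqref{equation:genEulerOneHalfOddSpinMockThetaId}.

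\textbf{Main obstacle.} The difficulty is purely bookkeeping. First, Corollary \ref{corollary:levelOneHalfOddSpin} must be correctly specialized from Corollary \ref{corollary:genEulerIdentityOddSpinCorollary}. For $p=2$ and $m=1$, I need to check that $\Phi_1^r$ collapses to a single theta function $j(q^{2+2r};q^5)$ via the quintuple product identity, with prefactor $-q^{-r}$. Second, the convention $\mathcal{C}$ versus $C$ must be consistent between Theorem \ref{theorem:fractionalLevelPlus12alt} and Corollary \ref{corollary:levelOneHalfOddSpin}; both use the modified $\mathcal{C}$, so no $s_\lambda$ shift should appear.

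\textbf{Fallback.} If the spin $\ell=3$ case causes trouble, I would instead use $r=0$, $\ell=1$. There $\Theta_1=-\frac12\frac{J_1^3}{J_2J_4}j(-q^3;-q^5)$, and I would convert using $j(-q^3;-q^5)$ versus $j(-q;-q^5)$ with the relation $j(x;q)=j(q/x;q)$ and $(-q^5)/(-q^3)=q^2$. If that also fails, I would prove the identity directly by expanding $f_{5,5,1}$ through \cite[Theorem 1.4]{HM} into Appell functions $m(\cdot,\cdot;q^{20})$. I would then match these to $\mu_2(q)=2m(-q,-1;q^4)+2m(-q,q;q^4)$ using the standard Appell-function changes of base, and verify the remaining theta-function identity by comparing finitely many $q$-expansion coefficients, since both sides are modular forms of known weight and level.
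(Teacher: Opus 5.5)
Your proof is correct, and it takes a genuinely different route from the paper's. With $s=0$, $r=1$ in Corollary~\ref{corollary:levelOneHalfOddSpin}, you have $\delta_{3,1}=0$, and both terms carry the same theta function $j(q^4;q^5)$. So the double-sum combination equals $q\,(q)_\infty^3\mathcal{C}^{1/2}_{1,3}(q)$ exactly; a direct $q$-expansion of both sides of this intermediate identity gives $q-2q^3-q^4+2q^6+q^7+q^8-\cdots$. Theorem~\ref{theorem:fractionalLevelPlus12alt} at $(m,\ell)=(1,3)$ then produces the stated right-hand side. There is no circularity, since the corollary rests only on the odd-spin quasi-periodicity and Proposition~\ref{propostion:genEulerIdentitySW}.

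The paper instead works directly with the double-sums:
\begin{itemize}
\item it expands them by Corollary~\ref{corollary:f551-HeckeExpansion};
\item it collapses the Appell part to $j(q;q^5)\bigl(1-2m(-q,-1;q^4)\bigr)$ (Proposition~\ref{proposition:level12OddSpinAppellForm});
\item it passes to $\mu_2(q)$ via \eqref{equation:2nd-mu(q)};
\item it disposes of the leftover theta quotients with the Frye--Garvan identity of Proposition~\ref{proposition:level12OddSpinThetaId}.
\end{itemize}
Your argument needs no new theta-function verification. It also shows that the theorem is literally the $(m,\ell)=(1,3)$ case of \cite[Theorem 2.1]{BoMo2026} read through the generalized Euler identity. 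The price is that it imports a complete evaluation of a $1/2$-level string function, itself proved in \cite{BoMo2026} with the same Hecke/Appell machinery. So it does not extend to the $1/3$- and $2/3$-level odd-spin identities, where no such string-function evaluations are available. That is why the paper, as its concluding remarks note, prefers a uniform direct proof over cobbling together the $1/2$-level cases from \cite{BoMo2026}.

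Some peripheral slips, none of which affect the main argument:
\begin{itemize}
\item Your $r=0$ ``consistency check'' has nothing to do with $C_{m,\ell}=C_{N-m,N-\ell}$, which makes no sense at $N=1/2$. Combined with Theorem~\ref{theorem:fractionalLevelPlus12alt}, it amounts to the theta identity $j(q;q^5)j(-q^3;-q^5)+j(q^2;q^5)j(-q;-q^5)=2J_2J_4$.
\item In the fallback, \eqref{equation:j-flip} gives $j(-q^3;-q^5)=j(q^2;-q^5)$, not $j(-q;-q^5)$.
\item For $p=2$, the collapse of $\Phi_1^r$ to $j(q^{2+2r};q^5)$ comes from \eqref{equation:j-split-m2}, not the quintuple product identity.
\end{itemize}
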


\subsection{On string functions of admissible-level $1/3$ and the functions $f_{3}(q)$ and $\omega_{3}(q)$}

Next we have the odd-spin analog of Corollary \ref{corollary:levelOneThirdEvenSpin}. 
\begin{corollary}\label{corollary:levelOneThirdOddSpin}  For $p=3$, $j=1$, $r\in\{ 0,1,2\}$ and $s\in\{ 0,1,2\}$ we have
\begin{align*}
(q)_{\infty}^{3}\delta_{2r+1,2s+1}
& =(q)_{\infty}^{3}j(q^{6s+6};q^{7})
\mathcal{C}_{2s+1,2r+1}^{(3,7)}(q)\\
&\qquad 
+q^{1-2r}
\frac{j(q^{9+2r};q^{14})j(q^{4+4r};q^{28})}{J_{28}}\\
&\qquad \times \Big (  -q^{4+4s} f_{7,7,1}(q^{6s+13},q^{5};q)
 +q^{2+2s} f_{7,7,1}(q^{6s+13},q^{3};q)\Big ) \\
 &\qquad 
-q^{-r}  \frac{j(q^{2+2r};q^{14})j(q^{18+4r};q^{28})}{J_{28}}\\
&\qquad \times \Big (  -q^{5+5s} f_{7,7,1}(q^{6s+13},q^{6};q)
 +q^{1+s} f_{7,7,1}(q^{6s+13},q^{2};q)\Big ).
\end{align*}%
\end{corollary}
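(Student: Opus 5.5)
This is a direct specialization of Corollary \ref{corollary:genEulerIdentityOddSpinCorollary} to $p=3$ (so $j=1$, $(p,p')=(3,7)$). The work is to simplify the theta-function coefficients $\Phi_m^r$ for $m=1,2$ into the stated products, and to match the powers of $q$.

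First I set $p=3$ in the leading term. The coefficient $j(q^{p+p(2s+1)};q^{2p+1})$ becomes $j(q^{6s+6};q^7)$, which gives the first line immediately. The prefactor $(-1)^{p}q^{\binom{p}{2}-p(r-s)}$ becomes $-q^{3-3r+3s}$. The sum over $m$ runs over $m\in\{1,2\}$, with weights $(-1)^m q^{\binom{m+1}{2}+m(r-3)}$, namely $-q^{r-2}$ for $m=1$ and $q^{2r-3}$ for $m=2$. The double-sum factors become
\begin{gather*}
-q^{3+m(s+1)}f_{7,7,1}(q^{6s+13},q^{4+m};q)+q^{3-m(s+1)}f_{7,7,1}(q^{6s+13},q^{4-m};q).
\end{gather*}
For $m=1$ this gives the pair with $q^5,q^3$. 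For $m=2$ it gives $q^6,q^2$. After pulling out a common power of $q$, the $q$-exponents in the statement should come out as $q^{4+4s},q^{2+2s}$ and $q^{5+5s},q^{1+s}$. The $s$-dependence in the prefactor $q^{3s}$ has to be absorbed into these, so I will track exponents carefully. In particular $q^{3s}\cdot q^{3+(s+1)}$ and $q^{3s}\cdot q^{3-(s+1)}$ must be rebalanced against the $q^{3s}$ and the powers coming from the theta simplification.

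Second, and this is the main obstacle, I must show that each $\Phi_m^r$, a difference of two theta functions with base $q^{42}$, is a product of two theta functions divided by $J_{28}$. Concretely, I need
\begin{gather*}
\Phi_1^r=j(-q^{13+6r};q^{42})-q^{5-2r}j(-q^{-1+6r};q^{42})
\end{gather*}
to be proportional to $j(q^{2+2r};q^{14})j(q^{18+4r};q^{28})/J_{28}$. I also need $\Phi_2^r$ to be proportional to $j(q^{9+2r};q^{14})j(q^{4+4r};q^{28})/J_{28}$. This is exactly the same kind of identity used in \cite{BoMo2025} for even spin in Theorem \ref{theorem:newMockThetaIdentitiespP37m0ell2r}, with $2r+1$ replaced by $2r+2$.

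I would prove it via the quintuple product identity. Write $j(x;q^{14})j(x^2\cdot\,;q^{28})$-type products in the form $j(x;q)j(q^2/x^2;q^2)/J_2$-style quintuple products with $q\to q^{14}$. Then expand the quintuple product as a difference of two base-$q^{42}$ theta functions, which is the standard form $j(q^2x^3;q^3)+xj(qx^3;q^3)$ up to sign and $x\to -x$. Choosing $x$ appropriately, as a power of $q$ with a sign, matches the two terms of $\Phi_m^r$. The elliptic transformation (\ref{equation:j-elliptic}) is then used to shift the exponents into range and produce the leftover powers $q^{1-2r}$ and $q^{-r}$.

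Finally, I collect all $q$-powers. The two overall coefficients should become $+q^{1-2r}$ for the $m=2$ product and $-q^{-r}$ for the $m=1$ product, with the double-sum pairs as stated. A useful check is that the $r$-dependence of the exponents matches the even-spin Corollary \ref{corollary:levelOneThirdEvenSpin} under $2r+1\mapsto 2r+2$. If a sign or power mismatch arises, it will come from the choice of branch in the quintuple product, so that is the step I would verify numerically first.
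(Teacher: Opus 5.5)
Your route is the paper's: specialize Corollary~\ref{corollary:genEulerIdentityOddSpinCorollary} at $p=3$, expand the terms $m=1,2$, and collapse each $\Phi_m^r$ using (\ref{equation:j-flip}), the quintuple product (\ref{equation:H1Thm1.0}) with $q\to q^{14}$, and (\ref{equation:j-flip}) again. Your prefactor $-q^{3-3r+3s}$, your weights $-q^{r-2}$ and $q^{2r-3}$, and your double-sum pairs are all correct.

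\textbf{The step that fails.} The two theta identities you set out to prove are attached to the wrong $m$. The one you state for $\Phi_1^r$ is false: $j(q^{2+2r};q^{14})j(q^{18+4r};q^{28})$ and $j(q^{9+2r};q^{14})j(q^{4+4r};q^{28})$ are not monomial multiples of each other. At $r=0$ they are $1-q^2+O(q^{10})$ and $1-q^4+O(q^5)$. What is true, exactly and with no leftover monomial, is
\[
\Phi_1^r=\frac{j(q^{9+2r};q^{14})j(q^{4+4r};q^{28})}{J_{28}},\qquad \Phi_2^r=\frac{j(q^{2+2r};q^{14})j(q^{18+4r};q^{28})}{J_{28}}.
\]
For $\Phi_1^r$, flip to $j(-q^{29-6r};q^{42})-q^{5-2r}j(-q^{43-6r};q^{42})$. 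Then apply (\ref{equation:H1Thm1.0}) with $q\to q^{14}$ and $x=-q^{5-2r}$, and flip the result $j(q^{5-2r};q^{14})j(q^{24-4r};q^{28})$. For $\Phi_2^r$, (\ref{equation:j-elliptic}) gives $q^{10-4r}j(-q^{-8+6r};q^{42})=q^{2+2r}j(-q^{34+6r};q^{42})$, and then $x=-q^{2+2r}$ works directly. Your quintuple product is also misquoted: $j(q^2x^3;q^3)+xj(qx^3;q^3)$ needs $x^{-1}$ in place of $x$. Use (\ref{equation:H1Thm1.0}) as stated: $j(qx^3;q^3)+xj(q^2x^3;q^3)=j(-x;q)j(qx^2;q^2)/J_2$.

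\textbf{Where the $q$-powers come from.} Because the theta identities are exact, the powers $q^{1-2r}$ and $q^{-r}$ do not come from elliptic shifts as you anticipate. They are simply the prefactors: $-q^{3-3r+3s}\cdot(-q^{r-2})=q^{1-2r+3s}$ for $m=1$, and $-q^{3-3r+3s}\cdot q^{2r-3}=-q^{-r+3s}$ for $m=2$. So $+q^{1-2r}$ goes with $m=1$ (the $(q^5,q^3)$ pair) and $-q^{-r}$ goes with $m=2$ (the $(q^6,q^2)$ pair). This is the reverse of your final claim, which also contradicts your own correctly computed weights. The check you propose against Corollary~\ref{corollary:levelOneThirdEvenSpin} would have caught this, since there the $m=1$ product is $j(q^{8+2r};q^{14})j(q^{2+4r};q^{28})$.

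With the assignment corrected, absorbing $q^{3s}$ gives $q^{3s}(-q^{4+s},q^{2-s})=(-q^{4+4s},q^{2+2s})$ and $q^{3s}(-q^{5+2s},q^{1-2s})=(-q^{5+5s},q^{1+s})$. This is exactly the statement, and it is the paper's proof.
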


For the cases $s=0$, the two Hecke-type double-sum coefficients found in Corollary \ref{corollary:levelOneThirdOddSpin} evaluate to the following mock theta conjecture-like identities:
\begin{theorem}  \label{theorem:genEulerOneThirdOddSpin} It holds
\begin{gather*}
-q^4f_{7,7,1}(q^{13},q^{5};q)+q^2f_{7,7,1}(q^{13},q^{3};q)
=j(q;q^7)\left ( 1-\frac{1}{2}f_3(q^2)\right) 
-\frac{1}{2}\frac{J_{1}^2}{J_{4}}\cdot j(-q;-q^7),\\
-q^5f_{7,7,1}(q^{13},q^{6};q)+qf_{7,7,1}(q^{13},q^{2};q)
=j(q;q^7)\left ( 1-q\omega_{3}(-q)\right ) 
-\frac{J_{1}^3J_{4}}{J_{2}^3}\cdot J_{12,28}.
\end{gather*}
\end{theorem}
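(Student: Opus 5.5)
Each of the two identities will be proved by expanding both Hecke-type double-sums as Appell functions plus theta functions via \cite[Theorem 1.4]{HM}. Both identities have the shape $-q^{a}f_{7,7,1}(q^{13},q^{b};q)+q^{c}f_{7,7,1}(q^{13},q^{d};q)$. Here $f_{7,7,1}$ has discriminant $D=b^2-ac=49-7=42>0$ with $a=7$, $b=7$, $c=1$, so $a\mid b$ and the expansion of \cite[Theorem 1.4]{HM} applies. Taking $n=a/\gcd(a,b)$-type parameters appropriately, each double-sum becomes a finite sum of terms
\[
j(\text{monomial};q^{7})\, m\big(\text{monomial},\, z;q^{42}\big)
\]
plus a theta quotient $\theta_{n,p}$. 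The free parameter $z$ should be chosen so that the Appell functions match those appearing in the known Appell-function forms of $f_3(q^2)$ and $\omega_3(-q)$ from \cite[Section 5]{HM}.

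The next step is to reduce these Appell functions to base $q^{2}$ or $q^{4}$ type Appell functions. For this we use the standard Appell function identities: the change of $z$ formula, the relation $m(x,z;q)=m(x,qz;q)$-type shifts, $m(x,z;q)=x^{-1}m(x^{-1},z^{-1};q)$, and the decomposition of $m(x,z;q^{n})$-sums in terms of $m(x^{n},\cdot;q^{n^{2}})$. The aim is to collect the Appell functions in the combination
\[
-q^{a}f_{7,7,1}(q^{13},q^{b};q)+q^{c}f_{7,7,1}(q^{13},q^{d};q),
\]
so that the mock parts combine exactly into $-\tfrac12 j(q;q^7)f_3(q^2)$ for the first identity and into $-q\,j(q;q^7)\omega_3(-q)$ for the second.

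A useful shortcut exists if the even-spin counterparts (Corollary \ref{corollary:genEulerOneThirdEvenSpin} and Theorems \ref{theorem:genEulerOneThirdFirstPairEvenSpin}, \ref{theorem:genEulerOneThirdSecondPairEvenSpin}) are already in hand. The functional equation of $f_{a,b,c}$ in $x\mapsto q^{a}x$ relates $f_{7,7,1}(q^{13},y;q)$ to $f_{7,7,1}(q^{6},y\,q^{7};q)$-type sums. Applying that equation, followed by the symmetry $f_{a,b,c}(x,y;q)=-q^{?}f_{a,b,c}(q^{a+b}/x\cdot\ldots)$, converts the odd-spin double-sums into the even-spin ones at a negative or boundary value of $s$. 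Concretely, $6s+10=13$ would require $s=1/2$, so the shift has to be combined with the $y$-variable identity $f_{a,b,c}(x,y;q)=-\tfrac{q^{a+b+c}}{xy}f_{a,b,c}(q^{2a+b}/x,q^{2c+b}/y;q)$. I would first try to match to $s=-1$ of Theorems \ref{theorem:genEulerOneThirdFirstPairEvenSpin}, \ref{theorem:genEulerOneThirdSecondPairEvenSpin}. The swap of $f_3$ and $\omega_3$ between the pairs suggests exactly such a reflection, and the constants $1$ would come from the finite $k$-sums.

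The remaining terms, once the mock pieces are isolated, form a sum of theta-function products. This includes the $\theta$-term of \cite[Theorem 1.4]{HM}, together with the theta parts from the Appell-function-to-mock-theta conversions. The final step is to verify that this sum equals $j(q;q^7)-\tfrac12\frac{J_1^2}{J_4}j(-q;-q^7)$ (respectively $j(q;q^7)-\frac{J_1^3J_4}{J_2^3}J_{12,28}$). I expect this theta-function identity to be the main obstacle. To handle it, I would write everything over a common base $q^{28}$ (or $q^{84}$), using the quasi-elliptic property (\ref{equation:j-elliptic}). I would then prove the resulting identity by the standard method of checking that both sides are modular functions of the same weight on a congruence subgroup (or elliptic functions in an auxiliary variable with matching poles), and comparing sufficiently many $q$-expansion coefficients via the Sturm bound.
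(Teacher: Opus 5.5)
Your direct route is the paper's route. You specialize \cite[Theorem 1.4]{HM} as in Corollary~\ref{corollary:f771-HeckeExpansion}, collapse the Appell part, and convert to $f_3(q^2)$ and $\omega_3(-q)$ with the change-of-$z$ formula (Lemma~\ref{lemma:alternateAppellForms}). The leftover theta identity is then checked by modular-function methods, which is what the Frye--Garvan verification of Propositions~\ref{proposition:level13OddSpinFirstPairThetaId} and \ref{proposition:level13OddSpinSecondPairThetaId} amounts to.

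Two details of your sketch are off.
\begin{itemize}
\item In the expansion, $j(x;q^7)$ multiplies $m(-q^{6}yx^{-1},-1;q^{6})$, not a base-$q^{42}$ Appell function. The base-$q^{42}$ Appell function carries the factor $j(y;q)$, which vanishes because $y$ is an integral power of $q$. So nothing needs to be reduced to base $q^2$ or $q^4$: base $q^6$ is already the base of $f_3(q^2)$ and $\omega_3(-q)$.
\item With (\ref{equation:j-elliptic}), (\ref{equation:mxqz-flip}) and (\ref{equation:mxqz-fnq-x}), the two Appell parts become $j(q;q^7)\bigl(1-2m(-q^2,-1;q^6)\bigr)$ and $j(q;q^7)\bigl(1-2m(-q,-1;q^6)\bigr)$. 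The constant $1$ therefore comes from $m(qx,z;q)=1-xm(x,z;q)$, not from a finite sum.
\end{itemize}

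Your shortcut through the even-spin theorems is a genuinely different and workable route, but not at $s=-1$, and no reflection (\ref{equation:H7eq1.14}) is needed. The functional equation (\ref{equation:Gen1}) moves $x$ by powers of $q^{7}$, so you need $6s+10\equiv 13 \pmod 7$, i.e.\ $s\equiv 4 \pmod 7$. The value $s=-1$ cannot be reached even with reflection, since $4\not\equiv 13$ and $4\not\equiv 21-13 \pmod 7$.

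Taking $(\ell,k)=(0,3)$, resp.\ $(-1,4)$, in (\ref{equation:Gen1}) gives
\[
f_{7,7,1}(q^{13},q^{3};q)=-q^{12}f_{7,7,1}(q^{34},q^{6};q)+T_1,\qquad f_{7,7,1}(q^{13},q^{5};q)=-q^{-8}f_{7,7,1}(q^{34},q^{2};q)+T_2 .
\]
Here the $j(q^{7m}y;q)$ terms vanish, and $T_1,T_2$ are Laurent polynomials times $j(q;q^7)$. One checks that $q^{9}$ times the first odd-spin left side differs from the $s=4$ left side of Theorem~\ref{theorem:genEulerOneThirdSecondPairEvenSpin} by exactly the same multiple of $j(q;q^7)$ as the two right sides do. On the right, that multiple comes from the finite $k$-sum together with the $1$ in $1-\tfrac12 f_3(q^2)$. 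Likewise, the second odd-spin identity is the $s=4$ case of Theorem~\ref{theorem:genEulerOneThirdFirstPairEvenSpin}.

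The trade-off between the two routes:
\begin{itemize}
\item Your shortcut needs no new theta-function identity, since $s=4$ is already among the cases verified in Propositions~\ref{proposition:level13EvenSpinFirstPairThetaId} and \ref{proposition:level13EvenSpinSecondPairThetaId}.
\item The paper's direct route keeps the odd-spin proof independent of the even-spin machinery, at the price of two extra Frye--Garvan checks.
\end{itemize}
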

\begin{remark}  Using the Hecke-type double-sum functional equation property (\ref{equation:fabc-fnq-1}), we see that the above two identities are equivalent to
{\allowdisplaybreaks \begin{gather}
q^4f_{7,7,1}(q^{13},q^{5};q)+f_{7,7,1}(q^{6},q^{2};q)
=j(q;q^7)\frac{1}{2}f_3(q^2)
+\frac{1}{2}\frac{J_{1}^2}{J_{4}}\cdot j(-q;-q^7),\\
q^5f_{7,7,1}(q^{13},q^{6};q)+f_{7,7,1}(q^{6},q;q)
=j(q;q^7)\cdot q\omega_{3}(-q) 
+\frac{J_{1}^3J_{4}}{J_{2}^3}\cdot J_{12,28}.
\end{gather}}%
All of our Hecke-type double-sum identities can be adjusted with the functional equation properties in (\ref{equation:H7eq1.14}), (\ref{equation:fabc-fnq-1}), and (\ref{equation:fabc-fnq-2}), but we choose to keep the double-sums in the form given by the generalized Euler identity.
\end{remark}

\subsection{On string functions of admissible-level $2/3$ and the functions $\psi_{3}(q)$ and $\chi_{3}(q)$}

Finally we have the odd-spin analog of Corollary \ref{corollary:levelTwoThirdsEvenSpin}. 

\begin{corollary}\label{corollary:levelTwoThirdsOddSpin} For $p=3$ and $j=2$, $r\in\{0,1,2\}$ and $s\in\{0,1,2,3\}$ we have
{\allowdisplaybreaks \begin{align*}
(q)_{\infty}^{3}\delta_{2r+1,2s+1}
& =(q)_{\infty}^{3}j(-q^{10+6s};q^{16})
\mathcal{C}_{2s+1,2r+1}^{(3,8)}(q)
-(q)_{\infty}^{3}j(-q^{18+6s};q^{16})
\mathcal{C}_{2s+3,2r+1}^{(3,8)}(q)\\
&\qquad + q^{1-2r}
\frac{j(q^{10+2r};q^{16})j(q^{4+4r};q^{32})}{J_{32}}\\,
&\qquad  \times \Big ( q^{8+4s}
 f_{4,4,1}(-q^{6s+26},-q^{13};q^4)
  - q^{5+2s}
 f_{4,4,1}(-q^{6s+26},-q^{9};q^4)\\
 &\qquad \qquad -q^{14+4s}
 f_{4,4,1}(-q^{6s+34},-q^{15};q^4)
  +q^{9+2s}
 f_{4,4,1}(-q^{6s+34},-q^{11};q^4)\Big ) \\
 &\qquad -q^{-r}
\frac{j(q^{2+2r};q^{16})j(q^{20+4r};q^{32})}{J_{32}}\\
&\qquad  \times \Big ( q^{9+5s}
 f_{4,4,1}(-q^{6s+26},-q^{15};q^4)
 - q^{3+s}
 f_{4,4,1}(-q^{6s+26},-q^{7};q^4)\\
 &\qquad \qquad  -q^{16+5s}
 f_{4,4,1}(-q^{6s+34},-q^{17};q^4)
   +q^{6+s}
 f_{4,4,1}(-q^{6s+34},-q^{9};q^4)\Big ) .
\end{align*}}%
\end{corollary}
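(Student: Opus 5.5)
This corollary is the specialization $p=3$, $j=2$ of Theorem \ref{theorem:genEulerIdentityOddSpin}, so the plan is direct substitution followed by simplification. With $(p,p')=(3,8)$ we have $2p+j=8$, $j(2p+j)=16$ and $2p(2p+j)=48$. The outer sum runs over $a\in\{0,1\}$ and the inner sum over $m\in\{1,2\}$.

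\textbf{Finite sum of string functions.} For $a=0$ the exponent is $j(a+p)+\binom{j}{2}+p(2(a+s)+1)=6+1+6s+3=10+6s$. The sign factor is $-(-1)^j=-1$, which gives $j(-q^{10+6s};q^{16})\mathcal{C}_{2s+1,2r+1}$. For $a=1$ the exponent is $8+1+6s+9=18+6s$, and the prefactor $(-1)^1q^{0}$ supplies the minus sign in front of the second string function term.

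\textbf{Theta coefficients $\Phi_m^r$.} Each $\Phi_m^r$ is a difference of two theta functions with base $q^{48}$. I will collapse it to a single product with the quintuple product identity, or equivalently the standard identity $j(x;q)$-combination used in \cite{BoMo2025}, written in the form
\[
j(-xq^{a};q^{3a})-x\,j(-x^{-1}q^{a};q^{3a})\ \text{type}=\frac{j(x;q^{a/\cdot})j(\cdot)}{J_{\cdot}}.
\]
This turns $\Phi_1^r$ into $j(q^{2+2r};q^{16})j(q^{20+4r};q^{32})/J_{32}$ and $\Phi_2^r$ into $j(q^{10+2r};q^{16})j(q^{4+4r};q^{32})/J_{32}$, each up to explicit powers of $q$. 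The computation should mirror the even-spin Corollary \ref{corollary:levelTwoThirdsEvenSpin}, with $2r+1$ replaced by $2r+2$. Combined with the prefactor
\[
(-1)^p q^{\binom{p}{2}-p(r-s)}(-1)^m q^{\binom{m+1}{2}+m(r-3)},
\]
the resulting $q$-powers should produce $q^{1-2r}$ for $m=2$ and $-q^{-r}$ for $m=1$. The leftover powers $q^{3s}$ and so on are absorbed into the double sums.

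\textbf{Double sums $\Psi_{a,m}^s$.} With $j=2$ the double sum is $f_{8,8,2}(x,y;q^2)$, whose arguments carry the sign $-(-1)^j=-1$. I will convert each $f_{8,8,2}(x,y;q^2)$ into $f_{4,4,1}(\cdot,\cdot;q^4)$ by direct rescaling of the quadratic form: $8\binom{r}{2}$ in $q^2$ equals $4\binom{r}{2}$ in $q^4$. The arguments keep their exponents, and I will expect them to be $-q^{26+6s}$ for $a=0$ and $-q^{34+6s}$ for $a=1$. The second arguments come from $3+2(a\pm m+4)$, namely $13,9,15,11$ for $m=2$ and $15,7,17,9$ for $m=1$. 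I will then check the outer monomial prefactors $q^{\binom{a}{2}+1+2(a+3)\pm m(\cdots)}$ against those displayed, after multiplying in $(-1)^aq^{3a}$ and the surplus power from the $\Phi$ step.

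The main obstacle is the exponent bookkeeping. This includes the careful application of the quintuple product to $\Phi_m^r$, the reconciliation of the powers of $q$ across all eight terms, and any needed shift of a theta argument by the quasi-elliptic property (\ref{equation:j-elliptic}). I would verify this last step numerically for $r=s=0$ before trusting it.
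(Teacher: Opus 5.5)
Your plan is the paper's own proof. Specialize Theorem~\ref{theorem:genEulerIdentityOddSpin} to $(p,j)=(3,2)$, expand the sums over $a$ and $m$, collapse each $\Phi_m^r$ with (\ref{equation:j-flip}) and the quintuple product (\ref{equation:H1Thm1.0}) (with $q\to q^{16}$), and rewrite $f_{8,8,2}(x,y;q^2)=f_{4,4,1}(x,y;q^4)$.

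Your $m$-labels are swapped throughout. The swap is consistent, so the grouping in the final formula comes out right, but the intermediate claims as written are false:
\begin{itemize}
\item $m=1$ is the term with prefactor $q^{1-2r}$ and second arguments $13,9,15,11$. Here $\Phi_1^r=j(q^{10+2r};q^{16})j(q^{4+4r};q^{32})/J_{32}$ exactly, with no surplus power of $q$; take $x=-q^{6-2r}$ after flipping.
\item $m=2$ gives the prefactor $-q^{-r}$ and second arguments $15,7,17,9$. Here $\Phi_2^r=j(q^{2+2r};q^{16})j(q^{20+4r};q^{32})/J_{32}$; take $x=-q^{2+2r}$ after one use of (\ref{equation:j-elliptic}).
\end{itemize}
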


For the cases $s=0$, the Hecke-type double-sum coefficients of Corollary \ref{corollary:levelTwoThirdsOddSpin}  give us mock theta conjecture-like identities involving two different third-order mock theta functions of Ramanujan.  They read
\begin{equation*}
\psi_3(q):=\sum_{n\ge 1}\frac{q^{n^2}}{(q;q^2)_n}, \  
\chi_3(q):=\sum_{n\ge 0}\frac{q^{n^2}(-q)_n}{(-q^3;q^3)_n}.
\end{equation*}

\begin{theorem} \label{theorem:genEulerTwoThirdsOddSpin} It holds
\begin{align}
 q^{8}& f_{4,4,1}(-q^{26},-q^{13};q^4)
  - q^{5}f_{4,4,1}(-q^{26},-q^{9};q^4)\label{equation:level23OddSpinFirstQuadMockTheta}\\
 &\qquad  -q^{14} f_{4,4,1}(-q^{34},-q^{15};q^4)
  +q^{9}f_{4,4,1}(-q^{34},-q^{11};q^4)\notag \\
 &= j(-q^{10};q^{16})\left (-\psi_{3}(-q) \right ) 
   -  j(-q^{2};q^{16})\left ( 1-q\left (1- \chi_{3}(q)\right )  \right )+3q^2\frac{J_{3}J_{12}^3}{J_{4}J_{6}^2}j(-q^{2};q^{16}),\notag
   \end{align}
\begin{align}
    q^{9}&
 f_{4,4,1}(-q^{26},-q^{15};q^4)
 - q^{3}
 f_{4,4,1}(-q^{26},-q^{7};q^4)\label{equation:level23OddSpinSecondQuadMockTheta}\\
 &\qquad   -q^{16}
 f_{4,4,1}(-q^{34},-q^{17};q^4)
   +q^{6}
 f_{4,4,1}(-q^{34},-q^{9};q^4)\notag\\
&=  j(-q^{10};q^{16})\left (1 - \chi_{3}(q)\right ) 
 -q j(-q^{2};q^{16})\left (q^{-2} + \psi_{3}(-q)\right )+3q\frac{J_{3}J_{12}^3}{J_{4}J_{6}^2}j(-q^{10};q^{16}).\notag
 \end{align}
\end{theorem}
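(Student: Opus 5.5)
The strategy is to turn each quadruple of Hecke-type double-sums into Appell functions and theta functions using the Hickerson--Mortenson expansion \cite[Theorem 1.4]{HM}. We then identify the Appell functions with $\psi_3$ and $\chi_3$ and verify the leftover theta identity with the theory of modular forms.

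First we apply \cite[Theorem 1.4]{HM} to each $f_{4,4,1}(x,y;q^4)$ with $(a,b,c)=(4,4,1)$. Since $b^2-ac=12>0$, the expansion expresses each double-sum as a sum of Appell functions of the shape $m(\cdot,\cdot;q^{4\cdot 12})$, or after reduction of the form $m(\cdot,\cdot;q^{12})$ and $m(\cdot,\cdot;q^{3})$, with theta coefficients. It also leaves a theta-function remainder $\theta_{4,4,1}(x,y;q^4)$. We choose the free parameters in the Appell functions so that, in each of \eqref{equation:level23OddSpinFirstQuadMockTheta} and \eqref{equation:level23OddSpinSecondQuadMockTheta}, the four sums contribute Appell functions with a common base and nome.

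Next we collect the Appell functions. Using the standard properties of $m(x,z;q)$ (changing $z$, the shift $x\to qx$, and $m(x,z;q)=x^{-1}m(x^{-1},z^{-1};q)$), we reduce them to a small set. The expected outcome is exactly two mock pieces, multiplied by $j(-q^{10};q^{16})$ and $j(-q^{2};q^{16})$ respectively. We then match these pieces with the known Appell-function forms of the third-order functions from \cite[Section 5]{HM}:
\begin{equation*}
\psi_3(q)=-m(q,-q^3;q^{12})\cdot(\text{theta factor}),
\end{equation*}
with a similar formula for $\chi_3(q)$ in terms of $m(\cdot,\cdot;q^3)$. The required replacements are $q\to -q$ for $\psi_3$ and the polynomial corrections such as $1-q(1-\chi_3(q))$, which come from the $x\to qx$ shifts.

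After subtracting the mock parts, both sides differ by a pure theta expression. We must show that this expression equals the stated $3q^{2}\frac{J_3J_{12}^3}{J_4J_6^2}j(-q^2;q^{16})$ term (respectively the $3q\,\frac{J_3J_{12}^3}{J_4J_6^2}j(-q^{10};q^{16})$ term) plus the $j(-q^{10};q^{16})$ and $j(-q^2;q^{16})$ contributions. We will do this by writing everything over the common base $q^{48}$ or $q^{96}$, checking that both sides are modular forms of the same weight on a common $\Gamma_1(N)$, and comparing enough coefficients to exceed the Sturm bound. Alternatively we can use the theta-function identities (Weierstrass three-term relations) from \cite{HM}.

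The main obstacle is the Appell-function consolidation. The $\theta_{a,b,c}$ remainders from \cite[Theorem 1.4]{HM} are complicated sums over $d,e,f$, and the four double-sums produce Appell functions with several different second arguments. Combining them requires repeated use of the change-of-$z$ formula, and each use generates new theta quotients. Keeping these terms organized so that the final theta identity becomes tractable is the delicate part. A fallback is the route via Theorem \ref{theorem:generalQuasiPeriodicityOddSpin}: insert the known $(3,8)$ odd-spin string-function evaluations and solve Corollary \ref{corollary:levelTwoThirdsOddSpin} for the double-sum coefficients. The difficulty there is that this requires $\mathcal{C}_{1,2r+1}^{(3,8)}$ and $\mathcal{C}_{3,2r+1}^{(3,8)}$ in terms of $\psi_3$ and $\chi_3$, and those expressions would have to be derived first.
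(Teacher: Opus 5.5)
Your overall plan matches the paper's proof: expand each $f_{4,4,1}$ by Corollary~\ref{corollary:f441-HeckeExpansion} (the case $a=b=4$, $c=1$ of \cite[Theorem 1.4]{HM} with $z=-1$), simplify the Appell part, convert it to third-order mock theta functions, and verify the leftover theta identity by modular methods. The paper does the last step with the Frye--Garvan packages, which is the same idea as your Sturm-bound check.

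\textbf{The gap: the identification with $\chi_3$ and $\psi_3$.} The Appell functions produced by the expansion have nomes $q^{12}$ and $q^{48}$. The forms in \cite[Section 5]{HM}, however, are
\begin{equation*}
\chi_3(q)=m(-q,q;q^3)+\frac{J_{3,6}^2}{J_1},\qquad \psi_3(q)=-m(q,-q;-q^3)+\frac{qJ_{12}^3}{J_4J_{3,12}},
\end{equation*}
with nomes $q^3$ and $-q^3$.

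\begin{itemize}
\item The three tools you list (change of $z$, the shift $x\to qx$, and the flip) all preserve the nome. They can never bring your Appell functions into either of these forms.
\item The formula you intend to match against, $\psi_3(q)=-m(q,-q^3;q^{12})\cdot(\text{theta factor})$, is not valid. The correction is an additive theta quotient, and passing from nome $-q^3$ to $q^{12}$ produces a two-term Appell combination.
\item Your claim that the expansion yields Appell functions of nome $q^3$ ``after reduction'' is unfounded without some nome-changing identity.
\end{itemize}

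The missing ingredient is the splitting formula \cite[Theorem 3.5]{HM} with $n=2$ (Corollary~\ref{corollary:msplit-n2}). Applied to $m(-q,q;q^3)$ it gives
\begin{equation*}
m(-q,q;q^3)=m(-q^5,-1;q^{12})-q^{-1}m(-q,-1;q^{12})+\frac{q^{-1}}{2}\frac{J_3J_6J_8J_{12}^3}{J_2J_4J_{24}^3}.
\end{equation*}
Since $\psi_3(-q)=-m(-q,q;q^3)-qJ_{12}^3/(J_4\overline{J}_{3,12})$, the single combination $M:=m(-q^5,-1;q^{12})-q^{-1}m(-q,-1;q^{12})$ equals both $\chi_3(q)$ and $-\psi_3(-q)$ up to theta quotients (Lemma~\ref{lemma:alternateAppellFormsChiPsi}). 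So you should not expect two different mock pieces. The same $M$ multiplies both $j(-q^{10};q^{16})$ and $j(-q^2;q^{16})$, and the statement writes it as $-\psi_3(-q)$ in one place and as $\chi_3(q)$ in the other.

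\textbf{The consolidation is easier than you expect.} It needs no change of $z$ at all.
\begin{itemize}
\item With $z=-1$, the four terms $j(y;q^4)\,m(q^{24}xy^{-4},-1;q^{48})$ cancel in pairs. Their Appell functions coincide, and by \eqref{equation:j-elliptic} and \eqref{equation:j-flip} we have $q^8j(-q^{13};q^4)=q^{14}j(-q^{15};q^4)$ and $q^5j(-q^9;q^4)=q^9j(-q^{11};q^4)$.
\item The flip and the shift $x\to qx$ at nome $q^{12}$ then turn the remaining four terms of the first quadruple into
\begin{equation*}
j(-q^{10};q^{16})\,M-j(-q^{2};q^{16})\bigl(1-q+qM\bigr).
\end{equation*}
\item The same steps turn the second quadruple into $j(-q^{10};q^{16})(1-M)-q\,j(-q^{2};q^{16})(q^{-2}-M)$.
\end{itemize}
Substituting the two expressions for $M$ and checking the resulting theta identities (Propositions~\ref{proposition:level23OddSpinFirstQuadThetaId} and~\ref{proposition:level23OddSpinSecondQuadThetaId}) completes the proof. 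With the $n=2$ splitting formula added, your plan goes through.
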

\begin{remark}
Similar to the situation we had with the even-spin, $2/3$-level string functions, we can combine the simple quotient of theta functions with the term involving $\chi_3(q)$.
\end{remark}


 \section{Concluding remarks and an overview of the paper}
A natural question to ask is, ``Can one recover the results of \cite{BoMo2025}?''  We do lose the ``$z$'' from the Weyl--Kac formula and its polar-finite decomposition \cite[Theorem 2.3]{BoMo2025}.  For $1/2$-level, we easily recover  (\ref{equation:mockThetaConj2502r-2ndmu}), and for $1/3$-level one would need \cite{FG} to  recover Theorem (\ref{theorem:newMockThetaIdentitiespP37m0ell2r}).  Beyond that, it seems unlikely to recover the other results of \cite{BoMo2025, KM25}.  However, we do gain new a new type of polar-finite forms as well as new mock theta conjecture-like identities for Hecke-type double-sums, but there is some overlap with our new $1/2$-level polar-finite forms.

 \smallskip
 Using functional equation properties of Hecke-type double-sums, see (\ref{equation:H7eq1.14}), (\ref{equation:fabc-fnq-1}), (\ref{equation:fabc-fnq-2}), the two identities (\ref{equation:genEulerOneHalfEvenSpinMockThetaId}) and (\ref{equation:genEulerOneHalfOddSpinMockThetaId})  are easily seen to be equivalent to the following two identities from \cite{BoMo2026}:
\begin{gather*}
f_{5,5,1}(q^3,1;q)-q^2f_{5,5,1}(q^7,q^4;q)=\frac{1}{2}j(q^2;q^5)\mu_2(q)-\frac{1}{2}\frac{J_{1}^3}{J_{2}J_{4}}j(-q^3;-q^{5}),\\
f_{5,5,1}(q^4,q;q)-qf_{5,5,1}(q^6,q^3;q)=\frac{1}{2}j(q;q^5)\mu_2(q)+\frac{1}{2}\frac{J_{1}^3}{J_{2}J_{4}}j(-q;-q^{5}).
\end{gather*}
One could cobble together proofs for Theorems \ref{theorem:genEulerOneHalfEvenSpin} and \ref{theorem:genEulerOneHalfOddSpin} from the results in \cite{BoMo2026}, but we would like to keep our proofs in this paper to be uniform.
 
 \smallskip
We begin with the preliminaries.  In Section \ref{section:prelim} we review basic facts about theta functions, Hecke-type double-sums, Appell functions, and mock theta functions.  In Section \ref{section:FryeGarvan}, we list the theta function identities needed to prove the mock theta conjecture-like identities found in Sections \ref{section:resultsEvenSpin} and  \ref{section:resultsOddSpin}.  We have already proved the identities using Maple packages of Frye and Garvan \cite{FG}.

\smallskip
In Section \ref{section:quasiPeriodicOddSpin}, we prove the odd-spin quasi-periodic relation found in Theorem \ref{theorem:generalQuasiPeriodicityOddSpin}.   In Section \ref{section:genEulerTheorems}, we apply our quasi-periodicity relation for even-spin, respectively odd-spin, to the generalized Euler identity and prove Theorem  \ref{theorem:genEulerIdentityEvenSpin}, respectively Theorem \ref{theorem:genEulerIdentityOddSpin}.

\smallskip
 In Section \ref{section:genEulerCorollaries}, we specialize our new modified general Euler identity theorems for even and odd-spin to obtain the results for admissible level $1/2$ in Corollaries \ref{corollary:levelOneHalfEvenSpin} and \ref{corollary:levelOneHalfOddSpin}, admissible level $1/3$ in Corollaries \ref{corollary:levelOneThirdEvenSpin} and \ref{corollary:levelOneThirdOddSpin}, and admissible level $2/3$ in Corollaries \ref{corollary:levelTwoThirdsEvenSpin} and \ref{corollary:levelTwoThirdsOddSpin}.

\smallskip
In Section \ref{section:mockThetaTheoremsEvenSpin}, we prove the new mock theta function identities for even-spin, $1/2$-level string functions in Theorem \ref{theorem:genEulerOneHalfEvenSpin}, for $1/3$-level string functions in Theorems \ref{theorem:genEulerOneThirdFirstPairEvenSpin} and \ref{theorem:genEulerOneThirdSecondPairEvenSpin}, and for $2/3$-level string functions in Theorem \ref{theorem:level23EvenSpinFirstQuad-sEven}, \ref{theorem:level23EvenSpinFirstQuad-sOdd}, \ref{theorem:level23EvenSpinSecondQuad-sEven}, and \ref{theorem:level23EvenSpinSecondQuad-sOdd}.

\smallskip
In Section \ref{section:mockThetaTheoremsOddSpin}, we prove the new mock theta function identities for odd-spin, $1/2$-level string functions in Theorem \ref{theorem:genEulerOneHalfOddSpin}, for $1/3$-level string functions in Theorems \ref{theorem:genEulerOneThirdOddSpin}, and for $2/3$-level string functions in Theorem \ref{theorem:genEulerTwoThirdsOddSpin}.


 \section{Technical preliminaries}\label{section:prelim}
 In this section we recall some basic properties of theta functions, Appell functions, Hecke-type double-sums, and mock theta functions.  We recall the formulas that are necessary to convert from Hecke-type double-sums to Appell functions.  We close by giving the technical lemmas needed to convert the Appell function expressions to mock theta functions.
 
 \smallskip
We begin with theta functions.  We recall useful product rearrangements, that may be used without mention.  We have
\begin{subequations}
\begin{gather}
\overline{J}_{0,1}=2\overline{J}_{1,4}=\frac{2J_2^2}{J_1},  \overline{J}_{1,2}=\frac{J_2^5}{J_1^2J_4^2},   J_{1,2}=\frac{J_1^2}{J_2},   \overline{J}_{1,3}=\frac{J_2J_3^2}{J_1J_6}, \notag\\
J_{1,4}=\frac{J_1J_4}{J_2},   J_{1,6}=\frac{J_1J_6^2}{J_2J_3},   \overline{J}_{1,6}=\frac{J_2^2J_3J_{12}}{J_1J_4J_6}.\notag
\end{gather}
\end{subequations}
We list several useful theta function properties:
\begin{subequations}
{\allowdisplaybreaks \begin{gather}
j(q^n x;q)=(-1)^nq^{-\binom{n}{2}}x^{-n}j(x;q), \ \ n\in\mathbb{Z},\label{equation:j-elliptic}\\
j(x;q)=j(q/x;q)\label{equation:j-flip},\\
j(x;q)={J_1}j(x,qx,\dots,q^{n-1}x;q^n)/{J_n^n} \ \ {\text{if $n\ge 1$,}}\label{equation:1.10}\\
j(x;-q)={j(x;q^2)j(-qx;q^2)}/{J_{1,4}},\label{equation:1.11}\\
j(z;q)=\sum_{k=0}^{m-1}(-1)^k q^{\binom{k}{2}}z^k
j\big ((-1)^{m+1}q^{\binom{m}{2}+mk}z^m;q^{m^2}\big ),\label{equation:j-split}\\
j(x^n;q^n)={J_n}j(x,\zeta_nx,\dots,\zeta_n^{n-1}x;q^n)/{J_1^n} \ \ {\text{if $n\ge 1$.}}\label{equation:1.12}
\end{gather}}%
\end{subequations}
A useful specialization of (\ref{equation:j-split}) reads
\begin{equation}
j(z;q)=j(-qz^2;q^4)-zj(-q^3z^2;q^4).\label{equation:j-split-m2}
\end{equation}

We recall the quintuple product identity:
\begin{proposition}\cite[Theorem 1.0]{H1}   For generic $x,y,z\in \mathbb{C}^*$ 
 \begin{subequations}
{\allowdisplaybreaks \begin{gather}
j(qx^3;q^3)+xj(q^2x^3;q^3)=j(-x;q)j(qx^2;q^2)/J_2={J_1j(x^2;q)}/{j(x;q)}.\label{equation:H1Thm1.0}
\end{gather}}
\end{subequations}
\end{proposition}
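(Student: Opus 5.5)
The plan is to prove the second equality by rearranging infinite products, and the first by a standard "same quasi-periodicity, same zeros" argument for theta functions in $x$.

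\emph{Second equality.} From the product form in (\ref{equation:JTPid}) we have $(x;q)_\infty(-x;q)_\infty=(x^2;q^2)_\infty$, and similarly with $x$ replaced by $q/x$. Hence
\[
j(x^2;q^2)=\frac{J_2}{J_1^2}\,j(x;q)\,j(-x;q).
\]
Next, (\ref{equation:1.10}) with $n=2$ gives
\[
j(x^2;q)=\frac{J_1}{J_2^2}\,j(x^2;q^2)\,j(qx^2;q^2).
\]
Substituting the first display into the second and dividing by $j(x;q)$ yields $J_1j(x^2;q)/j(x;q)=j(-x;q)j(qx^2;q^2)/J_2$. This step is routine.

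\emph{First equality.} Put $G(x):=J_1j(x^2;q)/j(x;q)$ and $F(x):=j(qx^3;q^3)+xj(q^2x^3;q^3)$.

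First, $G$ is holomorphic on $\mathbb{C}^*$, because every zero $x=q^k$ of $j(x;q)$ is also a zero of $j(x^2;q)$.

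Second, I check with (\ref{equation:j-elliptic}) that $F$ and $G$ have the same quasi-periodicity, $f(qx)=-q^{-1}x^{-3}f(x)$.
\begin{itemize}
\item For $G$: $j(q^2x^2;q)=q^{-1}x^{-4}j(x^2;q)$ and $j(qx;q)=-x^{-1}j(x;q)$.
\item For $F$: $j(q^4x^3;q^3)=-q^{-1}x^{-3}j(qx^3;q^3)$ and $qx\,j(q^5x^3;q^3)=-q^{-1}x^{-2}j(q^2x^3;q^3)$.
\end{itemize}

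A nonzero holomorphic function on $\mathbb{C}^*$ with this multiplier has exactly three zeros, counted with multiplicity, in a fundamental annulus $|q|<|x|\le 1$. This follows from the argument principle, since the multiplier contributes $x^{-3}$.

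Third, I locate the zeros of $G$. They occur where $x^2\in q^{\mathbb{Z}}$ but $x\notin q^{\mathbb{Z}}$, giving the representatives $x=-1$ and $x=\pm q^{1/2}$.

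Fourth, I verify that $F$ vanishes at these three points, using (\ref{equation:j-flip}) and (\ref{equation:j-elliptic}).
\begin{itemize}
\item At $x=-1$: $F(-1)=j(-q;q^3)-j(-q^2;q^3)=0$ by the flip.
\item At $x=q^{1/2}$: $j(q^{5/2};q^3)=j(q^{1/2};q^3)$ and $q^{1/2}j(q^{7/2};q^3)=-j(q^{1/2};q^3)$, so the two terms cancel.
\item At $x=-q^{1/2}$: the analogous computation gives $j(-q^{1/2};q^3)-j(-q^{1/2};q^3)=0$.
\end{itemize}

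It follows that $F/G$ is an elliptic function of $x$ (invariant under $x\mapsto qx$) with no poles, so it is constant.

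Finally, I fix the constant by evaluating at $x=1$. On one side, $F(1)=j(q;q^3)+j(q^2;q^3)=2J_1$. On the other, $G(1)=\lim_{x\to1}J_1j(x^2;q)/j(x;q)=2J_1$, since the factors $(1-x^2)/(1-x)\to 2$ and the remaining products agree. Hence $F=G$.

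The main point needing care is the zero-counting step. One must make sure that the three zeros of $G$ are simple and distinct modulo $q$. It is also worth noting that a possible extra zero of $F$ does no harm: the constancy argument only needs $F/G$ to be pole-free.
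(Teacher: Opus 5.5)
Your proof is correct. Note that the paper gives no proof of this proposition: it quotes it as Theorem 1.0 of Hickerson \cite{H1} and uses it only as a black box, to merge pairs of theta functions in the proofs of the level $1/3$ and $2/3$ corollaries (e.g.\ Corollaries \ref{corollary:levelOneThirdEvenSpin} and \ref{corollary:levelTwoThirdsEvenSpin}). Your route is therefore genuinely different, because it is self-contained.

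\begin{itemize}
\item The second equality is pure product rearrangement, via $j(x;q)j(-x;q)=J_1^2j(x^2;q^2)/J_2$ and (\ref{equation:1.10}) with $n=2$.
\item The first equality is the standard Liouville argument: $F$ and $G$ share the multiplier $-q^{-1}x^{-3}$, $F$ vanishes on the zero set of $G$, and you normalize at $x=1$.
\end{itemize}

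I checked the individual computations and they hold: both multipliers, the cancellations at $x=-1$ and $x=\pm q^{1/2}$ (via (\ref{equation:j-flip}) and (\ref{equation:j-elliptic})), and $F(1)=G(1)=2J_1$.

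One streamlining is available. Since you prove the second equality first, you can read holomorphy and the zero set of $G$ directly from $G(x)=j(-x;q)j(qx^2;q^2)/J_2$. The zeros are $x=-q^k$ and $x=\pm q^{k+1/2}$, each simple and pairwise distinct. That makes the argument-principle count optional, although in your setup it does legitimately certify that the three distinct zeros you found are simple.

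The citation buys the paper brevity. Your argument buys independence: the identity then rests only on (\ref{equation:JTPid}), (\ref{equation:j-elliptic}), (\ref{equation:j-flip}) and (\ref{equation:1.10}), all of which are already in the paper's preliminaries.
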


We recall some functional equation properties for Hecke-type double-sums.
\begin{proposition} \cite[Proposition 6.2]{HM} \label{proposition:H7eq1.14}For $x,y\in\mathbb{C}^*$
\begin{equation}
f_{a,b,c}(x,y;q)=-\frac{q^{a+b+c}}{xy}f_{a,b,c}(q^{2a+b}/x,q^{2c+b}/y;q).\label{equation:H7eq1.14}
\end{equation}
\end{proposition}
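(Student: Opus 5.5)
The proposal is to prove the identity by the involutive change of summation indices $r\mapsto -r-1$, $s\mapsto -s-1$ in the definition \eqref{equation:fabc-def2}. This map is a bijection of $\mathbb{Z}^2$. It sends the quadrant $\{r,s\ge 0\}$ onto $\{r,s<0\}$ and vice versa. Hence the signed combination $\sum_{r,s\ge0}-\sum_{r,s<0}$ turns into $-\big(\sum_{R,S\ge 0}-\sum_{R,S<0}\big)$, and this is where the overall minus sign comes from. Wherever the double-sum converges absolutely, reindexing is harmless; the identity also holds as an identity of formal series in the usual setting $a,c>0$ for Hecke-type sums.

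Next we track each factor of the summand under $r=-R-1$, $s=-S-1$.

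The sign becomes $(-1)^{r+s}=(-1)^{R+S}$.

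The monomials become $x^ry^s=(xy)^{-1}x^{-R}y^{-S}$.

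For the quadratic exponent we use $\binom{-R-1}{2}=\binom{R}{2}+2R+1$ and $(R+1)(S+1)=RS+R+S+1$. These give
\begin{equation*}
a\tbinom{r}{2}+brs+c\tbinom{s}{2}
=a\tbinom{R}{2}+bRS+c\tbinom{S}{2}+(2a+b)R+(2c+b)S+(a+b+c).
\end{equation*}

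Collecting terms, the summand becomes
\begin{equation*}
\frac{q^{a+b+c}}{xy}\,(-1)^{R+S}\Big(\frac{q^{2a+b}}{x}\Big)^{R}\Big(\frac{q^{2c+b}}{y}\Big)^{S}q^{a\binom{R}{2}+bRS+c\binom{S}{2}}.
\end{equation*}
Summing this with the reversed quadrant sign yields exactly $-\frac{q^{a+b+c}}{xy}f_{a,b,c}(q^{2a+b}/x,q^{2c+b}/y;q)$.

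The only step needing care is the binomial bookkeeping, in particular checking the linear terms $(2a+b)R$ and $(2c+b)S$ and the constant $a+b+c$. There is no real obstacle beyond that.
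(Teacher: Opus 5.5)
Your proof is correct. The involution $r\mapsto -r-1$, $s\mapsto -s-1$ swaps the two quadrants, which produces the overall minus sign, and your exponent bookkeeping $(2a+b)R+(2c+b)S+(a+b+c)$ checks out; for positive $a,b,c$ each quadrant sum converges absolutely, so the reindexing is justified. The paper gives no proof of its own and simply cites Hickerson--Mortenson; this substitution in the definition of $f_{a,b,c}$ is the standard argument for the identity.
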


\begin{proposition} \cite[Proposition 6.3]{HM}  \label{proposition:f-functionaleqn} For $x,y\in\mathbb{C}^*$ and $\ell, k \in \mathbb{Z}$
\begin{align}
f_{a,b,c}(x,y;q)&=(-x)^{\ell}(-y)^kq^{a\binom{\ell}{2}+b\ell k+c\binom{k}{2}}f_{a,b,c}(q^{a\ell+bk}x,q^{b\ell+ck}y;q) \notag\\
&\ \ \ \ +\sum_{m=0}^{\ell-1}(-x)^mq^{a\binom{m}{2}}j(q^{mb}y;q^c)+\sum_{m=0}^{k-1}(-y)^mq^{c\binom{m}{2}}j(q^{mb}x;q^a),\label{equation:Gen1}
\end{align}
where when $b<a$, we follow the usual convention:
\begin{equation}
\sum_{r=a}^{b} c_r:=-\sum_{r=b+1}^{a-1} c_r.\label{equation:sumconvention}
\end{equation}
\end{proposition}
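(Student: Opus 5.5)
The plan is to prove the identity directly from the definition \eqref{equation:fabc-def2} by shifting the summation indices and tracking how the region of summation changes. The first step is to write the sign-restricted double sum in a symmetric form. Define $\operatorname{sg}(n):=1$ for $n\ge 0$ and $\operatorname{sg}(n):=-1$ for $n<0$. Then
\[
\mathbf{1}_{r\ge0,\,s\ge0}-\mathbf{1}_{r<0,\,s<0}=\tfrac12\big(\operatorname{sg}(r)+\operatorname{sg}(s)\big).
\]
Hence
\[
f_{a,b,c}(x,y;q)=\sum_{r,s\in\Z}\tfrac12\big(\operatorname{sg}(r)+\operatorname{sg}(s)\big)\,T(r,s),\qquad T(r,s):=(-1)^{r+s}x^ry^sq^{a\binom{r}{2}+brs+c\binom{s}{2}}.
\]
These series converge absolutely for the indefinite forms in use ($b^2>ac$ and $a,c>0$), so rearrangements are harmless.

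Next, take the first term on the right-hand side of \eqref{equation:Gen1} and rewrite it in the original indices. The key observation is the expansion $\binom{r+\ell}{2}=\binom{r}{2}+r\ell+\binom{\ell}{2}$, together with its analogue for $s$ and the cross term $b(r+\ell)(s+k)$. With these, a direct computation gives
\[
(-x)^{\ell}(-y)^kq^{a\binom{\ell}{2}+b\ell k+c\binom{k}{2}}\,(-1)^{r+s}(q^{a\ell+bk}x)^r(q^{b\ell+ck}y)^sq^{a\binom{r}{2}+brs+c\binom{s}{2}}=T(r+\ell,s+k).
\]
So after the substitution $r\mapsto r-\ell$, $s\mapsto s-k$, the first term equals
\[
\sum_{r,s}\tfrac12\big(\operatorname{sg}(r-\ell)+\operatorname{sg}(s-k)\big)T(r,s).
\]

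Subtracting this from $f_{a,b,c}(x,y;q)$ leaves
\[
\sum_{r,s}\tfrac12\big(\operatorname{sg}(r)-\operatorname{sg}(r-\ell)\big)T(r,s)+\sum_{r,s}\tfrac12\big(\operatorname{sg}(s)-\operatorname{sg}(s-k)\big)T(r,s).
\]
Now $\tfrac12(\operatorname{sg}(r)-\operatorname{sg}(r-\ell))$ equals $1$ for $0\le r\le \ell-1$ when $\ell>0$. It equals $-1$ for $\ell\le r\le -1$ when $\ell<0$, and it vanishes otherwise. This is precisely the convention \eqref{equation:sumconvention}.

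For each fixed $r=m$ in that finite range, the sum over all $s\in\Z$ is
\[
(-x)^mq^{a\binom{m}{2}}\sum_{s}(-1)^sq^{c\binom{s}{2}}(q^{bm}y)^s=(-x)^mq^{a\binom{m}{2}}j(q^{mb}y;q^c),
\]
by the Jacobi triple product \eqref{equation:JTPid}. Symmetrically, the $s$-strip yields $\sum_{m=0}^{k-1}(-y)^mq^{c\binom{m}{2}}j(q^{mb}x;q^a)$.

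The only delicate point is the bookkeeping: checking the exponent identity for $T(r+\ell,s+k)$, and checking that the signs in the strip sums match the convention \eqref{equation:sumconvention} for negative $\ell,k$. Both are routine once the $\operatorname{sg}$ reformulation is in place.
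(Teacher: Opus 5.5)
Your proof is correct. The paper gives no proof of this proposition; it quotes it from \cite[Proposition 6.3]{HM}, so there is no internal argument to compare with.

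The most natural alternative would be to verify only the one-step relations \eqref{equation:fabc-fnq-2} and \eqref{equation:fabc-fnq-1}. Each is a single index shift, in which the line $r=0$ (resp.\ $s=0$) produces the theta function. One then iterates, running the recursion backwards for negative $\ell$ or $k$. One also needs \eqref{equation:j-elliptic} to normalize the theta terms from the second round of shifts, which arrive as $j(q^{mb+a\ell}x;q^a)$.

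Your weight formulation $\tfrac12(\operatorname{sg}(r)+\operatorname{sg}(s))$ has several advantages over that route:
\begin{itemize}
\item it handles all $(\ell,k)\in\Z^2$ at once;
\item it yields the theta terms directly in final form, without \eqref{equation:j-elliptic};
\item it treats the rectangle $0\le r<\ell$, $0\le s<k$ correctly: the weight difference there is $2$, so those terms legitimately appear in both strip sums;
\item it shows why \eqref{equation:sumconvention} is exactly the right convention for negative $\ell$ or $k$.
\end{itemize}

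One correction to your convergence remark. The hypothesis $b^2>ac$ plays no role in absolute convergence, and ``$b^2>ac$, $a,c>0$'' would not suffice anyway (take $b<-\sqrt{ac}$). What you actually use is:
\begin{itemize}
\item $a,c>0$ and $b\ge 0$, true for every triple in the paper, which make $\tfrac a2r^2+brs+\tfrac c2s^2$ positive definite on both closed quadrants, so the quadrant sums and their translates converge absolutely;
\item $a,c>0$, which makes each single line $\sum_{s\in\Z}$ or $\sum_{r\in\Z}$ an absolutely convergent theta series.
\end{itemize}

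Indefiniteness matters only as a warning. Since $b^2>ac$, you cannot split $\tfrac12\operatorname{sg}(r)T(r,s)+\tfrac12\operatorname{sg}(s)T(r,s)$ into two full-lattice sums, because those diverge. Your argument only splits the difference of the weights, which is supported on finitely many lines. You should state explicitly that this is the one place where absolute convergence is invoked.
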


\begin{corollary}\cite[Corollary 6.4]{HM} \label{corollary:fabc-funceqnspecial}We have two simple specializations:
\begin{align}
f_{a,b,c}(x,y;q) =&-yf_{a,b,c}(q^bx,q^cy;q)+j(x;q^a),\label{equation:fabc-fnq-1}\\
f_{a,b,c}(x,y;q) =&-xf_{a,b,c}(q^ax,q^by;q)+j(y;q^c).\label{equation:fabc-fnq-2}
\end{align}
\end{corollary}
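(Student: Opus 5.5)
Both identities are the two simplest instances of the general functional equation \eqref{equation:Gen1} of Proposition~\ref{proposition:f-functionaleqn}, so the plan is to specialize $(\ell,k)$ there rather than work from the definition \eqref{equation:fabc-def2} directly.

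For \eqref{equation:fabc-fnq-1}, I take $\ell=0$, $k=1$ in \eqref{equation:Gen1}.
\begin{itemize}
\item The prefactor becomes $(-x)^0(-y)^1q^{a\binom{0}{2}+0+c\binom{1}{2}}=-y$.
\item The shifted arguments are $(q^{b}x,q^{c}y)$.
\item The first finite sum runs over $m$ from $0$ to $\ell-1=-1$. Under the convention \eqref{equation:sumconvention} this equals minus the sum from $0$ to $-1$, which is empty, so it is $0$.
\item The second finite sum has only the term $m=0$, namely $(-y)^0q^{0}j(x;q^a)=j(x;q^a)$.
\end{itemize}
This is exactly \eqref{equation:fabc-fnq-1}.

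For \eqref{equation:fabc-fnq-2}, I take $\ell=1$, $k=0$ symmetrically. The prefactor is $-x$ and the arguments become $(q^{a}x,q^{b}y)$. The second sum is now empty, and the first sum contributes $j(q^{0}y;q^c)=j(y;q^c)$.

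There is no real obstacle here; the only point needing care is checking that the empty-range sums vanish under the stated convention. As a sanity check, \eqref{equation:fabc-fnq-1} can also be verified directly from \eqref{equation:fabc-def2}. Substituting $s\to s+1$ in $-yf_{a,b,c}(q^bx,q^cy;q)$ reproduces the summand of $f_{a,b,c}(x,y;q)$, because
\[
a\tbinom{r}{2}+brs+c\tbinom{s}{2}+br+cs=a\tbinom{r}{2}+br(s+1)+c\tbinom{s+1}{2}.
\]
The index ranges then differ only in the boundary row $s=0$, $r\in\mathbb{Z}$, which sums to $\sum_r(-1)^rx^rq^{a\binom{r}{2}}=j(x;q^a)$ by the triple product \eqref{equation:JTPid}. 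The same computation with $r\to r+1$ gives \eqref{equation:fabc-fnq-2}.
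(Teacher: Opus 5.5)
Your proposal is correct and follows the same route as the source \cite{HM}, which the paper simply cites: the corollary is the specialization $(\ell,k)=(0,1)$ and $(\ell,k)=(1,0)$ of the general functional equation \eqref{equation:Gen1}, and in each case the empty-range sum contributes zero. Your independent check from the definition \eqref{equation:fabc-def2} is also correct: shift $s\to s+1$, and the boundary row $s=0$ sums to $j(x;q^a)$ by the triple product.
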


We have the following the degenerate cases for Hecke-type double-sum coefficients found in Corollaries \ref{corollary:genEulerOneHalfEvenSpin} and \ref{corollary:genEulerOneThirdEvenSpin}
\begin{lemma} \label{lemma:degenerateDoubleSumCoeffs} We have
\begin{gather}
 -q^{8} f_{5,5,1}(q^{15},q^{4};q) +q^{3} f_{5,5,1}(q^{15},q^{2};q)=0,\label{equation:degenOneHalfEvenSpin}\\
-q^{15}
 f_{7,7,1}(q^{28},q^{5};q) +q^{8} f_{7,7,1}(q^{28},q^{3};q)=0\label{equation:degenOneThirdFirstPairEvenSpin},\\
-q^{18}
 f_{7,7,1}(q^{28},q^{6};q) +q^{4} f_{7,7,1}(q^{28},q^{2};q)=0.\label{equation:degenOneThirdSecondPairEvenSpin}
\end{gather}
\end{lemma}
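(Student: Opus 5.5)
The plan is to show that in each of the three identities the two Hecke-type double-sums are constant multiples of one another. To do this we combine the involution \eqref{equation:H7eq1.14} with the general shift formula \eqref{equation:Gen1}. The key point is that every theta term produced by \eqref{equation:Gen1} vanishes: each has the form $j(q^{n};q^{c})$ with $c\mid n$, or $j(q^{n};q^{a})$ with $a\mid n$, and these are zero by the remark $j(q^n;q)=0$ together with \eqref{equation:j-elliptic}.

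For \eqref{equation:degenOneHalfEvenSpin}, we take $(a,b,c)=(5,5,1)$. First apply \eqref{equation:H7eq1.14} to $f_{5,5,1}(q^{15},q^{2};q)$. Since $a+b+c=11$, $2a+b=15$ and $2c+b=7$, this gives
\begin{equation*}
f_{5,5,1}(q^{15},q^{2};q)=-q^{-6}f_{5,5,1}(1,q^{5};q).
\end{equation*}
Next apply \eqref{equation:Gen1} to $f_{5,5,1}(1,q^5;q)$. We choose $\ell,k$ so that $(q^{5\ell+5k},q^{5\ell+k+5})=(q^{15},q^4)$, which forces $\ell=-1$ and $k=4$. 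The sums in \eqref{equation:Gen1} then involve only two kinds of terms:
\begin{itemize}
\item $j(q^{5m}\cdot q^5;q)$, which vanishes because the base is $q^1$;
\item $j(q^{5m};q^5)$, which vanishes because the argument is an integral power of $q^5$.
\end{itemize}
For the negative value $\ell=-1$ we interpret the sum via the convention \eqref{equation:sumconvention}, and the same vanishing still applies. Hence $f_{5,5,1}(1,q^5;q)=\pm q^{e}f_{5,5,1}(q^{15},q^4;q)$. The sign and the exponent $e$ are read off from the prefactor $(-x)^{\ell}(-y)^kq^{a\binom{\ell}{2}+b\ell k+c\binom{k}{2}}$ with $x=1$, $y=q^5$, $\ell=-1$, $k=4$. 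It then remains to check that the resulting constant equals $q^{8}/q^{3}=q^5$, so that the two terms in \eqref{equation:degenOneHalfEvenSpin} cancel.

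The two level-$1/3$ identities go the same way with $(a,b,c)=(7,7,1)$. For \eqref{equation:degenOneThirdFirstPairEvenSpin}, apply \eqref{equation:H7eq1.14} to $f_{7,7,1}(q^{28},q^3;q)$. Here $a+b+c=15$, $2a+b=21$ and $2c+b=9$, so the argument becomes $(q^{-7},q^{6})$. We then shift back to $(q^{28},q^5)$ with $\ell+k=5$ and $7\ell+k=-1$, that is $\ell=-1$ and $k=6$. All theta terms are of the form $j(q^{7\cdot(\text{integer})};q^7)$ or have base $q$, so they vanish. For \eqref{equation:degenOneThirdSecondPairEvenSpin} we start from $f_{7,7,1}(q^{28},q^2;q)$. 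The involution sends this to $(q^{-7},q^{7})$, and we solve $7(\ell+k)=35$, $7\ell+k+7=6$ for $\ell,k$ in the same manner. Each case ends with a comparison of the explicit power of $q$ and the sign against the ratios $q^{15}/q^{8}$ and $q^{18}/q^4$ respectively.

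The only real obstacle is the exponent bookkeeping: tracking the sign $(-1)^{\ell+k}$ and the quadratic exponent in \eqref{equation:Gen1}, including the case of negative $\ell$, to confirm that the constants match exactly and the combinations really vanish. The vanishing of the theta corrections is automatic. If the exponents come out off by a unit power, the first thing to check is the orientation of the two auxiliary identities \eqref{equation:fabc-fnq-1} and \eqref{equation:fabc-fnq-2}. These can be used one step at a time in place of the single large shift, which makes each sign easy to verify.
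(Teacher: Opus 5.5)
Your proposal is correct and is essentially the paper's argument: the involution \eqref{equation:H7eq1.14} followed by the shift \eqref{equation:Gen1} (which is just the iterate of \eqref{equation:fabc-fnq-1} and \eqref{equation:fabc-fnq-2}), with every theta correction vanishing. The bookkeeping you left open does close: the prefactors in \eqref{equation:Gen1} come out to $-q^{11}$, $-q^{23}$ and $-q^{29}$, giving $f_{5,5,1}(q^{15},q^{2};q)=q^{5}f_{5,5,1}(q^{15},q^{4};q)$, $f_{7,7,1}(q^{28},q^{3};q)=q^{7}f_{7,7,1}(q^{28},q^{5};q)$ and $f_{7,7,1}(q^{28},q^{2};q)=q^{14}f_{7,7,1}(q^{28},q^{6};q)$, which are exactly the required ratios.
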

\begin{proof} [Proof of Lemma \ref{lemma:degenerateDoubleSumCoeffs}] The three proofs are just repeated use of identities (\ref{equation:H7eq1.14}), (\ref{equation:fabc-fnq-1}), and (\ref{equation:fabc-fnq-2}).
\end{proof}
Expressions found in \cite{HM,MZ2023} allow one to express Hecke-type double-sums in terms of Appell functions.  We list some Appell function properties.   The Appell function properties can be found in both \cite{HM, Zw2}, but we will use the notation found in \cite{HM}.  We have
\begin{proposition} \cite[Proposition 3.1]{HM} For generic $x,z\in \mathbb{C}^*$
{\allowdisplaybreaks \begin{subequations}
{\allowdisplaybreaks \begin{gather}
m(x,z;q)=m(x,qz;q),\label{equation:mxqz-fnq-z}\\
m(x,z;q)=x^{-1}m(x^{-1},z^{-1};q),\label{equation:mxqz-flip}\\
m(qx,z;q)=1-xm(x,z;q).\label{equation:mxqz-fnq-x}
\end{gather}}%
\end{subequations}}
\end{proposition}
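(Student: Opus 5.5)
The three identities are direct consequences of the definition (\ref{equation:m-def}). For each one I will manipulate the bilateral sum termwise: shift the summation index, use the quasi-periodicity (\ref{equation:j-elliptic}) of $j(z;q)$, and apply elementary partial-fraction rewritings of $1/(1-q^{r-1}xz)$. Everything is absolutely convergent for generic $x,z$, so rearrangements are harmless.

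For (\ref{equation:mxqz-fnq-z}), I replace $z$ by $qz$ in (\ref{equation:m-def}).
\begin{itemize}
\item The denominator $j(qz;q)$ equals $-z^{-1}j(z;q)$ by (\ref{equation:j-elliptic}).
\item The sum becomes $\sum_r(-1)^rq^{\binom{r}{2}+r}z^r/(1-q^{r}xz)$.
\item Shifting $r\mapsto r-1$ and using $\binom{r-1}{2}+r-1=\binom{r}{2}$ turns it into $-z^{-1}\sum_r(-1)^rq^{\binom r2}z^r/(1-q^{r-1}xz)$.
\item The factors $-z^{-1}$ cancel between numerator and denominator, giving $m(x,z;q)$.
\end{itemize}

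For (\ref{equation:mxqz-fnq-x}), I first use (\ref{equation:mxqz-fnq-z}) in the form $m(qx,z;q)=m(qx,q^{-1}z;q)$.
\begin{itemize}
\item The denominators become $1-q^{r-1}xz$, the same as in $m(x,z;q)$.
\item The prefactor is $1/j(q^{-1}z;q)=-qz^{-1}/j(z;q)$.
\item The summand carries $q^{\binom r2}q^{-r}z^r$.
\item I then use the partial fraction
\[
\frac{q^{1-r}z^{-1}}{1-q^{r-1}xz}=q^{1-r}z^{-1}+\frac{x}{1-q^{r-1}xz}.
\]
\item This splits $m(qx,z;q)$ into $-x\,m(x,z;q)$ plus $-j(z;q)^{-1}\sum_r(-1)^rq^{\binom r2-r+1}z^{r-1}$.
\item After $r\mapsto r+1$, the leftover sum is $-\sum_r(-1)^rq^{\binom r2}z^r=-j(z;q)$ by (\ref{equation:JTPid}), so that term equals $+1$.
\end{itemize}

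For (\ref{equation:mxqz-flip}), I write out $m(x^{-1},z^{-1};q)$ directly.
\begin{itemize}
\item Use $j(z^{-1};q)=-z^{-1}j(z;q)$, from (\ref{equation:j-flip}) and (\ref{equation:j-elliptic}).
\item Rewrite $1/(1-q^{r-1}x^{-1}z^{-1})=-xzq^{1-r}/(1-q^{1-r}xz)$.
\item Substitute $r=2-s$, so that the denominator becomes $1-q^{s-1}xz$ and $z^{-r}\cdot z=z^{s-1}$.
\item Check the exponent: $\binom{2-s}{2}+1-(2-s)=\binom{s}{2}$, the required Gaussian exponent.
\end{itemize}
Collecting the signs and the factor $x$ should give $x\,m(x,z;q)$, which is the claim.

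The only real obstacle is bookkeeping of signs and exponents in (\ref{equation:mxqz-flip}). I would verify it by checking that the exponent identity above holds for all $s$ and that the sign factors $(-1)^{2-s}$, $-xz$, and $-z^{-1}$ combine correctly.
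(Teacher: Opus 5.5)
Your proposal is correct and is essentially the standard argument. The paper gives no proof of its own; it quotes \cite[Proposition 3.1]{HM}, and there these identities are derived directly from the definition (\ref{equation:m-def}) by index shifts and the quasi-periodicity (\ref{equation:j-elliptic}), just as you do.

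The bookkeeping you flagged in (\ref{equation:mxqz-flip}) does close up. The factors $1/j(z^{-1};q)=-z/j(z;q)$ and $-xz\,q^{1-r}$, together with $(-1)^rq^{\binom r2}z^{-r}$ and your exponent identity, turn the summand into $x(-1)^sq^{\binom s2}z^s/\bigl(j(z;q)(1-q^{s-1}xz)\bigr)$. Hence $m(x^{-1},z^{-1};q)=x\,m(x,z;q)$, as required.
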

\noindent We also have
\begin{proposition}\cite[Theorem 3.3]{HM} \label{proposition:changing-z-theorem}  For generic $x,z_0,z_1\in \mathbb{C}^*$
\begin{equation}
m(x,z_1;q)-m(x,z_0;q)=\frac{z_0J_1^3j(z_1/z_0;q)j(xz_0z_1;q)}{j(z_0;q)j(z_1;q)j(xz_0;q)j(xz_1;q)}.
\end{equation}
\end{proposition}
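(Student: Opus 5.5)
The plan is a Liouville-type argument in the variable $z_1$, with $x$ and $z_0$ held fixed and generic. Set
\begin{equation*}
F(z_1):=m(x,z_1;q)-m(x,z_0;q)-\frac{z_0J_1^3j(z_1/z_0;q)j(xz_0z_1;q)}{j(z_0;q)j(z_1;q)j(xz_0;q)j(xz_1;q)}.
\end{equation*}
I will show that $F$ is a meromorphic function of $z_1\in\mathbb{C}^*$ that is invariant under $z_1\mapsto qz_1$ and has no poles. Then $F$ is a holomorphic function on the compact torus $\mathbb{C}^*/q^{\mathbb{Z}}$, hence constant. At $z_1=z_0$ the theta quotient vanishes because $j(1;q)=0$, and the Appell difference is trivially $0$, so the constant is $0$.

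\textbf{Step 1 (periodicity).} For the Appell part, invariance under $z_1\mapsto qz_1$ is exactly (\ref{equation:mxqz-fnq-z}). For the theta quotient I apply (\ref{equation:j-elliptic}) with $n=1$ to each of the four factors that involve $z_1$:
\begin{itemize}
\item $j(qz_1/z_0)=-(z_0/z_1)\,j(z_1/z_0)$,
\item $j(qxz_0z_1)=-(xz_0z_1)^{-1}\,j(xz_0z_1)$,
\item $j(qz_1)=-z_1^{-1}\,j(z_1)$,
\item $j(qxz_1)=-(xz_1)^{-1}\,j(xz_1)$.
\end{itemize}
The accumulated numerator factor is $z_0/(xz_0z_1^2)$ and the denominator factor is $1/(xz_1^2)$, so their ratio is $1$ and the quotient is invariant.

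\textbf{Step 2 (poles).} Modulo $q^{\mathbb{Z}}$, the possible poles of $F$ are at $z_1=1$ and at $z_1=x^{-1}$. These are simple zeros of $j(z_1)$ and $j(xz_1)$ respectively, and they also account for the poles of the summands $1/(1-q^{r-1}xz_1)$ in (\ref{equation:m-def}). I will compare residues (or principal parts) of the two sides at each point.

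At $z_1=1$, I use $j(z;q)\sim(1-z)J_1^3$. The residue of $m(x,z_1;q)$ is then governed by
\begin{equation*}
\frac{1}{J_1^3}\sum_{r}\frac{(-1)^rq^{\binom r2}}{1-q^{r-1}x}.
\end{equation*}
This sum evaluates to a multiple of $J_1^3/j(x;q)$ by the partial fraction expansion of $1/j(z;q)$ recalled in the introduction, after the shift $r\mapsto r+1$ and using (\ref{equation:j-flip}). On the right side, setting $z_1=1$ in the remaining factors gives
\begin{equation*}
z_0J_1^3\,j(1/z_0)\,j(xz_0)\big/\big(j(z_0)\,j(xz_0)\,j(x)\big).
\end{equation*}
Using $j(1/z_0)=-z_0^{-1}j(z_0)$, this simplifies to $-J_1^3/j(x)$, the matching quantity.

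At $z_1=x^{-1}$, it is cleaner to first apply the flip (\ref{equation:mxqz-flip}), $m(x,z_1)=x^{-1}m(x^{-1},z_1^{-1})$. This moves the pole to the "$z=1$" position for the new arguments, so the same computation applies. The right-hand side is symmetric enough that its residue at $xz_1=1$ reduces by the same simplification.

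\textbf{Main obstacle.} I expect the only delicate point to be Step 2: tracking signs and the factor $(1-z)$ versus $(1-z)^{-1}$ conventions, so that the two residues match exactly rather than up to sign. Once they agree, $F$ is holomorphic on $\mathbb{C}^*$ and $q$-periodic, and the argument in the first paragraph finishes the proof.
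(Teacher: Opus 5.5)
Your Liouville argument in $z_1$ is the standard route; the paper gives no proof of its own and only quotes \cite[Theorem 3.3]{HM}. Step 1 is correct, and so is the check at $z_1=1$. After $r\mapsto r+1$, the partial fraction expansion gives exactly $\sum_r(-1)^rq^{\binom r2}/(1-q^{r-1}x)=-J_1^3/j(x;q)$, with no flip needed. So both sides have principal part $-1/\big(j(x;q)(1-z_1)\big)$ there.

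The step that fails as written is the pole at $z_1=x^{-1}$. The flip (\ref{equation:mxqz-flip}) does not move this pole to the ``$z=1$'' position. Put $x'=x^{-1}$ and $w=z_1^{-1}$. Then $x'w=(xz_1)^{-1}$, so $xz_1=1$ becomes $x'w=1$. That is again a pole of the $r=1$ summand $1/(1-q^{r-1}x'w)$ in (\ref{equation:m-def}), while $j(w;q)=j(x;q)\neq 0$ there. The flip preserves the two kinds of poles, so ``the same computation'' is not available, and your appeal to symmetry of the right-hand side is never made precise.

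The repair is a direct, and easier, computation. Near $z_1=x^{-1}$ the factor $1/j(z_1;q)$ is regular and only the $r=1$ summand is singular, so
\[
m(x,z_1;q)=-\frac{x^{-1}}{j(x^{-1};q)}\cdot\frac{1}{1-xz_1}+O(1).
\]
On the right-hand side you have $j(xz_1;q)\sim(1-xz_1)J_1^3$ and $j(xz_0z_1;q)\to j(z_0;q)$. By (\ref{equation:j-flip}) and (\ref{equation:j-elliptic}), $j(z_1/z_0;q)\to j(1/(xz_0);q)=-(xz_0)^{-1}j(xz_0;q)$. This yields the same principal part $-x^{-1}/\big(j(x^{-1};q)(1-xz_1)\big)$, and both sides equal $1/\big(j(x;q)(1-xz_1)\big)$.

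Genericity is what keeps the rest of the argument clean. Taking $x\notin q^{\mathbb{Z}}$ and $z_0\notin q^{\mathbb{Z}}\cup x^{-1}q^{\mathbb{Z}}$ makes the poles simple and distinct and makes the constants $j(z_0;q)$, $j(xz_0;q)$ nonzero. With the computation at $z_1=x^{-1}$ replaced as above, your argument is complete.
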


We recall an Appell function property unique to work of \cite{HM}.  We take the specialization $n=2$ of Theorem $3.5$ \cite{HM}.  This gives
\begin{corollary} \label{corollary:msplit-n2} For generic $x,z,z'\in \mathbb{C}^*$ 
{\allowdisplaybreaks \begin{align*}
m(&x,z;q) =  m\big({-}q x^2, -1;q^{4} \big) - q^{-1} x m\big({-}q^{-1} x^2, -1;q^{4} \big)\\
& - \frac{J_2^3}{j(xz;q) j(-1;q^{4})} \left [  
\frac{ j\big(q x^2 z;q^2\big) j(-z^2;q^{4})}
{ j\big(q x^2;q^{2}\big ) j\big ( z;q^2\big )}
-\frac{ xz j\big(q^{2} x^2 z;q^2\big)j(-q^{2} z^2;q^{4})}
{ j\big(q x^2;q^{2}\big ) j\big ( q z;q^2\big )}
\right ].
\end{align*}}%
\end{corollary}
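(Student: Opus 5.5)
The statement is Corollary \ref{corollary:msplit-n2}, the specialization $n=2$ of \cite[Theorem 3.5]{HM}. I would prove it directly, without appealing to that theorem, by combining an explicit splitting of $m(x,z;q)$ with the change-of-$z$ formula of Proposition \ref{proposition:changing-z-theorem}.

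First, split the bilateral sum defining $m(x,z;q)$ in (\ref{equation:m-def}) according to the parity of $r$. Writing $r=2k$ and $r=2k+1$, I combine each pair $\frac{1}{1-q^{r-1}xz}$ using
\begin{equation*}
\frac{1}{1-w}=\frac{1+w}{1-w^2}.
\end{equation*}
This expresses $j(z;q)\,m(x,z;q)$ as a sum of four series in base $q^{4}$, with denominators of the form $1-q^{4k+c}x^2z^2$. Each such series is, up to a theta factor, an Appell function $m(\,\cdot\,,\,\cdot\,;q^4)$. Identifying the $x$-arguments then shows that
\begin{equation*}
m(x,z;q)=m(-qx^2,Z_0;q^4)-q^{-1}x\,m(-q^{-1}x^2,Z_1;q^4)
\end{equation*}
plus explicit theta quotients, for suitable $z$-arguments $Z_0,Z_1$ depending on $z$ (essentially $z^2$ up to a sign and a power of $q$).

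An equivalent and cleaner route is the one used in \cite{HM}. Since $m$ is a meromorphic function of $z$ with known quasi-periodicity (\ref{equation:mxqz-fnq-z}) and simple poles where $j(xz;q)=0$, I would compare both sides as functions of $z$. The difference of the two sides is elliptic in $z$ under $z\mapsto qz$, so it suffices to check that it has no poles. For that I compute residues at $xz=q^{k}$ on the left, and at the zeros of $j(xz;q)$, $j(z;q^2)$ and $j(qz;q^2)$ on the right. The difference is then a constant, and evaluating at a convenient point (say $z\to$ a zero of $j(-z^2;q^4)$, or $z=-1$ after reducing) shows that this constant is zero.

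In practice, I would first establish the formula for one convenient $z$ (the splitting gives $Z_0=Z_1=-1$ when $z$ is chosen suitably). I would then move $m(-qx^2,Z;q^4)$ to $m(-qx^2,-1;q^4)$ via Proposition \ref{proposition:changing-z-theorem}, and move $m(x,z;q)$ from that special $z$ to general $z$ by the same proposition in base $q$. All correction terms are then explicit theta quotients.

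The main obstacle is the final step: showing that the accumulated theta-quotient corrections from the two or three applications of Proposition \ref{proposition:changing-z-theorem} collapse to the bracketed expression, with prefactor $J_2^3/(j(xz;q)j(-1;q^4))$. I would handle this by treating both sides as functions of $z$. Both transform the same way under $z\mapsto q^2 z$ (checked via (\ref{equation:j-elliptic})) and have the same poles at zeros of $j(z;q^2)$ and $j(qz;q^2)$. Their residues agree there, which is checked using (\ref{equation:j-split-m2}) and the product rearrangements for $\overline{J}_{0,1}$, $J_{1,2}$. Equality then follows from a theta-function space dimension count, finished with one specialization such as $z=x^{-1}q^{1/2}$-type values, or, as a fallback, a direct $q$-expansion check.
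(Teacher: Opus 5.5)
The paper gives no independent argument here. It simply specializes \cite[Theorem 3.5]{HM} to $n=2$, $z'=-1$, which is why a vestigial $z'$ appears in the hypothesis. Your self-contained route can be made to work, but as written only its first step is sound, and one claim in it is false.

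Carrying out your parity dissection, using $m(X,q^4Z;q^4)=m(X,Z;q^4)$ and (\ref{equation:j-elliptic}) on the last piece, gives for all $z$
\begin{equation*}
j(z;q)\,m(x,z;q)=j(-qz^2;q^4)\,M(-qz^2)-z\,j(-q^3z^2;q^4)\,M(-q^3z^2),
\end{equation*}
where $M(Z):=m(-qx^2,Z;q^4)-q^{-1}x\,m(-q^{-1}x^2,Z;q^4)$. Each $x$-argument occurs with both $z$-arguments. Since $-qz^2$ and $-q^3z^2$ differ by a factor $q^2$, no choice of $z$ makes both $\equiv -1$ modulo $q^{4\mathbb{Z}}$. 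So your parenthetical ``$Z_0=Z_1=-1$ when $z$ is chosen suitably'' is wrong. Moving $m(x,z;q)$ in $z$ afterwards is also unnecessary, since the dissection already holds for every $z$.

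Subtract $j(z;q)M(-1)$ via (\ref{equation:j-split-m2}), and apply Proposition~\ref{proposition:changing-z-theorem} to $M(-qz^2)-M(-1)$ and to $M(-q^3z^2)-M(-1)$. The corollary then becomes a pure two-variable theta identity. Four quotients, with denominators built from $j(z;q)$, $j(qx^2;q^4)$, $j(q^{-1}x^2;q^4)$, $j(x^2z^2;q^4)$ and $j(q^2x^2z^2;q^4)$, must collapse to the two-term bracket with prefactor $J_2^3/(j(xz;q)j(-1;q^4))$. That identity is the entire content of the statement, and you only name a strategy for it.

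That strategy also has holes as stated.
\begin{itemize}
\item When you compare $m(x,z;q)$ directly with the right side, you list poles on the left only at $xz\in q^{\mathbb{Z}}$. But $m(x,z;q)$ also has simple poles at $z\in q^{\mathbb{Z}}$, with residue proportional to $1/j(x;q)$ (by the partial-fraction expansion of $1/j$). These are exactly what $j(z;q^2)j(qz;q^2)$ on the right must match.
\item More seriously, fixing the constant by ``evaluating at a convenient point'' is circular in that formulation. You have no independent expression for $m(x,i;q)$ or $m(x,-1;q)$ in terms of $m(-q^{\pm1}x^2,-1;q^4)$; that is precisely the claim.
\item The Liouville-type argument only closes after the reduction above, when both sides are explicit theta quotients. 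Even then you must handle the apparent poles at $xz\in -q^{\mathbb{Z}}$ coming from $j(x^2z^2;q^4)\,j(q^2x^2z^2;q^4)$. These have to cancel, because $j(xz;q)$ does not vanish there. You must also match residues at $z\in q^{\mathbb{Z}}$ and $xz\in q^{\mathbb{Z}}$, and check one special value.
\end{itemize}
None of this is carried out, so the proposal is a plan rather than a proof. The economical fix is to do what the paper does: quote \cite[Theorem 3.5]{HM} with $n=2$ and $z'=-1$, and check that the arguments match.
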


In order to evaluate the Hecke-type double-sums found in the coefficients, we will use specializations of \cite[Theorem 1.4]{HM}.  For our purposes, we first specialize \cite[Theorem 1.4]{HM} to $a=b=n$, $c=1$.  
\begin{theorem} \label{theorem:main-acdivb} Let $n$ be a positive integer.  Then
\begin{align*}
& f_{n,n,1}(x,y;q)=h_{n,n,1}(x,y;q)-\frac{1}{\overline{J}_{0,n-1}\overline{J}_{0,n(n-1)}}\cdot \theta_{n,n,1}(x,y;q),
\end{align*}
where 
\begin{align*}
h_{n,n,1}(x,y;q)&:=j(x;q^n)m\big({ -}q^{n-1}yx^{-1},-1;q^{n-1} \big )\\
&\qquad  +j(y;q)m\big( q^{\binom{n}{2}}x(-y)^{-n},-1;q^{n^2-n} \big ),
\end{align*}
and
\begin{align*}
\theta_{n,n,1}(x,y;q):=&\sum_{d=0}^{n-1}
q^{(n-1)\binom{d+1}{2}}j\big (q^{(n-1)(d+1)}y;q^{n}\big )  j\big ({-}q^{n(n-1)-(n-1)(d+1)}xy^{-1};q^{n(n-1)}\big ) \\
& \ \ \ \ \ \cdot \frac{J_{n(n-1)}^3j\big (q^{ \binom{n}{2}+(n-1)(d+1)}(-y)^{1-n};q^{n(n-1)}\big )}
{j\big ({-}q^{\binom{n}{2}}x(-y)^{-n};q^{n(n-1)})j(q^{(n-1)(d+1)}x^{-1}y;q^{n(n-1)}\big )}.
\end{align*}
\end{theorem}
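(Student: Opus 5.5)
The statement is the specialization $a=b=n$, $c=1$ of \cite[Theorem 1.4]{HM}, so the plan is to write down that general theorem and simplify it term by term. As I recall it, for $n$ a positive integer, $D:=b^2-ac>0$, and $a,c$ dividing $b$ with $b/a=c/b\cdot$(appropriate), it states
\[
f_{a,b,c}(x,y;q)=h_{a,b,c}(x,y;q)-\frac{1}{\overline{J}_{0,b/a}\overline{J}_{0,b^2/a-c}}\,\theta_{a,b,c}(x,y;q),
\]
with $h_{a,b,c}$ a sum of two terms of the form $j(x;q^a)\,m(\cdot,-1;q^{b^2/a-c})$ and $j(y;q^c)\,m(\cdot,-1;q^{b^2/c-a})$. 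The $\theta_{a,b,c}$ is a double sum over residue classes built from theta quotients. With $a=b=n$ and $c=1$ we have $D=n^2-n=n(n-1)$. Hence $b^2/a-c=n-1$ and $b^2/c-a=n^2-n$, which already explains the two moduli $q^{n-1}$ and $q^{n^2-n}$ in $h_{n,n,1}$ and the prefactor $1/(\overline{J}_{0,n-1}\overline{J}_{0,n(n-1)})$.

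The first step is the Appell arguments. In the general theorem they are
\[
-q^{a\binom{b/a+1}{2}-c\binom{b/a}{2}-b(b/a)}\,x^{-b/a}\,y \quad\text{and}\quad -q^{c\binom{b/c+1}{2}-a\binom{b/c}{2}-b(b/c)}\,y^{-b/c}\,x .
\]
Substituting $b/a=1$ and $b/c=n$, the exponents simplify by elementary binomial algebra to $q^{n-1}yx^{-1}$ (with the minus sign) and $q^{\binom n2}x(-y)^{-n}$. For the second argument the sign $(-1)^{n}$ is absorbed into $(-y)^{-n}$.

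The second step is $\theta_{a,b,c}$. In general it is a sum over $d\in\{0,\dots,b/a-1\}$ and $e\in\{0,\dots,b/c-1\}$. Here $b/a=1$, so the $d$-sum collapses to the single term $d=0$. The surviving index, which runs over $n$ values, is the one relabeled $d$ in the statement. I would substitute the parameters into each of the five theta factors in turn. These are the $j(q^{\cdot}y;q^n)$ and $j(-q^{\cdot}xy^{-1};q^{n(n-1)})$ factors, the cubic $J^3_{n(n-1)}$ factor, and the two denominator theta functions. Then I would use \eqref{equation:j-elliptic} and \eqref{equation:j-flip} to bring the arguments into the displayed normalized forms. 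Any leftover monomial prefactors should combine into $q^{(n-1)\binom{d+1}{2}}$.

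The main obstacle is purely bookkeeping. The general $\theta_{a,b,c}$ has messy exponents, and one has to check that the power of $q$ and the sign produced by the elliptic shifts exactly match the stated normalization. I would verify this by a quick $q$-expansion check for $n=2,3$ with generic $x,y$.
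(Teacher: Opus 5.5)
Your route is the paper's route: the paper gives no argument beyond presenting the statement as the specialization $a=b=n$, $c=1$ of \cite[Theorem 1.4]{HM}. The problem is that your version of that theorem is misquoted, and each ``this simplifies to'' claim is false for the formula you actually displayed. So as written, the check does not go through.

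First, your prefactor $1/(\overline{J}_{0,b/a}\overline{J}_{0,b^2/a-c})$ becomes $1/(\overline{J}_{0,1}\overline{J}_{0,n-1})$ at $a=b=n$, $c=1$. That is not $1/(\overline{J}_{0,n-1}\overline{J}_{0,n(n-1)})$. The correct prefactor in \cite{HM} is $1/(\overline{J}_{0,b^2/a-c}\overline{J}_{0,b^2/c-a})$.

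Second, your Appell exponents $a\binom{b/a+1}{2}-c\binom{b/a}{2}-b(b/a)$ and $c\binom{b/c+1}{2}-a\binom{b/c}{2}-b(b/c)$ evaluate at $a=b=n$, $c=1$ to $0$ and to $\binom{n+1}{2}-n\binom{n}{2}-n^2$ (which is $-3$ for $n=2$). They do not give $n-1$ and $\binom{n}{2}$. This is not cosmetic: by \eqref{equation:mxqz-fnq-x}, $m(-q^{n-1}yx^{-1},-1;q^{n-1})=1+yx^{-1}m(-yx^{-1},-1;q^{n-1})$.

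Third, with your sign convention the second argument has the form $-q^{E}xy^{-n}$. This equals $q^{E}x(-y)^{-n}$ only for odd $n$, so the sign cannot be ``absorbed into $(-y)^{-n}$'' when $n$ is even. The paper uses $n=4$.

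The arguments in \cite[Theorem 1.4]{HM} are
\[
-q^{a\binom{b/a+1}{2}-c}\,\frac{(-y)}{(-x)^{b/a}}\qquad\text{and}\qquad -q^{c\binom{b/c+1}{2}-a}\,\frac{(-x)}{(-y)^{b/c}}.
\]
With $b/a=1$ and $b/c=n$ these are exactly $-q^{n-1}yx^{-1}$ and $q^{\binom{n+1}{2}-n}x(-y)^{-n}=q^{\binom{n}{2}}x(-y)^{-n}$, with no elliptic shifts needed. A quick sanity test for any recalled version is $(a,b,c)=(1,2,1)$, where the arguments must come out as $q^{2}yx^{-2}$ and $q^{2}xy^{-2}$.

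You should also state the hypotheses correctly. \cite[Theorem 1.4]{HM} requires $a,b,c$ positive with $ac<b^2$, $a\mid b$ and $c\mid b$. Here that means $n\ge 2$, which is needed anyway because $\overline{J}_{0,n-1}$ and $m(\cdot,-1;q^{n-1})$ are undefined for $n=1$.

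Your treatment of $\theta_{n,n,1}$ is only a sketch. It has to be carried out from the actual $\theta_{a,b,c}$ of \cite{HM}, and you need to look it up rather than recall it, given the errors above. After that, the remaining work is the bookkeeping you describe, and your numerical check for small $n$ is a sensible safeguard.
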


We now list additional specializations for $n=4,5,7$.  For the level $2/3$ mock theta function identities we will use the following specialization for $n=4$. 
\begin{corollary} \label{corollary:f441-HeckeExpansion} We have
\begin{align*}
& f_{4,4,1}(x,y;q)=h_{4,4,1}(x,y;q)-\frac{1}{\overline{J}_{0,3}\overline{J}_{0,12}}\cdot \theta_{4,4,1}(x,y;q),
\end{align*}
where 
\begin{align*}
h_{4,4,1}(x,y;q)&:=j(x;q^4)m\big( {-}q^{3}yx^{-1},-1;q^{3} \big )
+j(y;q)m\big( q^{6}xy^{-4},-1;q^{12} \big ),
\end{align*}
and
\begin{align*}
\theta_{4,4,1}(x,y;q):=&\sum_{d=0}^{3}
q^{3\binom{d+1}{2}}j\big (q^{3(d+1)}y;q^{4}\big )  j\big ({-}q^{12-3(d+1)}xy^{-1};q^{12}\big )\\
&\qquad  \cdot \frac{J_{12}^3j\big ({-}q^{ 6+3(d+1)}y^{-3};q^{12}\big )}
{j\big ({-}q^{6}xy^{-4};q^{12})j(q^{3(d+1)}x^{-1}y;q^{12}\big )}.
\end{align*}
\end{corollary}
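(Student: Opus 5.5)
This corollary is a direct specialization of Theorem~\ref{theorem:main-acdivb}, so the plan is simply to set $n=4$ there and simplify each ingredient.

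First, the normalizing constants: with $n=4$ we have $n-1=3$ and $n(n-1)=12$. The prefactor therefore becomes $1/(\overline{J}_{0,3}\overline{J}_{0,12})$.

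Second, the holomorphic part $h_{4,4,1}$. Substituting gives $j(x;q^4)\,m(-q^{3}yx^{-1},-1;q^{3})$ for the first term. For the second term, $\binom{4}{2}=6$ and $(-y)^{-4}=y^{-4}$, since the exponent is even. This gives $j(y;q)\,m(q^{6}xy^{-4},-1;q^{12})$.

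Third, the theta part $\theta_{4,4,1}$, term by term in $d=0,\dots,3$:
\begin{itemize}
\item $q^{(n-1)\binom{d+1}{2}}$ becomes $q^{3\binom{d+1}{2}}$;
\item the factor $j(q^{(n-1)(d+1)}y;q^{n})$ becomes $j(q^{3(d+1)}y;q^4)$;
\item the factor $j(-q^{n(n-1)-(n-1)(d+1)}xy^{-1};q^{12})$ becomes $j(-q^{12-3(d+1)}xy^{-1};q^{12})$;
\item $J_{n(n-1)}^3$ becomes $J_{12}^3$.
\end{itemize}

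The only place needing a moment's care is the sign in $(-y)^{1-n}=(-y)^{-3}=-y^{-3}$. This turns $j(q^{6+3(d+1)}(-y)^{-3};q^{12})$ into $j(-q^{6+3(d+1)}y^{-3};q^{12})$. Similarly, $(-y)^{-4}=y^{-4}$ in the denominator gives $j(-q^{6}xy^{-4};q^{12})$. The remaining denominator factor is $j(q^{3(d+1)}x^{-1}y;q^{12})$.

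Collecting these pieces reproduces the stated formula exactly. There is no real obstacle here beyond this sign bookkeeping with the odd power $(-y)^{-3}$ versus the even power $(-y)^{-4}$.
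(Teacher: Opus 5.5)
Your proposal is correct and matches the paper, which obtains this corollary simply as the case $n=4$ of Theorem~\ref{theorem:main-acdivb}. Your bookkeeping reproduces every factor of the stated formula: $n-1=3$, $n(n-1)=n^2-n=12$, $\binom{4}{2}=6$, and the signs $(-y)^{-3}=-y^{-3}$ and $(-y)^{-4}=y^{-4}$.
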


For the level $1/2$ mock theta function identities we will use the following specialization for $n=5$. 
\begin{corollary} \label{corollary:f551-HeckeExpansion} We have
\begin{equation*}
f_{5,5,1}(x,y;q)=h_{5,5,1}(x,y;q)-\frac{1}{\overline{J}_{0,4}\overline{J}_{0,20}}\theta_{5,5,1}(x,y;q),
\end{equation*}
where
\begin{equation*}
h_{5,5,1}(x,y;q)
:=j(x;q^5)m(-q^4x^{-1}y,-1;q^4)+j(y;q)m(-q^{10}xy^{-5},-1;q^{20}),
\end{equation*}
and
\begin{equation*}
\theta_{5,5,1}(x,y;q):= \sum_{d=0}^{4}
q^{2d(d+1)}\frac{J_{20}^3j\big (q^{4+4d}y;q^{5}\big )  j\big ({-}q^{16-4d}xy^{-1};q^{20}\big )j\big (q^{14+4d}y^{-4};q^{20}\big )}
{j\big (q^{10}xy^{-5};q^{20})j(q^{4+4d}x^{-1}y;q^{20}\big )}.
\end{equation*}
\end{corollary}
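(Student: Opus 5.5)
The last statement is Corollary \ref{corollary:f551-HeckeExpansion}, the case $n=5$ of Theorem \ref{theorem:main-acdivb}. The plan is to substitute $n=5$ into Theorem \ref{theorem:main-acdivb} and simplify each piece until it matches the displayed formulas. No new analytic input should be needed; the work is bookkeeping of exponents and signs, together with two uses of \eqref{equation:j-flip} and \eqref{equation:j-elliptic}.

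The first step is the constants and the Appell part. With $n=5$ we have $n-1=4$, $n(n-1)=20$, $\binom{n}{2}=10$ and $n^2-n=20$. The prefactor therefore becomes $1/(\overline{J}_{0,4}\overline{J}_{0,20})$. The first Appell term is $j(x;q^5)m(-q^4yx^{-1},-1;q^4)$. For the second Appell term, $(-y)^{-5}=-y^{-5}$, so $q^{10}x(-y)^{-5}=-q^{10}xy^{-5}$. This gives $j(y;q)m(-q^{10}xy^{-5},-1;q^{20})$, exactly as in $h_{5,5,1}$.

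The second step is the theta sum, term by term for $0\le d\le 4$.
\begin{itemize}
\item The power $q^{4\binom{d+1}{2}}$ equals $q^{2d(d+1)}$.
\item The factor $j(q^{4(d+1)}y;q^5)$ is already $j(q^{4+4d}y;q^{5})$.
\item The argument $-q^{20-4(d+1)}xy^{-1}$ equals $-q^{16-4d}xy^{-1}$.
\item In the numerator, $(-y)^{1-5}=y^{-4}$, so that factor is $j(q^{10+4+4d}y^{-4};q^{20})=j(q^{14+4d}y^{-4};q^{20})$.
\item In the denominator, $-q^{10}x(-y)^{-5}=q^{10}xy^{-5}$, giving $j(q^{10}xy^{-5};q^{20})$. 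The remaining denominator factor is $j(q^{4+4d}x^{-1}y;q^{20})$.
\end{itemize}
Collecting these reproduces $\theta_{5,5,1}$.

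The main obstacle, minor here, is the parity of the sign $(-1)^{n}$ coming from $(-y)^{-n}$ and $(-y)^{1-n}$. For $n=5$ these signs flip relative to the even case $n=4$ of Corollary \ref{corollary:f441-HeckeExpansion}, which is why the minus signs inside $j$ disappear in two places. I would double-check these against Theorem \ref{theorem:main-acdivb} directly rather than copying the $n=4$ pattern. If any form ends up differing by $x\mapsto q^{20}/x$, I would reconcile it using \eqref{equation:j-flip}.
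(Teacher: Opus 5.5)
Your proposal is correct and matches the paper, which obtains this corollary by setting $n=5$ in Theorem \ref{theorem:main-acdivb}; your term-by-term checks, notably $(-y)^{-5}=-y^{-5}$ and $(-y)^{-4}=y^{-4}$, are exactly the bookkeeping required. Note that \eqref{equation:j-flip} and \eqref{equation:j-elliptic} are not actually needed, since the substitution already gives the displayed form exactly.
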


For the level $1/3$ mock theta function identities we will use the following specialization for $n=7$. 
\begin{corollary}  \label{corollary:f771-HeckeExpansion} We have
\begin{align*}
& f_{7,7,1}(x,y,q)=h_{7,7,1}(x,y;q)-\frac{1}{\overline{J}_{0,6}\overline{J}_{0,42}}\cdot \theta_{7,7,1}(x,y;q),
\end{align*}
where 
\begin{align*}
h_{7,7,1}(x,y;q):&=j(x;q^7)m\big({ -}q^{6}yx^{-1},-1;q^{6} \big )
 +j(y;q)m\big({ -}q^{21}xy^{-7},-1;q^{42} \big ),
\end{align*}
and
\begin{align*}
\theta_{7,7,1}(x,y;q):=&\sum_{d=0}^{6}
q^{6\binom{d+1}{2}}j\big (q^{6(d+1)}y;q^{7}\big )  j\big ({-}q^{42-6(d+1)}xy^{-1};q^{42}\big ) \\
& \ \ \ \ \ \cdot \frac{J_{42}^3j\big (q^{ 21+6(d+1)}y^{-6};q^{42}\big )}
{j\big (q^{21}xy^{-7};q^{42}\big )j\big (q^{6(d+1)}x^{-1}y;q^{42}\big )}.
\end{align*}
\end{corollary}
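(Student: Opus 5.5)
The statement immediately above is Corollary \ref{corollary:f771-HeckeExpansion}, the $n=7$ specialization of Theorem \ref{theorem:main-acdivb}. The plan is simply to set $n=7$ in Theorem \ref{theorem:main-acdivb} and simplify each ingredient, exactly as one would for Corollaries \ref{corollary:f441-HeckeExpansion} and \ref{corollary:f551-HeckeExpansion}. No new identity is needed: every change is an arithmetic evaluation of an exponent together with the sign bookkeeping forced by $n$ being odd.

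The first step is the normalizing constant and the $h$-part. With $n=7$ we have $n-1=6$ and $n(n-1)=42$, so the prefactor becomes $1/(\overline{J}_{0,6}\overline{J}_{0,42})$. In $h_{n,n,1}$ the first Appell function becomes $m(-q^{6}yx^{-1},-1;q^{6})$. In the second Appell function, $\binom{7}{2}=21$ and $n^2-n=42$. Since $n=7$ is odd, $(-y)^{-7}=-y^{-7}$, so $q^{21}x(-y)^{-7}=-q^{21}xy^{-7}$. This gives $j(y;q)\,m(-q^{21}xy^{-7},-1;q^{42})$, as stated.

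The second step is the sum $\theta_{n,n,1}$. Substituting $n=7$ gives the weight $q^{6\binom{d+1}{2}}$, the factors $j(q^{6(d+1)}y;q^{7})$ and $j(-q^{42-6(d+1)}xy^{-1};q^{42})$, and $J_{42}^3$ in the numerator. Two parity simplifications are needed. First, $(-y)^{1-n}=(-y)^{-6}=y^{-6}$, so the numerator theta is $j(q^{21+6(d+1)}y^{-6};q^{42})$ with no minus sign. Second, in the denominator $-q^{21}x(-y)^{-7}=q^{21}xy^{-7}$, giving $j(q^{21}xy^{-7};q^{42})$. The last denominator factor $j(q^{6(d+1)}x^{-1}y;q^{42})$ carries over unchanged.

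The only place an error could occur, and so the step to check most carefully, is this tracking of the signs coming from powers of $-y$ with $n$ odd. One can check it against the $n=5$ case in Corollary \ref{corollary:f551-HeckeExpansion}, which displays the same pattern. There the Appell argument is $-q^{10}xy^{-5}$, while the theta arguments are $q^{10}xy^{-5}$ and $q^{14+4d}y^{-4}$, both without a minus sign. This is exactly the pattern obtained above for $n=7$, and it also matches the $n=4$ case after accounting for the parity difference. Collecting the terms yields the stated formula.
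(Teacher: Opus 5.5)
Your proposal is correct and matches the paper, which states this corollary without a separate proof as the $n=7$ specialization of Theorem~\ref{theorem:main-acdivb}. Your parity bookkeeping is also right: $(-y)^{-7}=-y^{-7}$ and $(-y)^{-6}=y^{-6}$, so the Appell argument is $-q^{21}xy^{-7}$, while the theta quotient contains $q^{21}xy^{-7}$ and $q^{21+6(d+1)}y^{-6}$.
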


Once we have rewritten the double-sums using Corollaries \ref{corollary:f441-HeckeExpansion}, \ref{corollary:f551-HeckeExpansion}, and \ref{corollary:f771-HeckeExpansion}, we will need to rewrite the expression involving Appell functions in terms of mock theta functions.  The first step is to get to within a theta function of the mock theta function.  For simple results for odd-spin this step will be short, but for the general results for even-spin we will need the following:
\begin{lemma}\label{lemma:generalAppellSums}
We have
\begin{align}
&m ( -q^{1+4s},-1;q^{4}  )  -q^{2s-1}m ( -q^{-1+4s},-1;q^{4}  )
\label{equation:generalAppellSumOneModFour}\\
&\quad =\sum_{k=0}^{2s-1}(-1)^{k}q^{2sk-\binom{k+1}{2}}
 +2q^{s+4\binom{s}{2}}m ( -q,-1;q^{4}  ),\notag\\
&m ( -q^{1+6s},-1;q^{6}  )  -q^{2s-1}m ( -q^{-1+6s},-1;q^{6}  )
\label{equation:generalAppellSumOneModSix}\\
&\quad =\sum_{k=0}^{s-1}q^{6sk-6\binom{k+1}{2}}\left (q^{k} -q^{-k+2s-1}\right ) 
 +2q^{s+6\binom{s}{2}}m ( -q,-1;q^{6}  ),\notag\\
& m ( -q^{2+6s},-1;q^{6} ) - q^{4s-2}m ( -q^{-2+6s},-1;q^{6}  )
\label{equation:generalAppellSumTwoModSix}\\
&\quad =\sum_{k=0}^{s-1}q^{6sk-6\binom{k+1}{2}}\left (q^{2k} -q^{-2k+4s-2}\right ) 
 +2q^{2s+6\binom{s}{2}}m ( -q^{2},-1;q^{6}  ).\notag
\end{align}
\end{lemma}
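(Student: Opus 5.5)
The plan is to prove all three identities in Lemma~\ref{lemma:generalAppellSums} by induction on $s\ge 0$. The only tools needed are the elliptic shift \eqref{equation:mxqz-fnq-x} and the flip \eqref{equation:mxqz-flip}, both taken with $z=-1$, so that $z^{-1}=z$. Write $Q$ for the base: $Q=q^4$ in \eqref{equation:generalAppellSumOneModFour} and $Q=q^6$ in the other two. Set $x_0=-q$ for the first two identities and $x_0=-q^2$ for the third. In each case the left-hand side has the form
\[
L_s:=m(Q^sx_0,-1;Q)-c_s\,m(Q^sx_0^{-1}\cdot(\text{const}),-1;Q),
\]
where the second argument is $-q^{-1+4s}=Q^s(-q^{-1})$, $-q^{-1+6s}=Q^s(-q^{-1})$, or $-q^{-2+6s}=Q^s(-q^{-2})$ respectively. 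In each case this is $Q^s\cdot (1/x_0)$ up to the harmless identification $-q^{-1}=1/(-q)\cdot 1$, which is where the flip enters.

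\emph{Base case $s=0$.} By \eqref{equation:mxqz-flip}, $m(-q^{-1},-1;Q)=(-q^{-1})^{-1}m(-q,-1;Q)=-q\,m(-q,-1;Q)$. Hence the first identity's left-hand side is $m(-q,-1;q^4)+m(-q,-1;q^4)=2m(-q,-1;q^4)$, which matches the right-hand side since the sum is empty. The same computation with $x_0=-q^2$ and coefficient $q^{-2}$ gives $2m(-q^2,-1;q^6)$ for the third identity, and the second is identical to the first.

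\emph{Inductive step.} Rather than induct, I would unroll the shift directly. Iterating \eqref{equation:mxqz-fnq-x} with base $Q$ gives, for $n\ge 0$,
\[
m(Q^nx,-1;Q)=\sum_{k=0}^{n-1}(-1)^kx^kQ^{\binom{k+1}{2}}\cdots+(-1)^nx^nQ^{\binom{n}{2}}m(x,-1;Q).
\]
Before doing anything else I would fix the exact exponents carefully by checking the case $n=1$, namely $m(Qx)=1-xm(x)$. I would apply this to the first term with $x=x_0$ and $n=s$. For the second term I would first flip: $m(Q^s/x_0,-1;Q)=x_0Q^{-s}m(Q^{-s}x_0,-1;Q)$. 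I would then apply the backward version of the shift (the same formula with $n=-s$, using the convention \eqref{equation:sumconvention}, or equivalently solving $m(x)=x^{-1}(1-m(Qx))$ repeatedly). Both terms are now multiples of $m(x_0,-1;Q)$ plus finite geometric-quadratic sums. One checks that the two multiples of $m(x_0,-1;Q)$ coincide and add to $2q^{s+4\binom{s}{2}}$, respectively $2q^{s+6\binom{s}{2}}$ and $2q^{2s+6\binom{s}{2}}$. This coincidence is forced by the flip symmetry, and the $s=0$ case already exhibits it.

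The main obstacle is the bookkeeping of the finite sums. The forward sum from the first term and the backward sum from the second term must be reindexed ($k\mapsto 2s-1-k$, or pairing $k$ with its mirror) and merged into the stated shapes. In \eqref{equation:generalAppellSumOneModFour} they form one alternating sum $\sum_{k=0}^{2s-1}(-1)^kq^{2sk-\binom{k+1}{2}}$. In the mod-$6$ cases they form paired differences $q^{6sk-6\binom{k+1}{2}}(q^{k}-q^{-k+2s-1})$. As a safeguard I would verify the final formulas by the induction difference $L_{s+1}-L_s$ and by comparing $q$-expansions for $s=1,2$.
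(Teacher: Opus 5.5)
Your proposal is correct and is essentially the paper's argument. The paper only says the lemma is a straightforward induction on $s$, and unrolling \eqref{equation:mxqz-fnq-x} while using \eqref{equation:mxqz-flip} to bring the second Appell term back to $m(x_0,-1;Q)$ is that induction written out explicitly.

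One correction to your bookkeeping. The unrolled shift is $m(Q^nx,-1;Q)=\sum_{k=0}^{n-1}(-1)^kx^kQ^{kn-\binom{k+1}{2}}+(-1)^nx^nQ^{\binom n2}m(x,-1;Q)$. With this, both finite sums can be indexed by $0\le k\le s-1$ and combine termwise, so no reflection $k\mapsto 2s-1-k$ is needed:
\begin{itemize}
\item In the mod-$6$ identities, the termwise combination gives the stated paired differences directly.
\item In \eqref{equation:generalAppellSumOneModFour}, the $k$-th paired term $q^{4sk-4\binom{k+1}{2}}(q^{k}-q^{2s-1-k})$ supplies exactly the terms of index $2k$ and $2k+1$ of the alternating sum.
\end{itemize}
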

\begin{proof}[Proof of Lemma \ref{lemma:generalAppellSums}]  This is a straightforward exercise in induction.
\end{proof}

The mock theta functions that we will be using are all from Ramanujan's classical list.  We use the forms as found in \cite[Section 5]{HM}.  We have both second-order and third-order functions.  For the second order functions, we will only use a single mock theta function:
\begin{equation}
\mu_2(q):=\sum_{n\ge 0}\frac{(-1)^nq^{n^2}(q;q^2)_n}{(-q^2;q^2)_{n}^2}
=4m(-q,-1;q^4)-\frac{J_{2,4}^4}{J_1^3}.\label{equation:2nd-mu(q)}
\end{equation}
For the third order-functions, we will have four mock theta functions:
{\allowdisplaybreaks \begin{align}
f_3(q)&:=\sum_{n\ge 0}\frac{q^{n^2}}{(-q)_n^2}
=4m(-q,q;q^3)+\frac{J_{3,6}^2}{J_1},\label{equation:3rd-f(q)}\\
 \omega_3(q)&:=\sum_{n\ge 0}\frac{q^{2n(n+1)}}{(q;q^2)_{n+1}^2}
 =-2q^{-1}m(q,q^2;q^6)+\frac{J_6^3}{J_2 J_{3,6}},\label{equation:3rd-omega(q)}\\
\psi_3(q)&:=\sum_{n\ge 1}\frac{q^{n^2}}{(q;q^2)_n}
=-m(q,-q;-q^3)+\frac{qJ_{12}^3}{J_4 J_{3,12}},\label{equation:3rd-psi(q)}\\
\chi_3(q)&:=\sum_{n\ge 0}\frac{q^{n^2}(-q)_n}{(-q^3;q^3)_n}=m(-q,q;q^3)+\frac{J_{3,6}^2}{J_1}.\label{equation:3rd-chi(q)}
\end{align}}%

We end this section with the lemmas that allow us to convert from the Appell function expressions to the mock theta functions.
\begin{lemma}\label{lemma:alternateAppellForms}  We have
\begin{align} 
m(-q,-1;q^{6})
&= \frac{1}{2}q\omega_{3}(-q)-\frac{1}{2}q\frac{J_{6}^3}{J_{2}\overline{J}_{3,6}}
+\frac{J_{6}^3\overline{J}_{2,6}J_{3,6}}{\overline{J}_{0,6}J_{1,6}J_{2,6}\overline{J}_{3,6}},
\label{equation:alternateAppellForm3rd-w}
\\
m(-q^2,-1;q^{6})
&=\frac{1}{4}f_3(q^2)-\frac{1}{4}\frac{J_{6,12}^2}{J_{2}}
+\frac{J_{6}^3}{\overline{J}_{0,6}J_{2,6}}.
\label{equation:alternateAppellForm3rd-f}
\end{align}
\end{lemma}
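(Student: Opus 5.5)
The plan is to start from the Appell-function forms (\ref{equation:3rd-omega(q)}) and (\ref{equation:3rd-f(q)}) and move the parameters of the Appell functions to those on the left-hand sides of Lemma \ref{lemma:alternateAppellForms}. For (\ref{equation:alternateAppellForm3rd-w}), replace $q$ by $-q$ in (\ref{equation:3rd-omega(q)}). This gives
\begin{equation*}
q\omega_3(-q)=2m(-q,q^2;q^6)+q\frac{J_6^3}{J_2\overline{J}_{3,6}},
\end{equation*}
since $j(-q^3;q^6)=\overline{J}_{3,6}$ and $J_2,J_6$ are unchanged. Hence $m(-q,q^2;q^6)=\tfrac12 q\omega_3(-q)-\tfrac12 q J_6^3/(J_2\overline{J}_{3,6})$. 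It then remains to pass from $z=q^2$ to $z=-1$ in $m(-q,z;q^6)$.

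For this step I use Proposition \ref{proposition:changing-z-theorem} with base $q^6$, $x=-q$, $z_1=-1$ and $z_0=q^2$. This gives
\begin{equation*}
m(-q,-1;q^6)-m(-q,q^2;q^6)=\frac{q^2J_6^3\,j(-q^{-2};q^6)\,j(q^3;q^6)}{j(q^2;q^6)\,j(-1;q^6)\,j(-q^3;q^6)\,j(q;q^6)}.
\end{equation*}
Next I simplify with (\ref{equation:j-elliptic}) and (\ref{equation:j-flip}). First, $j(-q^{-2};q^6)=j(-q^8;q^6)=q^{-2}j(-q^2;q^6)=q^{-2}\overline{J}_{2,6}$. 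The remaining factors are $J_{3,6}$, $J_{2,6}$, $\overline{J}_{0,6}$, $\overline{J}_{3,6}$ and $J_{1,6}$. Substituting, this is exactly $J_6^3\overline{J}_{2,6}J_{3,6}/(\overline{J}_{0,6}J_{1,6}J_{2,6}\overline{J}_{3,6})$, which is the claimed correction term.

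For (\ref{equation:alternateAppellForm3rd-f}), replace $q$ by $q^2$ in (\ref{equation:3rd-f(q)}). This gives $f_3(q^2)=4m(-q^2,q^2;q^6)+J_{6,12}^2/J_2$, so $m(-q^2,q^2;q^6)=\tfrac14 f_3(q^2)-\tfrac14 J_{6,12}^2/J_2$. I again apply Proposition \ref{proposition:changing-z-theorem}, now with $x=-q^2$, $z_0=q^2$ and $z_1=-1$. The correction is
\begin{equation*}
\frac{q^2J_6^3\,j(-q^{-2};q^6)\,j(q^2;q^6)}{j(q^2;q^6)\,j(-1;q^6)\,j(-q^4;q^6)\,j(q^2;q^6)}.
\end{equation*}
Using $q^2j(-q^{-2};q^6)=\overline{J}_{2,6}$ and $j(-q^4;q^6)=\overline{J}_{2,6}$, this collapses to $J_6^3/(\overline{J}_{0,6}J_{2,6})$, as claimed.

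The main obstacle is only bookkeeping. The signs and the powers of $q$ from (\ref{equation:j-elliptic}) have to be tracked carefully. One must also confirm that the choices $z_0=q^2$, $z_1=-1$ are generic, i.e.\ that no theta factor in the denominator vanishes. I expect no deeper difficulty.
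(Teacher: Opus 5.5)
Your proposal is correct and is the paper's own proof, which derives both identities from the Appell forms (\ref{equation:3rd-omega(q)}) and (\ref{equation:3rd-f(q)}) (after $q\to -q$, resp.\ $q\to q^2$) together with Proposition \ref{proposition:changing-z-theorem}. Your choices of $x,z_0,z_1$ and the theta-function simplifications all check out; the one step you left implicit is that in the second case the numerator factor $j(xz_0z_1;q^6)=j(q^4;q^6)$ equals $j(q^2;q^6)$ by (\ref{equation:j-flip}).
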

\begin{proof}[Proof of Lemma \ref{lemma:alternateAppellForms}]  This follows from the Appell function forms (\ref{equation:3rd-omega(q)}) and (\ref{equation:3rd-f(q)}) and Proposition \ref{proposition:changing-z-theorem}.
\end{proof}

\begin{lemma} \label{lemma:alternateAppellFormsChiPsi} We have
{\allowdisplaybreaks \begin{align}
m(-q^5, -1;q^{12}) - q^{-1} m(-q, -1;q^{12})
&=\chi_3(q)-\frac{J_{3,6}^2}{J_{1}}-\frac{q^{-1}}{2} \frac{J_{3}J_{6}J_{8}J_{12}^3}{J_{2}J_{4}J_{24}^3},\\
m(-q^5, -1;q^{12}) - q^{-1} m(-q, -1;q^{12})
&=-\psi_3(-q)-q\frac{J_{12}^3}{\overline{J}_{3,12}J_{4}}-\frac{q^{-1}}{2} \frac{J_{3}J_{6}J_{8}J_{12}^3}{J_{2}J_{4}J_{24}^3}.
\end{align}}%
\end{lemma}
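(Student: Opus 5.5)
The plan is to start from the Appell function forms (\ref{equation:3rd-chi(q)}) and (\ref{equation:3rd-psi(q)}). We rewrite each of $m(-q,q;q^3)$ and $m(-q,-q;q^3)$ (the latter obtained from $-m(q,-q;-q^3)$ with $q\to -q$) as the combination $m(-q^5,-1;q^{12})-q^{-1}m(-q,-1;q^{12})$ plus explicit theta quotients. The identities then follow by comparison, with the leftover theta terms verified by Frye--Garvan type identities as in Section \ref{section:FryeGarvan}.

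\textbf{Step 1: the $\chi_3$ identity.} Apply Corollary \ref{corollary:msplit-n2} to $m(x,z;q^3)$ with $x=-q$, i.e. with base $q\to q^3$. Since $x^2=q^2$, the two leading terms become
\begin{equation*}
m(-q^3\cdot q^2,-1;q^{12})-q^{-3}(-q)\,m(-q^{-3}q^2,-1;q^{12}) = m(-q^5,-1;q^{12})+q^{-2}m(-q^{-1},-1;q^{12}).
\end{equation*}
Using (\ref{equation:mxqz-flip}) and (\ref{equation:mxqz-fnq-z}), we have $m(-q^{-1},-1;q^{12})=-q\,m(-q,-1;q^{12})$, so this combination is exactly $m(-q^5,-1;q^{12})-q^{-1}m(-q,-1;q^{12})$.

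The free parameter $z$ in Corollary \ref{corollary:msplit-n2} should be chosen to make the bracketed theta terms as simple as possible. It need not equal $q$: we first move $m(-q,q;q^3)$ to a convenient $z$ via Proposition \ref{proposition:changing-z-theorem}, for instance $z=-1$ or $z=q^{3/2}$-type choices avoiding poles. The resulting theta quotients, together with $J_{3,6}^2/J_1$, must then collapse to $-J_{3,6}^2/J_1-\tfrac{q^{-1}}{2}J_3J_6J_8J_{12}^3/(J_2J_4J_{24}^3)$.

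\textbf{Step 2: the $\psi_3$ identity.} Replace $q$ by $-q$ in (\ref{equation:3rd-psi(q)}) to get $\psi_3(-q)=-m(-q,q;q^3)-qJ_{12}^3/(\overline{J}_{3,12}J_4)$, after rewriting theta functions with $-q$ via (\ref{equation:1.11}). This is the same Appell function as in Step 1. The second identity is therefore equivalent to the first together with the theta identity
\begin{equation*}
\chi_3(q)+\psi_3(-q)=\frac{J_{3,6}^2}{J_1}-q\frac{J_{12}^3}{\overline{J}_{3,12}J_4},
\end{equation*}
which is immediate from the two Appell forms since the $m$-terms cancel. So the second line follows directly from the first.

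\textbf{Main obstacle.} The real work is the theta-function bookkeeping in Step 1. This means evaluating the bracket in Corollary \ref{corollary:msplit-n2}, plus any changing-$z$ correction from Proposition \ref{proposition:changing-z-theorem}, and proving the resulting quotient equals $-\tfrac{q^{-1}}{2}J_3J_6J_8J_{12}^3/(J_2J_4J_{24}^3)$.

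I would first choose $z$ so that $j(z;q^6)$ and $j(xz;q^3)$ are nonzero with simple product forms. I would then reduce everything to a common base, $q^{24}$, using (\ref{equation:1.10}), (\ref{equation:1.11}) and the product rearrangements. The final equality is a modular identity among eta-quotients, which I would certify with the Frye--Garvan Maple package \cite{FG} (checking enough coefficients via the valence formula), as the paper does for its other theta identities.
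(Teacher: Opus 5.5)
Your proposal is the paper's proof. It applies Corollary \ref{corollary:msplit-n2} with $q\to q^3$, $x=-q$ to $m(-q,q;q^3)$, recognizes the leading pair as $m(-q^5,-1;q^{12})-q^{-1}m(-q,-1;q^{12})$, and reads both lines off from (\ref{equation:3rd-chi(q)}) and from (\ref{equation:3rd-psi(q)}) with $q\to -q$, both of which involve the same $m(-q,q;q^3)$ (not $m(-q,-q;q^3)$, as your overview says). The step you leave open is easier than you expect: keep $z=q$ with no change of $z$, so the first bracket term contains $j(q^6;q^6)=0$ and the surviving term reduces by (\ref{equation:j-flip}), (\ref{equation:j-elliptic}) and the listed product rearrangements to exactly $\frac{q^{-1}}{2}\frac{J_3J_6J_8J_{12}^3}{J_2J_4J_{24}^3}$, so no Frye--Garvan verification is needed.
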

\begin{remark}
There is not a mistake in the above lemma.  It is indeed the case that the left-hand sides of both identities are the same!
\end{remark}
\begin{proof}[Proof of Lemma \ref{lemma:alternateAppellFormsChiPsi}] 
Using Corollary \ref{corollary:msplit-n2}, we have 
{\allowdisplaybreaks \begin{align*}
m(-q,q;q^3) &=  m (-q^5, -1;q^{12} ) + q^{-2} m (-q^{-1}, -1;q^{12} )\\
&\qquad  - \frac{J_6^3}{j(-q^2;q^3) j(-1;q^{12})} 
\frac{ q^2
j\big(q^{9};q^6\big)
j(-q^{8};q^{12})}
{ j\big(q^5;q^{6}\big ) j\big ( q^4 ;q^6\big )}.
\end{align*}}%
Using (\ref{equation:mxqz-flip}) and (\ref{equation:j-elliptic}) gives
{\allowdisplaybreaks \begin{align*}
m(-q,q;q^3) &=  m(-q^5, -1;q^{12}) - q^{-1} m(-q, -1;q^{12})
  +q^{-1} \frac{J_6^3}{\overline{J}_{1,3} \overline{J}_{0,12}} 
\frac{J_{3,6}\overline{J}_{4,12}}
{ J_{1,6} J_{2}}.
\end{align*}}%
Elementary product rearrangements give
{\allowdisplaybreaks \begin{align*}
m(-q,q;q^3) &=  m(-q^5, -1;q^{12}) - q^{-1} m(-q, -1;q^{12})
  +\frac{q^{-1}}{2} \frac{J_{3}J_{6}J_{8}J_{12}^3}{J_{2}J_{4}J_{24}^3}.
\end{align*}}%
The results then follow from identities (\ref{equation:3rd-psi(q)}) and (\ref{equation:3rd-chi(q)})
\end{proof}

\section{Families of theta function identities through Frye and Garvan}\label{section:FryeGarvan}
In this section, we list the identities that we proved by using Frye and Garvan's Maple packages {\em qseries} and {\em thetaids}  \cite{FG}.  We will not sketch the proofs here, but the interested reader can find sketches of how the two Maple packages prove identities in \cite[Section 4]{BoMo2026}, \cite[Section 5]{BoMo2025}.

\smallskip
For even-spin, $1/2$-level, we will need the following identity to prove Theorem \ref{theorem:genEulerOneHalfEvenSpin}.  We recall the notation for $\theta_{5,5,1}(x,y;q)$ from Corollary \ref{corollary:f551-HeckeExpansion}.
 \begin{proposition} \label{proposition:level12EvenSpinThetaId} For $s\in\{0,1,3,4\}$ we have that
 \begin{align*}
 (-1)^{s}&q^{\binom{s+1}{2}}j(q^{s+3};q^5)\frac{1}{2}\frac{J_{2,4}^4}{J_{1}^3}\\
&\qquad -\frac{1}{\overline{J}_{0,4}\overline{J}_{0,20}}
\left ( -q^{2+3s} \theta_{5,5,1}(q^{4s+7},q^{4};q) +q^{1+s} \theta_{5,5,1}(q^{4s+7},q^{2};q)\right )\\
 &= -(-1)^s(-q)^{\binom{s+1}{2}}\frac{1}{2}\frac{J_{1}^3}{J_{2}J_{4}}j((-q)^{s+3};-q^5).
 \end{align*}
 \end{proposition}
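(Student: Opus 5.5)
This is a pure theta function identity: every term is a finite combination of products and quotients of Jacobi theta functions $j(q^a;q^b)$. The strategy is therefore to prove each case $s\in\{0,1,3,4\}$ separately, as an identity between theta quotients, via the Frye--Garvan machinery \cite{FG}, as announced at the start of Section \ref{section:FryeGarvan}. The value $s=2$ is excluded because the double-sum coefficient degenerates there; by Lemma \ref{lemma:degenerateDoubleSumCoeffs}, the left-hand double-sums vanish.

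For a fixed $s$, I first make $\theta_{5,5,1}(q^{4s+7},q^4;q)$ and $\theta_{5,5,1}(q^{4s+7},q^2;q)$ explicit from Corollary \ref{corollary:f551-HeckeExpansion}. Substituting $x=q^{4s+7}$ and $y=q^4$ or $y=q^2$ turns each of the five summands ($d=0,\dots,4$) into a quotient of theta functions with base $q^5$ or $q^{20}$. The denominators contain $j(q^{10}xy^{-5};q^{20})$ and $j(q^{4+4d}x^{-1}y;q^{20})$. For the relevant $s$ one should check that none of these arguments is an integral power of $q^{20}$; if some factor did vanish, the corresponding summand would have to be interpreted through the limiting form of \cite[Theorem 1.4]{HM}.

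Next I normalize every theta function using (\ref{equation:j-elliptic}) and (\ref{equation:j-flip}). The goal is arguments $q^a$ with $0<a<b/2$ (or $-q^a$), so that each term becomes a generalized eta quotient $\prod J_{a,N}^{e_a}$ times a power of $q$ at a common level. For the left side I take $N=20$ or $N=40$. The terms $\overline{J}_{0,4}$, $\overline{J}_{0,20}$ and $J_{2,4}^4/J_1^3$ convert by the product rearrangements listed in Section \ref{section:prelim}. On the right side, $j((-q)^{s+3};-q^5)$ converts to base $q^{10}$ via (\ref{equation:1.11}), and $J_1^3/(J_2J_4)$ is an eta quotient.

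Dividing through by one term, say the right-hand side, the identity becomes the statement that a sum of about a dozen eta quotients equals a constant. Each quotient is a modular function on $\Gamma_1(N)$ for suitable $N$, such as $40$ or $80$. The proof then runs as in {\em thetaids}, following \cite[Section 4]{BoMo2026} and \cite[Section 5]{BoMo2025}:
\begin{itemize}
\item verify the modularity conditions for each quotient;
\item compute, at every cusp of $\Gamma_1(N)$, the minimal order of the summands;
\item use the valence formula to reduce the identity to checking $q$-expansion coefficients up to an explicit bound (a few hundred terms);
\item carry out that check with {\em qseries}.
\end{itemize}

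\textbf{Main obstacle.} The difficulty is bookkeeping rather than concept. The $\theta_{5,5,1}$ terms carry many theta factors, and the normalizing powers of $q$ from (\ref{equation:j-elliptic}) must be tracked exactly. In particular, the factors $(-1)^s$ and $(-q)^{\binom{s+1}{2}}$ in the statement must come out consistently for each $s$. The cusp-order computation also has to be done at the level that keeps the Sturm bound manageable. If the raw level is too large, I would first use (\ref{equation:1.10}) and (\ref{equation:1.12}) to rewrite the base-$q^{20}$ pieces at a smaller level before invoking {\em thetaids}.
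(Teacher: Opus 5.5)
Your proposal matches the paper's proof: the paper also treats this as a pure theta-function identity, checks each case $s\in\{0,1,3,4\}$ separately with Frye and Garvan's \emph{qseries} and \emph{thetaids} packages, and gives no further details. Your concern about vanishing denominators does not arise: for $y=q^4$ and $y=q^2$, the arguments of $j(q^{10}xy^{-5};q^{20})$ and $j(q^{4+4d}x^{-1}y;q^{20})$ are odd powers of $q$ for every $s$, so no limiting form of the Hickerson--Mortenson expansion is needed.
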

\noindent For the case $s=2$ in Proposition \ref{proposition:level12EvenSpinThetaId}, the Hecke-type double-sum coefficient in Corollary  \ref{corollary:genEulerOneHalfEvenSpin} evaluates to zero, see Lemma \ref{lemma:degenerateDoubleSumCoeffs}.

For even-spin, $1/3$-level, we will need the following two identities to prove Theorems \ref{theorem:genEulerOneThirdFirstPairEvenSpin} and \ref{theorem:genEulerOneThirdSecondPairEvenSpin}, respectively.  We recall the notation for $\theta_{7,7,1}(x,y;q)$ from Corollary \ref{corollary:f771-HeckeExpansion}.
\begin{proposition}\label{proposition:level13EvenSpinFirstPairThetaId} For $s\in\{0,1,2,4,5,6\}$ we have that
{\allowdisplaybreaks \begin{align*}
(-1)^s&2q^{\binom{s+1}{2}}j(q^{s+4};q^7)\left ( -\frac{1}{2}q\frac{J_{6}^3}{J_{2}\overline{J}_{3,6}}
+\frac{J_{6}^3\overline{J}_{2,6}J_{3,6}}{\overline{J}_{0,6}J_{1,6}J_{2,6}\overline{J}_{3,6}}\right )\\
&\qquad -\frac{1}{\overline{J}_{0,6}\overline{J}_{0,42}}
\left ( -q^{3+4s}\theta_{7,7,1}(q^{6s+10},q^5;q) +q^{2+2s}\theta_{7,7,1}(q^{6s+10},q^3;q)\right ) \\
&=-q^{5s^2-3s+1}\frac{J_{1}^3J_{4}}{J_{2}^3}j(q^{8+16s};q^{28}).
\end{align*}}%
\end{proposition}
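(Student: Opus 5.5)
For each fixed $s\in\{0,1,2,4,5,6\}$ the statement is a finite identity among products of theta functions. The mock parts have already been separated off via Lemma \ref{lemma:alternateAppellForms} and Corollary \ref{corollary:f771-HeckeExpansion}. So I would prove it as six separate theta function identities, using the modular-function machinery of Frye and Garvan \cite{FG}, as the paper does for the other propositions of this section.

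\textbf{Step 1: write every term as a generalized eta quotient.}
First specialize $\theta_{7,7,1}(x,y;q)$ at $(x,y)=(q^{6s+10},q^5)$ and $(q^{6s+10},q^3)$, term by term in $d=0,\dots,6$. Each summand then becomes a quotient of theta functions $j(\pm q^{a};q^{b})$ with $b\in\{7,42\}$. I would check that no denominator vanishes:
\begin{itemize}
\item $q^{6(d+1)}x^{-1}y$ has exponent $\equiv 1$ or $5 \pmod 6$, so it is never $\equiv 0 \pmod{42}$;
\item $q^{21}xy^{-7}$ has exponent $6s-4$ or $6s+10$, which is never $\equiv 0\pmod{42}$ for the listed $s$.
\end{itemize}
The excluded value $s=3$ is the degenerate case of Lemma \ref{lemma:degenerateDoubleSumCoeffs}. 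Next, use (\ref{equation:j-elliptic}) to reduce all exponents to $0<a<b$, and (\ref{equation:1.11}) or the rearrangement $\overline{J}_{a,b}=J_{2a,2b}/J_{a,b}$ to remove minus signs. After this every term is a monomial in the generalized Dedekind eta functions $\eta_{\delta,g}$ of level dividing $N=168$ (the lcm of $2,4,6,7,28,42$, doubled for the $-q$ arguments), times a power of $q$.

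\textbf{Step 2: reduce to a modular-function identity and verify it.}
Divide the whole identity by the right-hand side $q^{5s^2-3s+1}J_1^3J_4J_2^{-3}\,j(q^{8+16s};q^{28})$. The claim becomes
\[
\sum_i c_i\,g_i(\tau)=1,
\]
where each $g_i$ is a generalized eta quotient of weight $0$. With the \emph{thetaids} package I would check Robins' criteria, so that each $g_i$ is a modular function on $\Gamma_1(N)$. I would then compute the order of each $g_i$ at every cusp of $\Gamma_1(N)$. The valence formula then says: if the $q$-expansion of $\sum_i c_ig_i-1$ vanishes past the resulting Sturm-type bound, then the identity holds. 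The final check is a finite $q$-series computation with \emph{qseries}.

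\textbf{Reducing the number of cases.}
Before running six separate verifications I would look for a symmetry $s\mapsto 6-s$. This should come from (\ref{equation:H7eq1.14}) together with (\ref{equation:j-elliptic}), which move $j(q^{s+4};q^7)$ to $j(q^{10-s};q^7)$. If it holds, the work reduces to $s\in\{0,1,2\}$.

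\textbf{Main obstacle.}
I expect the difficulty to be computational rather than conceptual. The level is large, and $\theta_{7,7,1}$ contributes fourteen summands with many distinct eta factors. This makes the negative orders at the cusps large and the Sturm bound potentially several thousand terms. If that becomes unwieldy, I would first simplify within each $\theta_{7,7,1}$ sum: pair summands $d$ and $6-d$, and use the quintuple product (\ref{equation:H1Thm1.0}) and (\ref{equation:1.10}) to collapse theta functions of base $42$ into base $14$ or $6$. This lowers the level, and hence the bound, before invoking the valence formula.
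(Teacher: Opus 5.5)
This is the same approach as the paper, which states only that the identity was verified for each listed $s$ with Frye and Garvan's \emph{qseries} and \emph{thetaids} packages and gives no further detail; note that at $s=3$ both $j(q^{s+4};q^{7})$ and $j(q^{8+16s};q^{28})$ vanish, so your division step would be unavailable there in any case. One small correction to your Step 1: the correct rewriting is $\overline{J}_{a,b}=J_{2a,2b}J_{b}^{2}/(J_{a,b}J_{2b})$, not $J_{2a,2b}/J_{a,b}$.
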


\begin{proposition}\label{proposition:level13EvenSpinSecondPairThetaId} For $s\in\{0,1,2,4,5,6\}$ we have that
{\allowdisplaybreaks \begin{align*}
(-1)^{s}&2q^{\binom{s+1}{2}}j(q^{s+4};q^7)\left ( -\frac{1}{4}\frac{J_{6,12}^2}{J_{2}}
+\frac{J_{6}^3}{\overline{J}_{0,6}J_{2,6}}\right ) \\
&\qquad -\frac{1}{\overline{J}_{0,6}\overline{J}_{0,42}}
\left ( -q^{3+5s}\theta_{7,7,1}(q^{6s+10},q^6;q) +q^{1+s}\theta_{7,7,1}(q^{6s+10},q^2;q)\right ) \\
&=-(-1)^{s}\frac{(-q)^{\binom{s+1}{2}}}{2}\frac{J_{1}^2}{J_{4}}j((-q)^{s+4};-q^7).
\end{align*}}%
\end{proposition}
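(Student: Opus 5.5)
The statement to prove is Proposition \ref{proposition:level13EvenSpinSecondPairThetaId}. It is a pure theta function identity: both sides are finite combinations of theta quotients in $q$, since $\theta_{7,7,1}$ is an explicit sum of seven theta quotients by Corollary \ref{corollary:f771-HeckeExpansion}. We will therefore verify it by the modular-function method implemented in Frye and Garvan's \emph{thetaids} package \cite{FG}, applied case by case for $s\in\{0,1,2,4,5,6\}$. The value $s=3$ is excluded because the double-sum combination degenerates there; see (\ref{equation:degenOneThirdSecondPairEvenSpin}).

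First we substitute $x=q^{6s+10}$ and $y=q^6$, respectively $y=q^2$, into $\theta_{7,7,1}$. Each summand then becomes a product of functions $j(\pm q^{a};q^{b})$ with $b\in\{7,42\}$. We use the quasi-periodicity (\ref{equation:j-elliptic}) and the flip (\ref{equation:j-flip}) to reduce every exponent modulo its base, which pulls out explicit powers of $q$. We then rewrite every theta function as a generalized eta quotient on a common level, which is a divisor of $168$: this involves $j(\cdot;q^{42})$, the $J_{12}$ hidden in $J_{6,12}$, and the $q^{14}$ that arises from $j(\pm q^{k};-q^7)$ through (\ref{equation:1.11}). The right-hand side becomes a single generalized eta quotient. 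After dividing through by it, the identity takes the form $\sum_i g_i(\tau)=1$, where each $g_i$ is a generalized eta quotient multiplied by a rational power of $q$.

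Next we check the standard conditions (Robins' criteria) showing that each $g_i$ is a modular function on $\Gamma_1(N)$. We compute the orders of each $g_i$ at the cusps of $\Gamma_1(N)$ with the usual Ligozat-type formula. By the valence formula, the identity then follows once we confirm that the $q$-expansion of $\sum_i g_i-1$ vanishes beyond the Sturm-type bound, namely minus the sum of the minimal orders at the cusps other than $\infty$. This last check is a finite computation in \emph{qseries}.

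We expect the main obstacle to be the size of the computation, not any conceptual step. The level is large, there are many cusps, and the denominators $j(q^{21}xy^{-7};q^{42})$ and $j(q^{6(d+1)}x^{-1}y;q^{42})$ introduce poles at cusps. These poles can make the required expansion length large. If the direct verification becomes too heavy, we would first simplify the $d$-sum by hand, using quasi-periodicity to pair terms with $d$ and $6-d$. We would also reduce the six values of $s$ to fewer cases: shifting $s\mapsto s+7$ or $s\mapsto 6-s$ acts compatibly on both sides through (\ref{equation:j-elliptic}), so the list of verified cases can be shortened before running \emph{thetaids}.
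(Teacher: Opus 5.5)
Your proposal matches the paper's proof: the authors likewise treat this as a pure theta-function identity and verify it case by case for $s\in\{0,1,2,4,5,6\}$ with Frye and Garvan's \emph{qseries} and \emph{thetaids} packages \cite{FG}, which is the modular-function and valence-formula method you describe. The case $s=3$ is the degenerate one covered by Lemma \ref{lemma:degenerateDoubleSumCoeffs}, and your optional $s\mapsto 6-s$ reduction (which swaps the $y=q^6$ and $y=q^2$ pieces with $d\mapsto 6-d$) is not used in the paper; it would need its own check but is not needed for the argument.
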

\noindent For the cases $s=3$ in Propositions \ref{proposition:level13EvenSpinFirstPairThetaId} and \ref{proposition:level13EvenSpinSecondPairThetaId}, we see that the double-sum coefficients in Corollary \ref{corollary:genEulerOneThirdEvenSpin} evaluate to zero, see Lemma \ref{lemma:degenerateDoubleSumCoeffs}.

For even-spin, $2/3$-level, we will need the following two identities to prove Theorems \ref{theorem:level23EvenSpinFirstQuad-sEven}  and \ref{theorem:level23EvenSpinFirstQuad-sOdd}, respectively.   We recall the notation for $\theta_{4,4,1}(x,y;q)$ from Corollary \ref{corollary:f441-HeckeExpansion}.
\begin{proposition}\label{proposition:level23EvenSpinFirstQuad-sEvenThetaId} For $s\in\{0,1,2,3\}$ we have
{\allowdisplaybreaks \begin{align*}
&2q^{7+16s}  q^{2s+12\binom{s}{2}} j(-q^{12s+23};q^{16})\left (  -\frac{1}{2}q^2\frac{J_{12}^3}{J_{4}\overline{J}_{6,12}}
+\frac{J_{12}^3\overline{J}_{4,12}J_{6,12}}{\overline{J}_{0,12}J_{2,12}J_{4,12}\overline{J}_{6,12}}\right )\\
&\quad +2q^{17+16s}q^{4s}q^{4s+12\binom{s}{2}} j(-q^{31+12s};q^{16}) 
 \left ( -\frac{1}{4}\frac{J_{12,24}^2}{J_{4}}
+\frac{J_{12}^3}{\overline{J}_{0,12}J_{4,12}}\right )\\
&\quad-\frac{1}{\overline{J}_{0,12}\overline{J}_{0,48}}
\Big (q^{7+8s}\theta_{4,4,1}(-q^{12s+23},-q^{13};q^4)
   - q^{5+4s}\theta_{4,4,1}(-q^{12s+23},-q^{9};q^4)\\
& \qquad \qquad -q^{13+8s}\theta_{4,4,1} (-q^{31+12s},-q^{15};q^4)
  + q^{9+4s} \theta_{4,4,1} (-q^{31+12s},-q^{11};q^4)\Big ) \\ 
&=  -(-1)^{s}\frac{q^{\binom{2s}{2}+2}}{2}\frac{J_{1}^2J_{2}}{J_{4}^2}j(-q^{1+4s};q^{16}).
\end{align*}}%
\end{proposition}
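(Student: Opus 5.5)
The statement is an identity among theta quotients only. The paper says these identities are verified with the Frye--Garvan Maple packages \emph{qseries} and \emph{thetaids} \cite{FG}, so I would follow the standard route used in \cite[Section 4]{BoMo2026} and \cite[Section 5]{BoMo2025}. First fix $s\in\{0,1,2,3\}$ and treat each of the four cases separately. In each case, expand the four terms $\theta_{4,4,1}(x,y;q^4)$ from Corollary \ref{corollary:f441-HeckeExpansion}, with $q\to q^4$ and the specific $x,y$. Each $\theta_{4,4,1}$ is a sum over $d\in\{0,1,2,3\}$ of quotients of theta functions. Their arguments are powers of $q$ times $\pm1$, with bases $q^{16}$ and $q^{48}$.

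Next, use the quasi-elliptic property (\ref{equation:j-elliptic}) and (\ref{equation:j-flip}) to reduce every theta function to the form $J_{a,M}$ or $\overline{J}_{a,M}$ with $0\le a\le M/2$. Use the rearrangements (\ref{equation:1.10}), (\ref{equation:1.11}) and (\ref{equation:1.12}) to rewrite everything over a common base. The base $q^{48}$ is natural, but I would choose the least common multiple of $16$, $24$ and $48$, namely $q^{48}$. Terms like $J_{4}$, $J_{12}$, $\overline{J}_{6,12}$, $J_{1}$ and $J_{2}$ on the left, and $j(-q^{1+4s};q^{16})$ on the right, are then expressed as generalized eta quotients. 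At this point the claimed identity becomes $\sum_i c_i q^{e_i}\prod_k J_{k,48}^{a_{ik}}=0$, a finite linear relation among generalized eta quotients of level $48$ (or $96$ after accounting for $\overline{J}$'s).

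To prove each such relation, divide by one of the terms. Each resulting quotient is then a modular function on $\Gamma_1(N)$ for suitable $N$ (likely $N=96$ or $192$); this is checked with \emph{thetaids} via Robins' criteria for generalized eta quotients. Then compute the orders at all cusps of $\Gamma_1(N)$ using the standard Ligozat-type formulas that \emph{thetaids} implements. Apply the valence formula: if the sum is a modular function with poles bounded by $B$ at the cusps other than $\infty$, it suffices to verify that its $q$-expansion vanishes beyond order $B$. That check is done with \emph{qseries} by expanding to sufficiently many terms.

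The main obstacle is the sheer size of the bookkeeping. There are sixteen $\theta$-summands plus two extra terms, each with prefactors $q^{2s+12\binom{s}{2}}$ and so on, and the level $N$ and number of cusps become large. So the Sturm-type bound may require many coefficients, and denominators such as $j(q^{3(d+1)}x^{-1}y;q^{12})$ may vanish for some $d$ and $s$. I would first check these degenerate denominators. Where one vanishes, the pole must cancel against a partner term, so I would pair such terms before applying the valence formula. I would also try to use (\ref{equation:j-elliptic}) to show that the cases $s$ and $s+1$ differ by a uniform factor, which reduces the four cases to fewer independent computations.
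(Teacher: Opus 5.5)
Your proposal is correct and is essentially the paper's own argument. The paper proves this pure theta-quotient identity by machine verification with Frye and Garvan's \emph{qseries} and \emph{thetaids} packages, which is exactly what you outline: for each $s\in\{0,1,2,3\}$, reduce to generalized eta quotients, check modularity, bound the cusp orders via the valence formula, and compare finitely many coefficients. (The vanishing-denominator issue you anticipate does not occur: for all four argument pairs, the exponents in $j(-q^{24}xy^{-4};q^{48})$ and $j(q^{12(d+1)}x^{-1}y;q^{48})$ are never multiples of $48$. The reduction between consecutive values of $s$ is also not needed.)
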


\begin{proposition}\label{proposition:level23EvenSpinFirstQuad-sOddThetaId} For $s\in\{0,1,2,3\}$ we have
{\allowdisplaybreaks \begin{align*}
&-2q^{4s}q^{4s+12\binom{s}{2}}q^{15+16s}j(-q^{29+12s};q^{16})\left (-\frac{1}{4}\frac{J_{12,24}^2}{J_{4}}
+\frac{J_{12}^3}{\overline{J}_{0,12}J_{4,12}}\right ) \\
&\quad -2q^{25+16s}q^{4s+2}q^{8s} q^{2s+12\binom{s}{2}}j(-q^{37+12s};q^{16})
\left ( -\frac{1}{2}q^2\frac{J_{12}^3}{J_{4}\overline{J}_{6,12}}
+\frac{J_{12}^3\overline{J}_{4,12}J_{6,12}}{\overline{J}_{0,12}J_{2,12}J_{4,12}\overline{J}_{6,12}}\right )\\
&\quad-\frac{1}{\overline{J}_{0,12}\overline{J}_{0,48}}
\Big (q^{11+8s}\theta_{4,4,1}(-q^{29+12s},-q^{13};q^4)-q^{7+4s}\theta_{4,4,1}(-q^{29+12s},-q^9;q^4)\\
&\qquad \qquad -q^{17+8s}\theta_{4,4,1}(-q^{37+12s},-q^{15};q^4)+q^{11+4s}\theta_{4,4,1}(-q^{37+12s},-q^{11};q^{4})\Big ) \\ 
&\qquad = -(-1)^{s}\frac{q^{\binom{2s+1}{2}+2}}{2}\frac{J_{1}^2J_{2}}{J_{4}^2}j(-q^{3+4s};q^{16}).
\end{align*}}%
\end{proposition}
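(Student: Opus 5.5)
This is a pure theta-function identity: the left side is a finite combination of quotients of $j(\cdot;q^k)$ with fixed bases (16, 12, 24, 48, and 4 after unfolding $\theta_{4,4,1}$ in base $q^4$). Since $s$ ranges over the finite set $\{0,1,2,3\}$, I would prove it case by case in the way the paper describes for this section: reduce each case to an identity among generalized eta quotients and verify it with Frye and Garvan's \emph{thetaids} package \cite{FG}.

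\textbf{Step 1: normalise the $\theta_{4,4,1}$ terms.} Replace $q$ by $q^4$ in Corollary \ref{corollary:f441-HeckeExpansion} and expand each $\theta_{4,4,1}(x,y;q^4)$ as the sum over $d=0,\dots,3$. Each summand is a quotient of theta functions in bases $q^{16}$ and $q^{48}$. With the specific arguments $x=-q^{29+12s}$ or $-q^{37+12s}$ and $y=-q^{13},-q^9,-q^{15},-q^{11}$, the arguments are monomials $\pm q^{k}$. Apply (\ref{equation:j-elliptic}) and (\ref{equation:j-flip}) to bring each $k$ into $[0,b/2]$, where $b$ is the base. This collects explicit powers of $q$ and signs, which I track so that the $s$-dependence is made explicit.

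\textbf{Step 2: reduce the remaining terms.} Rewrite the two Appell-correction pieces, coming from Lemma \ref{lemma:alternateAppellForms} with $q\to q^2$, and the right-hand side in the same $J_{a,b}$, $\overline{J}_{a,b}$ notation. For example, $j(-q^{37+12s};q^{16})$ is reduced by (\ref{equation:j-elliptic}) to some $j(-q^{c};q^{16})$ with $0\le c\le 8$ times a monomial. Convert everything to a common base, say $q^{48}$ or $q^{96}$, using the product rearrangements and (\ref{equation:1.10}), (\ref{equation:1.12}). Then divide by the right-hand side, so each case becomes the statement that a sum of generalized eta quotients of level $96$ (or a divisor of it) equals $-1$.

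\textbf{Step 3: verify with the valence formula.} The \emph{thetaids} procedure checks that each term is a modular function on $\Gamma_1(N)$, computes lower bounds for the orders at the cusps, and then verifies enough $q$-expansion coefficients to satisfy the Sturm/valence bound. This is done once for each $s\in\{0,1,2,3\}$. In every case the exponents reduce to residues modulo $16$ and $48$, so only four distinct identities arise.

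\textbf{Main obstacle.} I expect the hardest part to be the bookkeeping in Step 1: getting the sign and power-of-$q$ prefactors exactly right after the quasi-periodic shifts, especially where $j(q^{3(d+1)}x^{-1}y;q^{48})$ or $j(-q^{24}xy^{-4};q^{48})$ acquires large shifts depending on $s$. A single slip makes the modular-function check fail. To guard against this, before running the proof machinery I would first numerically compare $q$-expansions of both sides to, say, $O(q^{200})$ for each $s$. It may also turn out that the individual quotients carry different multiplier characters. In that case I would multiply through by a suitable common eta quotient so that every term lies on the same $\Gamma_1(N)$ before invoking \cite{FG}.
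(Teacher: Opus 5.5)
Your proposal takes the same approach as the paper. The paper also treats this as four theta-function identities, one for each $s\in\{0,1,2,3\}$, and proves them with Frye and Garvan's \emph{qseries} and \emph{thetaids} packages without writing out a further argument; your normalization steps and the preliminary $q$-expansion comparison are the bookkeeping that the package automates.
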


For even-spin, $2/3$-level, we will need the following two identities to prove Theorems \ref{theorem:level23EvenSpinSecondQuad-sEven}  and \ref{theorem:level23EvenSpinSecondQuad-sOdd}, respectively.   We recall the notation for $\theta_{4,4,1}(x,y;q)$ from Corollary \ref{corollary:f441-HeckeExpansion}.
\begin{proposition}\label{proposition:level23EvenSpinSecondQuad-sEvenThetaId} For $s\in\{0,1,2,3\}$ we have
{\allowdisplaybreaks \begin{align*}
&2q^{7+14s} q^{4s+12\binom{s}{2}} j(-q^{12s+23};q^{16})
 \left (-\frac{1}{4}\frac{J_{12,24}^2}{J_{4}}
+\frac{J_{12}^3}{\overline{J}_{0,12}J_{4,12}}\right ) \\
&\qquad +2q^{8s}q^{16+14s} q^{2s+12\binom{s}{2}} j(-q^{31+12s};q^{16})\left ( -\frac{1}{2}q^2\frac{J_{12}^3}{J_{4}\overline{J}_{6,12}}
+\frac{J_{12}^3\overline{J}_{4,12}J_{6,12}}{\overline{J}_{0,12}J_{2,12}J_{4,12}\overline{J}_{6,12}} \right )\\
&\qquad -\frac{1}{\overline{J}_{12}\overline{J}_{0,48}}\Big ( q^{7+10s}
 \theta_{4,4,1}(-q^{12s+23},-q^{15};q^4)
   - q^{3+2s} \theta_{4,4,1}(-q^{12s+23},-q^{7};q^4)\\
&\qquad \qquad  -q^{14+10s}
 \theta_{4,4,1}(-q^{31+12s},-q^{17};q^4)
  + q^{6+2s} \theta_{4,4,1}(-q^{31+12s},-q^{9};q^4) \Big ) \\
 &\quad = -(-1)^{s}\frac{q^{\binom{2s+1}{2}}}{2}\frac{J_{1}^2J_{2}}{J_{4}^2}j(-q^{9+4s};q^{16}).
\end{align*}}%
\end{proposition}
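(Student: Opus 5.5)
The statement is Proposition \ref{proposition:level23EvenSpinSecondQuad-sEvenThetaId}. Every term in it is built from theta functions. I will prove it with the Frye--Garvan packages \cite{FG}, in the same way as Propositions \ref{proposition:level23EvenSpinFirstQuad-sEvenThetaId} and \ref{proposition:level23EvenSpinFirstQuad-sOddThetaId}. First I make everything explicit. The prefactor $1/(\overline{J}_{12}\overline{J}_{0,48})$ should be read as $1/(\overline{J}_{0,12}\overline{J}_{0,48})$, since that is the normalization coming from Corollary \ref{corollary:f441-HeckeExpansion} with $q\to q^4$. Next I write each of the four $\theta_{4,4,1}(x,y;q^4)$ as its sum over $d=0,\dots,3$. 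The arguments are $x=-q^{12s+23}$ or $-q^{12s+31}$, and $y=-q^{15},-q^7,-q^{17},-q^{9}$. This gives sixteen theta quotients. Each has numerator $j(q^{12(d+1)}y;q^{16})\,j(-q^{48-12(d+1)}x/y;q^{48})\,J_{48}^3\,j(-q^{24+12(d+1)}y^{-3};q^{48})$. Its denominator is $j(-q^{24}xy^{-4};q^{48})\,j(q^{12(d+1)}y/x;q^{48})$.

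Before running the software, I reduce the dependence on $s$ with the quasi-periodicity \eqref{equation:j-elliptic}. Every argument is of the form $\pm q^{\alpha+\beta s}$ with $\beta\in\{\pm 12, \pm 36, \pm 48\}$ or $\beta=4$. Shifting $s\to s+4$ changes each exponent by a multiple of $48$, or of $16$ for the base-$16$ thetas. So both sides pick up explicit monomial factors. I would check that the ratio of the left side to the right side is invariant under $s\to s+4$. This needs bookkeeping of the prefactors $q^{7+10s}$, $q^{3+2s}$, etc., of $q^{4s+12\binom{s}{2}}$ and $q^{2s+12\binom{s}{2}}$, and of $q^{\binom{2s+1}{2}}$ together with the sign $(-1)^s$. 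If so, it suffices to verify $s\in\{0,1,2,3\}$, which is exactly the stated range. In any case the statement only asserts those four values, so a finite check is all that is needed.

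For each fixed $s$, I rewrite every $j(\pm q^a;q^b)$ as a generalized eta quotient on the common level $48$, using \eqref{equation:1.10}, \eqref{equation:j-flip} and the listed product rearrangements. The two terms involving $J_{12,24}^2/J_4$, $J_{12}^3/(\overline{J}_{0,12}J_{4,12})$, etc. are already of this form. Dividing through by the right-hand side then expresses the identity as $\sum_i g_i(\tau)=1$, where each $g_i$ is a modular function on $\Gamma_1(48)$ (possibly $\Gamma_1(96)$ because of the signed thetas). The \emph{thetaids} package checks modularity of each $g_i$ and computes its orders at all cusps. The valence formula then reduces the identity to verifying the $q$-expansion past the Sturm-type bound. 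I do that with \emph{qseries}.

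The main obstacle is the sheer size and the cusp analysis. There are sixteen $\theta$-terms, each a quotient with possible poles at many cusps of $\Gamma_1(48)$. The bound from the total pole order may be large, and some denominators such as $j(q^{12(d+1)}y/x;q^{48})$ may degenerate, i.e.\ vanish, for particular $s$. If a denominator vanishes, I first pair the offending terms, or perturb $x$ to a generic value and take the limit. The paired terms come from the same $x$ with $y$ and $y'$ related by the $(d\to d')$ shift. If the level is unwieldy, I would first use \eqref{equation:j-split-m2} and the quintuple product \eqref{equation:H1Thm1.0} to collapse the sums over $d$ into fewer theta quotients, and then hand the smaller identity to \emph{thetaids}.
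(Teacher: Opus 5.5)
Your proposal is correct and is essentially the paper's own argument: the paper simply records that this identity, for each $s\in\{0,1,2,3\}$, was verified with Frye and Garvan's \emph{qseries} and \emph{thetaids} packages \cite{FG}, which is the finite modular-function check you describe. You are also right to read the prefactor as $1/(\overline{J}_{0,12}\overline{J}_{0,48})$, which is what Corollary \ref{corollary:f441-HeckeExpansion} gives under $q\to q^4$. The degenerate denominators you worry about do not occur, since the exponents in $j(q^{12(d+1)}x^{-1}y;q^{48})$ and $j(-q^{24}xy^{-4};q^{48})$ are never divisible by $12$.
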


\begin{proposition}\label{proposition:level23EvenSpinSecondQuad-sOddThetaId} For $s\in\{0,1,2,3\}$ we have
{\allowdisplaybreaks 
\begin{align*}
-2&q^{8s}q^{2s+12\binom{s}{2}} q^{14+14s}  j(-q^{12s+29};q^{16})\left ( -\frac{1}{2}q^2\frac{J_{12}^3}{J_{4}\overline{J}_{6,12}}
+\frac{J_{12}^3\overline{J}_{4,12}J_{6,12}}{\overline{J}_{0,12}J_{2,12}J_{4,12}\overline{J}_{6,12}}   \right ) \\
&\quad - 2q^{23+14s}  q^{8s+4} q^{4s}q^{4s+12\binom{s}{2}}j(-q^{37+12s};q^{16})\left (-\frac{1}{4}\frac{J_{12,24}^2}{J_{4}}
+\frac{J_{12}^3}{\overline{J}_{0,12}J_{4,12}}\right )\\
&\quad -\frac{1}{\overline{J}_{12}\overline{J}_{0,48}}
\Big ( q^{12+10s}
 \theta_{4,4,1}(-q^{12s+29},-q^{15};q^4)
   - q^{4+2s} \theta_{4,4,1}(-q^{12s+29},-q^{7};q^4)\\
&\qquad  -q^{19+10s}
 \theta_{4,4,1}(-q^{37+12s},-q^{17};q^4)
  + q^{7+2s} \theta_{4,4,1}(-q^{37+12s},-q^{9};q^4) \Big ) \\
  &= - (-1)^{s}\frac{q^{\binom{2s+2}{2}}}{2}\frac{J_{1}^2J_{2}}{J_{4}^2}j(-q^{11+4s};q^{16}).
\end{align*}}%
\end{proposition}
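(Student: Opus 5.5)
This is a theta-function identity among explicit quotients of $j(\cdot)$'s. The plan is to verify it as an identity of modular functions through the algorithm of Frye and Garvan \cite{FG}, as the paper does for the other propositions in this section. For each fixed $s\in\{0,1,2,3\}$, every term specializes to a finite combination of products of $j(\pm q^{k};q^{N})$ with concrete exponents. In particular, the four $\theta_{4,4,1}$ terms are taken from Corollary \ref{corollary:f441-HeckeExpansion} with $(x,y)$ replaced by the given monomials in $q$ and $q\mapsto q^{4}$, so each is a sum over $d=0,\dots,3$ of theta quotients with base dividing $q^{48}$.

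First I will normalize every theta function. Using the quasi-periodicity (\ref{equation:j-elliptic}) and the flip (\ref{equation:j-flip}), I will reduce the exponents $12s+29$, $12s+37$, and those appearing inside $\theta_{4,4,1}$ into a fundamental range, keeping track of the powers of $q$ and the signs. The $s$-dependence then sits only in prefactors of the form $q^{\text{quadratic in }s}$. This should let a single normalized identity cover all four values of $s$, or at most one identity per $s$.

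Next I will divide through by the right-hand side $-(-1)^{s}\tfrac{1}{2}q^{\binom{2s+2}{2}}J_1^2J_2J_4^{-2}j(-q^{11+4s};q^{16})$. After converting $J_{a,b}$ and $\overline{J}_{a,b}$ into generalized eta-quotients of level dividing $96$, each resulting term is a weight-zero generalized eta-quotient. The identity then has the form $\sum_i g_i=1$ with the $g_i$ modular functions on $\Gamma_1(N)$ for some $N$ dividing $96$ or $192$. For each $g_i$ I will check the modularity conditions (the $\sum \delta r_\delta\equiv0 \pmod{24}$ type conditions, with a half-integral $q$-shift absorbed when necessary).

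I will then compute the orders of each $g_i$ at all cusps of $\Gamma_1(N)$, which is what \texttt{thetaids} does, and sum the negative orders to get a Sturm-type bound $B$. Finally I will verify that the $q$-expansion of $\sum_i g_i-1$ vanishes through $q^{B}$. The main obstacle is bookkeeping: the $\theta_{4,4,1}$ terms have theta functions in the denominator, such as $j(-q^{6}xy^{-4};q^{48})$ and $j(q^{12(d+1)}x^{-1}y;q^{48})$ after rescaling, and these must be nonvanishing for the given $x,y$. I also have to confirm that a genuine common level exists, likely $\Gamma_1(192)$ because of the halving in $q^{\binom{2s+2}{2}}$. If the cusp count makes the bound too large, I will first simplify the four $d$-sums individually by pairing $d$ with $3-d$ via the flip symmetry, and only then run the verification.
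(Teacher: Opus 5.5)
Your plan is essentially the paper's proof. The authors give no hand argument for Proposition~\ref{proposition:level23EvenSpinSecondQuad-sOddThetaId}; they verify it for each $s\in\{0,1,2,3\}$ with Frye and Garvan's \emph{qseries} and \emph{thetaids} packages \cite{FG}. That means rewriting everything as generalized eta-quotients, checking modularity, bounding orders at the cusps and comparing $q$-expansions up to the resulting Sturm-type bound, as you describe.

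Two small corrections for when you run it. The prefactor $1/(\overline{J}_{12}\overline{J}_{0,48})$ in the statement should be read as $1/(\overline{J}_{0,12}\overline{J}_{0,48})$, the $q\mapsto q^4$ version of the constant in Corollary~\ref{corollary:f441-HeckeExpansion}. Also, the rescaled denominator theta function is $j(-q^{24}xy^{-4};q^{48})$, not $j(-q^{6}xy^{-4};q^{48})$.
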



For odd-spin, $1/2$-level, we will need the following identity to prove Theorem \ref{theorem:genEulerOneHalfOddSpin}.  We recall the notation for $\theta_{5,5,1}(x,y;q)$ from Corollary \ref{corollary:f551-HeckeExpansion}.
\begin{proposition} \label{proposition:level12OddSpinThetaId} We have that
\begin{align*}
-\frac{1}{2}&j(q;q^5)\frac{J_{2,4}^4}{J_{1}^3}
 -\frac{1}{\overline{J}_{0,4}\overline{J}_{0,20}}
\left ( -q^{3}\theta_{5,5,1}(q^9,q^4;q)+q\theta_{5,5,1}(q^9,q^2;q)\right )\\
&=-\frac{1}{2}\frac{J_{1}^3}{J_{2}J_{4}}j(-q;-q^5).
\end{align*}
\end{proposition}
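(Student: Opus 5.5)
This is a pure theta function identity: the left-hand side involves only theta quotients, once $\theta_{5,5,1}$ is written out from Corollary \ref{corollary:f551-HeckeExpansion}. I would prove it as a modular form identity in the style of Frye and Garvan, as the paper does for the companion Proposition \ref{proposition:level12EvenSpinThetaId}.

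The plan has four steps.

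\textbf{Step 1: expand the theta sums.} Substitute $(x,y)=(q^9,q^4)$ and $(x,y)=(q^9,q^2)$ into the definition of $\theta_{5,5,1}(x,y;q)$. Each becomes a sum over $d=0,\dots,4$ of quotients of the form
\begin{equation*}
q^{2d(d+1)}\frac{J_{20}^3\, j(q^{4+4d}y;q^5)\, j(-q^{16-4d}xy^{-1};q^{20})\, j(q^{14+4d}y^{-4};q^{20})}{j(q^{10}xy^{-5};q^{20})\, j(q^{4+4d}x^{-1}y;q^{20})}.
\end{equation*}

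\textbf{Step 2: normalize.} Use (\ref{equation:j-elliptic}) and (\ref{equation:j-flip}) to reduce every theta function to a standard $J_{a,20}$, $\overline{J}_{a,20}$, $J_{a,5}$ or $\overline{J}_{a,4}$ with $0\le a$ below the base.

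Several summands vanish outright. For example, for $y=q^4$ and $d=4$ the factor $j(q^{24};q^5)$ is $j(q^{5k};q^5)=0$; for $y=q^2$ the factor $j(q^{6+4d};q^5)$ vanishes at $d=1$. One must check that no denominator $j(q^{4+4d}x^{-1}y;q^{20})$ vanishes. With $x^{-1}y=q^{-5}$ or $q^{-7}$, the exponents $4d-1$ and $4d-3$ are never divisible by $20$, so this is fine.

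Also rewrite $j(-q;-q^5)$ via (\ref{equation:1.11}) as $j(-q;q^{10})j(q^6;q^{10})/J_{5,20}$. The target identity is then a finite linear relation among eta/theta quotients, all with rational $q$-power prefactors.

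\textbf{Step 3: modular verification.} Divide through by one term, say $\frac{1}{2}\frac{J_1^3}{J_2J_4}j(-q;-q^5)$. This turns the claim into $\sum_i g_i(\tau)=c$, where each $g_i$ is a generalized eta quotient. Each $g_i$ is a modular function on $\Gamma_1(N)$ for some $N$ dividing a multiple of $40$ (plausibly $N=40$ or $80$).

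Then use the Frye--Garvan \emph{thetaids} procedure:
\begin{itemize}
\item check each $g_i$ is invariant on $\Gamma_1(N)$ using Robins' criteria for generalized eta quotients;
\item compute the order of each $g_i$ at every cusp of $\Gamma_1(N)$;
\item apply the valence formula, which bounds how many $q$-expansion coefficients must agree;
\item verify those coefficients numerically.
\end{itemize}

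\textbf{Main obstacle.} I expect the hard part to be the bookkeeping in Step 3: the cusp orders of the roughly ten surviving quotients and the resulting Sturm-type bound, which may be large. I would lower the level first by combining the $d$-terms pairwise. Terms $d$ and $4-d$ are related by $j(x;q)=j(q/x;q)$, so pairing them should yield simpler quotients on a smaller group. Failing that, I would simply run \emph{thetaids} on the full sum, as the paper does for the even-spin analogues.
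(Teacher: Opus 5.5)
Your plan is essentially the paper's route: the authors list this among the theta-function identities of Section~\ref{section:FryeGarvan} and prove it with Frye and Garvan's \emph{qseries}/\emph{thetaids} packages, which carry out the generalized eta-quotient, cusp-order and valence-formula verification you describe. Two small slips to fix when you carry it out. First, for $y=q^4$ the vanishing summand is $d=3$ rather than $d=4$: the factor is $j(q^{8+4d};q^5)$, and $j(q^{20};q^5)=0$ while $j(q^{24};q^5)\neq 0$. Second, the summands for $d$ and $4-d$ are not identified by (\ref{equation:j-flip}); for instance, for $y=q^4$ the $d=0$ summand carries $j(q^{8};q^5)$ and $j(q^{-1};q^{20})^2$, whereas the $d=4$ summand carries $j(q^{24};q^5)$ and $j(q^{-1};q^{20})j(q^{15};q^{20})$, so skip the pairing and verify the full sum.
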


For odd-spin, $1/3$-level, we will need the following two identities to prove Theorem \ref{theorem:genEulerOneThirdOddSpin}.  We recall the notation for $\theta_{7,7,1}(x,y;q)$ from Corollary \ref{corollary:f771-HeckeExpansion}.
\begin{proposition}\label{proposition:level13OddSpinFirstPairThetaId}  We have that
\begin{align*}
\frac{1}{2}&\frac{J_{6,12}^2J_{1,7}}{J_{2}}
-2\frac{J_{6}^3J_{1,7}}{\overline{J}_{0,6}J_{2,6}}
 -\frac{1}{\overline{J}_{0,6}\overline{J}_{0,42}}\left ( -q^4\theta_{7,7,1}(q^{13},q^5;q) +q^2\theta_{7,7,1}(q^{13},q^3;q)\right ) \\
 &=-\frac{1}{2}\frac{J_{1}^2}{J_{4}}\cdot j(-q;-q^7).
\end{align*}
\end{proposition}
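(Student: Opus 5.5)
This is a pure theta-function identity: every term is a quotient of products $j(\cdot\,;q^k)$, and $\theta_{7,7,1}(x,y;q)$ is an explicit finite sum of such quotients by Corollary \ref{corollary:f771-HeckeExpansion}. Following the paper's stated method for this section, I would verify it with Frye and Garvan's \emph{thetaids} package \cite{FG}.

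\textbf{Step 1: expand the $\theta$-terms.} Substitute $(x,y)=(q^{13},q^5)$ and $(x,y)=(q^{13},q^3)$ into the definition of $\theta_{7,7,1}$. This gives fourteen summands, $d=0,\dots,6$ for each pair. Each summand involves
\begin{equation*}
j(q^{6(d+1)+y_0};q^7),\quad j(-q^{42-6(d+1)+13-y_0};q^{42}),\quad j(q^{21+6(d+1)-6y_0};q^{42}),\quad j(q^{21+13-7y_0};q^{42}),\quad j(q^{6(d+1)-13+y_0};q^{42}),
\end{equation*}
where $y_0\in\{5,3\}$. Use (\ref{equation:j-elliptic}) and (\ref{equation:j-flip}) to reduce every argument to a standard range. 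Some numerators vanish, since $j(q^{7k};q^7)=0$ when $6(d+1)+y_0\equiv 0\pmod 7$. This happens at $d=4$ for $y_0=3$ and at $d=0$ for $y_0=5$, which removes terms. Any apparent pole must likewise be checked to cancel or be absent.

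\textbf{Step 2: put everything on one base.} Write all remaining terms, including $\overline{J}_{0,6}$, $\overline{J}_{0,42}$, $J_{6,12}$, $J_{2,6}$, $J_{1,7}$, and the right side $\frac{J_1^2}{J_4}j(-q;-q^7)$, as generalized eta quotients of level $N=84$. Use (\ref{equation:1.10}) and (\ref{equation:1.11}); for instance $j(-q;-q^7)=j(-q;q^{14})j(q^8;q^{14})/J_{7,28}$. Dividing through by one term, say the right-hand side, turns the claim into $\sum_i g_i(\tau)=1$, where each $g_i$ is a modular function on $\Gamma_1(84)$.

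\textbf{Step 3: valence-formula check.} Use \emph{thetaids} to confirm that each $g_i$ satisfies Robins' criteria for modularity on $\Gamma_1(84)$. Then compute the orders at all cusps to get a lower bound $B$ for the order of $\sum g_i-1$ away from $i\infty$. Finally, verify the $q$-expansion of $\sum g_i-1$ vanishes beyond $O(q^{-B})$; the valence formula then forces it to be identically zero.

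\textbf{Main obstacle.} The hard part is Step 3: the cusp bookkeeping at level $84$ is heavy, and the number of terms to check may be large. If the level is unwieldy, I would first apply the quintuple product (\ref{equation:H1Thm1.0}) and (\ref{equation:1.12}) to combine paired $\theta$-summands. This should reduce the count and possibly the level before running the package.
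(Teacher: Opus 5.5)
Your plan is the paper's own route. The paper gives no hand computation for this identity; it records it as verified with Frye and Garvan's \emph{qseries}/\emph{thetaids} packages \cite{FG}, via generalized eta quotients and a valence-formula check as sketched in \cite[Section 4]{BoMo2026} and \cite[Section 5]{BoMo2025}.

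One small slip: the summands that vanish are $d=4$ for $y_0=5$ (since $6\cdot 5+5=35$) and $d=2$ for $y_0=3$ (since $6\cdot 3+3=21$), not the indices you list. This is harmless if you feed the full expression to the package, but it would matter if you pruned terms by hand.
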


\begin{proposition}\label{proposition:level13OddSpinSecondPairThetaId}  We have that
\begin{align*}
q&\frac{J_{6}^3J_{1,7}}{J_{2}\overline{J}_{3,6}}
-2\frac{J_{6}^3\overline{J}_{2,6}J_{3,6}J_{1,7}}{\overline{J}_{0,6}J_{1,6}J_{2,6}\overline{J}_{3,6}} 
 -\frac{1}{\overline{J}_{0,6}\overline{J}_{0,42}}\left ( -q^5\theta_{7,7,1}(q^{13},q^6;q) +q\theta_{7,7,1}(q^{13},q^2;q)\right )\\
 &=-\frac{J_{1}^3J_{4}}{J_{2}^3}\cdot J_{12,28}.
\end{align*}
\end{proposition}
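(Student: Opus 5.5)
This is a pure theta function identity in which every term is an explicit product of theta functions. The term $\theta_{7,7,1}$ is a finite sum of seven theta quotients, as given in Corollary~\ref{corollary:f771-HeckeExpansion}. So the plan is the same as for the other propositions of this section: reduce both sides to a common form and verify with the Frye--Garvan machinery \cite{FG}.

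\emph{Step 1: specialize $\theta_{7,7,1}$.} I substitute $(x,y)=(q^{13},q^6)$ and $(x,y)=(q^{13},q^2)$ into the definition of $\theta_{7,7,1}(x,y;q)$. For each $d\in\{0,\dots,6\}$ I then use the quasi-periodicity (\ref{equation:j-elliptic}) and the flip (\ref{equation:j-flip}). This normalizes every factor $j(q^{6(d+1)}y;q^7)$, $j(-q^{36-6d}xy^{-1};q^{42})$, $j(q^{21+6(d+1)}y^{-6};q^{42})$, $j(q^{21}xy^{-7};q^{42})$ and $j(q^{6(d+1)}x^{-1}y;q^{42})$ into the form $q^{e}J_{a,b}$ or $q^{e}\overline{J}_{a,b}$ with $0\le a\le b/2$.

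Some summands may have a numerator factor $j(q^{7k};q^7)$, which vanishes, and these drop out. For example, with $y=q^6$ and $d=0$ we get $j(q^{12};q^7)\neq 0$, so in general one has to check each case. A possible zero denominator $j(q^{6(d+1)}x^{-1}y;q^{42})$ must also be watched. If one occurs, I would perturb $x$ and take a limit, or argue that the paired numerator vanishes simultaneously, since $f_{7,7,1}$ itself is finite at these arguments.

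\emph{Step 2: reduce to a common form.} The result of Step 1 is a sum of roughly $14$ theta quotients with bases $q^7$ and $q^{42}$, together with the two explicit terms on the left involving $J_{1,7}$ and base-$6$ thetas. I rewrite everything over a common base, say $q^{84}$ or the lcm $q^{168}$ needed to absorb $J_{12,28}$, $J_{4}$ and $\overline J_{3,6}$. Dividing by one chosen term turns the identity into the statement that a sum of modular functions on $\Gamma_1(N)$ equals a constant.

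\emph{Step 3: verify.} Following the method sketched in \cite[Section 5]{BoMo2025}, I would use \emph{thetaids} to check that each quotient is a modular function on $\Gamma_1(N)$. I would then compute the orders at all cusps, bound the total pole order, and verify enough $q$-expansion coefficients with \emph{qseries} to exceed the Sturm-type bound.

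The main obstacle is Step 1. The bookkeeping of exponents is delicate, and degenerate denominators must be handled. A faster sanity check before the formal argument is to expand both sides as $q$-series to a few hundred terms.
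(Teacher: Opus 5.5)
Your proposal is correct and follows the paper's own route. The paper gives no hand proof of this identity; it states only that it was verified with Frye and Garvan's \emph{qseries} and \emph{thetaids} packages \cite{FG}, as sketched in \cite[Section 5]{BoMo2025}. The degeneracies you anticipate in Step 1 are mild: only the $d=5$ summand of $\theta_{7,7,1}(q^{13},q^6;q)$ and the $d=1$ summand of $\theta_{7,7,1}(q^{13},q^2;q)$ vanish, and no denominator vanishes, so no limiting argument is needed.
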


For odd-spin, $2/3$-level, we will need the following two identities to prove Theorem \ref{theorem:genEulerTwoThirdsOddSpin}.  We recall the notation for $\theta_{4,4,1}(x,y;q)$ from Corollary \ref{corollary:f441-HeckeExpansion}.
\begin{proposition} \label{proposition:level23OddSpinFirstQuadThetaId}  We have
\begin{align*}
 -& j(-q^{10};q^{16}) \left ( q\frac{J_{12}^3}{\overline{J}_{3,12}J_{4}}+\frac{q^{-1}}{2} \frac{J_{3}J_{6}J_{8}J_{12}^3}{J_{2}J_{4}J_{24}^3} \right ) 
+  j(-q^{2};q^{16})\left (\frac{J_{3,6}^2}{J_{1}}+\frac{q^{-1}}{2} \frac{J_{3}J_{6}J_{8}J_{12}^3}{J_{2}J_{4}J_{24}^3} \right )\\
 &\qquad -\frac{1}{\overline{J}_{0,3}\overline{J}_{0,12}}
 \Big ( q^{8} \theta_{4,4,1}(-q^{26},-q^{13};q^4)
  - q^{5}\theta_{4,4,1}(-q^{26},-q^{9};q^4)\\
 &\qquad \qquad -q^{14} \theta_{4,4,1}(-q^{34},-q^{15};q^4)
  +q^{9}\theta_{4,4,1}(-q^{34},-q^{11};q^4)\Big)\\
&=3q^2\frac{J_{3}J_{12}^3}{J_{4}J_{6}^2}j(-q^{2};q^{16}).
\end{align*}
\end{proposition}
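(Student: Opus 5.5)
This is a pure theta function identity. Every term is a product or quotient of theta functions $j(x;q^k)$, with one exception: the combination $\theta_{4,4,1}(x,y;q^4)$. By Corollary \ref{corollary:f441-HeckeExpansion} with $q\to q^4$, that combination is itself a finite sum over $d\in\{0,1,2,3\}$ of theta quotients. I would therefore prove the proposition the same way as the other identities of this section: reduce it to a finite identity among generalized eta-quotients and verify it with Frye and Garvan's \emph{thetaids} package \cite{FG}.

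\textbf{Step 1: expand the four $\theta_{4,4,1}$ terms.} For each of the four pairs $(x,y)\in\{(-q^{26},-q^{13}),(-q^{26},-q^{9}),(-q^{34},-q^{15}),(-q^{34},-q^{11})\}$, substitute $q\to q^4$ in the formula for $\theta_{4,4,1}$. Each $d$-summand becomes
\[
q^{12\binom{d+1}{2}}\,\frac{J_{48}^3\, j(q^{12(d+1)}y;q^{16})\, j(-q^{48-12(d+1)}xy^{-1};q^{48})\, j(-q^{24+12(d+1)}y^{-3};q^{48})}{j(-q^{24}xy^{-4};q^{48})\, j(q^{12(d+1)}x^{-1}y;q^{48})}.
\]
Here $x$ and $y$ are explicit monomials (up to sign), so each factor is a $J_{a,b}$ or $\overline{J}_{a,b}$. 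I would then use (\ref{equation:j-elliptic}) and (\ref{equation:j-flip}) to normalize every argument into the range $0<a\le b/2$. Any factor whose argument reduces to $q^{0}$ modulo the base would signal a pole or zero that must cancel; I would check the denominators $j(-q^{24}xy^{-4};q^{48})$ and $j(q^{12(d+1)}x^{-1}y;q^{48})$ for each pair to confirm none vanish. The prefactor is written as $1/(\overline{J}_{0,3}\overline{J}_{0,12})$, which matches Corollary \ref{corollary:f441-HeckeExpansion} with $q\to q^4$ only if it is read as $1/(\overline{J}_{0,12}\overline{J}_{0,48})$. This is consistent with the neighboring propositions, so I would verify the intended base numerically before proceeding.

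\textbf{Step 2: put everything over a common modulus.} The remaining pieces are $j(-q^{10};q^{16})$, $j(-q^{2};q^{16})$, and quotients in $J_1,J_2,J_3,J_4,J_6,J_8,J_{12},J_{24},J_{3,6},\overline{J}_{3,12}$. Together with the expanded $\theta$-terms, everything is a generalized eta-quotient of level dividing $N=48$, or possibly $96$ after the $\overline{J}$'s are converted via $\overline{J}_{a,b}=J_{2a,2b}J_b^{\,?}/\ldots$, that is, by writing $j(-x;q)=j(x^2;q^2)J_1^2/(J_2\,j(x;q))$. Dividing the whole identity by one term, say $3q^2 J_3J_{12}^3 j(-q^2;q^{16})/(J_4J_6^2)$, turns the claim into the statement that a sum of roughly $4\cdot 4+4=20$ modular functions on $\Gamma_1(N)$ equals $1$.

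\textbf{Step 3: verification.} Following the method sketched in \cite[Section 4]{BoMo2026}, \cite[Section 5]{BoMo2025}, \emph{thetaids} checks that each quotient is a modular function on $\Gamma_1(N)$ and computes its order at every cusp. This yields a lower bound $B$ on the total order of poles of the difference of the two sides. It then suffices to verify via \emph{qseries} that the $q$-expansion of the difference vanishes through $q^{B+1}$ (Sturm-type bound).

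\textbf{Main obstacle.} I expect the main difficulty to be Step 1: carrying out the bookkeeping for sixteen $\theta$-summands without sign or exponent errors, especially the factors $(-y)^{-3}$ with $y=-q^k$ and the normalization of the $J_{48}$-level arguments. The cusp-order computation itself is mechanical once all terms are correctly written. As a safeguard, I would first check the identity numerically to $O(q^{200})$ before running the rigorous verification.
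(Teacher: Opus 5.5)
Your method is the paper's method. The paper gives no argument for this proposition beyond saying it was verified with Frye and Garvan's \emph{qseries} and \emph{thetaids}, and your Steps 1--3 are exactly that verification. You are also right that the prefactor must be read as $1/(\overline{J}_{0,12}\overline{J}_{0,48})$.

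However, the printed statement has a second misprint that you did not catch, and until it is fixed your Step 3 cannot succeed. Put every factor in its modular normalization: $J_k=q^{-k/24}\eta(k\tau)$ and $j(\pm q^{a};q^{b})=q^{-(2a-b)^2/(8b)}\cdot(\text{theta series})$. Then $j(-q^{10};q^{16})$ carries $q^{-1/8}$ and $j(-q^{2};q^{16})$ carries $q^{-9/8}$. Each of $q\frac{J_{12}^3}{\overline{J}_{3,12}J_4}$, $\frac{J_{3,6}^2}{J_1}$ and $\frac{q^{-1}}{2}\frac{J_3J_6J_8J_{12}^3}{J_2J_4J_{24}^3}$ carries $q^{1/24}$. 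So the two $j(-q^{10};q^{16})$ terms, the $\theta_{4,4,1}$-summands (e.g.\ the $d=0$ summand of $q^8\theta_{4,4,1}(-q^{26},-q^{13};q^4)/(\overline{J}_{0,12}\overline{J}_{0,48})$) and the right-hand side all have residual factor $q^{-1/12}$. The two $j(-q^{2};q^{16})$ terms instead have $q^{-13/12}$. After your division in Step 2, those two quotients are $q^{-1}$ times modular functions, so the valence-formula argument of Step 3 does not apply to them.

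The version that is actually needed has second term $q\,j(-q^{2};q^{16})\bigl(\frac{J_{3,6}^2}{J_1}+\frac{q^{-1}}{2}\frac{J_3J_6J_8J_{12}^3}{J_2J_4J_{24}^3}\bigr)$. This is what the proof of Theorem~\ref{theorem:genEulerTwoThirdsOddSpin} requires. In Proposition~\ref{proposition:level23OddSpinFirstQuadAppellForm} the combination $M=m(-q^5,-1;q^{12})-q^{-1}m(-q,-1;q^{12})$ enters as $-j(-q^2;q^{16})(1-q+qM)$. Lemma~\ref{lemma:alternateAppellFormsChiPsi} then gives $qM=q\chi_3(q)-q\frac{J_{3,6}^2}{J_1}-\frac12\frac{J_3J_6J_8J_{12}^3}{J_2J_4J_{24}^3}$. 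The corrected version also matches the companion Proposition~\ref{proposition:level23OddSpinSecondQuadThetaId}, where the factor $q$ in front of $j(-q^2;q^{16})$ is present.

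The printed and corrected versions differ by $(1-q)\,j(-q^2;q^{16})\bigl(\frac{J_{3,6}^2}{J_1}+\frac{q^{-1}}{2}\frac{J_3J_6J_8J_{12}^3}{J_2J_4J_{24}^3}\bigr)=\frac12q^{-1}+\cdots\neq 0$, so they cannot both hold. The corrected one is the one consistent with the $q$-powers above and with Theorem~\ref{theorem:genEulerTwoThirdsOddSpin}. That theorem's first identity agrees term by term through $q^{15}$. Run your verification on the corrected statement, with denominator $\overline{J}_{0,12}\overline{J}_{0,48}$, and your plan then goes through unchanged.
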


\begin{proposition} \label{proposition:level23OddSpinSecondQuadThetaId}  We have
\begin{align*}
 j&(-q^{10};q^{16})\left ( \frac{J_{3,6}^2}{J_{1}}+\frac{q^{-1}}{2} \frac{J_{3}J_{6}J_{8}J_{12}^3}{J_{2}J_{4}J_{24}^3}\right ) 
-q j(-q^{2};q^{16})\left ( q\frac{J_{12}^3}{\overline{J}_{3,12}J_{4}}+\frac{q^{-1}}{2} \frac{J_{3}J_{6}J_{8}J_{12}^3}{J_{2}J_{4}J_{24}^3} \right )\\
&\qquad -\frac{1}{\overline{J}_{0,3}\overline{J}_{0,12}}\Big ( 
q^{9} \theta_{4,4,1}(-q^{26},-q^{15};q^4)
 - q^{3}\theta_{4,4,1}(-q^{26},-q^{7};q^4)\\
 &\qquad \qquad  -q^{16}\theta_{4,4,1}(-q^{34},-q^{17};q^4)
   +q^{6} \theta_{4,4,1}(-q^{34},-q^{9};q^4)\Big )\\
&=3q\frac{J_{3}J_{12}^3}{J_{4}J_{6}^2}j(-q^{10};q^{16}).
\end{align*}
\end{proposition}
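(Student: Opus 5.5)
The plan is to verify the proposition as a pure theta-function identity, in the same way as the other identities of Section \ref{section:FryeGarvan}. First, write every term explicitly as a quotient of theta functions. The four Hecke-type double-sums in Corollary \ref{corollary:levelTwoThirdsOddSpin} are in base $q^4$, so each $\theta_{4,4,1}$ is to be read with $q\mapsto q^4$ in Corollary \ref{corollary:f441-HeckeExpansion}. Accordingly, I will use the normalising factor that this substitution produces, namely $1/(\overline{J}_{0,12}\overline{J}_{0,48})$. I interpret the printed $1/(\overline{J}_{0,3}\overline{J}_{0,12})$ in exactly that way, so that the proposition is precisely the theta part left over when Corollary \ref{corollary:f441-HeckeExpansion} and Lemma \ref{lemma:alternateAppellFormsChiPsi} are inserted into the left-hand side of (\ref{equation:level23OddSpinSecondQuadMockTheta}).

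For each of the four pairs
\[
(x,y)\in\{(-q^{26},-q^{15}),\ (-q^{26},-q^{7}),\ (-q^{34},-q^{17}),\ (-q^{34},-q^{9})\},
\]
I expand $\theta_{4,4,1}(x,y;q^4)$ as a sum over $d=0,\dots,3$ of products of $j(\cdot\,;q^{16})$ and $j(\cdot\,;q^{48})$, divided by $j(-q^{24}xy^{-4};q^{48})$ and $j(q^{12(d+1)}x^{-1}y;q^{48})$. I will check that no denominator vanishes; for example, for the first pair $-q^{24}xy^{-4}=q^{-10}$, which is not an integer power of $q^{48}$. Then I normalise all arguments into the fundamental range using (\ref{equation:j-elliptic}) and (\ref{equation:j-flip}). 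This leaves each term as an explicit monomial power of $q$ times a product of $J_{a,48}$, $\overline{J}_{a,48}$, $J_{a,16}$ and $\overline{J}_{a,16}$. Using (\ref{equation:1.10}) and the product rearrangements at the start of Section \ref{section:prelim}, I convert the factors $J_{3,6}^2/J_1$, $J_{12}^3/(\overline{J}_{3,12}J_4)$, $J_3J_6J_8J_{12}^3/(J_2J_4J_{24}^3)$ and $J_3J_{12}^3/(J_4J_6^2)$ into generalised eta quotients of level $48$ as well.

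Next, divide the whole identity by the right-hand side $3q\,J_3J_{12}^3j(-q^{10};q^{16})/(J_4J_6^2)$. Each resulting summand is then a generalised eta quotient of weight $0$ on $\Gamma_1(48)$, possibly after a common rescaling $q\mapsto q^2$ to clear fractional exponents. With this normalisation the claim reads $\sum_i g_i=1$. Following the \emph{thetaids} method of Frye and Garvan \cite{FG}, I check modularity of each $g_i$ using the standard criteria for generalised eta quotients. I then compute the order of each $g_i$ at every cusp of $\Gamma_1(48)$, sum the negative contributions to obtain a bound $B$ on the total pole order, and finally verify with \emph{qseries} that the $q$-expansion of $\sum_i g_i-1$ vanishes past $q^{B}$. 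The valence formula then forces $\sum_i g_i=1$ identically.

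The main obstacle is the size of the computation, not any conceptual step. There are sixteen $\theta$-summands, some with high-order poles at certain cusps, so the Sturm-type bound may be large. Before running the check, I would reduce the number of terms by applying Proposition \ref{proposition:H7eq1.14}-type symmetries to the indices $d$: in each sum, pairs $d\leftrightarrow 2-d$ often coincide up to sign after applying (\ref{equation:j-elliptic}). I would also pull out common factors such as $j(-q^{10};q^{16})$ and $j(-q^{2};q^{16})$. As a fallback, one can verify numerically to many terms first, to confirm the signs and the interpretation of the normalising factor, before carrying out the rigorous cusp analysis.
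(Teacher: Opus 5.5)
Your plan is the paper's own argument: this identity is one of those in Section~\ref{section:FryeGarvan} that the authors establish by machine with Frye and Garvan's \emph{qseries} and \emph{thetaids} packages. Your reading of the prefactor as $1/(\overline{J}_{0,12}\overline{J}_{0,48})$ (Corollary~\ref{corollary:f441-HeckeExpansion} with $q\mapsto q^{4}$, as in the even-spin analogues) is the correct one. A hand expansion of the bracketed $\theta_{4,4,1}$-combination gives $2q^{-1}+2-6q-2q^{2}+4q^{3}+\cdots$, which matches what the identity requires through $q^{3}$ with your prefactor, whereas the printed $1/(\overline{J}_{0,3}\overline{J}_{0,12})$ already fails at $q^{2}$.

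The only adjustment is the group. Factors such as $\overline{J}_{0,48}=2J_{96}^{2}/J_{48}$ and $j(-q^{b};q^{16})$ push the natural level to $96$ rather than $48$, but the modularity check in \emph{thetaids} will detect this anyway.
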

\section{The quasi-periodic relations for odd-spin}
\label{section:quasiPeriodicOddSpin}

We prove Theorem \ref{theorem:generalQuasiPeriodicityOddSpin}, but we will do so in a series of steps.  We first recall the Hecke-type double-sum form for our string function.  Let $p'\geq 2$, $p\geq 1$ be co-prime integers, $0\leq \ell \leq p'-2$ and $m\in 2\Z +\ell$. We specialize  (\ref{equation:modStringFnHeckeForm}) with $(m,\ell)=(2k+1,2r+1)$, $(p,p^{\prime})=(p,2p+j)$.  This gives
\begin{align}
(q)_{\infty}^{3}\mathcal{C}_{2k+1,2r+1}^{(p,2p+j)}(q)
 &= f_{1,2p+j,2p(2p+j)}(q^{2+k+r},-q^{p(2p+j+2r+2)};q)
 \label{equation:stringQuasiHecke}\\
&\qquad  -f_{1,2p+j,2p(2p+j)}(q^{k-r},-q^{p(2p+j-2r-2)};q).
\notag
\end{align}  

Using the functional equation Proposition \ref{proposition:f-functionaleqn} for Hecke-type double-sums, we are able to establish the following relation:

\begin{proposition} \label{proposition:quasiPeriodicity-step1} We have the following expression
{\allowdisplaybreaks \begin{align}
 (q)_{\infty}^{3}&\mathcal{C}_{2k+1,2r+1}^{(p,2p+j)}(q)
 -(q)_{\infty}^{3}q^{-p(2k+1+j)} \mathcal{C}_{2k+2j+1,2r+1}^{(p,2p+j)}(q)
 \label{equation:quasiPeriodString1}\\
&\qquad   = - \sum_{m=1}^{2p}(-1)^{m}q^{-m(2+k+r)}q^{\binom{m+1}{2}}
j(-q^{(p-m)(2p+j)+p(2r+2)};q^{2p(2p+j)})
\notag \\
&\qquad  \qquad  + \sum_{m=1}^{2p}(-1)^{m}q^{-m(k-r)}q^{\binom{m+1}{2}}j(-q^{(p+m)(2p+j)+p(2r+2)};q^{2p(2p+j)}).
\notag
\end{align}}%
\end{proposition}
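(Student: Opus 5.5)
The plan is to apply the general functional equation \eqref{equation:Gen1} of Proposition \ref{proposition:f-functionaleqn} to each of the two Hecke-type double-sums in \eqref{equation:stringQuasiHecke}. The shift $(\ell,k)$ is chosen so that the first argument is multiplied by exactly $q^{j}$ and the second argument is left unchanged. With $(a,b,c)=(1,2p+j,2p(2p+j))$ we need $a\ell+bk=j$ and $b\ell+ck=0$. I take $\ell=-2p$ and $k=1$, which gives $-2p+(2p+j)=j$ and $-2p(2p+j)+2p(2p+j)=0$. Since the first argument of $f$ in \eqref{equation:stringQuasiHecke} is $q^{2+k+r}$ (respectively $q^{k-r}$), multiplying it by $q^{j}$ is the same as replacing $k$ by $k+j$. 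This replacement turns $\mathcal{C}_{2k+1,2r+1}$ into $\mathcal{C}_{2k+2j+1,2r+1}$, and the difference of the two shifted double-sums is again of the form \eqref{equation:stringQuasiHecke}.

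\textbf{Step 1: the prefactor.} For the first double-sum, $x=q^{2+k+r}$ and $y=-q^{p(2p+j+2r+2)}$, so the prefactor is
\[
(-x)^{-2p}(-y)\,q^{\binom{-2p}{2}-2p(2p+j)}.
\]
Since $(-x)^{-2p}=x^{-2p}$, $-y=q^{p(2p+j+2r+2)}$ and $\binom{-2p}{2}=p(2p+1)$, adding the exponents should give exactly $-p(2k+1+j)$. For the second double-sum, $x=q^{k-r}$ and $y=-q^{p(2p+j-2r-2)}$, and the same bookkeeping gives the same exponent $-p(2k+1+j)$. The two prefactors therefore match, and subtracting the two shifted double-sums reproduces $q^{-p(2k+1+j)}(q)_\infty^3\mathcal{C}_{2k+2j+1,2r+1}^{(p,2p+j)}(q)$ on the left of \eqref{equation:quasiPeriodString1}.

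\textbf{Step 2: the theta sums.} The $k$-sum in \eqref{equation:Gen1} has only the term $m=0$, namely $j(x;q)$. This vanishes because $x$ is an integral power of $q$. For the $\ell$-sum, with $\ell=-2p<0$, I use the convention \eqref{equation:sumconvention}:
\[
\sum_{m=0}^{-2p-1}=-\sum_{m=-2p}^{-1}.
\]
Substituting $m\to -m$ and using $\binom{-m}{2}=\binom{m+1}{2}$, the first double-sum contributes
\[
-\sum_{m=1}^{2p}(-1)^m x^{-m}q^{\binom{m+1}{2}}\,j\big(q^{-m(2p+j)}y;q^{2p(2p+j)}\big).
\]
Here $q^{-m(2p+j)}y=-q^{(p-m)(2p+j)+p(2r+2)}$, which gives the first sum in \eqref{equation:quasiPeriodString1}.

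For the second double-sum, the theta argument is $-q^{-m(2p+j)+p(2p+j-2r-2)}$. Applying \eqref{equation:j-flip} with base $q^{2p(2p+j)}$ turns it into $-q^{(p+m)(2p+j)+p(2r+2)}$. The minus sign in front of this double-sum in \eqref{equation:stringQuasiHecke} flips the overall sign, which yields the second sum with a plus sign.

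The main obstacle is purely bookkeeping: getting the signs right from the reversed-sum convention, and checking that both prefactor exponents really collapse to $-p(2k+1+j)$. I would verify these two points first, numerically for small $p$ and $j$ if necessary.
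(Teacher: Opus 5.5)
Your proposal is correct and follows the paper's proof essentially step for step: the same specialization $(\ell,k)=(-2p,1)$ of \eqref{equation:Gen1}, the summation convention \eqref{equation:sumconvention} followed by $m\to -m$, the vanishing of $j(x;q)$ because $x$ is an integral power of $q$, and \eqref{equation:j-flip} applied to the second theta sum. Your prefactor bookkeeping checks out, since both exponents do collapse to $-p(2k+1+j)$, and so do your signs; you also correctly keep the factor $(-1)^m$ in the second sum, which the paper's intermediate displays misprint as $(-1)$.
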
%

Taking advantage of some cancellation as well as some symmetries brings us to a more compact expression:

\begin{proposition}\label{proposition:quasiPeriodicity-step2} We have
{\allowdisplaybreaks \begin{align*}
 (q)_{\infty}^{3}&\mathcal{C}_{2k+1,2r+1}^{(p,2p+j)}(q)
 -(q)_{\infty}^{3}q^{-p(2k+1+j)} \mathcal{C}_{2k+2j+1,2r+1}^{(p,2p+j)}(q)\\
&  = -(-1)^{p}q^{\binom{p+1}{2}-p(k+r+2)} \sum_{m=1}^{p-1}(-1)^{m}q^{\binom{m+1}{2}+m(r-p)}
\left (q^{m(k+1)} -q^{-m(k+j)}\right ) \\
&\qquad  \times\Big (  j(-q^{m(2p+j)+p(2r+2)};q^{2p(2p+j)}) \\
&\qquad \qquad  -q^{m(2p+j)-m(2r+2)}j(-q^{-m(2p+j)+p(2r+2)};q^{2p(2p+j)})\Big ) .
\end{align*}}%
\end{proposition}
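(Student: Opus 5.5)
We will derive Proposition \ref{proposition:quasiPeriodicity-step2} directly from Proposition \ref{proposition:quasiPeriodicity-step1}. The plan is to show that the two sums over $m\in\{1,\dots,2p\}$ on the right-hand side of (\ref{equation:quasiPeriodString1}) collapse to a single sum over $m\in\{1,\dots,p-1\}$.

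First we identify vanishing terms. In the first sum, the term $m=p$ carries $j(-q^{p(2r+2)};q^{2p(2p+j)})$. In the second sum, the term $m=p$ carries $j(-q^{2p(2p+j)+p(2r+2)};q^{2p(2p+j)})$. These do not vanish individually. Using (\ref{equation:j-elliptic}) we will compare them, and we expect the $m=p$ term of the first sum to cancel against the $m=p$ term of the second after the prefactors $q^{-p(2+k+r)}$ and $q^{-p(k-r)}$ are accounted for.

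We must check this cancellation honestly. If it fails, the $m=p$ and $m=2p$ terms will instead be absorbed into the $(m,2p-m)$ pairing described next.

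Next we reindex. In the first sum we substitute $m\mapsto 2p-m$ for $m\in\{p+1,\dots,2p\}$, turning $j(-q^{(p-m)(2p+j)+p(2r+2)};\cdot)$ into $j(-q^{-(p-m')(2p+j)+p(2r+2)};\cdot)$. We then shift the argument by the period $q^{2p(2p+j)}$ with (\ref{equation:j-elliptic}), or flip it with (\ref{equation:j-flip}), so that every theta function has the form $j(-q^{\pm m(2p+j)+p(2r+2)};q^{2p(2p+j)})$ with $1\le m\le p-1$. We do the same to the second sum, using $m\mapsto m-p$ for $m\in\{1,\dots,p\}$ and $m\mapsto 3p-m$ or $m\mapsto m-p$ for $m\in\{p+1,\dots,2p\}$.

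After this, each of the two sums contributes four families indexed by $1\le m\le p-1$. These are the two theta types $j(-q^{m(2p+j)+p(2r+2)})$ and $j(-q^{-m(2p+j)+p(2r+2)})$, each carrying a power $q^{-mk}$ or $q^{+mk}$ from the prefactors. The quadratic exponents come from $q^{\binom{m+1}{2}}$ after shifting $m\to p\pm m$, together with the factor $(-1)^{p\pm m}$. We will collect them using $\binom{p+m+1}{2}=\binom{p+1}{2}+\binom{m+1}{2}+pm-\binom{1}{2}\cdot 0$ and $\binom{p-m+1}{2}=\binom{p+1}{2}+\binom{m}{2}-pm$.

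Finally we factor out $-(-1)^pq^{\binom{p+1}{2}-p(k+r+2)}$ and group. The coefficient of $j(-q^{m(2p+j)+p(2r+2)})$ should become $(-1)^mq^{\binom{m+1}{2}+m(r-p)}(q^{m(k+1)}-q^{-m(k+j)})$, and the coefficient of the other theta should be the same times $-q^{m(2p+j)-m(2r+2)}$. Matching the second coefficient will likely require one more application of (\ref{equation:j-elliptic}), to move $j(-q^{(2p-m)(2p+j)+\cdots})$ to $j(-q^{-m(2p+j)+\cdots})$, which produces exactly the factor $q^{m(2p+j)-m(2r+2)}$ up to the prefactor.

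The main obstacle is bookkeeping. The exponent arithmetic, particularly the $q^{-m(k+j)}$ term whose $j$ arises from the quasi-period shift, and the boundary terms $m=p,2p$ must be handled carefully. We will first verify the pattern numerically for $p=2$, $j=1$ to fix signs before writing the general computation.
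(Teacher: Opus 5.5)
Your plan is essentially the paper's proof. The paper cancels the two $m=p$ terms with (\ref{equation:j-elliptic}), splits each sum at $m=p$, reindexes ($m\to p-m$ on $1\le m\le p-1$ and $m\to p+m$ on $p+1\le m\le 2p$), and moves the arguments $(2p\pm m)(2p+j)+p(2r+2)$ down one period with (\ref{equation:j-elliptic}), with no use of (\ref{equation:j-flip}); it then groups the four resulting families exactly as you describe. You should assert rather than hedge that the two original $m=2p$ terms also cancel against each other by (\ref{equation:j-elliptic}) (in the paper they are the $m=p$ terms after $m\to p+m$), since this is what removes the leftover $m'=0$ term from your $m\mapsto 2p-m$ substitution; your fallback of absorbing the boundary terms into an $(m,2p-m)$ pairing is not needed and would not work.
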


Now we can start the proof of the Theorem \ref{theorem:generalQuasiPeriodicityOddSpin}.  The proofs of the two propositions follow.

\begin{proof}[Proof of Theorem \ref{theorem:generalQuasiPeriodicityOddSpin}]
Changing the string function notation with (\ref{equation:mathCalCtoStringC}), we have
\begin{equation*}
    q^{-\frac{1}{8}+\frac{p(2r+2)^2}{4(2p+j)}-\frac{p}{4j}(2k+1)^2}
    \mathcal{C}_{2k+1,2r+1}^{(p,2p+j)}(q)
    =C_{2k+1,2r+1}^{(p,2p+j)}(q).
\end{equation*}
Hence we have
{\allowdisplaybreaks \begin{align*}
& (q)_{\infty}^{3}C_{2k+1,2r+1}^{(p,2p+j)}(q)
 -(q)_{\infty}^{3}C_{2k+2j+1,2r+1}^{(p,2p+j)}(q)\\
&  = -(-1)^{p}q^{-\frac{1}{8}+\frac{p(2r+2)^2}{4(2p+j)}-\frac{p}{4j}(2k+1)^2}q^{\binom{p+1}{2}-p(k+r+2)} \sum_{m=1}^{p-1}(-1)^{m}q^{\binom{m+1}{2}+m(r-p)}
\left (q^{m(k+1)} -q^{-m(k+j)}\right ) \\
&\qquad  \times\Big (  j(-q^{m(2p+j)+p(2r+2)};q^{2p(2p+j)}) \\
&\qquad \qquad  -q^{m(2p+j)-m(2r+2)}j(-q^{-m(2p+j)+p(2r+2)};q^{2p(2p+j)})\Big ) .
\end{align*}}%

Rearranging terms and replacing $k\to k-j$ gives
{\allowdisplaybreaks \begin{align*}
(q)_{\infty}^{3}&C_{2k+1,2r+1}^{(p,2p+j)}(q)
- (q)_{\infty}^{3}C_{2k-2j+1,2r+1}^{(p,2p+j)}(q)\\
&  = (-1)^{p}q^{-\frac{1}{8}+\frac{p(2r+2)^2}{4(2p+j)}-\frac{p}{4j}(2k-2j+1)^2}q^{\binom{p+1}{2}-p(k-j+r+2)}\\
&\qquad \times \sum_{m=1}^{p-1}(-1)^{m}q^{\binom{m+1}{2}+m(r-p)}
\left (q^{m(k-j+1)} -q^{-mk}\right ) \\
&\qquad  \times\Big (  j(-q^{m(2p+j)+p(2r+2)};q^{2p(2p+j)}) \\
&\qquad \qquad  -q^{m(2p+j)-m(2r+2)}j(-q^{-m(2p+j)+p(2r+2)};q^{2p(2p+j)})\Big ) .
\end{align*}}%
Simplifying the exponents yields
{\allowdisplaybreaks \begin{align*}
 (q)_{\infty}^{3}&C_{2k+1,2r+1}^{(p,2p+j)}(q)
 -(q)_{\infty}^{3}C_{2k-2j+1,2r+1}^{(p,2p+j)}(q)\\
&  = (-1)^{p}q^{-\frac{1}{8}+\frac{p(2r+2)^2}{4(2p+j)}-\frac{p}{4j}(2k+1)^2}q^{\binom{p+1}{2}+p(k-r-1)}\\
&\qquad \times \sum_{m=1}^{p-1}(-1)^{m}q^{\binom{m+1}{2}+m(r-p)}
\left (q^{m(k-j+1)} -q^{-mk}\right ) \\
&\qquad  \times\Big (  j(-q^{m(2p+j)+p(2r+2)};q^{2p(2p+j)}) \\
&\qquad \qquad  -q^{m(2p+j)-m(2r+2)}j(-q^{-m(2p+j)+p(2r+2)};q^{2p(2p+j)})\Big ) .
\end{align*}}%

\noindent Now, we consider $m=2k$, $k=jt+s$, $0\le s\le j-1$.  This allows us to write
{\allowdisplaybreaks \begin{align*}
 (q)_{\infty}^{3}&C_{2jt+2s+1,2r+1}^{(p,2p+j)}(q)
 -(q)_{\infty}^{3}C_{2j(t-1)+2s+1,2r+1}^{(p,2p+j)}(q)\\
&  = (-1)^{p}q^{-\frac{1}{8}+\frac{p(2r+2)^2}{4(2p+j)}-\frac{p}{4j}(2(jt+s)+1)^2}q^{\binom{p+1}{2}+p(jt+s-r-1)}\\
&\qquad \times \sum_{m=1}^{p-1}(-1)^{m}q^{\binom{m+1}{2}+m(r-p)}
\left (q^{m(jt+s-j+1)} -q^{-m(jt+s)}\right ) \\
&\qquad  \times\Big (  j(-q^{m(2p+j)+p(2r+2)};q^{2p(2p+j)}) \\
&\qquad \qquad  -q^{m(2p+j)-m(2r+2)}j(-q^{-m(2p+j)+p(2r+2)};q^{2p(2p+j)})\Big )\\
&  = (-1)^{p}q^{-\frac{1}{8}+\frac{p(2r+2)^2}{4(2p+j)}}q^{\binom{p+1}{2}-2pj\binom{t}{2}-p(r+1-s+(2s+1)t)-\frac{p}{4j}(2s+1)^2}\\
&\qquad \times \sum_{m=1}^{p-1}(-1)^{m}q^{\binom{m+1}{2}+m(r-p)}
\left (q^{m(jt+s-j+1)} -q^{-m(jt+s)}\right ) \\
&\qquad  \times\Big (  j(-q^{m(2p+j)+p(2r+2)};q^{2p(2p+j)}) \\
&\qquad \qquad  -q^{m(2p+j)-m(2r+2)}j(-q^{-m(2p+j)+p(2r+2)};q^{2p(2p+j)})\Big ).
\end{align*}}%
Iterating gives us
{\allowdisplaybreaks \begin{align*}
 (q)_{\infty}^{3}&C_{2jt+2s+1,2r+1}^{(p,2p+j)}(q)\\
 &=(q)_{\infty}^{3}C_{2j(t-1)+2s+1,2r+1}^{(p,2p+j)}(q)\\
& \qquad + (-1)^{p}q^{-\frac{1}{8}+\frac{p(2r+2)^2}{4(2p+j)}}
\sum_{i=1}^{t}q^{\binom{p+1}{2}-2pj\binom{i}{2}-p(r+1-s+(2s+1)i)-\frac{p}{4j}(2s+1)^2}\\
&\qquad \times \sum_{m=1}^{p-1}(-1)^{m}q^{\binom{m+1}{2}+m(r-p)}
\left (q^{m(ji+s-j+1)} -q^{-m(ji+s)}\right ) \\
&\qquad  \times\Big (  j(-q^{m(2p+j)+p(2r+2)};q^{2p(2p+j)}) \\
&\qquad \qquad  -q^{m(2p+j)-m(2r+2)}j(-q^{-m(2p+j)+p(2r+2)};q^{2p(2p+j)})\Big ).
\end{align*}}
Rewriting the expression gives the result.
\end{proof}


\begin{proof}[Proof of Proposition \ref{proposition:quasiPeriodicity-step1}] In Proposition \ref{proposition:f-functionaleqn}, we set $(\ell,k)=(-2p,1)$.  For the first double-sum in (\ref{equation:stringQuasiHecke}), this gives
{\allowdisplaybreaks \begin{align*}
 &f_{1,2p+j,2p(2p+j)}(q^{2+k+r},-q^{p(2p+j+2r+2)};q)\\
 &\qquad =(-q^{2+k+r})^{(-2p)}q^{p(2p+j+2r+2)}q^{\binom{-2p}{2}-2p(2p+j)}
 f_{1,2p+j,2p(2p+j)}(q^{j+2+k+r},-q^{p(2p+j+2r+2)};q)\\
&\qquad \qquad +\sum_{m=0}^{-2p-1}(-q^{2+k+r})^mq^{\binom{m}{2}}j(-q^{m(2p+j)}q^{p(2p+j+2r+2)};q^{2p(2p+j)})
+j(q^{2+k+r};q)\\
 &\qquad =q^{-p(2k+1+j)}
 f_{1,2p+j,2p(2p+j)}(q^{2+j+k+r},-q^{p(2p+j+2r+2)};q)\\
&\qquad \qquad -\sum_{m=-2p}^{-1}(-q^{2+k+r})^mq^{\binom{m}{2}}j(-q^{m(2p+j)}q^{p(2p+j+2r+2)};q^{2p(2p+j)}),
\end{align*}}%
where in the last equality we have used the summation convention (\ref{equation:sumconvention}) and the fact that $j(q^{n};q)=~0$ for $n\in\mathbb{Z}$.  Simplifying some more yields ($m\to -m$)
{\allowdisplaybreaks \begin{align}
 &f_{1,2p+j,2p(2p+j)}(q^{2+k+r},-q^{p(2p+j+2r+2)};q) \label{equation:quasi-preSum1}\\
 &\qquad  =q^{-p(2k+1+j)}
 f_{1,2p+j,2p(2p+j)}(q^{2+j+k+r},-q^{p(2p+j+2r+2)};q)\notag \\
&\qquad \qquad -\sum_{m=1}^{2p}(-1)^{m}q^{-m(2+k+r})q^{\binom{m+1}{2}}j(-q^{(p-m)(2p+j)+p(2r+2)};q^{2p(2p+j)})\notag.
\end{align}}%

\noindent For the second double-sum in (\ref{equation:stringQuasiHecke}), the same approach yields
{\allowdisplaybreaks \begin{align*}
&f_{1,2p+j,2p(2p+j)}(q^{k-r},-q^{p(2p+j-2r-2)};q)\\
&\qquad =(-q^{k-r})^{-2p}q^{p(2p+j-2r-2)}
q^{\binom{-2p}{2}-2p(2p+j)}f_{1,2p+j,2p(2p+j)}(q^{j+k-r},-q^{p(2p+j-2r-2)};q)\\
&\qquad \qquad  +\sum_{m=0}^{-2p-1}(-q^{k-r})^mq^{\binom{m}{2}}j(-q^{m(2p+j)}q^{p(2p+j-2r-2)};q^{2p(2p+j)})
+j(q^{k-r};q)\\
&\qquad =q^{-p(2k+1+j)}f_{1,2p+j,2p(2p+j)}(q^{j+k-r},-q^{p(2p+j-2r-2)};q)\\
&\qquad \qquad  -\sum_{m=-2p}^{-1}(-q^{k-r})^mq^{\binom{m}{2}}j(-q^{m(2p+j)}q^{p(2p+j-2r-2)};q^{2p(2p+j)})\\
&\qquad =q^{-p(2k+1+j)}f_{1,2p+j,2p(2p+j)}(q^{j+k-r},-q^{p(2p+j-2r-2)};q)\\
&\qquad \qquad  -\sum_{m=1}^{2p}(-1)q^{-m(k-r)}q^{\binom{m+1}{2}}j(-q^{(p-m)(2p+j)-p(2r+2)};q^{2p(2p+j)}).
\end{align*}}%

\noindent Applying (\ref{equation:j-flip}) to the theta function yields
\begin{align}
&f_{1,2p+j,2p(2p+j)}(q^{k-r},-q^{p(2p+j-(2r+2))};q)\label{equation:quasi-preSum2}\\
&\qquad =q^{-p(2k+1+j)}f_{1,2p+j,2p(2p+j)}(q^{j+k-r},-q^{p(2p+j-(2r+2))};q)\notag \\
&\qquad \qquad  -\sum_{m=1}^{2p}(-1)q^{-m(k-r)}q^{\binom{m+1}{2}}j(-q^{(p+m)(2p+j)+p(2r+2)};q^{2p(2p+j)}).\notag
\end{align}

\noindent Subtracting (\ref{equation:quasi-preSum2}) from (\ref{equation:quasi-preSum1}) gives
{\allowdisplaybreaks \begin{align*}
 &f_{1,2p+j,2p(2p+j)}(q^{2+k+r},-q^{p(2p+j+2r+2)};q) -f_{1,2p+j,2p(2p+j)}(q^{k-r},-q^{p(2p+j-(2r+2))};q)\\
 &\qquad  =q^{-p(2k+1+j)}
\Big (  f_{1,2p+j,2p(2p+j)}(q^{2+j+k+r},-q^{p(2p+j+2r+2)};q)\\
&\qquad \qquad \qquad - f_{1,2p+j,2p(2p+j)}(q^{j+k-r},-q^{p(2p+j-(2r+2))};q)  \Big )  \\
&\qquad \qquad -\sum_{m=1}^{2p}(-1)^{m}q^{-m(2+k+r})q^{\binom{m+1}{2}}j(-q^{(p-m)(2p+j)+p(2r+2)};q^{2p(2p+j)})\\
&\qquad \qquad  +\sum_{m=1}^{2p}(-1)q^{-m(k-r)}q^{\binom{m+1}{2}}j(-q^{(p+m)(2p+j)+p(2r+2)};q^{2p(2p+j)}).
\end{align*}}%
Using the string function notation (\ref{equation:stringQuasiHecke}), we obtain the stated result.
\end{proof}

\begin{proof}[Proof of Proposition \ref{proposition:quasiPeriodicity-step2}]  We first note that the terms for $m=p$ in the first and second sums in (\ref{equation:quasiPeriodString1}) cancel.  This follows from (\ref{equation:j-elliptic}) and simplifying:
{\allowdisplaybreaks \begin{align*}
& - (-1)^{p}q^{-p(2+k+r)}q^{\binom{p+1}{2}} j(-q^{p(2r+2)};q^{2p(2p+j)}) \\
&\qquad  \qquad  + (-1)^{p}q^{-p(k-r)}q^{\binom{p+1}{2}}j(-q^{2p(2p+j)+p(2r+2)};q^{2p(2p+j)})\\
&\qquad =  - (-1)^{p}q^{-p(2+k+r)}q^{\binom{p+1}{2}} j(-q^{p(2r+2)};q^{2p(2p+j)}) \\
&\qquad  \qquad  + (-1)^{p}q^{-p(k-r)}q^{\binom{p+1}{2}}q^{-(2r+2)}j(-q^{p(2r+2)};q^{2p(2p+j)})=0.
\end{align*}}%

\noindent So we can omit the two terms for $m=p$ and break up the two summations in (\ref{equation:quasiPeriodString1}) to write
{\allowdisplaybreaks \begin{align}
 (q)_{\infty}^{3}&\mathcal{C}_{2k+1,2r+1}^{(p,2p+j)}(q)
 -(q)_{\infty}^{3}q^{-p(2k+1+j)} \mathcal{C}_{2k+2j+1,2r+1}^{(p,2p+j)}(q)
 \label{equation:quasiPeriodicInitBreak}\\
&\qquad   = - \sum_{m=1}^{p-1}(-1)^{m}q^{-m(2+k+r)}q^{\binom{m+1}{2}}
j(-q^{(p-m)(2p+j)+p(2r+2)};q^{2p(2p+j)})
\notag \\
&\qquad   \qquad  - \sum_{m=p+1}^{2p}(-1)^{m}q^{-m(2+k+r)}q^{\binom{m+1}{2}}
j(-q^{(p-m)(2p+j)+p(2r+2)};q^{2p(2p+j)})
\notag \\
&\qquad  \qquad  + \sum_{m=1}^{p-1}(-1)^{m}q^{-m(k-r)}q^{\binom{m+1}{2}}j(-q^{(p+m)(2p+j)+p(2r+2)};q^{2p(2p+j)})
\notag\\
&\qquad  \qquad  + \sum_{m=p+1}^{2p}(-1)^{m}q^{-m(k-r)}q^{\binom{m+1}{2}}j(-q^{(p+m)(2p+j)+p(2r+2)};q^{2p(2p+j)}).
\notag
\end{align}}%

\noindent In the first and third summation symbols of (\ref{equation:quasiPeriodicInitBreak}), we make the substitution $m\to p-m$: 
{\allowdisplaybreaks \begin{align}
 (q)_{\infty}^{3}&\mathcal{C}_{2k+1,2r+1}^{(p,2p+j)}(q)
 -(q)_{\infty}^{3}q^{-p(2k+1+j)} \mathcal{C}_{2k+2j+1,2r+1}^{(p,2p+j)}(q)
 \label{equation:quasiPeriodicInitBreak2}\\
&\qquad   = - \sum_{m=1}^{p-1}(-1)^{p-m}q^{(m-p)(2+k+r)}q^{\binom{p-m+1}{2}}
j(-q^{m(2p+j)+p(2r+2)};q^{2p(2p+j)})
\notag \\
&\qquad   \qquad  - \sum_{m=p+1}^{2p}(-1)^{m}q^{-m(2+k+r)}q^{\binom{m+1}{2}}
j(-q^{(p-m)(2p+j)+p(2r+2)};q^{2p(2p+j)})
\notag \\
&\qquad  \qquad  + \sum_{m=1}^{p-1}(-1)^{p-m}q^{(m-p)(k-r)}q^{\binom{p-m+1}{2}}j(-q^{(2p-m)(2p+j)+p(2r+2)};q^{2p(2p+j)})
\notag\\
&\qquad  \qquad  + \sum_{m=p+1}^{2p}(-1)^{m}q^{-m(k-r)}q^{\binom{m+1}{2}}j(-q^{(p+m)(2p+j)+p(2r+2)};q^{2p(2p+j)}).
\notag
\end{align}}%
\noindent In the third summation of (\ref{equation:quasiPeriodicInitBreak2}), we use (\ref{equation:j-elliptic}) and simplify.  This gives
{\allowdisplaybreaks \begin{align}
& (q)_{\infty}^{3}\mathcal{C}_{2k+1,2r+1}^{(p,2p+j)}(q)
 -(q)_{\infty}^{3}q^{-p(2k+1+j)} \mathcal{C}_{2k+2j+1,2r+1}^{(p,2p+j)}(q)
 \label{equation:quasiPeriodicInitBreak3}\\
&\quad   = - (-1)^{p}q^{\binom{p+1}{2}-p(k+r+2)}
\sum_{m=1}^{p-1}(-1)^{m}q^{\binom{m+1}{2}+m(k+r+1)-mp}
j(-q^{m(2p+j)+p(2r+2)};q^{2p(2p+j)})
\notag \\
&\quad   \quad  - \sum_{m=p+1}^{2p}(-1)^{m}q^{-m(2+k+r)}q^{\binom{m+1}{2}}
j(-q^{(p-m)(2p+j)+p(2r+2)};q^{2p(2p+j)})
\notag \\
&\quad  \quad  +(-1)^{p}q^{\binom{p+1}{2}-p(k+r+2)}
 \sum_{m=1}^{p-1}(-1)^{m}q^{\binom{m+1}{2}+m(k-r+p+j-1)}
 j(-q^{-m(2p+j)+p(2r+2)};q^{2p(2p+j)})
\notag\\
&\quad  \quad  + \sum_{m=p+1}^{2p}(-1)^{m}q^{-m(k-r)}q^{\binom{m+1}{2}}j(-q^{(p+m)(2p+j)+p(2r+2)};q^{2p(2p+j)}).
\notag
\end{align}}%
\noindent In the second and fourth summations of (\ref{equation:quasiPeriodicInitBreak3}) we make the substitution $m\to p+m$:
{\allowdisplaybreaks \begin{align}
 &(q)_{\infty}^{3}\mathcal{C}_{2k+1,2r+1}^{(p,2p+j)}(q)
 -(q)_{\infty}^{3}q^{-p(2k+1+j)} \mathcal{C}_{2k+2j+1,2r+1}^{(p,2p+j)}(q)
 \label{equation:quasiPeriodicInitBreak4}\\
&\quad   = - (-1)^{p}q^{\binom{p+1}{2}-p(k+r+2)}\sum_{m=1}^{p-1}(-1)^{m}q^{\binom{m+1}{2}+m(k+r+1)-mp}
j(-q^{m(2p+j)+p(2r+2)};q^{2p(2p+j)})
\notag \\
&\quad   \quad  - \sum_{m=1}^{p}(-1)^{p+m}q^{-(p+m)(2+k+r)}q^{\binom{p+m+1}{2}}
j(-q^{-m(2p+j)+p(2r+2)};q^{2p(2p+j)})
\notag \\
&\quad  \quad  +(-1)^{p}q^{\binom{p+1}{2}-p(k+r+2)}
 \sum_{m=1}^{p-1}(-1)^{m}q^{\binom{m+1}{2}+m(k-r+p+j-1)}
 j(-q^{-m(2p+j)+p(2r+2)};q^{2p(2p+j)})
\notag\\
&\quad  \quad  + \sum_{m=1}^{p}(-1)^{p+m}q^{-(p+m)(k-r)}q^{\binom{p+m+1}{2}}j(-q^{(2p+m)(2p+j)+p(2r+2)};q^{2p(2p+j)}).
\notag
\end{align}}%

\noindent In the fourth summation of (\ref{equation:quasiPeriodicInitBreak4}) we use (\ref{equation:j-elliptic}).  This brings us to
{\allowdisplaybreaks \begin{align}
& (q)_{\infty}^{3}\mathcal{C}_{2k+1,2r+1}^{(p,2p+j)}(q)
 -(q)_{\infty}^{3}q^{-p(2k+1+j)} \mathcal{C}_{2k+2j+1,2r+1}^{(p,2p+j)}(q)
 \label{equation:quasiPeriodicInitBreak5}\\
&\quad   = - (-1)^{p}q^{\binom{p+1}{2}-p(k+r+2)}\sum_{m=1}^{p-1}(-1)^{m}q^{\binom{m+1}{2}+m(k+r+1)-mp}
j(-q^{m(2p+j)+p(2r+2)};q^{2p(2p+j)})
\notag \\
&\qquad   - (-1)^{p}q^{\binom{p+1}{2}-p(k+r+2)}\sum_{m=1}^{p}(-1)^{m}q^{\binom{m+1}{2}-m(k+r+2-p)}
j(-q^{-m(2p+j)+p(2r+2)};q^{2p(2p+j)})
\notag \\
&\qquad   +(-1)^{p}q^{\binom{p+1}{2}-p(k+r+2)}
 \sum_{m=1}^{p-1}(-1)^{m}q^{\binom{m+1}{2}+m(k-r+p+j-1)}
 j(-q^{-m(2p+j)+p(2r+2)};q^{2p(2p+j)})
\notag\\
&\qquad   +(-1)^{p}q^{\binom{p+1}{2}-p(k+r+2)} \sum_{m=1}^{p}(-1)^{m}q^{\binom{m+1}{2}+m(r-k-j-p)}
j(-q^{m(2p+j)+p(2r+2)};q^{2p(2p+j)}).
\notag
\end{align}}%

\noindent Now, in the second and fourth summations in (\ref{equation:quasiPeriodicInitBreak5}), we see that the terms for $m=p$ cancel:
\begin{align*}
  - &q^{\binom{p+1}{2}-p(k+r+2)}q^{\binom{p+1}{2}-p(k+r+2-p)}
j(-q^{-p(2p+j)+p(2r+2)};q^{2p(2p+j)})\\
&\qquad    +q^{\binom{p+1}{2}-p(k+r+2)} q^{\binom{p+1}{2}+p(r-k-j-p)}
j(-q^{p(2p+j)+p(2r+2)};q^{2p(2p+j)})\\
&=  - q^{\binom{p+1}{2}-p(k+r+2)}q^{\binom{p+1}{2}-p(k+r+2-p)}
j(-q^{-p(2p+j)+p(2r+2)};q^{2p(2p+j)})\\
&\qquad    +q^{\binom{p+1}{2}-p(k+r+2)} q^{\binom{p+1}{2}+p(r-k-j-p)}
q^{p(2p+j)-p(2r+2)}j(-q^{-p(2p+j)+p(2r+2)};q^{2p(2p+j)})=0,
\end{align*}
where we used (\ref{equation:j-elliptic}) and simplified.  In (\ref{equation:quasiPeriodicInitBreak5}), we group together the first and fourth summations as well as the second and third summations to obtain
{\allowdisplaybreaks \begin{align*}
 (q)_{\infty}^{3}&\mathcal{C}_{2k+1,2r+1}^{(p,2p+j)}(q)
 -(q)_{\infty}^{3}q^{-p(2k+1+j)} \mathcal{C}_{2k+2j+1,2r+1}^{(p,2p+j)}(q)\\
&  = (-1)^{p}q^{\binom{p+1}{2}-p(k+r+2)} \sum_{m=1}^{p-1}(-1)^{m}q^{\binom{m+1}{2}+m(r-k-j-p)}
\left ( 1-q^{m(2k+1+j)}\right ) \\
&\qquad \qquad \times j(-q^{m(2p+j)+p(2r+2)};q^{2p(2p+j)}) \\
&\qquad    - (-1)^{p}q^{\binom{p+1}{2}-p(k+r+2)}\sum_{m=1}^{p-1}(-1)^{m}q^{\binom{m+1}{2}-m(k+r+2-p)}
\left ( 1-q^{m(2k+1+j)}\right ) \\
&\qquad \qquad \times j(-q^{-m(2p+j)+p(2r+2)};q^{2p(2p+j)}).
\end{align*}}%

\noindent Combining the remaining two sums yields
{\allowdisplaybreaks \begin{align*}
 &(q)_{\infty}^{3}\mathcal{C}_{2k+1,2r+1}^{(p,2p+j)}(q)
 -(q)_{\infty}^{3}q^{-p(2k+1+j)} \mathcal{C}_{2k+2j+1,2r+1}^{(p,2p+j)}(q)\\
&  = (-1)^{p}q^{\binom{p+1}{2}-p(k+r+2)} \sum_{m=1}^{p-1}(-1)^{m}q^{\binom{m+1}{2}+m(r-k-j-p)}
\left ( 1-q^{m(2k+1+j)}\right ) \\
&\qquad  \times\left (  j(-q^{m(2p+j)+p(2r+2)};q^{2p(2p+j)}) 
-q^{m(2p+j)-m(2r+2)}j(-q^{-m(2p+j)+p(2r+2)};q^{2p(2p+j)})\right ) .
\end{align*}}%

\noindent Rewriting the exponents gives the result.
\end{proof}

 
 \section{On a generalized Euler identity for modified string functions: Theorems}\label{section:genEulerTheorems}
  
In Section \ref{section:genEulerTheoremsEvenSpin}, we apply our quasi-periodicity relation for even-spin in Theorem \ref{theorem:generalQuasiPeriodicityEvenSpin} to the generalized Euler identity (\ref{equation:genEulerIdentitySW}) to obtain Theorem  \ref{theorem:genEulerIdentityEvenSpin}.  Likewise, in Section \ref{section:genEulerTheoremsOddSpin}, we apply our quasi-periodicity relation for odd-spin in Theorem \ref{theorem:generalQuasiPeriodicityOddSpin} to the generalized Euler identity (\ref{equation:genEulerIdentitySW}) to obtain Theorem  \ref{theorem:genEulerIdentityOddSpin}.


\subsection{The quasi-periodicity approach for admissible-level, even-spin}\label{section:genEulerTheoremsEvenSpin}

We first change the notation of Theorem \ref{theorem:generalQuasiPeriodicityEvenSpin}:
\begin{corollary}\label{corollary:quasiPeriodicityEvenSpinMathCal} We have
{\allowdisplaybreaks \begin{align*}
&  (q)_{\infty}^3\mathcal{C}_{2jt+2s,2r}^{(p,2p+j)}(q)
 -(q)_{\infty}^3q^{pjt^2+2pts}\mathcal{C}_{2s,2r}^{(p,2p+j)}(q)\\
& \quad  = (-1)^{p}q^{pjt^2+2pts}q^{\binom{p}{2}-p(r-s)}\sum_{i=1}^{t}q^{-2pj\binom{i}{2}-2psi} \\
&\qquad  \times 
\sum_{m=1}^{p-1}(-1)^{m}q^{\binom{m+1}{2}+m(r-p)}
 \left ( q^{m(ji+s-j)}-q^{-m(ji+s)}\right )\\
&\qquad   \times 
\Big (  j(-q^{m(2p+j)+p(2r+1)};q^{2p(2p+j)} )
  -  q^{m(2p+j)-m(2r+1)}j(-q^{-m(2p+j)+p(2r+1)};q^{2p(2p+j)})\Big ).
\end{align*}}%
\end{corollary}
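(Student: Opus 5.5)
The plan is to obtain the corollary by a direct change of normalization in Theorem \ref{theorem:generalQuasiPeriodicityEvenSpin}, using the relation (\ref{equation:mathCalCtoStringC}) between $C$ and $\mathcal{C}$. Specialize (\ref{equation:mathCalCtoStringC}) to level $N=j/p$, spin $\ell=2r$ and quantum number $m=2k$. Since $N+2=(2p+j)/p$ and $m^2/(4N)=pk^2/j$, this gives
\begin{equation*}
C_{2k,2r}^{(p,2p+j)}(q)=q^{-\frac{1}{8}+\frac{p(2r+1)^2}{4(2p+j)}-\frac{p}{j}k^2}\mathcal{C}_{2k,2r}^{(p,2p+j)}(q).
\end{equation*}
I will apply this twice. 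The first time is with $k=jt+s$, where the exponent becomes $-\frac18+\frac{p(2r+1)^2}{4(2p+j)}-\frac{p}{j}s^2-2pts-pjt^2$. The second time is with $k=s$.

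Next, substitute both expressions into the left side of Theorem \ref{theorem:generalQuasiPeriodicityEvenSpin}. Then multiply the whole identity by $q^{\frac18-\frac{p(2r+1)^2}{4(2p+j)}+\frac{p}{j}s^2+pjt^2+2pts}$. The left side becomes exactly $(q)_\infty^3\mathcal{C}_{2jt+2s,2r}^{(p,2p+j)}(q)-(q)_\infty^3q^{pjt^2+2pts}\mathcal{C}_{2s,2r}^{(p,2p+j)}(q)$.

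On the right side, the prefactor of Theorem \ref{theorem:generalQuasiPeriodicityEvenSpin} is $q^{-\frac18+\frac{p(2r+1)^2}{4(2p+j)}}q^{\binom{p}{2}-p(r-s)-\frac{p}{j}s^2}$. The fractional parts and the $-\frac{p}{j}s^2$ term cancel against the multiplier, leaving $q^{pjt^2+2pts}q^{\binom{p}{2}-p(r-s)}$. The inner double sum over $i$ and $m$ and the theta-function factor are untouched, so they match the statement verbatim.

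No step here is a real obstacle. The only care needed is the exponent bookkeeping: $\frac{p}{j}(jt+s)^2=pjt^2+2pts+\frac{p}{j}s^2$, and the $s_\lambda$ term uses $(\ell+1)^2=(2r+1)^2$ with $N+2=(2p+j)/p$. I would double-check these two points and then conclude.
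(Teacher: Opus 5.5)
Your proposal is correct and is the same argument the paper gives. You rewrite both string functions via (\ref{equation:mathCalCtoStringC}) with $N=j/p$, substitute into Theorem \ref{theorem:generalQuasiPeriodicityEvenSpin}, and multiply through by $q^{\frac18-\frac{p(2r+1)^2}{4(2p+j)}+\frac{p}{j}s^2+pjt^2+2pts}$; your exponent bookkeeping, including $\frac{p}{j}(jt+s)^2=pjt^2+2pts+\frac{p}{j}s^2$, checks out.
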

\begin{proof}[Proof of Corollary \ref{corollary:quasiPeriodicityEvenSpinMathCal}]
We recall (\ref{equation:mathCalCtoStringC}): 
\begin{equation*}
\mathcal{C}_{m,\ell}^{N}(q) := q^{-s_{\lambda}}C_{m,\ell}^{N}(q), \ 
s_{\lambda} := -\frac{1}{8}+\frac{(\ell+1)^2}{4(N+2)} - \frac{m^2}{4N},
\end{equation*}
to get
{\allowdisplaybreaks \begin{align*}
C_{2jt+2s,2r}^{(p,2p+j)}(q)&
 -C_{2s,2r}^{(p,2p+j)}(q)\\
&=q^{-\frac{1}{8}+\frac{p(2r+1)^2}{4(2p+j)}-\frac{p(2jt+2s)^2}{4j}} \mathcal{C}_{2jt+2s,2r}^{(p,2p+j)}(q)
 -q^{-\frac{1}{8}+\frac{p(2r+1)^2}{4(2p+j)}-\frac{p(2s)^2}{4j}}\mathcal{C}_{2s,2r}^{(p,2p+j)}(q).
\end{align*}}%

\noindent So our quasi-periodicity relation can be rewritten
{\allowdisplaybreaks \begin{align*}
& q^{-\frac{1}{8}+\frac{p(2r+1)^2}{4(2p+j)}-\frac{p(2jt+2s)^2}{4j}} (q)_{\infty}^3\mathcal{C}_{2jt+2s,2r}^{(p,2p+j)}(q)
 -q^{-\frac{1}{8}+\frac{p(2r+1)^2}{4(2p+j)}-\frac{p(2s)^2}{4j}}(q)_{\infty}^3\mathcal{C}_{2s,2r}^{(p,2p+j)}(q)\\
& \  = (-1)^{p}q^{-\frac{1}{8}+\frac{p(2r+1)^2}{4(2p+j)}}q^{\binom{p}{2}-p(r-s)-\frac{p}{j}s^2}\sum_{i=1}^{t}q^{-2pj\binom{i}{2}-2psi} \\
&\quad  \times 
\sum_{m=1}^{p-1}(-1)^{m}q^{\binom{m+1}{2}+m(r-p)}
 \left ( q^{m(ji+s-j)}-q^{-m(ji+s)}\right )\\
&\quad   \times 
\Big (  j(-q^{m(2p+j)+p(2r+1)};q^{2p(2p+j)} )
  -  q^{m(2p+j)-m(2r+1)}j(-q^{-m(2p+j)+p(2r+1)};q^{2p(2p+j)})\Big ).
\end{align*}}%
Multiplying through by the appropriate power of $q$ gives the result.
\end{proof}

\begin{proof}[Proof of Theorem \ref{theorem:genEulerIdentityEvenSpin}]  For $\eta=2s$, $\ell=2r$, $L=jt+a$, we have
\begin{equation*}
 C_{2L+\eta,\ell}^{(p,p^{\prime})}(q)=C_{2(jt+a)+2s,2r}^{N}(q)
 = C_{2jt+2(a+s),2r}^{N}(q).
\end{equation*}
Inserting into the generalized Euler identity (\ref{equation:genEulerIdentitySW}) yields
{\allowdisplaybreaks \begin{align*}
(q)_{\infty}^{3}\delta_{2r,2s}
&= (q)_{\infty}^{3} \sum_{L=-\infty}^{\infty}(-1)^{L}q^{\binom{L}{2}}\mathcal{C}_{2L+2s,2r}^{(p,2p+j)}(q)\\
&=(q)_{\infty}^{3}\sum_{a=0}^{j-1} \sum_{t=-\infty}^{\infty}(-1)^{jt+a}q^{\binom{jt+a}{2}}\mathcal{C}_{2(jt+a)+2s,2r}^{(p,2p+j)}(q)\\
&=(q)_{\infty}^{3}\sum_{a=0}^{j-1} \sum_{t=-\infty}^{\infty}(-1)^{jt+a}q^{\binom{jt+a}{2}}\mathcal{C}_{2jt+2(a+s),2r}^{(p,2p+j)}(q).
\end{align*}}%

\noindent We then use Corollary \ref{corollary:quasiPeriodicityEvenSpinMathCal}:
{\allowdisplaybreaks \begin{align*}
(q)_{\infty}^{3}\delta_{2r,2s}
& =(q)_{\infty}^{3}\sum_{a=0}^{j-1} \sum_{t=-\infty}^{\infty}(-1)^{jt+a}q^{\binom{jt+a}{2}}
q^{pjt^2+2pt(a+s)}\mathcal{C}_{2(a+s),2r}^{(p,2p+j)}(q)\\
&\qquad 
+\sum_{a=0}^{j-1} \sum_{t=-\infty}^{\infty}(-1)^{jt+a}q^{\binom{jt+a}{2}}
 (-1)^{p}q^{pjt^2+2pt(a+s)}q^{\binom{p}{2}-p(r-a-s)}\\
&\qquad  \times 
\sum_{i=1}^{t}q^{-2pj\binom{i}{2}-2p(a+s)i} \sum_{m=1}^{p-1}(-1)^{m}q^{\binom{m+1}{2}+m(r-p)}
 \left ( q^{m(ji+a+s-j)}-q^{-m(ji+a+s)}\right )\\
&\qquad   \times 
\Big (  j(-q^{m(2p+j)+p(2r+1)};q^{2p(2p+j)} )\\
&\qquad \quad   -  q^{m(2p+j)-m(2r+1)}j(-q^{-m(2p+j)+p(2r+1)};q^{2p(2p+j)})\Big ).
\end{align*}}%

\noindent We rewrite this as
{\allowdisplaybreaks\begin{align}
(q)_{\infty}^{3}\delta_{2r,2s}
& =(q)_{\infty}^{3}\sum_{a=0}^{j-1}(-1)^{a} q^{\binom{a}{2}} j(-(-1)^{j}q^{j(a+p)+\binom{j}{2}+2p(a+s)};q^{j(2p+j)})
\mathcal{C}_{2(a+s),2r}^{(p,2p+j)}(q)
\label{equation:preLastTwoLines}\\
&\qquad 
+(-1)^{p}q^{\binom{p}{2}-p(r-s)} \sum_{a=0}^{j-1} (-1)^{a}q^{pa}\sum_{m=1}^{p-1}(-1)^{m}q^{\binom{m+1}{2}+m(r-p)}
\notag\\
&\qquad   \times 
\Big (  j(-q^{m(2p+j)+p(2r+1)};q^{2p(2p+j)} )
\notag\\
&\qquad \qquad   -  q^{m(2p+j)-m(2r+1)}j(-q^{-m(2p+j)+p(2r+1)};q^{2p(2p+j)})\Big )
\notag\\
&\qquad \times \sum_{t=-\infty}^{\infty}(-1)^{jt}q^{\binom{jt+a}{2}}
q^{pjt^2+2pt(a+s)}
\notag\\
&\qquad  \times 
\sum_{i=1}^{t}q^{-2pj\binom{i}{2}-2p(a+s)i} 
 \left ( q^{m(ji+a+s-j)}-q^{-m(ji+a+s)}\right ).\notag
\end{align}}%
We focus on the last two lines of (\ref{equation:preLastTwoLines}), but we first rewrite the exponents.
{\allowdisplaybreaks\begin{align*}
 \sum_{t=-\infty}^{\infty}&(-1)^{jt}q^{\binom{jt+a}{2}}
q^{pjt^2+2pt(a+s)}  
\sum_{i=1}^{t}q^{-2pj\binom{i}{2}-2p(a+s)i} 
 \left ( q^{m(ji+a+s-j)}-q^{-m(ji+a+s)}\right )\\
 &= \sum_{t=-\infty}^{\infty}(-1)^{jt}q^{\binom{a}{2}+j(2p+j)\binom{t}{2}+t(j(a+p)+\binom{j}{2}+2p(a+s))}\\
&\qquad \times \sum_{i=1}^{t}q^{-2pj\binom{i}{2}-2p(a+s)i} 
 \left ( q^{m(ji+a+s-j)}-q^{-m(ji+a+s)}\right )=:\Psi_{a,m}^{s}(q).
\end{align*}}%
For $t=0$, the inner sum vanishes by our summation convention, so we can write
{\allowdisplaybreaks \begin{align*}
\Psi_{a,m}^{s}(q)&=q^{\binom{a}{2}}
\sum_{t\ge1} \sum_{i=1}^{t}(-1)^{jt}q^{j(2p+j)\binom{t}{2}+t(j(a+p)+\binom{j}{2}+2p(a+s))}\\
&\qquad \qquad \times q^{-2pj\binom{i}{2}-2p(a+s)i} 
 \left ( q^{m(ji+a+s-j)}-q^{-m(ji+a+s)}\right )\\
 &\qquad +q^{\binom{a}{2}}
\sum_{t\le -1} \sum_{i=1}^{t}(-1)^{jt}q^{j(2p+j)\binom{t}{2}+t(j(a+p)+\binom{j}{2}+2p(a+s))}\\
&\qquad \qquad \times q^{-2pj\binom{i}{2}-2p(a+s)i} 
 \left ( q^{m(ji+a+s-j)}-q^{-m(ji+a+s)}\right ).
\end{align*}}%
Using the summation convention again, we can write the second double-sum to get
{\allowdisplaybreaks \begin{align*}
\Psi_{a,m}^{s}(q)&=q^{\binom{a}{2}}
\sum_{t\ge1} \sum_{i=1}^{t}(-1)^{jt}q^{j(2p+j)\binom{t}{2}+t(j(a+p)+\binom{j}{2}+2p(a+s))}\\
&\qquad \qquad \times q^{-2pj\binom{i}{2}-2p(a+s)i} 
 \left ( q^{m(ji+a+s-j)}-q^{-m(ji+a+s)}\right )\\
 &\qquad -q^{\binom{a}{2}}
\sum_{t\le -1} \sum_{i=t+1}^{0}(-1)^{jt}q^{j(2p+j)\binom{t}{2}+t(j(a+p)+\binom{j}{2}+2p(a+s))}\\
&\qquad \qquad \times q^{-2pj\binom{i}{2}-2p(a+s)i} 
 \left ( q^{m(ji+a+s-j)}-q^{-m(ji+a+s)}\right ).
\end{align*}}%
In the second double-sum  we make the substitutions $t\to -t$, $i\to -i+1$:
{\allowdisplaybreaks \begin{align*}
\Psi_{a,m}^{s}(q)&=q^{\binom{a}{2}}
\sum_{t\ge1} \sum_{i=1}^{t}(-1)^{jt}q^{j(2p+j)\binom{t}{2}+t(j(a+p)+\binom{j}{2}+2p(a+s))}\\
&\qquad \qquad \times q^{-2pj\binom{i}{2}-2p(a+s)i} 
 \left ( q^{m(ji+a+s-j)}-q^{-m(ji+a+s)}\right )\\
 &\qquad -q^{\binom{a}{2}}
\sum_{t\ge 1} \sum_{i=1}^{t}(-1)^{jt}q^{j(2p+j)\binom{-t}{2}-t(j(a+p)+\binom{j}{2}+2p(a+s))}\\
&\qquad \qquad \times q^{-2pj\binom{-i+1}{2}-2p(a+s)(-i+1)} 
 \left ( q^{m(j(-i+1)+a+s-j)}-q^{-m(j(-i+1)+a+s)}\right ).
\end{align*}}%
Simplifying yields
{\allowdisplaybreaks \begin{align*}
\Psi_{a,m}^{s}(q)&=q^{\binom{a}{2}}
\sum_{t\ge1} \sum_{i=1}^{t}(-1)^{jt}q^{j(2p+j)\binom{t}{2}+t(j(a+p)+\binom{j}{2}+2p(a+s))}\\
&\qquad \qquad \times q^{-2pj\binom{i}{2}-2p(a+s)i} 
 \left ( q^{m(ji+a+s-j)}-q^{-m(ji+a+s)}\right )\\
 &\qquad -q^{\binom{a}{2}}
\sum_{t\ge 1} \sum_{i=1}^{t}(-1)^{jt}q^{j(2p+j)\binom{t+1}{2}-t(j(a+p)+\binom{j}{2}+2p(a+s))}\\
&\qquad \qquad \times q^{-2pj\binom{i}{2}+2p(a+s)(i-1)} 
 \left ( q^{-m(j(i-1)-a-s+j)}-q^{m(j(i-1)-a-s)}\right ).
\end{align*}}%
Replacing $i$ with $i+1$ and $t$ with $t+1$ gives
{\allowdisplaybreaks \begin{align*}
\Psi_{a,m}^{s}(q)&=q^{\binom{a}{2}}
\sum_{t\ge0} \sum_{i=0}^{t}(-1)^{j(t+1)}q^{j(2p+j)\binom{t+1}{2}+(t+1)(j(a+p)+\binom{j}{2}+2p(a+s))}\\
&\qquad \qquad \times q^{-2pj\binom{i+1}{2}-2p(a+s)(i+1)} 
 \left ( q^{m(ji+a+s)}-q^{-m(j(i+1)+a+s)}\right )\\
 &\qquad -q^{\binom{a}{2}}
\sum_{t\ge 0} \sum_{i=0}^{t}(-1)^{j(t+1)}q^{j(2p+j)\binom{t+2}{2}-(t+1)(j(a+p)+\binom{j}{2}+2p(a+s))}\\
&\qquad \qquad \times q^{-2pj\binom{i+1}{2}+2p(a+s)i} 
 \left ( q^{-m(j(i+1)-a-s)}-q^{m(ji-a-s)}\right ).
\end{align*}}%
We interchange summation symbols to obtain
{\allowdisplaybreaks \begin{align*}
\Psi_{a,m}^{s}(q)&=q^{\binom{a}{2}}
\sum_{i\ge0} \sum_{t=i}^{\infty}(-1)^{j(t+1)}q^{j(2p+j)\binom{t+1}{2}+(t+1)(j(a+p)+\binom{j}{2}+2p(a+s))}\\
&\qquad \qquad \times q^{-2pj\binom{i+1}{2}-2p(a+s)(i+1)} 
 \left ( q^{m(ji+a+s)}-q^{-m(j(i+1)+a+s)}\right )\\
 &\qquad -q^{\binom{a}{2}}
\sum_{i\ge0} \sum_{t=i}^{\infty}(-1)^{j(t+1)}q^{j(2p+j)\binom{t+2}{2}-(t+1)(j(a+p)+\binom{j}{2}+2p(a+s))}\\
&\qquad \qquad \times q^{-2pj\binom{i+1}{2}+2p(a+s)i} 
 \left ( q^{-m(j(i+1)-a-s)}-q^{m(ji-a-s)}\right ).
\end{align*}}%
Replacing $t$ with $i+t$ then brings us to a familiar point:
{\allowdisplaybreaks \begin{align}
\Psi_{a,m}^{s}(q)&=q^{\binom{a}{2}}
\sum_{i=0}^{\infty} \sum_{t=0}^{\infty}(-1)^{j(t+i+1)}q^{j(2p+j)\binom{t+i+1}{2}+(t+i+1)(j(a+p)+\binom{j}{2}+2p(a+s))}
\label{equation:preHeckeTypeDoubleSum}\\
&\qquad \qquad \times q^{-2pj\binom{i+1}{2}-2p(a+s)(i+1)} 
 \left ( q^{m(ji+a+s)}-q^{-m(j(i+1)+a+s)}\right )
 \notag\\
 &\qquad -q^{\binom{a}{2}}
\sum_{i=0}^{\infty} \sum_{t=0}^{\infty}(-1)^{j(t+i+1)}q^{j(2p+j)\binom{t+i+2}{2}-(t+i+1)(j(a+p)+\binom{j}{2}+2p(a+s))}
\notag \\
&\qquad \qquad \times q^{-2pj\binom{i+1}{2}+2p(a+s)i} 
 \left ( q^{-m(j(i+1)-a-s)}-q^{m(ji-a-s)}\right ).\notag
\end{align}}%

We want to write (\ref{equation:preHeckeTypeDoubleSum}) as the sum of two double-sums of the form (\ref{equation:fabc-def2}).  In the third and fourth sums, we make the substitutions $i\to -i-1$ and $t\to -t-1$.    The first and third double-sums pair together, the second and fourth sums pair together, and we have our result.
\end{proof}


\subsection{The quasi-periodicity approach for admissible-level, odd-spin}\label{section:genEulerTheoremsOddSpin}
We first change the notation of Theorem \ref{theorem:generalQuasiPeriodicityOddSpin}:

\begin{corollary}\label{corollary:quasiPeriodicityOddSpinMathCal}
For $(p,p^{\prime})=(p,2p+j)$, we have the quasi-periodic relation for odd-spin 
{\allowdisplaybreaks \begin{align*}
& (q)_{\infty}^3 \mathcal{C}_{2jt+2s+1,2r+1}^{(p,2p+j)}(q)
 -q^{pjt^2+pt(2s+1)}(q)_{\infty}^3\mathcal{C}_{2s+1,2r+1}^{(p,2p+j)}(q)\\
& \ = (-1)^{p}q^{pjt^2+pt(2s+1)+\binom{p}{2}-p(r-s)}
\sum_{i=1}^{t}q^{-2pj\binom{i}{2}-p(2s+1)i}\\
&\quad \times \sum_{m=1}^{p-1}(-1)^{m}q^{\binom{m+1}{2}+m(r-p)}
\left (q^{m(ji+s-j+1)} -q^{-m(ji+s)}\right ) \\
&\quad  \times\Big (  j(-q^{m(2p+j)+p(2r+2)};q^{2p(2p+j)}) 
 -q^{m(2p+j)-m(2r+2)}j(-q^{-m(2p+j)+p(2r+2)};q^{2p(2p+j)})\Big ).
\end{align*}}
\end{corollary}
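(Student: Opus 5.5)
The plan is to imitate the proof of Corollary \ref{corollary:quasiPeriodicityEvenSpinMathCal}: this is a change of normalization in Theorem \ref{theorem:generalQuasiPeriodicityOddSpin}, with no new analysis. The only input is the conversion (\ref{equation:mathCalCtoStringC}). With $N=j/p$, $N+2=(2p+j)/p$, $\ell=2r+1$ and $m=2k+1$, it reads
\[
C_{2k+1,2r+1}^{(p,2p+j)}(q)=q^{-\frac18+\frac{p(2r+2)^2}{4(2p+j)}-\frac{p}{4j}(2k+1)^2}\,\mathcal{C}_{2k+1,2r+1}^{(p,2p+j)}(q).
\]

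First I will substitute this into the left-hand side of Theorem \ref{theorem:generalQuasiPeriodicityOddSpin}, once with $k=jt+s$ and once with $k=s$. Both terms then carry the common factor $q^{-\frac18+\frac{p(2r+2)^2}{4(2p+j)}}$. The right-hand side of the theorem carries the same factor, so it cancels throughout.

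Next I will multiply the resulting identity by $q^{\frac{p}{4j}(2jt+2s+1)^2}$. The key computation is the exponent identity
\[
\frac{p}{4j}\Big((2jt+2s+1)^2-(2s+1)^2\Big)=pjt^2+pt(2s+1).
\]
This identity turns the coefficient of $\mathcal{C}_{2s+1,2r+1}$ into $q^{pjt^2+pt(2s+1)}$. On the right-hand side, the remaining factor $q^{\binom{p+1}{2}-p(r+1-s)-\frac{p}{4j}(2s+1)^2}$ is multiplied by $q^{\frac{p}{4j}(2jt+2s+1)^2}$. The result is $q^{pjt^2+pt(2s+1)}\,q^{\binom{p+1}{2}-p(r+1-s)}$.

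It remains to check that $\binom{p+1}{2}-p(r+1-s)=\binom{p}{2}-p(r-s)$. This holds because $\binom{p+1}{2}-\binom{p}{2}=p$. The sums over $i$ and $m$ and the theta-function factor pass through unchanged, which gives exactly the stated right-hand side.

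There is no real obstacle here. The only place where care is needed is the bookkeeping of the fractional exponents: the $-\frac18$ and the spin-dependent terms must cancel exactly, and the quadratic-in-$t$ term must come out with the correct linear term $pt(2s+1)$ rather than the even-spin term $2pts$.
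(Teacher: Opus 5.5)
Your proposal is correct and follows the paper's own proof: substitute the conversion (\ref{equation:mathCalCtoStringC}) into both sides of Theorem \ref{theorem:generalQuasiPeriodicityOddSpin}, cancel the common factor $q^{-\frac18+\frac{p(2r+2)^2}{4(2p+j)}}$, and multiply through by $q^{\frac{p}{4j}(2jt+2s+1)^2}$. Your two exponent checks, $\frac{p}{4j}\bigl((2jt+2s+1)^2-(2s+1)^2\bigr)=pjt^2+pt(2s+1)$ and $\binom{p+1}{2}-p(r+1-s)=\binom{p}{2}-p(r-s)$, are exactly the bookkeeping the paper leaves implicit in ``multiplying through by the appropriate power of $q$.''
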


\begin{proof}[Proof of Corollary \ref{corollary:quasiPeriodicityOddSpinMathCal}]
We recall (\ref{equation:mathCalCtoStringC}): 
\begin{equation*}
\mathcal{C}_{m,\ell}^{N}(q) := q^{-s_{\lambda}}C_{m,\ell}^{N}(q), \ 
s_{\lambda} := -\frac{1}{8}+\frac{(\ell+1)^2}{4(N+2)} - \frac{m^2}{4N},
\end{equation*}
to get
\begin{align*}
&C_{2jt+2s+1,2r+1}^{(p,2p+j)}(q)
 -C_{2s+1,2r+1}^{(p,2p+j)}(q)\\
&\qquad =q^{-\frac{1}{8}+\frac{p(2r+2)^2}{4(2p+j)}-\frac{p(2jt+2s+1)^2}{4j}} \mathcal{C}_{2jt+2s+1,2r+1}^{(p,2p+j)}(q)
 -q^{-\frac{1}{8}+\frac{p(2r+2)^2}{4(2p+j)}-\frac{p(2s+1)^2}{4j}}\mathcal{C}_{2s+1,2r+1}^{(p,2p+j)}(q).
\end{align*}

\noindent So our quasi-periodicity relation can be rewritten
{\allowdisplaybreaks \begin{align*}
& q^{-\frac{1}{8}+\frac{p(2r+2)^2}{4(2p+j)}-\frac{p(2jt+2s+1)^2}{4j}}(q)_{\infty}^3 \mathcal{C}_{2jt+2s+1,2r+1}^{(p,2p+j)}(q)
 -q^{-\frac{1}{8}+\frac{p(2r+2)^2}{4(2p+j)}-\frac{p(2s+1)^2}{4j}}(q)_{\infty}^3\mathcal{C}_{2s+1,2r+1}^{(p,2p+j)}(q)\\
& \ = (-1)^{p}q^{-\frac{1}{8}+\frac{p(2r+2)^2}{4(2p+j)}}q^{\binom{p+1}{2}-p(r+1-s)-\frac{p}{4j}(2s+1)^2}
\sum_{i=1}^{t}q^{-2pj\binom{i}{2}-p(2s+1)i}\\
&\quad \times \sum_{m=1}^{p-1}(-1)^{m}q^{\binom{m+1}{2}+m(r-p)}
\left (q^{m(ji+s-j+1)} -q^{-m(ji+s)}\right ) \\
&\quad  \times\Big (  j(-q^{m(2p+j)+p(2r+2)};q^{2p(2p+j)}) 
 -q^{m(2p+j)-m(2r+2)}j(-q^{-m(2p+j)+p(2r+2)};q^{2p(2p+j)})\Big ).
\end{align*}}

Multiplying through by the appropriate power of $q$ gives the result.
\end{proof}

\begin{proof}[Proof of Theorem \ref{theorem:genEulerIdentityOddSpin}]  For $\eta=2s+1$, $\ell=2r+1$, $L=jt+a$, we have
\begin{equation*}
 C_{2L+\eta,\ell}^{(p,p^{\prime})}(q)=C_{2(jt+a)+2s+1,2r+1}^{N}(q)
 = C_{2jt+2(a+s)+1,2r+1}^{N}(q).
\end{equation*}
Inserting into the generalized Euler identity (\ref{equation:genEulerIdentitySW}) yields
{\allowdisplaybreaks \begin{align*}
(q)_{\infty}^{3}\delta_{2r+1,2s+1}
&= (q)_{\infty}^{3} \sum_{L=-\infty}^{\infty}(-1)^{L}q^{\binom{L}{2}}\mathcal{C}_{2L+2s+1,2r+1}^{(p,2p+j)}(q)\\
&=(q)_{\infty}^{3}\sum_{a=0}^{j-1} \sum_{t=-\infty}^{\infty}(-1)^{jt+a}q^{\binom{jt+a}{2}}\mathcal{C}_{2(jt+a)+2s+1,2r+1}^{(p,2p+j)}(q)\\
&=(q)_{\infty}^{3}\sum_{a=0}^{j-1} \sum_{t=-\infty}^{\infty}(-1)^{jt+a}q^{\binom{jt+a}{2}}\mathcal{C}_{2jt+2(a+s)+1,2r+1}^{(p,2p+j)}(q).
\end{align*}}%

\noindent We then use Corollary \ref{corollary:quasiPeriodicityOddSpinMathCal}:
{\allowdisplaybreaks \begin{align*}
&(q)_{\infty}^{3}\delta_{2r+1,2s+1}\\
&\quad =(q)_{\infty}^{3}\sum_{a=0}^{j-1} \sum_{t=-\infty}^{\infty}(-1)^{jt+a}q^{\binom{jt+a}{2}}
q^{pjt^2+pt(2(a+s)+1)}\mathcal{C}_{2(a+s)+1,2r+1}^{(p,2p+j)}(q)\\
&\qquad 
+\sum_{a=0}^{j-1} \sum_{t=-\infty}^{\infty}(-1)^{jt+a}q^{\binom{jt+a}{2}}
 (-1)^{p}q^{pjt^2+pt(2(a+s)+1)+\binom{p}{2}-p(r-a-s)}\\
&\qquad \times \sum_{i=1}^{t}q^{-2pj\binom{i}{2}-p(2(a+s)+1)i}
 \times \sum_{m=1}^{p-1}(-1)^{m}q^{\binom{m+1}{2}+m(r-p)}
\left (q^{m(ji+a+s-j+1)} -q^{-m(ji+a+s)}\right ) \\
&\qquad  \times\Big (  j(-q^{m(2p+j)+p(2r+2)};q^{2p(2p+j)}) \\
&\qquad \qquad  -q^{m(2p+j)-m(2r+2)}j(-q^{-m(2p+j)+p(2r+2)};q^{2p(2p+j)})\Big ).
\end{align*}}%

\noindent We rewrite this as
{\allowdisplaybreaks \begin{align}
(q)_{\infty}^{3}\delta_{2r+1,2s+1}
& =(q)_{\infty}^{3}\sum_{a=0}^{j-1} (-1)^{a}q^{\binom{a}{2}}j(-(-1)^jq^{j(a+p)+\binom{j}{2}+p(2(a+s)+1)})
\mathcal{C}_{2(a+s)+1,2r+1}^{(p,2p+j)}(q)
\label{equation:preLastTwoLinesOdd}\\
&\qquad 
+(-1)^{p}q^{\binom{p}{2}-p(r-s)}\sum_{a=0}^{j-1}(-1)^{a} q^{pa}
 \sum_{m=1}^{p-1}(-1)^{m}q^{\binom{m+1}{2}+m(r-p)}
 \notag\\
&\qquad  \times\Big (  j(-q^{m(2p+j)+p(2r+2)};q^{2p(2p+j)}) 
\notag\\
&\qquad \qquad  -q^{m(2p+j)-m(2r+2)}j(-q^{-m(2p+j)+p(2r+2)};q^{2p(2p+j)})\Big )
\notag\\
&\qquad \times\sum_{t=-\infty}^{\infty}(-1)^{jt}q^{\binom{jt+a}{2}}
 q^{pjt^2+pt(2(a+s)+1)}
 \notag\\
&\qquad \times \sum_{i=1}^{t}q^{-2pj\binom{i}{2}-p(2(a+s)+1)i}\left (q^{m(ji+a+s-j+1)} -q^{-m(ji+a+s)}\right ). 
\notag
\end{align}}

We focus on the last two lines of (\ref{equation:preLastTwoLinesOdd}), but we first rewrite the exponents.
{\allowdisplaybreaks\begin{align*}
\sum_{t=-\infty}^{\infty}&(-1)^{jt}q^{\binom{jt+a}{2}}
 q^{pjt^2+pt(2(a+s)+1)}
  \sum_{i=1}^{t}q^{-2pj\binom{i}{2}-p(2(a+s)+1)i}\left (q^{m(ji+a+s-j+1)} -q^{-m(ji+a+s)}\right ) \\
  &=\sum_{t=-\infty}^{\infty}(-1)^{jt}q^{\binom{a}{2}+j(2p+j)\binom{t}{2}+t(j(a+p)+\binom{j}{2}+p(2(a+s)+1))}\\
  &\qquad \times   \sum_{i=1}^{t}q^{-2pj\binom{i}{2}-p(2(a+s)+1)i}\left (q^{m(ji+a+s-j+1)} -q^{-m(ji+a+s)}\right )=:\Psi_{a,m}^{s}(q).
\end{align*}}

For $t=0$, the inner sum vanishes by our summation convention, so we can write
{\allowdisplaybreaks \begin{align*}
\Psi_{a,m}^{s}(q)&=q^{\binom{a}{2}}
\sum_{t\ge 1}\sum_{i=1}^{t}(-1)^{jt}q^{j(2p+j)\binom{t}{2}+t(j(a+p)+\binom{j}{2}+p(2(a+s)+1))}\\
  &\qquad \qquad \times   q^{-2pj\binom{i}{2}-p(2(a+s)+1)i}\left (q^{m(ji+a+s-j+1)} -q^{-m(ji+a+s)}\right )\\
&\qquad + q^{\binom{a}{2}}
\sum_{t\le -1} \sum_{i=1}^{t}(-1)^{jt}q^{j(2p+j)\binom{t}{2}+t(j(a+p)+\binom{j}{2}+p(2(a+s)+1))}\\
  &\qquad \qquad \times  q^{-2pj\binom{i}{2}-p(2(a+s)+1)i}\left (q^{m(ji+a+s-j+1)} -q^{-m(ji+a+s)}\right ).
\end{align*}}

Using the summation convention again, we can write the second double-sum to get
{\allowdisplaybreaks \begin{align*}
\Psi_{a,m}^{s}(q)&=q^{\binom{a}{2}}
\sum_{t\ge 1}\sum_{i=1}^{t}(-1)^{jt}q^{j(2p+j)\binom{t}{2}+t(j(a+p)+\binom{j}{2}+p(2(a+s)+1))}\\
  &\qquad \qquad \times   q^{-2pj\binom{i}{2}-p(2(a+s)+1)i}\left (q^{m(ji+a+s-j+1)} -q^{-m(ji+a+s)}\right )\\
&\qquad - q^{\binom{a}{2}}
\sum_{t\le -1} \sum_{i=t+1}^{0}(-1)^{jt}q^{j(2p+j)\binom{t}{2}+t(j(a+p)+\binom{j}{2}+p(2(a+s)+1))}\\
  &\qquad \qquad \times  q^{-2pj\binom{i}{2}-p(2(a+s)+1)i}\left (q^{m(ji+a+s-j+1)} -q^{-m(ji+a+s)}\right ).
\end{align*}}

In the second double-sum  we make the substitutions $t\to -t$, $i\to -i+1$:
{\allowdisplaybreaks \begin{align*}
\Psi_{a,m}^{s}(q)&=q^{\binom{a}{2}}
\sum_{t\ge 1}\sum_{i=1}^{t}(-1)^{jt}q^{j(2p+j)\binom{t}{2}+t(j(a+p)+\binom{j}{2}+p(2(a+s)+1))}\\
  &\qquad \qquad \times   q^{-2pj\binom{i}{2}-p(2(a+s)+1)i}\left (q^{m(ji+a+s-j+1)} -q^{-m(ji+a+s)}\right )\\
&\qquad - q^{\binom{a}{2}}
\sum_{t\ge 1} \sum_{i=1}^{t}(-1)^{jt}q^{j(2p+j)\binom{-t}{2}-t(j(a+p)+\binom{j}{2}+p(2(a+s)+1))}\\
  &\qquad \qquad \times  q^{-2pj\binom{-i+1}{2}-p(2(a+s)+1)(-i+1)}\left (q^{m(j(-i+1)+a+s-j+1)} -q^{-m(j(-i+1)+a+s)}\right ).
\end{align*}}

Simplifying yields
{\allowdisplaybreaks \begin{align*}
\Psi_{a,m}^{s}(q)&=q^{\binom{a}{2}}
\sum_{t\ge 1}\sum_{i=1}^{t}(-1)^{jt}q^{j(2p+j)\binom{t}{2}+t(j(a+p)+\binom{j}{2}+p(2(a+s)+1))}\\
  &\qquad \qquad \times   q^{-2pj\binom{i}{2}-p(2(a+s)+1)i}\left (q^{m(ji+a+s-j+1)} -q^{-m(ji+a+s)}\right )\\
&\qquad - q^{\binom{a}{2}}
\sum_{t\ge 1} \sum_{i=1}^{t}(-1)^{jt}q^{j(2p+j)\binom{t+1}{2}-t(j(a+p)+\binom{j}{2}+p(2(a+s)+1))}\\
  &\qquad \qquad \times  q^{-2pj\binom{i}{2}+p(2(a+s)+1)(i-1)}
  \left (q^{-m(j(i-1)-a-s+j-1)} -q^{m(j(i-1)-a-s)}\right ).
\end{align*}}

Replacing $i$ with $i+1$ and $t$ with $t+1$ gives
{\allowdisplaybreaks \begin{align*}
\Psi_{a,m}^{s}(q)&=q^{\binom{a}{2}}
\sum_{t\ge 0}\sum_{i=0}^{t}(-1)^{j(t+1)}q^{j(2p+j)\binom{t+1}{2}+(t+1)(j(a+p)+\binom{j}{2}+p(2(a+s)+1))}\\
  &\qquad \qquad \times   q^{-2pj\binom{i+1}{2}-p(2(a+s)+1)(i+1)}\left (q^{m(ji+a+s+1)} -q^{-m(j(i+1)+a+s)}\right )\\
&\qquad - q^{\binom{a}{2}}
\sum_{t\ge 0} \sum_{i=0}^{t}(-1)^{j(t+1)}q^{j(2p+j)\binom{t+2}{2}-(t+1)(j(a+p)+\binom{j}{2}+p(2(a+s)+1))}\\
  &\qquad \qquad \times  q^{-2pj\binom{i+1}{2}+p(2(a+s)+1)i}
  \left (q^{-m(j(i+1)-a-s-1)} -q^{m(ji-a-s)}\right ).
\end{align*}}

We interchange summation symbols to obtain
{\allowdisplaybreaks \begin{align*}
\Psi_{a,m}^{s}(q)&=q^{\binom{a}{2}}
\sum_{i=0}^{t}\sum_{t\ge 0}(-1)^{j(t+1)}q^{j(2p+j)\binom{t+1}{2}+(t+1)(j(a+p)+\binom{j}{2}+p(2(a+s)+1))}\\
  &\qquad \qquad \times   q^{-2pj\binom{i+1}{2}-p(2(a+s)+1)(i+1)}\left (q^{m(ji+a+s+1)} -q^{-m(j(i+1)+a+s)}\right )\\
&\qquad - q^{\binom{a}{2}}
 \sum_{i=0}^{t}\sum_{t\ge 0}(-1)^{j(t+1)}q^{j(2p+j)\binom{t+2}{2}-(t+1)(j(a+p)+\binom{j}{2}+p(2(a+s)+1))}\\
  &\qquad \qquad \times  q^{-2pj\binom{i+1}{2}+p(2(a+s)+1)i}
  \left (q^{-m(j(i+1)-a-s-1)} -q^{m(ji-a-s)}\right ).
\end{align*}}
Replacing $t$ with $i+t$ then brings us to a familiar point:
{\allowdisplaybreaks \begin{align}
\Psi_{a,m}^{s}(q)&=q^{\binom{a}{2}}
\sum_{i=0}^{\infty}\sum_{t= 0}^{\infty}(-1)^{j(t+i+1)}q^{j(2p+j)\binom{t+i+1}{2}+(t+i+1)(j(a+p)+\binom{j}{2}+p(2(a+s)+1))}
\label{equation:preHeckeTypeDoubleSumOdd}\\
  &\qquad \qquad \times   q^{-2pj\binom{i+1}{2}-p(2(a+s)+1)(i+1)}\left (q^{m(ji+a+s+1)} -q^{-m(j(i+1)+a+s)}\right )
  \notag\\
&\qquad - q^{\binom{a}{2}}
 \sum_{i=0}^{\infty}\sum_{t= 0}^{\infty}(-1)^{j(t+i+1)}q^{j(2p+j)\binom{t+i+2}{2}-(t+i+1)(j(a+p)+\binom{j}{2}+p(2(a+s)+1))}
 \notag\\
  &\qquad \qquad \times  q^{-2pj\binom{i+1}{2}+p(2(a+s)+1)i}
  \left (q^{-m(j(i+1)-a-s-1)} -q^{m(ji-a-s)}\right ).
  \notag
\end{align}}

We want to write (\ref{equation:preHeckeTypeDoubleSumOdd}) as the sum of two double-sums of the form (\ref{equation:fabc-def2}).  In the third and fourth sums, we make the substitutions $i\to -i-1$ and $t\to -t-1$.    The first and third double-sums pair together, the second and fourth sums pair together, and we are done.
\end{proof}


\section{On a generalized Euler identity for modified string functions: Corollaries}\label{section:genEulerCorollaries}

In this section we obtain the corollaries to the modified generalized Euler identities found in Theorems \ref{theorem:genEulerIdentityEvenSpin} and Theorems \ref{theorem:genEulerIdentityOddSpin}.  In Section \ref{section:genEulerCorollariesEvenSpin} we derive the Corollaries \ref{corollary:levelOneHalfEvenSpin}, \ref{corollary:levelOneThirdEvenSpin}, and \ref{corollary:levelTwoThirdsEvenSpin} for even-spin string functions.   In Section \ref{section:genEulerCorollariesOddSpin} we derive the Corollaries \ref{corollary:levelOneHalfOddSpin}, \ref{corollary:levelOneThirdOddSpin}, and \ref{corollary:levelTwoThirdsOddSpin} for odd-spin string functions.

\subsection{Even-spin string functions with admissible-levels: $1/2$, $1/3$, and $2/3$}\label{section:genEulerCorollariesEvenSpin}
\begin{proof}[Proof of Corollary \ref{corollary:levelOneHalfEvenSpin}]
We specialize Corollary \ref{corollary:genEulerIdentityEvenSpinCorollary} to $p=2$.  This yields
\begin{align*}
(q)_{\infty}^{3}\delta_{2r,2s}
&=(q)_{\infty}^{3} j(q^{2+4s};q^{5})
\mathcal{C}_{2s,2r}^{(2,5)}(q)\\
&\qquad    -q^{-r+2s}
\Big (  j(-q^{7+4r};q^{20} )  -  q^{4-2r}j(-q^{-3+4r};q^{20})\Big )\\
&\qquad   \times  \Big ( -q^{2+s} f_{5,5,1}(q^{4s+7},q^{4};q) +q^{1-s} f_{5,5,1}(q^{4s+7},q^{2};q)\Big ).
\end{align*}

We combine the two theta functions.   We first use (\ref{equation:j-flip}) and then (\ref{equation:j-elliptic})
\begin{align*}
j(-q^{7+4r};q^{20} )  -  q^{4-2r}j(-q^{-3+4r};q^{20})
&=j(-q^{7+4r};q^{20} )  -  q^{4-2r}j(-q^{23-4r};q^{20})\\
&=j(-q^{7+4r};q^{20} )  -  q^{1+2r}j(-q^{3-4r};q^{20}).
\end{align*}
We again use (\ref{equation:j-flip}) and then (\ref{equation:j-split-m2}) to get
\begin{align*}
j(-q^{7+4r};q^{20} )  -  q^{4-2r}j(-q^{-3+4r};q^{20})
&=j(-q^{7+4r};q^{20} )  -  q^{1+2r}j(-q^{17+4r};q^{20})\\
&=j(q^{1+2r};q^5).
\end{align*}
Hence
\begin{align*}
(q)_{\infty}^{3}\delta_{2r,2s}
&=(q)_{\infty}^{3} j(q^{2+4s};q^{5})
\mathcal{C}_{2s,2r}^{(2,5)}(q)\\
&\qquad    -q^{-r+2s} j(q^{1+2r};q^5)  \times  \Big ( -q^{2+s} f_{5,5,1}(q^{4s+7},q^{4};q) +q^{1-s} f_{5,5,1}(q^{4s+7},q^{2};q)\Big ).
\end{align*}
If we redistribute the exponents in the second line, we arrive at the desired result.
\end{proof}

\begin{proof}[Proof of Corollary \ref{corollary:levelOneThirdEvenSpin} ]
We specialize Corollary \ref{corollary:genEulerIdentityEvenSpinCorollary} to $p=3$.  This produces
{\allowdisplaybreaks \begin{align*}
(q)_{\infty}^{3}\delta_{2r,2s}
&=(q)_{\infty}^{3}j(q^{3+6s};q^{7})
\mathcal{C}_{2s,2r}^{(3,7)}(q)\\
&\quad 
-q^{3-3r+3s} \sum_{m=1}^{2}(-1)^{m}q^{\binom{m+1}{2}+m(r-3)}\\
&\qquad   \times 
\Big (  j(-q^{7m+6r+3};q^{42} )
  -  q^{7m-m(2r+1)}j(-q^{-7m+6r+3};q^{42})\Big )\\
&\qquad \times \Big ( -q^{3+ms}
 f_{7,7,1}(q^{6s+10},q^{m+4};q) +q^{3-m(s+1)} f_{7,7,1}(q^{6s+10},q^{-m+4};q)\Big ) .
\end{align*}}%
We combine the theta functions.  For $m=1$, we first use (\ref{equation:j-flip}) to get
\begin{align*}
 j(-q^{10+6r};q^{42} ) &-  q^{6-2r}j(-q^{-4+6r};q^{42})\\
 &= j(-q^{32-6r};q^{42} ) -  q^{6-2r}j(-q^{46-6r};q^{42})\\
 &= j(-q^{14+3(6-2r)};q^{42} ) -  q^{6-2r}j(-q^{28+3(6-2r)};q^{42}).
 \end{align*}
 Using the quintuple product identity (\ref{equation:H1Thm1.0}) and again (\ref{equation:j-flip}) brings us to
 \begin{align*}
  j(-q^{10+6r};q^{42} ) -  q^{6-2r}j(-q^{-4+6r};q^{42})
 &=\frac{j(q^{6-2r};q^{14})j(q^{14+2(6-2r)};q^{28})}{J_{28}}\\
 &=\frac{j(q^{8+2r};q^{14})j(q^{2+4r};q^{28})}{J_{28}}.
\end{align*}
For $m=2$, we argue in a similar manner to obtain
\begin{align*}
j(-q^{17+6r};q^{42})-q^{12-4r}j(-q^{-11+6r};q^{42})=\frac{j(q^{1+2r};q^{14})j(q^{16+4r};q^{28})}{J_{28}}.
\end{align*}
Hence
{\allowdisplaybreaks \begin{align*}
(q)_{\infty}^{3}\delta_{2r,2s}
&=(q)_{\infty}^{3}j(q^{3+6s};q^{7})
\mathcal{C}_{2s,2r}^{(3,7)}(q)\\
&\qquad 
-q^{3-3r+3s} \left ( -q^{r-2}\right )  \frac{j(q^{8+2r};q^{14})j(q^{2+4r};q^{28})}{J_{28}}\\
&\qquad \qquad \times \Big ( -q^{3+s}
 f_{7,7,1}(q^{6s+10},q^{5};q) +q^{2-s} f_{7,7,1}(q^{6s+10},q^{3};q)\Big )\\
&\qquad  -q^{3-3r+3s} \left ( q^{2r-3}\right ) 
\frac{j(q^{1+2r};q^{14})j(q^{16+4r};q^{28})}{J_{28}}\\
&\qquad \qquad \times \Big ( -q^{3+2s}
 f_{7,7,1}(q^{6s+10},q^{6};q) +q^{1-2s} f_{7,7,1}(q^{6s+10},q^{2};q)\Big ).
\end{align*}}%
Redistributing the exponents gives the desired result.
\end{proof}

\begin{proof}[Proof of Corollary \ref{corollary:levelTwoThirdsEvenSpin}]
We specialize Theorem  \ref{theorem:genEulerIdentityEvenSpin} to $p=3$, $j=2$.  This gives
{\allowdisplaybreaks \begin{align*}
(q)_{\infty}^{3}\delta_{2r,2s}
&=(q)_{\infty}^{3}\sum_{a=0}^{1}(-1)^{a}  j(-q^{7+8a+6s};q^{16})
\mathcal{C}_{2(a+s),2r}^{(3,8)}(q)\\
&\qquad 
-q^{3-3r+3s} \sum_{a=0}^{1} (-1)^{a}q^{3a}\sum_{m=1}^{2}(-1)^{m}q^{\binom{m+1}{2}+m(r-3)}\\
&\qquad   \times 
\Big (  j(-q^{8m+3(2r+1)};q^{48} )   -  q^{8m-m(2r+1)}j(-q^{-8m+3(2r+1)};q^{48})\Big )\\
&\qquad   \times\Big ( q^{7+2a+m(a+s)}
 f_{8,8,2}(-q^{8a+6s+23},-q^{11+2(a+m)};q^2)\\
&\qquad \qquad - q^{7+2a-m(a+s+2)} f_{8,8,2}(-q^{8a+6s+23},-q^{11+2(a-m)};q^2)\Big ). 
\end{align*}}%
We expand the sum over $m$.  This brings us to 
{\allowdisplaybreaks \begin{align*}
&(q)_{\infty}^{3}\delta_{2r,2s}\\
&\quad =(q)_{\infty}^{3}\sum_{a=0}^{1}(-1)^{a}  j(-q^{7+8a+6s};q^{16})
\mathcal{C}_{2(a+s),2r}^{(3,8)}(q)\\
&\qquad 
-q^{3-3r+3s} \sum_{a=0}^{1} (-1)^{a}q^{3a}\\
&\qquad \times \Big [ -q^{r-2}
\Big (  j(-q^{11+6r};q^{48} )   -  q^{7-2r}j(-q^{-5+6r};q^{48})\Big )\\
&\qquad  \qquad  \times\Big ( q^{7+3a+s}
 f_{8,8,2}(-q^{8a+6s+23},-q^{13+2a};q^2)
 - q^{5+a-s} f_{8,8,2}(-q^{8a+6s+23},-q^{9+2a};q^2)\Big ) \\
&\qquad +q^{2r-3}
\Big (  j(-q^{19+6r};q^{48} )   -  q^{14-4r}j(-q^{-13+6r};q^{48})\Big )\\
&\qquad   \qquad \times\Big ( q^{7+4a+2s}
 f_{8,8,2}(-q^{8a+6s+23},-q^{15+2a};q^2)
 - q^{3-2s} f_{8,8,2}(-q^{8a+6s+23},-q^{7+2a};q^2)\Big ) \Big ]. 
\end{align*}}%
Once again, we combine the theta functions.  For the first sum of theta functions, we use (\ref{equation:j-flip}), (\ref{equation:H1Thm1.0}), and (\ref{equation:j-flip}) to get
{\allowdisplaybreaks \begin{align*}
j(-q^{11+6r};q^{48} )   -  q^{7-2r}j(-q^{-5+6r};q^{48})
&= j(-q^{37-6r};q^{48} )   -  q^{7-2r}j(-q^{53-6r};q^{48})\\
&=\frac{j(q^{7-2r};q^{16})j(q^{30-4r};q^{32})}{J_{32}}\\
&=\frac{j(q^{9+2r};q^{16})j(q^{2+4r};q^{32})}{J_{32}}.
\end{align*}}%
For the second sum, we argue in a similar manner to get
\begin{equation*}
j(-q^{19+6r};q^{48} )   -  q^{14-4r}j(-q^{-13+6r};q^{48})
= \frac{j(q^{1+2r};q^{16})j(q^{18+4r};q^{32})}{J_{32}}.
\end{equation*}
Substituting back in gives
{\allowdisplaybreaks \begin{align*}
&(q)_{\infty}^{3}\delta_{2r,2s}\\
&\quad =(q)_{\infty}^{3}\sum_{a=0}^{1}(-1)^{a}  j(-q^{7+8a+6s};q^{16})
\mathcal{C}_{2(a+s),2r}^{(3,8)}(q)\\
&\qquad 
-q^{3-3r+3s} \sum_{a=0}^{1} (-1)^{a}q^{3a}\\
&\qquad \quad \times \Big [ -q^{r-2}
\frac{j(q^{9+2r};q^{16})j(q^{2+4r};q^{32})}{J_{32}}\\
&\qquad \qquad  \times\Big ( q^{7+3a+s}
 f_{8,8,2}(-q^{8a+6s+23},-q^{13+2a};q^2)
  - q^{5+a-s} f_{8,8,2}(-q^{8a+6s+23},-q^{9+2a};q^2)\Big ) \\
&\qquad \quad +q^{2r-3}
\frac{j(q^{1+2r};q^{16})j(q^{18+4r};q^{32})}{J_{32}}\\
&\qquad \qquad   \times\Big ( q^{7+4a+2s}
 f_{8,8,2}(-q^{8a+6s+23},-q^{15+2a};q^2)
  - q^{3-2s} f_{8,8,2}(-q^{8a+6s+23},-q^{7+2a};q^2)\Big ) \Big ]. 
\end{align*}}%
We expand the first sum over $a$ and distribute the second sum over $a$.  This brings us to
{\allowdisplaybreaks \begin{align*}
(q)_{\infty}^{3}\delta_{2r,2s}
& =(q)_{\infty}^{3}  j(-q^{7+6s};q^{16})
\mathcal{C}_{2s,2r}^{(3,8)}(q)
-(q)_{\infty}^{3} j(-q^{15+6s};q^{16})
\mathcal{C}_{2(1+s),2r}^{(3,8)}(q)\\
&\qquad 
-q^{3-3r+3s} 
 \times \Big [ -q^{r-2}
\frac{j(q^{9+2r};q^{16})j(q^{2+4r};q^{32})}{J_{32}}\\
&\qquad   \times\sum_{a=0}^{1} (-1)^{a}q^{3a}\Big ( q^{7+3a+s}
 f_{8,8,2}(-q^{8a+6s+23},-q^{13+2a};q^2)\\
&\qquad \qquad  - q^{5+a-s} f_{8,8,2}(-q^{8a+6s+23},-q^{9+2a};q^2)\Big ) \\
&\qquad \quad  +q^{2r-3}
\frac{j(q^{1+2r};q^{16})j(q^{18+4r};q^{32})}{J_{32}}\\
&\qquad   \times\sum_{a=0}^{1} (-1)^{a}q^{3a}\Big ( q^{7+4a+2s}
 f_{8,8,2}(-q^{8a+6s+23},-q^{15+2a};q^2)\\
&\qquad \qquad  - q^{3-2s} f_{8,8,2}(-q^{8a+6s+23},-q^{7+2a};q^2)\Big ) \Big ]. 
\end{align*}}%
We continue to expand the sums over $a$:
{\allowdisplaybreaks \begin{align*}
(q)_{\infty}^{3}\delta_{2r,2s}
&=(q)_{\infty}^{3}  j(-q^{7+6s};q^{16})
\mathcal{C}_{2s,2r}^{(3,8)}(q)
-(q)_{\infty}^{3} j(-q^{15+6s};q^{16})
\mathcal{C}_{2(1+s),2r}^{(3,8)}(q)\\
&\qquad 
-q^{3-3r+3s} 
 \times \Big [ -q^{r-2}
\frac{j(q^{9+2r};q^{16})j(q^{2+4r};q^{32})}{J_{32}}\\
&\qquad   \times\Big ( q^{7+s}
 f_{8,8,2}(-q^{6s+23},-q^{13};q^2)
   - q^{5-s} f_{8,8,2}(-q^{6s+23},-q^{9};q^2)\\
&\qquad \qquad -q^{13+s}
 f_{8,8,2}(-q^{31+6s},-q^{15};q^2)
  + q^{9-s} f_{8,8,2}(-q^{31+6s},-q^{11};q^2)\Big ) \\
&\qquad +q^{2r-3}
\frac{j(q^{1+2r};q^{16})j(q^{18+4r};q^{32})}{J_{32}}\\
&\qquad   \times\Big ( q^{7+2s}
 f_{8,8,2}(-q^{6s+23},-q^{15};q^2)
  - q^{3-2s} f_{8,8,2}(-q^{6s+23},-q^{7};q^2)\\
&\qquad \qquad - q^{14+2s}
 f_{8,8,2}(-q^{31+6s},-q^{17};q^2)
   + q^{6-2s} f_{8,8,2}(-q^{31+6s},-q^{9};q^2)
\Big ) \Big ].
\end{align*}}%
Redistributing the exponents and noting that
\begin{equation*}
f_{8,8,2}(x,y;q)=f_{4,4,1}(x,y;q^2)
\end{equation*}
brings us to the final form.
\end{proof}


\subsection{Odd-spin string functions with admissible-levels: $1/2$, $1/3$, and $2/3$}\label{section:genEulerCorollariesOddSpin}
\begin{proof}[Proof of Corollary {\ref{corollary:levelOneHalfOddSpin}}] We specialize Corollary \ref{corollary:genEulerIdentityOddSpinCorollary} to $p=2$.  This gives
{\allowdisplaybreaks \begin{align*}
(q)_{\infty}^{3}\delta_{2r+1,2s+1}
& =(q)_{\infty}^{3}j(q^{4+4s};q^{5})
\mathcal{C}_{2s+1,2r+1}^{(2,5)}(q)\\
&\qquad 
-q^{-r}\Big (  j(-q^{9+4r};q^{20}) 
  -q^{3-2r}j(-q^{-1+4r};q^{20})\Big )\\
&\qquad \qquad  \times \Big (  -q^{3+3s} f_{5,5,1}(q^{4s+9},q^{4};q)
 +q^{1+s} f_{5,5,1}(q^{4s+9},q^{2};q)\Big ).
\end{align*}}%
We combine the theta functions.  Using (\ref{equation:j-flip}) and then (\ref{equation:j-split-m2}) gives
\begin{align*}
j(-q^{9+4r};q^{20})   -q^{3-2r}j(-q^{-1+4r};q^{20})
&=j(-q^{11-4r};q^{20})   -q^{3-2r}j(-q^{21-4r};q^{20})\\
&=j(q^{3-2r};q^{5}).
\end{align*}
Using (\ref{equation:j-flip}) gives the result.
\end{proof}

\begin{proof}[Proof of Corollary \ref{corollary:levelOneThirdOddSpin}]  We specialize Corollary \ref{corollary:genEulerIdentityOddSpinCorollary} to $p=3$.  This gives
{\allowdisplaybreaks \begin{align*}
&(q)_{\infty}^{3}\delta_{2r+1,2s+1}\\
&\quad =(q)_{\infty}^{3}j(q^{6s+6};q^{7})
\mathcal{C}_{2s+1,2r+1}^{(3,7)}(q)\\
&\qquad 
-q^{3-3r+3s}
 \sum_{m=1}^{2}(-1)^{m}q^{\binom{m+1}{2}+m(r-3)}\\
&\qquad  \times\Big (  j(-q^{7m+6r+6};q^{42}) 
  -q^{5m-2mr}j(-q^{-7m+6r+6};q^{42})\Big )\\
&\qquad \times \Big (  -q^{3+m(s+1)} f_{7,7,1}(q^{6s+13},q^{m+4};q)
 +q^{3-m(s+1)} f_{7,7,1}(q^{6s+13},q^{-m+4};q)\Big ).
 \end{align*}}%
 Expanding the sum over $m$ gives
{\allowdisplaybreaks \begin{align*}
(q)_{\infty}^{3}\delta_{2r+1,2s+1}
 &=(q)_{\infty}^{3}j(q^{6s+6};q^{7})
\mathcal{C}_{2s+1,2r+1}^{(3,7)}(q)\\
&\qquad 
+q^{1-2r+3s}
\Big (  j(-q^{13+6r};q^{42}) -q^{5-2r}j(-q^{-1+6r};q^{42})\Big )\\
&\qquad \times \Big (  -q^{4+s} f_{7,7,1}(q^{6s+13},q^{5};q)
 +q^{2-s} f_{7,7,1}(q^{6s+13},q^{3};q)\Big ) \\
 &\qquad 
-q^{-r+3s}  \Big (  j(-q^{20+6r};q^{42})   -q^{10-4r}j(-q^{-8+6r};q^{42})\Big )\\
&\qquad \times \Big (  -q^{5+2s} f_{7,7,1}(q^{6s+13},q^{6};q)
 +q^{1-2s} f_{7,7,1}(q^{6s+13},q^{2};q)\Big ).
\end{align*}}%

We combine the theta functions.  Using (\ref{equation:j-flip}), the quintuple product identity (\ref{equation:H1Thm1.0}), and again (\ref{equation:j-flip}) brings us to
\begin{align*}
 j(-q^{13+6r};q^{42}) -q^{5-2r}j(-q^{-1+6r};q^{42})
&= j(-q^{29-6r};q^{42}) -q^{5-2r}j(-q^{43-6r};q^{42})\\
 &=\frac{j(q^{5-2r};q^{14})j(q^{14+2(5-2r)};q^{28})}{J_{28}}\\
 &=\frac{j(q^{9+2r};q^{14})j(q^{4+4r};q^{28})}{J_{28}}.
\end{align*}
Arguing in a similar manner brings us to 
\begin{equation*}
 j(-q^{20+6r};q^{42})   -q^{10-4r}j(-q^{-8+6r};q^{42})
=\frac{j(q^{2+2r};q^{14})j(q^{18+4r};q^{28})}{J_{28}}.
\end{equation*}
Substituting back into the original expansion yields
{\allowdisplaybreaks \begin{align*}
(q)_{\infty}^{3}\delta_{2r+1,2s+1}
&=(q)_{\infty}^{3}j(q^{6s+6};q^{7})
\mathcal{C}_{2s+1,2r+1}^{(3,7)}(q)\\
&\qquad 
+q^{1-2r+3s}
\frac{j(q^{9+2r};q^{14})j(q^{4+4r};q^{28})}{J_{28}}\\
&\qquad \qquad \times \Big (  -q^{4+s} f_{7,7,1}(q^{6s+13},q^{5};q)
 +q^{2-s} f_{7,7,1}(q^{6s+13},q^{3};q)\Big ) \\
 &\qquad 
-q^{-r+3s}  \frac{j(q^{2+2r};q^{14})j(q^{18+4r};q^{28})}{J_{28}}\\
&\qquad \qquad \times \Big (  -q^{5+2s} f_{7,7,1}(q^{6s+13},q^{6};q)
 +q^{1-2s} f_{7,7,1}(q^{6s+13},q^{2};q)\Big ).
\end{align*}}%
Redistributing the exponents gives the desired result.
\end{proof}

\begin{proof}[Proof of Corollary \ref{corollary:levelTwoThirdsOddSpin}]  We specialize Theorem \ref{theorem:genEulerIdentityOddSpin} to $p=3$, $j=2$.  This gives
{\allowdisplaybreaks \begin{align*}
(q)_{\infty}^{3}\delta_{2r+1,2s+1}
& =(q)_{\infty}^{3}\sum_{a=0}^{1} (-1)^{a}q^{\binom{a}{2}}j(-q^{8a+10+6s};q^{16})
\mathcal{C}_{2(a+s)+1,2r+1}^{(3,8)}(q)\\
&\qquad 
-q^{3-3r+3s}\sum_{a=0}^{1}(-1)^{a} q^{3a}
 \sum_{m=1}^{2}(-1)^{m}q^{\binom{m+1}{2}+m(r-3)}\\
&\qquad  \times\Big (  j(-q^{8m+6r+6};q^{48}) 
  -q^{6m-2mr}j(-q^{-8m+6r+6};q^{48})\Big )\\,
&\qquad  \times \Big ( q^{\binom{a}{2}+7+2a+m(a+s+1)}
 f_{4,4,1}(-q^{6s+8a+26},-q^{11+2a+2m};q^4)\\
&\qquad \qquad - q^{\binom{a}{2}+7+2a-m(a+s+2)}
 f_{4,4,1}(-q^{6s+8a+26},-q^{11+2a-2m};q^4)\Big ) .
\end{align*}}%
We expand the first sum over $a$, this gives
{\allowdisplaybreaks \begin{align*}
(q)_{\infty}^{3}\delta_{2r+1,2s+1}
& =(q)_{\infty}^{3}j(-q^{10+6s};q^{16})
\mathcal{C}_{2s+1,2r+1}^{(3,8)}(q)
-(q)_{\infty}^{3}j(-q^{18+6s};q^{16})
\mathcal{C}_{2s+3,2r+1}^{(3,8)}(q)\\
&\qquad 
-q^{3-3r+3s}\sum_{a=0}^{1}(-1)^{a} q^{3a}
 \sum_{m=1}^{2}(-1)^{m}q^{\binom{m+1}{2}+m(r-3)}\\
&\qquad  \times\Big (  j(-q^{8m+6r+6};q^{48}) 
  -q^{6m-2mr}j(-q^{-8m+6r+6};q^{48})\Big )\\,
&\qquad  \times \Big ( q^{\binom{a}{2}+7+2a+m(a+s+1)}
 f_{4,4,1}(-q^{6s+8a+26},-q^{11+2a+2m};q^4)\\
&\qquad \qquad - q^{\binom{a}{2}+7+2a-m(a+s+2)}
 f_{4,4,1}(-q^{6s+8a+26},-q^{11+2a-2m};q^4)\Big ) .
\end{align*}}%
Now we expand the sum over $m$.  This yields
{\allowdisplaybreaks \begin{align*}
(q)_{\infty}^{3}\delta_{2r+1,2s+1}
& =(q)_{\infty}^{3}j(-q^{10+6s};q^{16})
\mathcal{C}_{2s+1,2r+1}^{(3,8)}(q)
-(q)_{\infty}^{3}j(-q^{18+6s};q^{16})
\mathcal{C}_{2s+3,2r+1}^{(3,8)}(q)\\
&\qquad + q^{1-2r}
\Big (  j(-q^{14+6r};q^{48}) 
  -q^{6-2r}j(-q^{-2+6r};q^{48})\Big )\\,
&\qquad  \times \sum_{a=0}^{1}(-1)^{a} q^{3a}\Big ( q^{8+3a+4s}
 f_{4,4,1}(-q^{6s+8a+26},-q^{13+2a};q^4)\\
&\qquad \qquad - q^{5+a+2s}
 f_{4,4,1}(-q^{6s+8a+26},-q^{9+2a};q^4)\Big )\\
 &\qquad -q^{-r}
\Big (  j(-q^{22+6r};q^{48}) 
  -q^{12-4r}j(-q^{-10+6r};q^{48})\Big )\\,
&\qquad  \times \sum_{a=0}^{1}(-1)^{a} q^{3a}\Big ( q^{9+4a+5s}
 f_{4,4,1}(-q^{6s+8a+26},-q^{15+2a};q^4)\\
&\qquad \qquad - q^{3+s}
 f_{4,4,1}(-q^{6s+8a+26},-q^{7+2a};q^4)\Big ).
\end{align*}}%

Finally, we expand the remaining sums over $a$ bringing us to
{\allowdisplaybreaks \begin{align*}
&(q)_{\infty}^{3}\delta_{2r+1,2s+1}\\
&\quad =(q)_{\infty}^{3}j(-q^{10+6s};q^{16})
\mathcal{C}_{2s+1,2r+1}^{(3,8)}(q)
-(q)_{\infty}^{3}j(-q^{18+6s};q^{16})
\mathcal{C}_{2s+3,2r+1}^{(3,8)}(q)\\
&\qquad + q^{1-2r}
\Big (  j(-q^{14+6r};q^{48}) 
  -q^{6-2r}j(-q^{-2+6r};q^{48})\Big )\\,
&\qquad \quad  \times \Big [ q^{8+4s}
 f_{4,4,1}(-q^{6s+26},-q^{13};q^4)
  - q^{5+2s}
 f_{4,4,1}(-q^{6s+26},-q^{9};q^4)\\
 &\qquad  \qquad -q^3 \Big ( q^{11+4s}
 f_{4,4,1}(-q^{6s+34},-q^{15};q^4)
  - q^{6+2s}
 f_{4,4,1}(-q^{6s+34},-q^{11};q^4)\Big )\Big ] \\
 &\qquad -q^{-r}
\Big (  j(-q^{22+6r};q^{48}) 
  -q^{12-4r}j(-q^{-10+6r};q^{48})\Big )\\,
&\qquad \quad \times \Big [ q^{9+5s}
 f_{4,4,1}(-q^{6s+26},-q^{15};q^4)
 - q^{3+s}
 f_{4,4,1}(-q^{6s+26},-q^{7};q^4)\\
 &\qquad \qquad  -q^3 \Big ( q^{13+5s}
 f_{4,4,1}(-q^{6s+34},-q^{17};q^4)
  - q^{3+s}
 f_{4,4,1}(-q^{6s+34},-q^{9};q^4)\Big )\Big ] .
\end{align*}}%

We combine the theta functions.  Using (\ref{equation:j-flip}) and the quintuple product identity (\ref{equation:H1Thm1.0}) gives
{\allowdisplaybreaks \begin{align*}
j(-q^{14+6r};q^{48}) & -q^{6-2r}j(-q^{-2+6r};q^{48})\\
&= j(-q^{34-6r};q^{48})  -q^{6-2r}j(-q^{50-6r};q^{48})\\
&= j(-q^{16+3(6-2r)};q^{48})  -q^{6-2r}j(-q^{32+3(6-2r)};q^{48})\\
&=\frac{j(q^{6-2r};q^{16})j(q^{16+2(6-2r)};q^{32})}{J_{32}}.
\end{align*}}%
Using (\ref{equation:j-flip}) again brings us to 
\begin{align*}
j(-q^{14+6r};q^{48}) & -q^{6-2r}j(-q^{-2+6r};q^{48})=\frac{j(q^{10+2r};q^{16})j(q^{4+4r};q^{32})}{J_{32}}.
\end{align*}
Arguing in a similar manner brings us to
\begin{equation*}
 j(-q^{22+6r};q^{48})  -q^{12-4r}j(-q^{-10+6r};q^{48})=\frac{j(q^{2+2r};q^{16})j(q^{20+4r};q^{32})}{J_{32}}.
\end{equation*}
We then substitute the theta functions into the original expansion and simplify.
\end{proof}

\section{Mock theta function identities for double-sum coefficients from even-spin}\label{section:mockThetaTheoremsEvenSpin}
In this section we consider the generalized Euler identity in the case of even-spin.  We prove the general forms of the mock theta function identities found in Theorem \ref{theorem:genEulerOneHalfEvenSpin} for $1/2$-level, Theorems \ref{theorem:genEulerOneThirdFirstPairEvenSpin} and \ref{theorem:genEulerOneThirdSecondPairEvenSpin} for $1/3$-level, and Theorems \ref{theorem:level23EvenSpinFirstQuad-sEven}, \ref{theorem:level23EvenSpinFirstQuad-sOdd}, \ref{theorem:level23EvenSpinSecondQuad-sEven}, and \ref{theorem:level23EvenSpinSecondQuad-sOdd} for $2/3$-level.  The proofs in this section are all the same, but they do involve an increasing amount of book-keeping.

\subsection{The $1/2$-level string functions: Theorem \ref{theorem:genEulerOneHalfEvenSpin}}  We state the necessary propositions to prove Theorem \ref{theorem:genEulerOneHalfEvenSpin}, then we give the proof, and then we give the proofs of the propositions.  

\smallskip
Let us define the left-hand side of the identity in Theorem \ref{theorem:genEulerOneHalfEvenSpin} to be
\begin{equation*}
F(r):=-q^{2+3r}f_{5,5,1}(q^{4r+7},q^4;q)+q^{1+r}f_{5,5,1}(q^{4r+7},q^2;q).
\end{equation*}
and the right-hand side to be
{\allowdisplaybreaks  \begin{align*}
G(r)&:=(-1)^{r}j(q^{r+3};q^5)
\left (q^{\binom{r+1}{2}}\frac{1}{2}\mu_2(q)+ q^{-3\binom{r}{2}}\sum_{k=0}^{2r-1}( -1)^{k}q^{2rk-\binom{k+1}{2}}\right )\\
&\qquad -(-1)^r(-q)^{\binom{r+1}{2}}\frac{1}{2}\frac{J_{1}^3}{J_{2}J_{4}}j((-q)^{r+3};-q^5).
\end{align*}}%
 \begin{proposition}\label{proposition:level12evenSpinFuncEqn}  We have that both $F(r)$ and $G(r)$ satisfy the same functional equation:
\begin{align*}
&H(r+5) -q^{12+4r}H(r) \\
&\quad = -(-1)^{r}q^{10-3\binom{r}{2}-r}j(q^{r+3};q^{5})\left ( q^{2+3r}\sum_{m=0}^{4} q^{-2m^2-m-4mr}
 -q^{1+r}\sum_{m=0}^{4}q^{-2m^2-3m-4mr}\right ). 
\end{align*}
\end{proposition}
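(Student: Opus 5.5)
The plan is to verify the functional equation separately for $F$ and for $G$. In both cases the shift $r\to r+5$ is reduced, using earlier elementary identities, to a finite sum of theta functions.

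\emph{The $F$ side.} Shifting $r\to r+5$ changes the first argument $q^{4r+7}$ of both double-sums to $q^{4r+27}=q^{20}\cdot q^{4r+7}$. In $f_{5,5,1}(x,y;q)$ we have $(a,b,c)=(5,5,1)$, so the functional equation (\ref{equation:Gen1}) with $(\ell,k)=(0,4)$ multiplies $x$ by $q^{bk}=q^{20}$ and $y$ by $q^{ck}=q^{4}$. To keep $y$ fixed we then apply (\ref{equation:fabc-fnq-2}) or (\ref{equation:fabc-fnq-1}) repeatedly, or choose $(\ell,k)$ with $5\ell+5k=20$ and $5\ell+k=0$; this has no integer solution, so a mixed route is needed. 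Concretely, I would apply (\ref{equation:Gen1}) with $(\ell,k)=(-1,5)$, which sends
\[
(x,y)\mapsto (q^{20}x,\,q^{0}y),
\]
since $5\cdot(-1)+5\cdot 5=20$ and $5\cdot(-1)+1\cdot 5=0$. This gives
\[
f_{5,5,1}(x,y;q)=(-x)^{-1}(-y)^{5}q^{5+\binom{5}{2}-25}f_{5,5,1}(q^{20}x,y;q)+\text{theta terms}.
\]
Solving for $f_{5,5,1}(q^{20}x,y;q)$ with $x=q^{4r+7}$ and $y\in\{q^4,q^2\}$ expresses $F(r+5)$ as $q^{12+4r}F(r)$ plus two finite sums. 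The first sum, the $\ell$-part, is a single term $j(q^{-5}y;q)$, which vanishes because $y$ is an integral power of $q$. The second sum, the $k$-part, is $\sum_{m=0}^{4}(-y)^m q^{\binom m2} j(q^{5m}x;q^5)$.

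Each $j(q^{5m+4r+7};q^5)$ is reduced by (\ref{equation:j-elliptic}) and (\ref{equation:j-flip}) to $\pm q^{(\cdot)}j(q^{r+3};q^5)$; this uses $4r+7\equiv -(r+3)\pmod 5$. Collecting the powers should produce exactly
\[
-(-1)^r q^{10-3\binom r2-r}j(q^{r+3};q^5)\bigl(q^{2+3r}\textstyle\sum_m q^{-2m^2-m-4mr}-q^{1+r}\sum_m q^{-2m^2-3m-4mr}\bigr).
\]
This step is routine bookkeeping of exponents.

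\emph{The $G$ side.} First, the theta quotient part is invariant under the shift in the appropriate sense. Using (\ref{equation:j-elliptic}) with base $-q^5$, one gets $j((-q)^{r+8};-q^5)=(-1)(-q^5)^{-1}\cdots$, and the prefactors $(-1)^{r}(-q)^{\binom{r+1}{2}}$ versus $(-1)^{r+5}(-q)^{\binom{r+6}{2}}$ should combine so that this part of $G(r+5)$ equals $q^{12+4r}$ times the same part of $G(r)$. Likewise $j(q^{r+8};q^5)=-q^{-r-3}j(q^{r+3};q^5)$, so the $\mu_2$ terms match with the same factor $q^{12+4r}$.

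What remains is to compare the finite sums:
\[
(-1)^{r+1}q^{-r-3}q^{-3\binom{r+5}{2}}\sum_{k=0}^{2r+9}(\cdots)\;-\;q^{12+4r}(-1)^r q^{-3\binom r2}\sum_{k=0}^{2r-1}(\cdots).
\]
I would reindex $k\to k+10$ so that the first $2r$ terms of the longer sum cancel the shorter one. The ten leftover terms then pair, with $k=2m$ and $k=2m+1$, into the two five-term sums on the right-hand side.

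I expect this last matching to be the main obstacle. It is a careful but elementary check of quadratic exponents, and I would verify it by expanding $2(r+5)k-\binom{k+1}{2}$ at $k=2m$ and $k=2m+1$.
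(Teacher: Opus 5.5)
This is essentially the paper's proof. Like you, it applies (\ref{equation:Gen1}) with $(\ell,k)=(-1,5)$; contrary to your aside, this is exactly the integer solution of $5\ell+5k=20$, $5\ell+k=0$. On the $G$ side it likewise shows the $\mu_2$ and theta-quotient parts scale by $q^{12+4r}$, cancels the tail $k=10,\dots,2r+9$ of the longer sum against the shorter one via $k\to k+10$, and matches the leftover terms $k=0,\dots,9$ by pairing $k=2m,2m+1$ and then reversing $m\to 4-m$.

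One sign needs fixing. Since $(-1)^{r+5}\cdot(-1)=(-1)^{r}$, the prefactor in your displayed difference should be $(-1)^{r}q^{-r-3-3\binom{r+5}{2}}$, not $(-1)^{r+1}\cdots$. With your sign, the reindexed tail would double the shorter sum instead of cancelling it.
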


We evaluate the Appell function expression in Corollary \ref{corollary:f551-HeckeExpansion} for the respective double-sums in Theorem \ref{theorem:genEulerOneHalfEvenSpin}.

 \begin{proposition} \label{proposition:level12EvenSpinAppellForm} We have that
  \begin{align*}
 -q^{2+3s}& h_{5,5,1}(q^{4s+7},q^{4};q) +q^{1+s} h_{5,5,1}(q^{4s+7},q^{2};q)\\
 &= -q^{2+5s}j(q^{4s+7};q^5)\left (\sum_{k=0}^{2s-1}(-1)^{k}q^{2sk-\binom{k+1}{2}} 
 +2q^{s+4\binom{s}{2}}m ( -q,-1;q^{4}  )\right ).
 \end{align*}
 \end{proposition}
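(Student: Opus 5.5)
The plan is to evaluate the two $h_{5,5,1}$ terms directly from the definition in Corollary \ref{corollary:f551-HeckeExpansion}, reduce everything to Appell functions $m(\cdot,-1;q^4)$, and then recognize the combination as the left-hand side of (\ref{equation:generalAppellSumOneModFour}) in Lemma \ref{lemma:generalAppellSums}.

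\textbf{Step 1: the second Appell term drops out.} For $y\in\{q^4,q^2\}$ we have $j(y;q)=0$, because $j(q^n;q)=0$ for $n\in\mathbb{Z}$. So the second summand $j(y;q)\,m(-q^{10}xy^{-5},-1;q^{20})$ of $h_{5,5,1}$ vanishes. We must check that this Appell function is finite so the product really is zero.
\begin{itemize}
\item Its argument is $-q^{10}\cdot q^{4s+7}\cdot q^{-20}=-q^{4s-3}$ for $y=q^4$, and similarly $-q^{4s+7}$ for $y=q^2$.
\item Since $j(-1;q^{20})\neq 0$, the prefactor in (\ref{equation:m-def}) is finite.
\item The denominators $1-q^{20(r-1)}xz$ have $xz$ an odd power of $q$. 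So they never equal $1-q^{20(r-1)}$ for a nonzero term that could vanish.
\end{itemize}
Hence only the first summand survives. It shares the common factor $j(q^{4s+7};q^5)$, and
\begin{equation*}
\text{LHS}=j(q^{4s+7};q^5)\Big(-q^{2+3s}m(-q^{1-4s},-1;q^4)+q^{1+s}m(-q^{-1-4s},-1;q^4)\Big),
\end{equation*}
since $-q^4yx^{-1}$ equals $-q^{1-4s}$ for $y=q^4$ and $-q^{-1-4s}$ for $y=q^2$.

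\textbf{Step 2: flip to positive exponents.} Apply (\ref{equation:mxqz-flip}) with $z=-1$, so $z^{-1}=-1$:
\begin{equation*}
m(-q^{1-4s},-1;q^4)=-q^{4s-1}m(-q^{4s-1},-1;q^4),\qquad m(-q^{-1-4s},-1;q^4)=-q^{4s+1}m(-q^{4s+1},-1;q^4).
\end{equation*}
Substituting, the bracket becomes
\begin{equation*}
q^{1+7s}m(-q^{4s-1},-1;q^4)-q^{2+5s}m(-q^{4s+1},-1;q^4)=-q^{2+5s}\Big(m(-q^{1+4s},-1;q^4)-q^{2s-1}m(-q^{-1+4s},-1;q^4)\Big).
\end{equation*}

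\textbf{Step 3: apply the lemma.} The last parenthesis is exactly the left-hand side of (\ref{equation:generalAppellSumOneModFour}). Replacing it by $\sum_{k=0}^{2s-1}(-1)^kq^{2sk-\binom{k+1}{2}}+2q^{s+4\binom{s}{2}}m(-q,-1;q^4)$ gives the stated identity.

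The only real obstacle is the exponent bookkeeping in Steps 1 and 2, together with justifying that the vanishing theta factor multiplies a finite Appell function. There is no genuine analytic difficulty.
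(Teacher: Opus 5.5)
Your proposal is correct and follows essentially the same route as the paper: drop the $j(y;q)$ terms using $j(q^n;q)=0$, flip both surviving Appell functions with (\ref{equation:mxqz-flip}), factor out $-q^{2+5s}$, and finish with (\ref{equation:generalAppellSumOneModFour}) of Lemma \ref{lemma:generalAppellSums}. Your parity argument that the discarded Appell functions are finite is a small extra justification; the paper only asserts that these functions are defined.
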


 \begin{proof}[Proof of Theorem \ref{theorem:genEulerOneHalfEvenSpin}]   Proposition \ref{proposition:level12evenSpinFuncEqn} tell us that 
 \begin{equation*}
 F(s+5)-G(s+5)=q^{12+4r}\left ( F(s)-G(s)\right ),
 \end{equation*}
 so we only need to prove Theorem \ref{theorem:genEulerOneHalfEvenSpin} for $0\le s\le 4$.  The case $s=2$ follows from Lemma \ref{lemma:degenerateDoubleSumCoeffs}.  From Corollary \ref{corollary:f551-HeckeExpansion}, we have that
\begin{align*}
 -q^{2+3s}& f_{5,5,1}(q^{4s+7},q^{4};q) +q^{1+s} f_{5,5,1}(q^{4s+7},q^{2};q)\\
&= -q^{2+3s} h_{5,5,1}(q^{4s+7},q^{4};q) +q^{1+s} h_{5,5,1}(q^{4s+7},q^{2};q)\\
&\qquad -\frac{1}{\overline{J}_{0,4}\overline{J}_{0,20}}
\left ( -q^{2+3s} \theta_{5,5,1}(q^{4s+7},q^{4};q) +q^{1+s} \theta_{5,5,1}(q^{4s+7},q^{2};q)\right ).
\end{align*} 
Proposition \ref{proposition:level12EvenSpinAppellForm} and  Lemma \ref{lemma:alternateAppellForms} (\ref{equation:2nd-mu(q)}) give
\begin{align*}
 -q^{2+3s}& f_{5,5,1}(q^{4s+7},q^{4};q) +q^{1+s} f_{5,5,1}(q^{4s+7},q^{2};q)\\
&=-q^{2+5s}j(q^{4s+7};q^5)\left (\sum_{k=0}^{2s-1}(-1)^{k}q^{2sk-\binom{k+1}{2}}
 +q^{s+4\binom{s}{2}}\left ( \frac{1}{2}\mu_{2}(q)+\frac{1}{2}\frac{J_{2,4}^2}{J_{1}^3}\right ) \right )\\
&\qquad -\frac{1}{\overline{J}_{0,4}\overline{J}_{0,20}}
\left ( -q^{2+3s} \theta_{5,5,1}(q^{4s+7},q^{4};q) +q^{1+s} \theta_{5,5,1}(q^{4s+7},q^{2};q)\right ).
\end{align*}
Using (\ref{equation:j-elliptic}) gives us
\begin{equation*}
-q^{2+5s}j(q^{4s+7};q^{5})=-q^{2+5s}j(q^{5(s+1)+2-s};q^{5})=-q^{2+5s}(-1)^{s+1}q^{-5\binom{s+1}{2}-(2-s)(s+1)}j(q^{2-s};q^{5}).
\end{equation*}
Simplifying and using (\ref{equation:j-flip}) gives
\begin{equation*}
-q^{2+5s}j(q^{4s+7};q^{5})=(-1)^{s}q^{-3\binom{s}{2}}j(q^{s+3};q^5).
\end{equation*}
Substituting back in gives
{\allowdisplaybreaks \begin{align*}
 -q^{2+3s}& f_{5,5,1}(q^{4s+7},q^{4};q) +q^{1+s} f_{5,5,1}(q^{4s+7},q^{2};q)\\
&=(-1)^{s}j(q^{s+3};q^5)\left (q^{-3\binom{s}{2}}\sum_{k=0}^{2s-1}(-1)^{k}q^{2sk-\binom{k+1}{2}}
 +q^{\binom{s+1}{2}}\left ( \frac{1}{2}\mu_{2}(q)+\frac{1}{2}\frac{J_{2,4}^4}{J_{1}^3}\right ) \right )\\
&\qquad -\frac{1}{\overline{J}_{0,4}\overline{J}_{0,20}}
\left ( -q^{2+3s} \theta_{5,5,1}(q^{4s+7},q^{4};q) +q^{1+s} \theta_{5,5,1}(q^{4s+7},q^{2};q)\right ).
\end{align*}}%
Proposition \ref{proposition:level12EvenSpinThetaId} then gives the result.
 \end{proof}
 
 \begin{proof}[Proof of Proposition \ref{proposition:level12EvenSpinAppellForm}]
From Corollary \ref{corollary:f551-HeckeExpansion} we have that
\begin{equation}
h_{5,5,1}(x,y;q)=j(x;q^5)m(-q^4x^{-1}y,-1;q^4)+j(y;q)m(-q^{10}xy^{-5},-1;q^{20}),
\end{equation}
Hence
\begin{align*}
 -q^{2+3s}& h_{5,5,1}(q^{4s+7},q^{4};q) +q^{1+s} h_{5,5,1}(q^{4s+7},q^{2};q)\\
 &=-q^{2+3s}\left ( j(q^{4s+7};q^5)m(-q^{1-4s},-1;q^4)+j(q^{4};q)m(-q^{-3+4s},-1;q^{20})\right ) \\
 &\qquad +q^{1+s}\left (j(q^{4s+7};q^5)m(-q^{-1-4s},-1;q^4)+j(q^2;q)m(-q^{7+4s},-1;q^{20}) \right ). 
 \end{align*}
 We note that $j(q^n;q)=0$ for $n\in\mathbb{Z}$ and that the respective Appell functions are defined.  This gives the more compact
\begin{align*}
 -q^{2+3s}& h_{5,5,1}(q^{4s+7},q^{4};q) +q^{1+s} h_{5,5,1}(q^{4s+7},q^{2};q)\\
 &= q^{1+s}j(q^{4s+7};q^5)\left ( m(-q^{-1-4s},-1;q^4)  -q^{1+2s} m(-q^{1-4s},-1;q^4) \right ). 
 \end{align*}
 We use the Appell function property (\ref{equation:mxqz-flip}) and factor out a $q$-term to get
 \begin{align*}
 -q^{2+3s}& h_{5,5,1}(q^{4s+7},q^{4};q) +q^{1+s} h_{5,5,1}(q^{4s+7},q^{2};q)\\
 &=q^{1+s}j(q^{4s+7};q^5)\left ( -q^{1+4s}m(-q^{1+4s},-1;q^4)+q^{6s} m(-q^{-1+4s},-1;q^4)\right )\\
  &= -q^{2+5s}j(q^{4s+7};q^5)\left (m(-q^{1+4s},-1;q^4)-q^{2s-1} m(-q^{-1+4s},-1;q^4)\right ).
 \end{align*}
 Using Lemma \ref{lemma:generalAppellSums} (\ref{equation:generalAppellSumOneModFour}) gives us the result.
 \end{proof}


\begin{proof}[Proof of Proposition \ref{proposition:level12evenSpinFuncEqn} for $F(r)$]
Let us specialize Proposition  \ref{proposition:f-functionaleqn} to $(a,b,c)=(5,5,1)$, with $\ell=-1$ and $k=5$.  Then
\begin{align*}
f_{5,5,1}(x,y;q)&=(-x)^{-1}(-y)^{5}q^{5\binom{-1}{2}-25+\binom{5}{2}}f_{5,5,1}(q^{20}x,y;q)\\
&\quad +\sum_{m=0}^{-1-1}(-x)^mq^{5\binom{m}{2}}j(q^{5m}y;q)+\sum_{m=0}^{4}(-y)^mq^{\binom{m}{2}}j(q^{5m}x;q^5).
\end{align*}
Rewriting with our summation convention (\ref{equation:sumconvention}) gives
\begin{align}
f_{5,5,1}(x,y;q)&=x^{-1}y^{5}q^{-10}f_{5,5,1}(q^{20s}x,y;q)\label{equation:f551-prefunc}\\
&\quad -\sum_{m=-1}^{-1}(-x)^mq^{5\binom{m}{2}}j(q^{5m}y;q)+\sum_{m=0}^{4}(-y)^mq^{\binom{m}{2}}j(q^{5m}x;q^5).\notag
\end{align}
Into (\ref{equation:f551-prefunc}), we substitute $(x,y)=(q^{4r+7},q^4)$ and $(x,y)=(q^{4r+7},q^2)$.  We recall that $j(q^n;q)=0$ for $n\in\mathbb{Z}$. This gives respectively
\begin{align*}
f_{5,5,1}(q^{4r+7},q^4;q)&=q^{-4r+3}f_{5,5,1}(q^{20+4r+7},q^4;q)\\
&\quad +\sum_{m=0}^{4}(-1)^m q^{4m}q^{\binom{m}{2}}j(q^{m5+4r+7};q^5),
\end{align*}
and
\begin{align*}
f_{5,5,1}(q^{4r+7},q^2;q)&=q^{-4r-7}f_{5,5,1}(q^{20+4r+7},q^2;q)\\
&\quad +\sum_{m=0}^{4}(-1)^mq^{2m}q^{\binom{m}{2}}j(q^{m5+4r+7};q^5).
\end{align*}
We isolate the double-sums on the right-hand sides of the two equations.  This yields
\begin{align*}
f_{5,5,1}(q^{27+4r},q^4;q)
&=q^{4r-3}f_{5,5,1}(q^{4r+7},q^4;q) 
 - q^{4r-3}j(q^{4r+7};q^5)\sum_{m=0}^{4} q^{-2m^2-m-4mr},
\end{align*}
and
\begin{align*}
f_{5,5,1}(q^{27+4r},q^2;q)
&=q^{4r+7}f_{5,5,1}(q^{4r+7},q^2;q)
 -q^{4r+7}j(q^{4r+7};q^5)\sum_{m=0}^{4}q^{-2m^2-3m-4mr}.
\end{align*}
We want to combine these two equations such that the left-hand side is in terms of $F(r+5)$ and the right-hand side is in terms of $F(r)$.  To achieve this, we need to multiple both equations by an appropriate power of $q$.  This brings us to
\begin{align*}
-q^{17+3r}f_{5,5,1}(q^{27+4r},q^4;q)
&=-q^{2+3r}q^{12+4r}f_{5,5,1}(q^{4r+7},q^4;q) \\
&\quad +q^{2+3r}q^{12+4r}j(q^{4r+7};q^5)\sum_{m=0}^{4} q^{-2m^2-m-4mr},
\end{align*}
and
\begin{align*}
q^{6+r}f_{5,5,1}(q^{27+4r},q^2;q)
&=q^{1+r}q^{12+4r}f_{5,5,1}(q^{4r+7},q^2;q)\\
&\quad -q^{1+r}q^{12+4r}j(q^{4r+7};q^5)\sum_{m=0}^{4}q^{-2m^2-3m-4mr}.
\end{align*}
Adding the two equations gives
{\allowdisplaybreaks \begin{align*}
&-q^{17+3r}f_{5,5,1}(q^{27+4r},q^4;q)+q^{6+r}f_{5,5,1}(q^{27+4r},q^2;q)\\
&\quad =q^{12+4r}\left ( -q^{2+3r}f_{5,5,1}(q^{4r+7},q^4;q)+q^{1+r}f_{5,5,1}(q^{4r+7},q^2;q)\right ) \\
&\qquad +q^{12+4r}j(q^{4r+7};q^5)\left ( q^{2+3r}\sum_{m=0}^{4} q^{-2m^2-m-4mr}
 -q^{1+r}\sum_{m=0}^{4}q^{-2m^2-3m-4mr}\right ). 
\end{align*}}%
We need to rewrite several aspects of this new equation.  We first use (\ref{equation:j-elliptic}) to obtain
\begin{align*}
j(q^{4r+7};q^5)&=j(q^{5(r+1)+2-r};q^{5})=(-1)^{r+1}q^{-5\binom{r+1}{2}-(r+1)(2-r)}j(q^{r+3};q^{5})\\
&=(-1)^{r+1}q^{-3\binom{r+1}{2}-2r-2}j(q^{r+3};q^{5}).
\end{align*}
Hence
\begin{align*}
&-q^{17+3r}f_{5,5,1}(q^{27+4r},q^4;q)+q^{6+r}f_{5,5,1}(q^{27+4r},q^2;q)\\
&\quad =q^{12+4r}\left ( -q^{2+3r}f_{5,5,1}(q^{4r+7},q^4;q)+q^{1+r}f_{5,5,1}(q^{4r+7},q^2;q)\right ) \\
&\qquad +(-1)^{r+1}q^{10+2r-3\binom{r+1}{2}}j(q^{r+3};q^{5})\left ( q^{2+3r}\sum_{m=0}^{4} q^{-2m^2-m-4mr}
 -q^{1+r}\sum_{m=0}^{4}q^{-2m^2-3m-4mr}\right ). 
\end{align*}
Rewriting the pairs of double-sums in terms of $F(r)$ completes the proof.
\end{proof}


\begin{proof}[Proof of Proposition \ref{proposition:level12evenSpinFuncEqn} for $G(r)$]
To set up the functional equation for $G(r)$, we first see that
{\allowdisplaybreaks \begin{align*}
G(r+5)&=(-1)^{5+r}j(q^{5+r+3};q^5)
\left (q^{\binom{5+r+1}{2}}\frac{1}{2}\mu_2(q)
+ q^{-3\binom{5+r}{2}}\sum_{k=0}^{2(5+r)-1}( -1)^{k}q^{2(5+r)k-\binom{k+1}{2}}\right )\\
&\qquad -(-1)^{5+r}(-q)^{\binom{5+r+1}{2}}\frac{1}{2}\frac{J_{1}^3}{J_{2}J_{4}}j((-q)^{5+r+3};-q^5).
\end{align*}}%
Let us rewrite the simple quotient of theta functions.  Using (\ref{equation:j-elliptic}), we find
{\allowdisplaybreaks \begin{align*}
(-1)^{5+r}&(-q)^{\binom{5+r+1}{2}}\frac{1}{2}\frac{J_{1}^3}{J_{2}J_{4}}j((-q)^{5+r+3};-q^5)\\
&=-(-q)^{10+5(r+1)}(-1)^{r}(-1)(-q)^{-(r+3)}(-q)^{\binom{r+1}{2}}\frac{1}{2}\frac{J_{1}^3}{J_{2}J_{4}}j((-q)^{r+3};-q^5)\\
&=q^{12+4r}(-1)^{r}(-q)^{\binom{r+1}{2}}\frac{1}{2}\frac{J_{1}^3}{J_{2}J_{4}}j((-q)^{r+3};-q^5).
\end{align*}}%
Hence
{\allowdisplaybreaks \begin{align*}
G(5+r)&=(-1)^{r}q^{12+4r}j(q^{r+3};q^5)q^{\binom{r+1}{2}}\frac{1}{2}\mu_2(q)\\
&\qquad + (-1)^{r}q^{-33-16r}j(q^{r+3};q^5)q^{-3\binom{r}{2}}\sum_{k=0}^{2(5+r)-1}( -1)^{k}q^{2(5+r)k-\binom{k+1}{2}}\\
&\qquad -q^{12+4r}(-1)^{r}(-q)^{\binom{r+1}{2}}\frac{1}{2}\frac{J_{1}^3}{J_{2}J_{4}}j((-q)^{r+3};-q^5).
\end{align*}}%
We then rewrite the right-hand side in terms of $G(r)$.  This gives
{\allowdisplaybreaks \begin{align*}
G(r+5)&=q^{12+4r}G(r)-(-1)^{r}q^{12+4r}j(q^{r+3};q^5)q^{-3\binom{r}{2}}\sum_{k=0}^{2r-1}( -1)^{k}q^{2rk-\binom{k+1}{2}}\\
&\qquad + (-1)^{r}q^{-33-16r}j(q^{r+3};q^5)q^{-3\binom{r}{2}}\sum_{k=0}^{2(5+r)-1}( -1)^{k}q^{2(5+r)k-\binom{k+1}{2}}.
\end{align*}}%
Comparing with the functional equation in Proposition \ref{proposition:level12evenSpinFuncEqn}, we see that we want to show
{\allowdisplaybreaks \begin{align*}
(-1)^{r+1}&q^{10+2r-3\binom{r+1}{2}}j(q^{r+3};q^{5})\left ( q^{2+3r}\sum_{m=0}^{4} q^{-2m^2-m-4mr}
 -q^{1+r}\sum_{m=0}^{4}q^{-2m^2-3m-4mr}\right ) \\
 &=-(-1)^{r}q^{12+4r}j(q^{r+3};q^5)q^{-3\binom{r}{2}}\sum_{k=0}^{2r-1}( -1)^{k}q^{2rk-\binom{k+1}{2}}\\
&\qquad + (-1)^{r}q^{-33-16r}j(q^{r+3};q^5)q^{-3\binom{r}{2}}\sum_{k=0}^{2(5+r)-1}( -1)^{k}q^{2(5+r)k-\binom{k+1}{2}}.
\end{align*}}%
We can simplify this.  We focus on the coefficient of $j(q^{r+3};q^5)$, and we can also factor out a power of $q$.  Hence we now only need to show
{\allowdisplaybreaks \begin{align*}
 q^{1+2r}&\sum_{m=0}^{4} q^{-2m^2-m-4mr}
 -\sum_{m=0}^{4}q^{-2m^2-3m-4mr} \\
 &=q^{1+4r}\sum_{k=0}^{2r-1}( -1)^{k}q^{2rk-\binom{k+1}{2}}
  - q^{-44-16r}\sum_{k=0}^{10+2r-1}( -1)^{k}q^{2(5+r)k-\binom{k+1}{2}}.
\end{align*}}%
To achieve this, we prove two things.  For the first, we need to show
\begin{equation}
q^{1+4r}\sum_{k=0}^{2r-1}( -1)^{k}q^{2rk-\binom{k+1}{2}}
  - q^{-44-16r}\sum_{k=10}^{10+2r-1}( -1)^{k}q^{2(5+r)k-\binom{k+1}{2}}=0,\label{equation:level12evenSpinRHSFuncEqnId1}
\end{equation}
and for the second, we need to show
\begin{equation}
 q^{1+2r}\sum_{m=0}^{4} q^{-2m^2-m-4mr}
 -\sum_{m=0}^{4}q^{-2m^2-3m-4mr} 
 =  - q^{-44-14r}\sum_{k=0}^{9}( -1)^{k}q^{2(5+r)k-\binom{k+1}{2}}.\label{equation:level12evenSpinRHSFuncEqnId2}
\end{equation}
To show (\ref{equation:level12evenSpinRHSFuncEqnId1}), a simple shift in indices gives
{\allowdisplaybreaks \begin{align*}
\sum_{k=10}^{10+2r-1}( -1)^{k}q^{2(5+r)k-\binom{k+1}{2}}&=\sum_{k=0}^{2r-1}( -1)^{k+10}q^{2(5+r)(k+10)-\binom{k+10+1}{2}}\\
&=q^{45+20r}\sum_{k=0}^{2r-1}( -1)^{k}q^{2rk-\binom{k+1}{2}}.
\end{align*}}%
To show (\ref{equation:level12evenSpinRHSFuncEqnId2}), let us replace $k$ with $(2m,2m+1)$.  This gives
\begin{align*}
\sum_{k=0}^{9}( -1)^{k}q^{2(5+r)k-\binom{k+1}{2}}
&=\sum_{m=0}^{4}q^{2(5+r)2m-\binom{2m+1}{2}}
-\sum_{m=0}^{4}q^{2(5+r)(2m+1)-\binom{2m+2}{2}}\\
&=\sum_{m=0}^{4}q^{-2m^2+4rm+19m}
-\sum_{m=0}^{4}q^{-2m^2+4rm+17m+2r+9}.
\end{align*}
We reverse the summations with $m\to 4-m$ to get
\begin{equation*}
\sum_{k=0}^{9}( -1)^{k}q^{2(5+r)k-\binom{k+1}{2}}
=q^{16r+44}\sum_{m=0}^{4}q^{-2m^2-4rm-3m}
-q^{18r+45}\sum_{m=0}^{4}q^{-2m^2-4rm-m},
\end{equation*}
which is what we want.
\end{proof}


\subsection{The $1/3$-level string functions: Theorem \ref{theorem:genEulerOneThirdFirstPairEvenSpin}}  We state the necessary propositions to prove Theorem \ref{theorem:genEulerOneThirdFirstPairEvenSpin}, then we give the proof, and then we give the proofs of the propositions.

\smallskip
Let us define the left-hand side of identity in Theorem \ref{theorem:genEulerOneThirdFirstPairEvenSpin} to be
\begin{equation*}
F(r):=-q^{3+4r} f_{7,7,1}(q^{6r+10},q^{5};q) +q^{2+2r} f_{7,7,1}(q^{6r+10},q^{3};q),
\end{equation*}
and the right-hand side to 
\begin{align*}
G(r)&:=(-1)^rj(q^{r+4};q^{7})
 \left ( q^{\binom{r+1}{2}}\cdot q\omega_3(-q)
 +q^{-5\binom{r}{2}}\sum_{k=0}^{r-1}q^{6kr-6\binom{k+1}{2}}\left( q^{k}-q^{-k+2r-1}\right ) \right ) \\
 &\qquad -q^{5r^2-3r+1}\frac{J_{1}^3J_{4}}{J_{2}^3}j(q^{8+16r};q^{28}).
\end{align*}
\begin{proposition}\label{proposition:level13evenSpinFirstPairFuncEqn}  We have that both $F(r)$ and $G(r)$ satisfy the same functional equation:
{\allowdisplaybreaks \begin{align*}
H&(r+7)-q^{24+6r}H(r) \\
& =-(-1)^{r}q^{-5\binom{r}{2}+23}j(q^{r+4};q^{7})\left ( q^{1+2r}
\sum_{m=0}^{6}q^{-3m^2-2m-6mr} -\sum_{m=0}^{6}q^{-3m^2-4m-6mr}\right ) .
\end{align*}}%
\end{proposition}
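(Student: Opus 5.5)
I will follow the template already used for Proposition \ref{proposition:level12evenSpinFuncEqn} and treat $F(r)$ and $G(r)$ separately.

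\textbf{The case of $F(r)$.} I specialize Proposition \ref{proposition:f-functionaleqn} to $(a,b,c)=(7,7,1)$ with $(\ell,k)=(-1,7)$. The first argument then shifts by $q^{-7+49}=q^{42}=q^{6\cdot 7}$, and the second argument is unchanged ($q^{-7+7}=1$). This gives
\begin{equation*}
f_{7,7,1}(x,y;q)=x^{-1}y^{7}q^{-21}f_{7,7,1}(q^{42}x,y;q)-(-x)^{-1}q^{7}j(q^{-7}y;q)+\sum_{m=0}^{6}(-y)^mq^{\binom{m}{2}}j(q^{7m}x;q^7).
\end{equation*}
I substitute $(x,y)=(q^{6r+10},q^5)$ and $(x,y)=(q^{6r+10},q^3)$. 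In both cases the middle term vanishes, since $j(q^n;q)=0$. Each theta function in the remaining sum is reduced to $j(q^{6r+10};q^7)$ by (\ref{equation:j-elliptic}), which produces the sums $\sum_{m}q^{-3m^2-2m-6mr}$ and $\sum_m q^{-3m^2-4m-6mr}$ (up to powers of $q$). I then solve for $f_{7,7,1}(q^{6r+52},\cdot;q)$ and multiply by $-q^{3+4(r+7)}$ and $q^{2+2(r+7)}$ respectively. The common factor in front of $F(r)$ should come out as $q^{24+6r}$. Finally I rewrite $j(q^{6r+10};q^7)=j(q^{7(r+1)+3-r};q^7)$ as a sign times a $q$-power times $j(q^{r+4};q^7)$, using (\ref{equation:j-elliptic}) and (\ref{equation:j-flip}). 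Matching the exponent $-5\binom{r}{2}+23$ is bookkeeping.

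\textbf{The case of $G(r)$.} I compute $G(r+7)$ directly.

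For the theta quotient, I use $j(q^{8+16(r+7)};q^{28})=j(q^{8+16r}q^{28\cdot4};q^{28})$ with (\ref{equation:j-elliptic}). I then check that $q^{5(r+7)^2-3(r+7)+1}$ times the resulting factor equals $q^{24+6r}q^{5r^2-3r+1}$, so that this term scales exactly like $H$.

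For the $\omega_3$ term, I rewrite $j(q^{r+11};q^7)$ via (\ref{equation:j-elliptic}) as a multiple of $j(q^{r+4};q^7)$, and check that the prefactor $q^{\binom{r+8}{2}}$ combines to $q^{24+6r}$ times the old coefficient, with the sign $(-1)^{r+7}(-1)=(-1)^r$.

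What remains is to verify a finite polynomial identity in $q$. Writing $S(r)=\sum_{k=0}^{r-1}q^{6kr-6\binom{k+1}{2}}(q^k-q^{-k+2r-1})$, I must show that the difference of the two $S$ terms, namely $S(r+7)$ with its $q$-prefactor minus $q^{24+6r}$ times the prefactor times $S(r)$, equals the stated right-hand side divided by $j(q^{r+4};q^7)$. I will split $S(r+7)$ into the terms $k\ge 7$ and the terms $0\le k\le 6$. After the shift $k\to k+7$, the terms with $k\ge 7$ reproduce $S(r)$ with exactly the right power of $q$, so they cancel. The seven terms $k=0,\dots,6$ must then equal $q^{1+2r}\sum_{m=0}^6q^{-3m^2-2m-6mr}-\sum_{m=0}^6 q^{-3m^2-4m-6mr}$ up to a monomial. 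For this I reverse the summation, $k\to 6-m$: the $q^{k}$ piece and the $q^{-k+2r-1}$ piece each turn into one of the two $m$-sums.

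\textbf{Main obstacle.} I expect the hardest part to be this exponent bookkeeping in the last step, making the monomial prefactors agree exactly. I would check it first at $r=0$, where $S(0)=0$ and only the seven boundary terms survive, before doing the general computation.
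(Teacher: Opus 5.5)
Your proposal is correct and follows essentially the same route as the paper. For $F(r)$ you use the specialization $(a,b,c)=(7,7,1)$, $(\ell,k)=(-1,7)$ of Proposition~\ref{proposition:f-functionaleqn}, solve for the shifted double-sums and rescale to the common factor $q^{24+6r}$; for $G(r)$ you compute $G(r+7)$ directly, the terms $k\ge 7$ cancel against $q^{24+6r}S(r)$ after $k\to k+7$, and the boundary terms $0\le k\le 6$ give the two $m$-sums under $k\to 6-m$. The bookkeeping you deferred (the $q^{42}$ shift, the prefactor $-(-1)^{r}q^{-5\binom{r}{2}+23}$, and the $q^{24+6r}$ scaling of the theta-quotient and $\omega_3$ terms) all checks out.
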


We evaluate the Appell function expression in Corollary \ref{corollary:f771-HeckeExpansion} for the respective double-sums in Theorem \ref{theorem:genEulerOneThirdFirstPairEvenSpin}.  We have

\begin{proposition}\label{proposition:level13EvenSpinFirstPairAppellForm}  We have that
\begin{align*}
 -q^{3+4s} &h_{7,7,1}(q^{6s+10},q^{5};q) +q^{2+2s} h_{7,7,1}(q^{6s+10},q^{3};q)\\
 &= (-1)^sj(q^{s+4};q^7)\left ( q^{-5\binom{s}{2}}\sum_{k=0}^{s-1}q^{6sk-6\binom{k+1}{2}}\left (q^{k} -q^{-k+2s-1}\right ) 
 +q^{\binom{s+1}{2}}\cdot q\omega_{3}(-q)\right )\\
 &\qquad +(-1)^s2j(q^{s+4};q^7)q^{\binom{s+1}{2}}\left ( -\frac{1}{2}q\frac{J_{6}^3}{J_{2}\overline{J}_{3,6}}
+\frac{J_{6}^3\overline{J}_{2,6}J_{3,6}}{\overline{J}_{0,6}J_{1,6}J_{2,6}\overline{J}_{3,6}}\right ). 
\end{align*}
\end{proposition}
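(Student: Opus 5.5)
We follow the template of the proof of Proposition \ref{proposition:level12EvenSpinAppellForm}. By Corollary \ref{corollary:f771-HeckeExpansion},
\begin{equation*}
h_{7,7,1}(x,y;q)=j(x;q^7)m(-q^6yx^{-1},-1;q^6)+j(y;q)m(-q^{21}xy^{-7},-1;q^{42}).
\end{equation*}
We substitute $(x,y)=(q^{6s+10},q^5)$ and $(x,y)=(q^{6s+10},q^3)$. The second terms carry $j(q^5;q)$ and $j(q^3;q)$, which vanish. The corresponding Appell functions $m(-q^{-4+6s},-1;q^{42})$ and $m(-q^{10+6s},-1;q^{42})$ are well defined, since $j(-1;q^{42})\neq0$ and the denominators $1-q^{42r-1}xz$ are nonzero for these exponents. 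What remains is
\begin{equation*}
j(q^{6s+10};q^7)\left(-q^{3+4s}m(-q^{1-6s},-1;q^6)+q^{2+2s}m(-q^{-1-6s},-1;q^6)\right).
\end{equation*}

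Next we apply (\ref{equation:mxqz-flip}) with $z=-1$, using $m(x,-1;q)=x^{-1}m(x^{-1},-1;q)$. This gives $m(-q^{1-6s},-1;q^6)=-q^{6s-1}m(-q^{6s-1},-1;q^6)$ and $m(-q^{-1-6s},-1;q^6)=-q^{1+6s}m(-q^{1+6s},-1;q^6)$. The bracket then becomes
\begin{equation*}
-q^{3+8s}\left(m(-q^{1+6s},-1;q^6)-q^{2s-1}m(-q^{-1+6s},-1;q^6)\right),
\end{equation*}
which is exactly the left side of (\ref{equation:generalAppellSumOneModSix}) in Lemma \ref{lemma:generalAppellSums}. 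Substituting that lemma produces the finite sum $\sum_{k=0}^{s-1}q^{6sk-6\binom{k+1}{2}}(q^k-q^{-k+2s-1})$ plus $2q^{s+6\binom{s}{2}}m(-q,-1;q^6)$.

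Then we fix the theta prefactor. Write $j(q^{6s+10};q^7)=j(q^{7(s+1)+3-s};q^7)$ and apply (\ref{equation:j-elliptic}) followed by (\ref{equation:j-flip}). This gives
\begin{equation*}
-q^{3+8s}j(q^{6s+10};q^7)=(-1)^s q^{c(s)}j(q^{s+4};q^7)
\end{equation*}
for an explicit exponent $c(s)$. We need to check two things. First, $c(s)=-5\binom{s}{2}$. Second, adding $s+6\binom{s}{2}$ to $c(s)$ gives $\binom{s+1}{2}$. A quick check at $s=0$: the prefactor is $-q^3j(q^{10};q^7)=-q^3\cdot(-q^{-3})j(q^3;q^7)=j(q^4;q^7)$, which agrees. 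Both checks are routine quadratic bookkeeping, analogous to the $-q^{2+5s}j(q^{4s+7};q^5)$ computation in the $1/2$-level case.

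Finally we replace $2m(-q,-1;q^6)$ using (\ref{equation:alternateAppellForm3rd-w}) of Lemma \ref{lemma:alternateAppellForms}. This produces $q\omega_3(-q)$ together with the theta quotient $2\bigl(-\tfrac12 q J_6^3/(J_2\overline{J}_{3,6})+\cdots\bigr)$, which gives the stated form. The main obstacle is keeping the powers of $q$ and the signs consistent through the flip and the elliptic shift; the argument has no conceptual difficulty beyond that.
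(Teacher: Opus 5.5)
Your proposal is correct and follows the paper's proof step for step: the $j(q^5;q)$ and $j(q^3;q)$ terms vanish, then the flip (\ref{equation:mxqz-flip}), then Lemma \ref{lemma:generalAppellSums}, the elliptic shift, and finally (\ref{equation:alternateAppellForm3rd-w}); you only swap the order of the last two steps. Your decomposition $6s+10=7(s+1)+3-s$ is the right one (the paper's displayed shift writes $4-s$, which is a slip), and indeed $3+8s-7\binom{s+1}{2}-(s+1)(3-s)=-5\binom{s}{2}$, so both of your exponent checks go through.
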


\begin{proof}[Proof of Theorem  \ref{theorem:genEulerOneThirdFirstPairEvenSpin}] Proposition \ref{proposition:level13evenSpinFirstPairFuncEqn} tells us that
\begin{equation*}
F(s+7)-G(s+7)=q^{24+6s}\left (F(s)-G(s) \right ), 
\end{equation*}
so we only need to prove Theorem  \ref{theorem:genEulerOneThirdFirstPairEvenSpin} for $0\le s\le 6$.  The case $s=3$ follows from Lemma \ref{lemma:degenerateDoubleSumCoeffs}.   Corollary \ref{corollary:f771-HeckeExpansion} and Proposition \ref{proposition:level13EvenSpinFirstPairAppellForm} tell us that this is equivalent to showing
\begin{align*}
(-1)^s&2j(q^{s+4};q^7)q^{\binom{s+1}{2}}\left ( -\frac{1}{2}q\frac{J_{6}^3}{J_{2}\overline{J}_{3,6}}
+\frac{J_{6}^3\overline{J}_{2,6}J_{3,6}}{\overline{J}_{0,6}J_{1,6}J_{2,6}\overline{J}_{3,6}}\right )\\
&\qquad -\frac{1}{\overline{J}_{0,6}\overline{J}_{0,42}}
\left ( -q^{3+4s}\theta_{7,7,1}(q^{6s+10},q^5;q) +q^{2+2s}\theta_{7,7,1}(q^{6s+10},q^3;q)\right ) \\
&=-q^{5s^2-3s+1}\frac{J_{1}^3J_{4}}{J_{2}^3}j(q^{8+16s};q^{28}),
\end{align*}
but this is just Proposition \ref{proposition:level13EvenSpinFirstPairThetaId}.
\end{proof}


\begin{proof}[Proof of Proposition \ref{proposition:level13EvenSpinFirstPairAppellForm}]
From Corollary \ref{corollary:f771-HeckeExpansion}, we have that
\begin{align*}
h_{7,7,1}(x,y;q)&=j(x;q^7)m\big({ -}q^{6}yx^{-1},-1;q^{6} \big )
 +j(y;q)m\big({ -}q^{21}xy^{-7},-1;q^{42} \big ).
\end{align*}
Hence
\begin{align*}
 A(q):&=-q^{3+4s} h_{7,7,1}(q^{6s+10},q^{5};q) +q^{2+2s} h_{7,7,1}(q^{6s+10},q^{3};q)\\
 &=-q^{3+4s}\left ( j(q^{6s+10};q^7)m\big( {-}q^{1-6s},-1;q^{6} \big )
  +j(q^{5};q)m\big({ -}q^{-4+6s},-1;q^{42} \big )\right ) \\
&\qquad + q^{2+2s}\left (j(q^{6s+10};q^7)m\big({ -}q^{-1-6s},-1;q^{6} \big )
 +j(q^3;q)m\big({ -}q^{10+6s},-1;q^{42} \big ) \right ). 
\end{align*}
We note that $j(q^n;q)=0$ for $n\in\mathbb{Z}$ and that the respective Appell functions are defined.  This gives the more compact
\begin{equation*}
A(q) = q^{2+2s}j(q^{6s+10};q^7)\left ( m\big( {-}q^{-1-6s},-1;q^{6} \big ) -q^{1+2s}m\big({ -}q^{1-6s},-1;q^{6} \big )\right ). 
\end{equation*}
We use the Appell function property (\ref{equation:mxqz-flip}) and factor out a $q$-term to get
\begin{align*}
 A(q)&= q^{2+2s}j(q^{6s+10};q^7)\left (-q^{1+6s} m\big({ -}q^{1+6s},-1;q^{6} \big ) +q^{8s}m\big( {-}q^{-1+6s},-1;q^{6} \big )\right )\\
  &= -q^{3+8s}j(q^{6s+10};q^7)\left (m\big( {-}q^{1+6s},-1;q^{6} \big ) -q^{2s-1}m\big({ -}q^{-1+6s},-1;q^{6} \big )\right ).
\end{align*}
Using Lemma \ref{lemma:generalAppellSums} (\ref{equation:generalAppellSumOneModSix}) gives us
\begin{align*}
 A(q)&= -q^{3+8s}j(q^{6s+10};q^7)\left ( \sum_{k=0}^{s-1}q^{6sk-6\binom{k+1}{2}}\left (q^{k} -q^{-k+2s-1}\right ) 
 +2q^{s+6\binom{s}{2}}m ( -q,-1;q^{6}  )\right ).
\end{align*}
Rewriting the last Appell function with Lemma \ref{lemma:alternateAppellForms} (\ref{equation:alternateAppellForm3rd-w}) gives
\begin{align*}
 A(q)&= -q^{3+8s}j(q^{6s+10};q^7)\left ( \sum_{k=0}^{s-1}q^{6sk-6\binom{k+1}{2}}\left (q^{k} -q^{-k+2s-1}\right ) 
 +q^{s+6\binom{s}{2}}q\omega_{3}(-q)\right )\\
 &\qquad -2q^{3+8s}j(q^{6s+10};q^7)q^{s+6\binom{s}{2}}\left ( -\frac{1}{2}q\frac{J_{6}^3}{J_{2}\overline{J}_{3,6}}
+\frac{J_{6}^3\overline{J}_{2,6}J_{3,6}}{\overline{J}_{0,6}J_{1,6}J_{2,6}\overline{J}_{3,6}}\right ).
\end{align*}
We use (\ref{equation:j-elliptic}) and (\ref{equation:j-flip}) to see that
\begin{align*}
j(q^{6s+10};q^7)&=j(q^{7(s+1)+4-s};q^7)=(-1)^{s+1}q^{-7\binom{s+1}{2}-(s+1)(4-s)}j(q^{4-s};q^{7})\\
&=(-1)^{s+1}q^{-7\binom{s+1}{2}-(s+1)(4-s)}j(q^{s+3};q^{7}),
\end{align*}
and the result follows.
\end{proof}


\begin{proof}[Proof of Proposition \ref{proposition:level13evenSpinFirstPairFuncEqn} for $F(r)$.]  
Let us specialize Proposition  \ref{proposition:f-functionaleqn} to $(a,b,c)=(7,7,1)$ with $\ell=-1$, and $k=7$, and then use our summation convention (\ref{equation:sumconvention}) on the first sum.  This gives
\begin{align}
f_{7,7,1}(x,y;q)&=x^{-1}y^{7}q^{-21}f_{7,7,1}(q^{42}x,y;q)\label{equation:f771-FirstPairPreFunc}\\
&\quad -\sum_{m=-1}^{-1}(-x)^mq^{7\binom{m}{2}}j(q^{m7}y;q)+\sum_{m=0}^{6}(-y)^mq^{\binom{m}{2}}j(q^{m7}x;q^7).\notag
\end{align}
Into (\ref{equation:f771-FirstPairPreFunc}), we substitute  $(x,y)=(q^{6r+10},q^5)$ and $(x,y)=(q^{6r+10},q^3)$.   We recall that $j(q^n;q)=0$ for $n\in\mathbb{Z}$. This gives respectively
\begin{align*}
f_{7,7,1}(q^{6r+10},q^5;q)&=q^{-6r+4}f_{7,7,1}(q^{42+6r+10},q^{5};q)\\
&\quad +\sum_{m=0}^{6}(-1)^{m}q^{5m}q^{\binom{m}{2}}j(q^{7m+6r+10};q^7),
\end{align*}
and
\begin{align*}
f_{7,7,1}(q^{6r+10},q^3;q)&=q^{-6r-10}f_{7,7,1}(q^{42+6r+10},q^3;q)\\
&\quad +\sum_{m=0}^{6}(-1)^{m}q^{3m}q^{\binom{m}{2}}j(q^{7m+6r+10};q^7).
\end{align*}
We rewrite the theta function in the two sums.  Using (\ref{equation:j-elliptic}) and simplifying produces
{\allowdisplaybreaks \begin{align*}
j(q^{7m+6r+10};q^7)&=j(q^{7(m+r+1)-r+3};q^7)\\
&=(-1)^{m+r+1}q^{-7\binom{m+r+1}{2}}q^{-(m+r+1)(3-r)}j(q^{3-r};q^{7})\\
&=(-1)^{m+r+1}q^{-7\binom{m}{2}-10m-6mr}q^{-5\binom{r}{2}-8r-3}j(q^{r+4};q^{7}).
\end{align*}}%
Our two equations can then be rewritten
\begin{align*}
f_{7,7,1}(q^{6r+10},q^5;q)&=q^{-6r+4}f_{7,7,1}(q^{42+6r+10},q^{5};q)\\
&\quad +(-1)^{r+1}q^{-5\binom{r}{2}-8r-3}j(q^{r+4};q^{7})
\sum_{m=0}^{6}q^{-3m^2-2m-6mr},
\end{align*}
and
\begin{align*}
f_{7,7,1}(q^{6r+10},q^3;q)&=q^{-6r-10}f_{7,7,1}(q^{42+6r+10},q^3;q)\\
&\quad +(-1)^{r+1}q^{-5\binom{r}{2}-8r-3}j(q^{r+4};q^{7})
\sum_{m=0}^{6}q^{-3m^2-4m-6mr}.
\end{align*}
We isolate the double-sums on the right-hand sides of the two equations to get
\begin{align*}
f_{7,7,1}(q^{42+6r+10},q^{5};q)&=q^{6r-4}f_{7,7,1}(q^{6r+10},q^5;q)\\
&\quad +(-1)^{r}q^{-5\binom{r}{2}-2r-7}j(q^{r+4};q^{7})
\sum_{m=0}^{6}q^{-3m^2-2m-6mr},
\end{align*}
and
\begin{align*}
f_{7,7,1}(q^{42+6r+10},q^3;q)&=q^{6r+10}f_{7,7,1}(q^{6r+10},q^3;q)\\
&\quad +(-1)^{r}q^{-5\binom{r}{2}-2r+7}j(q^{r+4};q^{7})
\sum_{m=0}^{6}q^{-3m^2-4m-6mr}.
\end{align*}
We want the sum of the two left-hand sides to look like $F(r+7)$, so we multiply through by an appropriate power of $q$ to get
{\allowdisplaybreaks \begin{align*}
-q^{31+4r}f_{7,7,1}(q^{42+6r+10},q^{5};q)&=-q^{27+10r}f_{7,7,1}(q^{6r+10},q^5;q)\\
&\quad -(-1)^{r}q^{-5\binom{r}{2}+2r+24}j(q^{r+4};q^{7})
\sum_{m=0}^{6}q^{-3m^2-2m-6mr},
\end{align*}}
and
{\allowdisplaybreaks \begin{align*}
q^{16+2r}f_{7,7,1}(q^{42+6r+10},q^3;q)&=q^{26+8r}f_{7,7,1}(q^{6r+10},q^3;q)\\
&\quad +(-1)^{r}q^{-5\binom{r}{2}+23}j(q^{r+4};q^{7})
\sum_{m=0}^{6}q^{-3m^2-4m-6mr}.
\end{align*}}%
Adding the two equations and combining like terms brings us to
\begin{align*}
-&q^{31+4r}f_{7,7,1}(q^{42+6r+10},q^{5};q)+q^{16+2r}f_{7,7,1}(q^{42+6r+10},q^3;q)\\
&=q^{24+6r}\left ( -q^{3+4r}f_{7,7,1}(q^{6r+10},q^5;q)+q^{2+2r}f_{7,7,1}(q^{6r+10},q^3;q)\right ) \\
&\quad -(-1)^{r}q^{-5\binom{r}{2}+23}j(q^{r+4};q^{7})\left ( q^{1+2r}
\sum_{m=0}^{6}q^{-3m^2-2m-6mr} -\sum_{m=0}^{6}q^{-3m^2-4m-6mr}\right ) .
\end{align*}
Rewriting the above in terms of $F(r)$ brings us to the desired result.
\end{proof}


\begin{proof}[Proof of Proposition \ref{proposition:level13evenSpinFirstPairFuncEqn} for $G(r)$]  We first note that
\begin{align*}
G&(r+7)=(-1)^{r+7}j(q^{r+4+7};q^{7})\\
&\qquad \times \left ( q^{\binom{7+r+1}{2}+1}\omega_3(-q)
 +q^{-5\binom{r+7}{2}}\sum_{k=0}^{r+7-1}q^{6k(r+7)-6\binom{k+1}{2}}\left( q^{k}-q^{-k+2(r+7)-1}\right ) \right ) \\
 &\qquad \qquad -q^{5(r+7)^2-3(r+7)+1}\frac{J_{1}^3J_{4}}{J_{2}^3}j(q^{8+16(r+7)};q^{28}).
\end{align*}
Let us rewrite the simple quotient of theta functions.  Using (\ref{equation:j-elliptic}), we find
{\allowdisplaybreaks \begin{align*}
q^{5(r+7)^2-3(r+7)+1}\frac{J_{1}^3J_{4}}{J_{2}^3}j(q^{8+16(r+7)};q^{28})
&=q^{5(r+7)^2-3(r+7)+1}\frac{J_{1}^3J_{4}}{J_{2}^3}j(q^{8+16r+28\cdot 4};q^{28})\\
&=q^{5(r+7)^2-3(r+7)+1}\frac{J_{1}^3J_{4}}{J_{2}^3}q^{-28\binom{4}{2}-4(8+16r)}j(q^{8+16r};q^{28})\\
&=q^{5r^2+3r+25}\frac{J_{1}^3J_{4}}{J_{2}^3}j(q^{8+16r};q^{28})\\
&=q^{24+6r}q^{5r^2-3r+1}\frac{J_{1}^3J_{4}}{J_{2}^3}j(q^{8+16r};q^{28}).
\end{align*}}%
Again employing (\ref{equation:j-elliptic}) allows us to write
{\allowdisplaybreaks \begin{align*}
G(r+7)&=(-1)^{r}q^{24+6r}j(q^{r+4};q^{7}) q^{\binom{r+1}{2}}\cdot q\omega_3(-q)\\
&\qquad  +(-1)^{r}q^{-109-36r}j(q^{r+4};q^{7}) q^{-5\binom{r}{2}}
\sum_{k=0}^{r+7-1}q^{6k(r+7)-6\binom{k+1}{2}}\left( q^{k}-q^{-k+2(r+7)-1}\right )\\
 &\qquad -q^{24+6r}q^{5r^2-3r+1}\frac{J_{1}^3J_{4}}{J_{2}^3}j(q^{8+16r};q^{28}).
\end{align*}}%
Rewriting the right-hand side of the above in terms of $G(r)$ brings us to
{\allowdisplaybreaks \begin{align*}
G(r+7)&=q^{24+6r}G(r)
-(-1)^rq^{24+6r}j(q^{r+4};q^{7})q^{-5\binom{r}{2}}\sum_{k=0}^{r-1}q^{6kr-6\binom{k+1}{2}}\left( q^{k}-q^{-k+2r-1}\right)\\
&\quad  +(-1)^{r}q^{-109-36r}j(q^{r+4};q^{7}) q^{-5\binom{r}{2}}
\sum_{k=0}^{r+7-1}q^{6k(r+7)-6\binom{k+1}{2}}\left( q^{k}-q^{-k+2(r+7)-1}\right ).
\end{align*}}%
Comparing with the functional equation in Proposition \ref{proposition:level13evenSpinFirstPairFuncEqn}, we see that we need to show
{\allowdisplaybreaks \begin{align*}
-(-1)^{r}&q^{-5\binom{r}{2}+23}j(q^{r+4};q^{7})\left ( q^{1+2r}
\sum_{m=0}^{6}q^{-3m^2-2m-6mr} -\sum_{m=0}^{6}q^{-3m^2-4m-6mr}\right ) \\
&=-(-1)^rq^{24+6r}j(q^{r+4};q^{7})q^{-5\binom{r}{2}}\sum_{k=0}^{r-1}q^{6kr-6\binom{k+1}{2}}\left( q^{k}-q^{-k+2r-1}\right)\\
&\quad  +(-1)^{r}q^{-109-36r}j(q^{r+4};q^{7}) q^{-5\binom{r}{2}}
\sum_{k=0}^{r+7-1}q^{6k(r+7)-6\binom{k+1}{2}}\left( q^{k}-q^{-k+2(r+7)-1}\right ).
\end{align*}}%
We break this into two parts, but first we simplify what we need to do.  We isolate the coefficient of $j(q^{r+4};q^7)$, and we also factor out a power of $q$.  Thus we only need to show
{\allowdisplaybreaks \begin{align*}
- q^{1+2r}&\sum_{m=0}^{6}q^{-3m^2-2m-6mr} +\sum_{m=0}^{6}q^{-3m^2-4m-6mr} \\
&=-q^{1+6r}\sum_{k=0}^{r-1}q^{6kr-6\binom{k+1}{2}}\left( q^{k}-q^{-k+2r-1}\right)\\
&\qquad  +q^{-132-36r}
\sum_{k=0}^{r+7-1}q^{6k(r+7)-6\binom{k+1}{2}}\left( q^{k}-q^{-k+2(r+7)-1}\right ).
\end{align*}}%
To show that the above is true, it suffices to show
{\allowdisplaybreaks \begin{align}
-q^{1+6r}&\sum_{k=0}^{r-1}q^{6kr-6\binom{k+1}{2}}\left( q^{k}-q^{-k+2r-1}\right)
\label{equation:level13evenSpinFirstPairRHSFuncEqnId1}\\
&  +q^{-132-36r}
\sum_{k=7}^{r+7-1}q^{6k(r+7)-6\binom{k+1}{2}}\left( q^{k}-q^{-k+2(r+7)-1}\right )=0,\notag
\end{align}}%
and
{\allowdisplaybreaks \begin{align}
- q^{1+2r}&\sum_{m=0}^{6}q^{-3m^2-2m-6mr} +\sum_{m=0}^{6}q^{-3m^2-4m-6mr} 
 \label{equation:level13evenSpinFirstPairRHSFuncEqnId2}\\
&=  q^{-132-36r}
\sum_{k=0}^{6}q^{6k(r+7)-6\binom{k+1}{2}}\left( q^{k}-q^{-k+2(r+7)-1}\right ).\notag
\end{align}}%
To prove (\ref{equation:level13evenSpinFirstPairRHSFuncEqnId1}), a simply shift of indices is all we need.  We have
{\allowdisplaybreaks \begin{align*}
\sum_{k=7}^{r+7-1}&q^{6k(r+7)-6\binom{k+1}{2}}\left( q^{k}-q^{-k+2(r+7)-1}\right )\\
&=\sum_{k=7}^{r+7-1}q^{6k(r+7)-6\binom{k+1}{2}+k}-\sum_{k=7}^{r+7-1}q^{6k(r+7)-6\binom{k+1}{2}-k+2(r+7)-1}\\
&=q^{42r+133}\sum_{k=0}^{r-1}q^{6kr-6\binom{k+1}{2}+k}
-q^{42r+133}\sum_{k=0}^{r-1}q^{6kr-6\binom{k+1}{2}-k+2r-1}\\
&=q^{42r+133} \sum_{k=0}^{r-1}q^{6kr-6\binom{k+1}{2}}\left (q^{k}-q^{-k+2r-1}\right).
\end{align*}}%
To prove (\ref{equation:level13evenSpinFirstPairRHSFuncEqnId2}), we reverse the summation with $k\to 6-m$.  This gives
{\allowdisplaybreaks \begin{align*}
\sum_{k=0}^{6}&q^{6k(r+7)-6\binom{k+1}{2}}\left( q^{k}-q^{-k+2(r+7)-1}\right )\\
&=\sum_{k=0}^{6}q^{6k(r+7)-6\binom{k+1}{2}+k}
-\sum_{k=0}^{6}q^{6k(r+7)-6\binom{k+1}{2}-k+2(r+7)-1}\\
&=q^{36r+132}\sum_{m=0}^{6}q^{-3m^2-6mr-4m}-q^{38r+133}\sum_{m=0}^{6}q^{-3m^2-6mr-2m},
\end{align*}}%
which is what we want.
\end{proof}


\subsection{The $1/3$-level string functions: Theorem \ref{theorem:genEulerOneThirdSecondPairEvenSpin}}  We state the necessary propositions to prove Theorem \ref{theorem:genEulerOneThirdSecondPairEvenSpin}, then we give the proof, and then we give the proofs of the propositions.

\smallskip
Let us define the left-hand side of identity in Theorem \ref{theorem:genEulerOneThirdSecondPairEvenSpin} to be
\begin{equation*}
F(r):= -q^{3+5r} f_{7,7,1}(q^{6r+10},q^{6};q) +q^{1+r} f_{7,7,1}(q^{6r+10},q^{2};q).
\end{equation*}
and the right-hand side to be
{\allowdisplaybreaks \begin{align*}
G(r)&:=(-1)^{r}j(q^{r+4};q^{7})
 \left ( q^{\binom{r+1}{2}}\frac{1}{2}f_{3}(q^2)
 +q^{-5\binom{r}{2}-r}\sum_{k=0}^{r-1}q^{6kr-6\binom{k+1}{2}}\left( q^{2k}-q^{-2k+4r-2}\right ) \right ) \\
 &\qquad \qquad -(-1)^{r}\frac{(-q)^{\binom{r+1}{2}}}{2}\frac{J_{1}^2}{J_{4}}j((-q)^{r+4};-q^7).
\end{align*}}%

\begin{proposition}\label{proposition:level13EvenSpinSecondPairFuncEqn}  We have that both $F(r)$ and $G(r)$ satisfy the same functional equation:
{\allowdisplaybreaks \begin{align*}
H&(r+7)-q^{24+6r}H(r)\\
&=-(-1)^{r}q^{22-r}q^{-5\binom{r}{2}}j(q^{r+4};q^{7})
\left( q^{2+4r}\sum_{m=0}^{6}q^{-3m^2-6mr-m}
-\sum_{m=0}^{6}q^{-3m^2-6mr-5m}\right ).
\end{align*}}%
\end{proposition}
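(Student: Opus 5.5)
The plan is to follow the template of Proposition \ref{proposition:level13evenSpinFirstPairFuncEqn} and treat $F(r)$ and $G(r)$ separately.

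\emph{The case $F(r)$.} I would specialize Proposition \ref{proposition:f-functionaleqn} to $(a,b,c)=(7,7,1)$ with $(\ell,k)=(-1,7)$. This gives (\ref{equation:f771-FirstPairPreFunc}), into which I substitute $(x,y)=(q^{6r+10},q^6)$ and $(x,y)=(q^{6r+10},q^2)$. The $j(q^{7m}y;q)$ term vanishes, since $j(q^n;q)=0$, and the prefactors $x^{-1}y^7q^{-21}$ become $q^{-6r+11}$ and $q^{-6r-17}$ respectively. The remaining theta sums are $\sum_{m=0}^{6}(-1)^mq^{6m+\binom{m}{2}}j(q^{7m+6r+10};q^7)$ and the analogous sum with $q^{2m}$.

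Next I rewrite each $j(q^{7m+6r+10};q^7)$ with (\ref{equation:j-elliptic}) and (\ref{equation:j-flip}), exactly as before, as
\[
(-1)^{m+r+1}q^{-7\binom{m}{2}-10m-6mr}q^{-5\binom{r}{2}-8r-3}j(q^{r+4};q^7).
\]
The sums then collapse to $\sum_m q^{-3m^2-m-6mr}$ for $y=q^6$ and $\sum_m q^{-3m^2-5m-6mr}$ for $y=q^2$.

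I then isolate $f_{7,7,1}(q^{6r+52},\cdot\,;q)$ and multiply by the prefactors $-q^{3+5(r+7)}$ and $q^{1+(r+7)}$ that define $F(r+7)$. The double-sum parts become $q^{24+6r}$ times the corresponding terms of $F(r)$: for $y=q^6$ the power is $q^{38+5r}q^{6r-11}=q^{24+6r}q^{3+5r}$, and for $y=q^2$ it is $q^{8+r}q^{6r+17}=q^{24+6r}q^{1+r}$. Adding the two equations and collecting powers of $q$ in front of $j(q^{r+4};q^7)$ should give the stated right-hand side.

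\emph{The case $G(r)$.} I would compute $G(r+7)$ directly. The term $j(q^{r+11};q^7)$ becomes $-q^{-7-(r+4)}\cdot q^{0}j(q^{r+4};q^7)$, up to the sign and power given by (\ref{equation:j-elliptic}). Combined with $q^{\binom{r+8}{2}}$, the $f_3$ term should pick up exactly the factor $q^{24+6r}$. For the theta quotient, $j((-q)^{r+11};-q^7)=j((-q)^7(-q)^{r+4};-q^7)$ gives the factor $-(-q)^{-(r+4)}$, and together with $(-q)^{\binom{r+8}{2}-\binom{r+1}{2}}=(-q)^{7r+28}$ this yields $q^{24+6r}$ again, with the signs $(-1)^{r+7}$ matching. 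After subtracting $q^{24+6r}G(r)$, what remains is $j(q^{r+4};q^7)$ times the difference of the finite sums, namely the $G(r+7)$ sum with $k$ running to $r+6$ and $q^{24+6r}$ times the $G(r)$ sum.

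I split the $G(r+7)$ sum into $k\in\{7,\dots,r+6\}$ and $k\in\{0,\dots,6\}$. After the shift $k\to k+7$, the first range should exactly cancel the $q^{24+6r}$ multiple of the $G(r)$ sum, using $6\cdot 7(r+7)-6\binom{k+8}{2}$ bookkeeping together with the extra $q^{\pm 14}$ coming from the $q^{2k}$ and $q^{-2k+4r-2}$ pieces. The range $0\le k\le6$ is reversed with $k\to 6-m$, which should reproduce $q^{2+4r}\sum q^{-3m^2-6mr-m}-\sum q^{-3m^2-6mr-5m}$ up to the global factor $-(-1)^rq^{22-r-5\binom{r}{2}}$.

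The main obstacle is purely exponent bookkeeping. The $G$-side prefactor $q^{-5\binom{r}{2}-r}$ differs from the first-pair case by the extra $q^{-r}$, and this must be reconciled with the $q^{22-r}$ in the statement. I would verify the reversed finite sums by checking that both sides agree at $r=0$ and by comparing the linear-in-$r$ exponents.
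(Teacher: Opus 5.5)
Your proposal is correct and is exactly the template the paper intends when it omits this proof as ``similar to Proposition~\ref{proposition:level13evenSpinFirstPairFuncEqn}''. On the $F$-side your prefactors $q^{-6r+11}$, $q^{-6r-17}$ and the collapsed sums $\sum_m q^{-3m^2-m-6mr}$, $\sum_m q^{-3m^2-5m-6mr}$ are right. On the $G$-side the split, the shift $k\to k+7$ and the reversal $k\to 6-m$ do yield $-q^{24+3r}\sum_m q^{-3m^2-6mr-m}+q^{22-r}\sum_m q^{-3m^2-6mr-5m}$, which matches the statement.

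Two small slips should be fixed when you write it out. First, $j(q^{r+11};q^7)=-q^{-(r+4)}j(q^{r+4};q^7)$, with no extra $q^{-7}$; this is what actually produces the factor $q^{24+6r}$ you correctly assert. Second, under $k\to k+7$ both pieces $q^{2k}$ and $q^{-2k+4(r+7)-2}$ gain the same factor $q^{+14}$, not $q^{\pm14}$. Hence the range $7\le k\le r+6$ contributes $q^{-5\binom{r}{2}-37r-116}\cdot q^{42r+140}=q^{24+6r}\cdot q^{-5\binom{r}{2}-r}$ times the $G(r)$ sum, and so cancels exactly.
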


\begin{proposition}\label{proposition:level13EvenSpinSecondPairAppellForm}  We have that
\begin{align*}
 -q^{3+5s} & h_{7,7,1}(q^{6s+10},q^{6};q) +q^{1+s} h_{7,7,1}(q^{6s+10},q^{2};q)\\
&= (-1)^{s}j(q^{s+4};q^7)\left (q^{-5\binom{s}{2}-s}\sum_{k=0}^{s-1}q^{6sk-6\binom{k+1}{2}}\left (q^{2k} -q^{-2k+4s-2}\right ) 
 +q^{\binom{s+1}{2}}\frac{1}{2}f_3(q^2)\right )\\
 &\qquad +(-1)^{s}2q^{\binom{s+1}{2}}j(q^{s+4};q^7)\left ( -\frac{1}{4}\frac{J_{6,12}^2}{J_{2}}
+\frac{J_{6}^3}{\overline{J}_{0,6}J_{2,6}}\right ). 
\end{align*}
\end{proposition}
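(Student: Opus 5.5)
The plan is to mirror the proof of Proposition \ref{proposition:level13EvenSpinFirstPairAppellForm} step by step. Write
\begin{equation*}
A(q):=-q^{3+5s}h_{7,7,1}(q^{6s+10},q^{6};q)+q^{1+s}h_{7,7,1}(q^{6s+10},q^{2};q)
\end{equation*}
and insert the definition of $h_{7,7,1}$ from Corollary \ref{corollary:f771-HeckeExpansion}. With $x=q^{6s+10}$ and $y=q^6$, respectively $y=q^2$, the first Appell functions are $m(-q^{2-6s},-1;q^6)$ and $m(-q^{-2-6s},-1;q^6)$. The second terms carry the factors $j(q^6;q)=j(q^2;q)=0$. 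We should check that the accompanying Appell functions $m(-q^{-11+6s},-1;q^{42})$ and $m(-q^{17+6s},-1;q^{42})$ are defined, which holds because the second argument is $-1$. So both second terms vanish, leaving
\begin{equation*}
A(q)=q^{1+s}j(q^{6s+10};q^7)\Big(m(-q^{-2-6s},-1;q^6)-q^{2+4s}m(-q^{2-6s},-1;q^6)\Big).
\end{equation*}

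Next apply the flip (\ref{equation:mxqz-flip}), $m(x,-1;q)=x^{-1}m(x^{-1},-1;q)$, to both Appell functions. This gives $m(-q^{-2-6s},-1;q^6)=-q^{2+6s}m(-q^{2+6s},-1;q^6)$ and $m(-q^{2-6s},-1;q^6)=-q^{-2+6s}m(-q^{-2+6s},-1;q^6)$. Factoring out $q^{2+6s}$ puts the bracket in the form
\begin{equation*}
-\Big(m(-q^{2+6s},-1;q^6)-q^{4s-2}m(-q^{-2+6s},-1;q^6)\Big),
\end{equation*}
with overall prefactor $-q^{3+7s}j(q^{6s+10};q^7)$. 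This is exactly the left side of (\ref{equation:generalAppellSumTwoModSix}) in Lemma \ref{lemma:generalAppellSums}. Replacing it by the finite sum plus $2q^{2s+6\binom{s}{2}}m(-q^2,-1;q^6)$, and then using (\ref{equation:alternateAppellForm3rd-f}) of Lemma \ref{lemma:alternateAppellForms}, converts the Appell term into
\begin{equation*}
2q^{2s+6\binom{s}{2}}\left(\frac14 f_3(q^2)-\frac14\frac{J_{6,12}^2}{J_2}+\frac{J_6^3}{\overline{J}_{0,6}J_{2,6}}\right).
\end{equation*}

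Finally, normalize the theta prefactor as in the previous proof. Using (\ref{equation:j-elliptic}) and (\ref{equation:j-flip}),
\begin{equation*}
j(q^{6s+10};q^7)=(-1)^{s+1}q^{-7\binom{s+1}{2}-(s+1)(3-s)}j(q^{s+4};q^7).
\end{equation*}
The main (though routine) obstacle is the exponent bookkeeping. The total power $3+7s-7\binom{s+1}{2}-(s+1)(3-s)$ simplifies to $-5\binom{s}{2}-s$, which is the exponent in front of the finite sum. Adding $2s+6\binom{s}{2}$ to it gives $\binom{s+1}{2}$, which matches the exponent on $f_3$ and on the theta quotient. The sign $-(-1)^{s+1}=(-1)^s$ then yields the stated form.
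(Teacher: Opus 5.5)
Your proof is correct and follows the paper's argument step for step. You drop the $j(q^{6};q)$ and $j(q^{2};q)$ terms, flip both Appell functions with (\ref{equation:mxqz-flip}), apply (\ref{equation:generalAppellSumTwoModSix}) and then (\ref{equation:alternateAppellForm3rd-f}), and finally normalize $j(q^{6s+10};q^7)$. Your shift $6s+10=7(s+1)+(3-s)$ is the correct one: the paper's displayed $7(s+1)+4-s$ is a typo, though its stated final form agrees with yours.

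One small refinement concerns the discarded Appell functions $m(-q^{6s-11},-1;q^{42})$ and $m(-q^{6s+17},-1;q^{42})$. Their finiteness does not follow from $z=-1$ alone. It also needs $q^{6s-11},q^{6s+17}\notin q^{42\mathbb{Z}}$, which holds because the exponents are odd, so no denominator $1-q^{42(r-1)}xz$ vanishes.
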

\begin{proof}[Proof of Theorem \ref{theorem:genEulerOneThirdSecondPairEvenSpin}]  Proposition \ref{proposition:level13EvenSpinSecondPairAppellForm} tells us that
\begin{equation*}
F(s+7)-G(s+7)=q^{24+6s}\left (F(s)-G(s) \right ), 
\end{equation*}
so we only need to prove Theorem  \ref{theorem:genEulerOneThirdSecondPairEvenSpin} for $0\le s\le 6$.  The case $s=3$ follows from Lemma \ref{lemma:degenerateDoubleSumCoeffs}.   Corollary \ref{corollary:f771-HeckeExpansion} and Proposition \ref{proposition:level13EvenSpinSecondPairAppellForm} tell us that this is equivalent to showing
\begin{align*}
(-1)^{s}&2q^{\binom{s+1}{2}}j(q^{s+4};q^7)\left ( -\frac{1}{4}\frac{J_{6,12}^2}{J_{2}}
+\frac{J_{6}^3}{\overline{J}_{0,6}J_{2,6}}\right ) \\
&\qquad -\frac{1}{\overline{J}_{0,6}\overline{J}_{0,42}}
\left ( -q^{3+5s}\theta_{7,7,1}(q^{6s+10},q^6;q) +q^{1+s}\theta_{7,7,1}(q^{6s+10},q^2;q)\right ) \\
&=-(-1)^{s}\frac{(-q)^{\binom{s+1}{2}}}{2}\frac{J_{1}^2}{J_{4}}j((-q)^{s+4};-q^7),
\end{align*}
but this is just Proposition \ref{proposition:level13EvenSpinSecondPairThetaId}.
\end{proof}

\begin{proof}[Proof of Proposition \ref{proposition:level13EvenSpinSecondPairAppellForm}]
From Corollary \ref{corollary:f771-HeckeExpansion} we have that
\begin{align*}
h_{7,7,1}(x,y;q)&=j(x;q^7)m\big({ -}q^{6}yx^{-1},-1;q^{6} \big )
 +j(y;q)m\big( {-}q^{21}xy^{-7},-1;q^{42} \big ).
\end{align*}
Hence
\begin{align*}
 A(q):&=-q^{3+5s}  h_{7,7,1}(q^{6s+10},q^{6};q) +q^{1+s} h_{7,7,1}(q^{6s+10},q^{2};q)\\
 &=-q^{3+5s}\left ( j(q^{6s+10};q^7)m\big({ -}q^{2-6s},-1;q^{6} \big )
 +j(q^6;q)m\big({ -}q^{-11+6s},-1;q^{42} \big )\right ) \\
 &\qquad +q^{1+s}\left ( j(q^{6s+10};q^7)m\big( {-}q^{-2-6s},-1;q^{6} \big )
 +j(q^2;q)m\big( {-}q^{17+6s},-1;q^{42} \big )\right ).
\end{align*}
We note that $j(q^n;q)=0$ for $n\in\mathbb{Z}$ and that the respective Appell functions are defined.  This gives the more compact
\begin{equation*}
A(q) =q^{1+s}j(q^{6s+10};q^7)\left ( m\big( {-}q^{-2-6s},-1;q^{6} \big ) - q^{2+4s}m\big({ -}q^{2-6s},-1;q^{6} \big ) \right ) .
\end{equation*}
We use the Appell function property (\ref{equation:mxqz-flip}) and factor out a $q$-term to get
\begin{align*}
 A(q)&=q^{1+s}j(q^{6s+10};q^7)\left ( -q^{2+6s}m\big( {-}q^{2+6s},-1;q^{6} \big ) + q^{10s}m\big({ -}q^{-2+6s},-1;q^{6} \big ) \right )\\
  &= -q^{3+7s}j(q^{6s+10};q^7)\left (m\big({ -}q^{2+6s},-1;q^{6} \big ) - q^{4s-2}m\big( {-}q^{-2+6s},-1;q^{6} \big ) \right ).
\end{align*}
Using Lemma \ref{lemma:generalAppellSums} (\ref{equation:generalAppellSumTwoModSix}) gives us
\begin{equation*}
A(q)= -q^{3+7s}j(q^{6s+10};q^7)\left (\sum_{k=0}^{s-1}q^{6sk-6\binom{k+1}{2}}\left (q^{2k} -q^{-2k+4s-2}\right ) 
 +2q^{2s+6\binom{s}{2}}m ( -q^{2},-1;q^{6}  )\right ).
\end{equation*}
Rewriting the last Appell function with Lemma \ref{lemma:alternateAppellForms} (\ref{equation:alternateAppellForm3rd-f}) gives
\begin{align*}
A(q)&= -q^{3+7s}j(q^{6s+10};q^7)\left (\sum_{k=0}^{s-1}q^{6sk-6\binom{k+1}{2}}\left (q^{2k} -q^{-2k+4s-2}\right ) 
 +q^{2s+6\binom{s}{2}}\frac{1}{2}f_3(q^2)\right )\\
 &\qquad -2q^{3+7s}j(q^{6s+10};q^7)q^{2s+6\binom{s}{2}}\left ( -\frac{1}{4}\frac{J_{6,12}^2}{J_{2}}
+\frac{J_{6}^3}{\overline{J}_{0,6}J_{2,6}}\right ). 
\end{align*}
We use (\ref{equation:j-elliptic}) and (\ref{equation:j-flip}) to see that
\begin{align*}
j(q^{6s+10};q^7)&=j(q^{7(s+1)+4-s};q^7)=(-1)^{s+1}q^{-7\binom{s+1}{2}-(s+1)(4-s)}j(q^{4-s};q^{7})\\
&=(-1)^{s+1}q^{-7\binom{s+1}{2}-(s+1)(4-s)}j(q^{s+3};q^{7}),
\end{align*}
and the result follows.
\end{proof}


\begin{proof}[Proof of Proposition \ref{proposition:level13EvenSpinSecondPairFuncEqn}]  
This is similar to the proof of Proposition \ref{proposition:level13evenSpinFirstPairFuncEqn}, so it will be omitted.
\end{proof}


\subsection{The $2/3$-level string functions: Theorem \ref{theorem:level23EvenSpinFirstQuad-sEven}}
We state the necessary propositions to prove Theorem \ref{theorem:level23EvenSpinFirstQuad-sEven}, then we give the proof, and then we give the proofs of the propositions.  In the remaining subsections, we need to consider the parity of $s$, in such cases we will just replace $s$ with $2s$ or $2s+1$.

\smallskip
Let us define the left-hand side of the identity in Theorem \ref{theorem:level23EvenSpinFirstQuad-sEven} to be 
\begin{align*}
F(r):=&q^{7+8r}f_{4,4,1}(-q^{23+12r},-q^{13};q^4)-q^{5+4r}f_{4,4,1}(-q^{23+12r},-q^9;q^4)\\
&\quad -q^{13+8r}f_{4,4,1}(-q^{31+12r},-q^{15};q^4)+q^{9+4r}f_{4,4,1}(-q^{31+12r},-q^{11};q^{4}).
\end{align*}
and the right-hand side to be
{\allowdisplaybreaks \begin{align*}
G(r)&:=q^{7+16r}j(-q^{23+12r};q^{16})
\left(q^{2r+12\binom{r}{2}}q^{2}\omega_{3}(-q^2)
+\sum_{k=0}^{r-1}q^{12rk-12\binom{k+1}{2}}\left ( q^{2k}-q^{-2k+4r-2}\right )  \right)\\
&\quad -q^{17+16r}j(-q^{31+12r};q^{16})\\
&\qquad \times \left ( 1-q^{4r}\left( q^{4r+12\binom{r}{2}}\frac{1}{2}f_{3}(q^4)
+\sum_{k=0}^{r-1}q^{12rk-12\binom{k+1}{2}}\left ( q^{4k}-q^{-4k+8r-4}\right )\right) \right )\\
&\quad -(-1)^{r}\frac{q^{\binom{2r}{2}+2}}{2}\frac{J_{1}^2J_{2}}{J_{4}^2}j(-q^{1+4r};q^{16}).
\end{align*}}%
\begin{proposition}\label{proposition:level23evenSpinFirstQuad-sEvenFuncEqn}
We have that both $F(r)$ and $G(r)$ satisfy the same functional equation:
\begin{align*}
H&(r+4)=q^{27+12r}H(r)\\
& -q^{25+4r}j(-q^{7+12r};q^{16})\left ( q^{4r+2}\sum_{m=0}^{3}q^{-6m^2-12mr-4m}
-\sum_{m=0}^{3}q^{-6m^2-12mr-8m}\right ) \\
& +q^{21+4r}j(-q^{15+12r};q^{16})\left ( q^{4r+4}\sum_{m=0}^{3}q^{-6m^2-12mr-10m}
 -\sum_{m=0}^{3}q^{-6m^2-12mr-14m}\right ).
\end{align*}
\end{proposition}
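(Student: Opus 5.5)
The plan follows the two earlier functional-equation proofs (Propositions \ref{proposition:level12evenSpinFuncEqn} and \ref{proposition:level13evenSpinFirstPairFuncEqn}), treating $F$ and $G$ separately.

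\emph{The $F(r)$ side.} Apply Proposition \ref{proposition:f-functionaleqn} with $(a,b,c)=(4,4,1)$, base $q^4$, and $(\ell,k)=(-1,4)$. This gives
\begin{equation*}
f_{4,4,1}(x,y;q^4)=x^{-1}y^{4}q^{-24}f_{4,4,1}(q^{48}x,y;q^4)+(\text{a term } j(q^{-16}y;q^4))+\sum_{m=0}^{3}(-y)^mq^{4\binom{m}{2}}j(q^{16m}x;q^{16}).
\end{equation*}
Replacing $r$ by $r+4$ sends $x=-q^{23+12r}$ to $-q^{71+12r}=-q^{48}x$, so this shift is the right one.

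For each of the four $y$-values $-q^{13},-q^{9},-q^{15},-q^{11}$, the extra term $j(q^{-16}y;q^4)=j(-q^{4n+1\text{ or }3};q^4)$ does not vanish automatically as it did before. I expect these terms to cancel in pairs within $F$, or to be absorbed into the displayed theta sums. I would check that first.

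Next, solve each identity for the shifted double-sum and multiply by the prefactors $q^{7+8(r+4)}$, $q^{5+4(r+4)}$, and so on. The double-sum parts should then assemble into $q^{27+12r}F(r)$; the common factor is $x^{-1}y^4q^{-24}$ times the prefactor ratio, and I would verify it equals $q^{27+12r}$ in all four cases.

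In the remaining finite sums, rewrite $j(-q^{16m+23+12r};q^{16})$ using (\ref{equation:j-elliptic}) as a power of $q$ times $j(-q^{7+12r};q^{16})$. Do the same for $j(-q^{16m+31+12r};q^{16})$ with $j(-q^{15+12r};q^{16})$. After completing squares in $m$, the exponents should become the $-6m^2-12mr-\cdots$ forms in the statement.

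\emph{The $G(r)$ side.} Compute $G(r+4)$ and apply (\ref{equation:j-elliptic}) to the three theta functions $j(-q^{71+12r};q^{16})$, $j(-q^{79+12r};q^{16})$ and $j(-q^{17+4r};q^{16})$. For $j(-q^{17+4r};q^{16})$ the sign $(-1)^{r+4}=(-1)^r$ is harmless. Each should acquire exactly the factor $q^{27+12r}$ on the mock theta and simple theta-quotient parts, including the constant ``$1$'' inside the $f_3$ bracket.

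It remains to show that the finite Appell-type sums reproduce the inhomogeneous terms. For the $\omega_3$ piece, split $\sum_{k=0}^{r+3}$ into $\sum_{k=4}^{r+3}$, which by an index shift cancels against $q^{27+12r}$ times $\sum_{k=0}^{r-1}$, and $\sum_{k=0}^{3}$, which after reversing $k\to3-m$ matches the first displayed bracket. The $f_3$ piece is handled the same way, together with the leftover from the ``$1$'' term, and matches the second bracket.

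\emph{Main obstacle.} I expect the hardest part to be the bookkeeping of the non-vanishing $j(q^{-16}y;q^4)$ terms and of the constant $1$ in the $f_3$ bracket. Since $j(q^n;q)=0$ is no longer available, these terms must cancel or combine correctly. I would check them first, by direct simplification with (\ref{equation:j-elliptic}) and (\ref{equation:j-flip}), before doing the exponent matching.
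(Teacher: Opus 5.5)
Your proposal is correct and is essentially the paper's proof. For $F$ you use Proposition~\ref{proposition:f-functionaleqn} with $(\ell,k)=(-1,4)$ on base $q^4$ together with (\ref{equation:j-elliptic}), and for $G$ you split $\sum_{k=0}^{r+3}$ into a shifted tail and a reversed head. Your guess about the extra terms $j(q^{-16}y;q^4)$ is also right: they cancel, the $x=-q^{23+12r}$ pair against the $x=-q^{31+12r}$ pair, because $j(-q^3;q^4)=j(-q;q^4)$.

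Three small fixes are needed.

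\begin{itemize}
\item The prefactor is $(-x)^{-1}(-y)^4q^{-24}=-x^{-1}y^{4}q^{-24}$. With your sign the common factor would come out as $-q^{27+12r}$.
\item Replacing $r$ by $r+4$ sends $x$ to $q^{48}x$, not $-q^{48}x$.
\item The constant $1$ in the $f_3$ bracket does not pick up $q^{27+12r}$. It leaves $(q^{29+16r}-q^{-75-32r})\,j(-q^{15+12r};q^{16})$. These are exactly the two boundary terms that appear when you re-index $q^{29+16r}\sum_{m=0}^{3}q^{-6m^2-12mr-2m}$ as $q^{21+4r}\sum_{m=-1}^{2}q^{-6m^2-12mr-14m}$ to reach the $-14m$ form.
\end{itemize}
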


This proposition is similar to what we found in previous subsections, but we will also have to consider the cases $s$ even and $s$ odd.  We consider $s$ even here and $s$ odd in the next section.
\begin{proposition}\label{proposition:level23EvenSpinFirstQuadAppellForm}  We have
{\allowdisplaybreaks \begin{align*}
&q^{7+4s}
 h_{4,4,1}(-q^{6s+23},-q^{13};q^4)
   - q^{5+2s} h_{4,4,1}(-q^{6s+23},-q^{9};q^4)\\
&\qquad  -q^{13+4s}
 h_{4,4,1}(-q^{31+6s},-q^{15};q^4)
  + q^{9+2s} h_{4,4,1}(-q^{31+6s},-q^{11};q^4)\\
 &=  q^{7+8s}   j(-q^{6s+23};q^{16})\left ( m ( -q^{2+6s},-1;q^{12}  ) 
 -q^{2s-2}m ( -q^{-2+6s},-1;q^{12}  ) \right ) \\
 &\qquad 
 -q^{17+8s} j(-q^{31+6s};q^{16})  \left ( 1 
  -q^{2s}\left (  m ( -q^{4+6s},-1;q^{12} ) - q^{4s-4}m ( -q^{-4+6s},-1;q^{12}  )\right ) \right ).
\end{align*}}%
\end{proposition}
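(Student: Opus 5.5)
The plan mirrors the proofs of Propositions \ref{proposition:level12EvenSpinAppellForm}, \ref{proposition:level13EvenSpinFirstPairAppellForm} and \ref{proposition:level13EvenSpinSecondPairAppellForm}. I will substitute the four pairs $(x,y)$ into the definition of $h_{4,4,1}(x,y;q^4)$ from Corollary \ref{corollary:f441-HeckeExpansion}, with $q\to q^4$:
\begin{equation*}
h_{4,4,1}(x,y;q^4)=j(x;q^{16})m(-q^{12}yx^{-1},-1;q^{12})+j(y;q^4)m(-q^{24}xy^{-4},-1;q^{48}).
\end{equation*}
The second terms carry the factors $j(-q^{13};q^4)$, $j(-q^{9};q^4)$, $j(-q^{15};q^4)$ and $j(-q^{11};q^4)$. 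These do not vanish, unlike the earlier levels, where $j(q^n;q)=0$ killed the analogous terms. So the first key step is to handle them. Using (\ref{equation:j-elliptic}), each reduces to $j(-q;q^4)$ or $j(-q^3;q^4)$, and $j(-q^3;q^4)=j(-q;q^4)$ by (\ref{equation:j-flip}). The $q^{48}$-Appell functions then have arguments $-q^{6s-4}$, $-q^{6s+14}$, $-q^{6s-16}$ and $-q^{6s+2}$. I will pair the first with the third and the second with the fourth. Each pair should be related by an $x\to q^{48}x$ type shift, after using (\ref{equation:mxqz-flip}), and (\ref{equation:mxqz-fnq-x}) then collapses each pair to a pure theta multiple plus constants. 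I expect the surviving constant to be exactly the isolated ``$1$'' multiplying $-q^{17+8s}j(-q^{31+6s};q^{16})$ in the statement, after the theta factor $j(-q;q^4)$ is traded for $j(-q^{31+6s};q^{16})$ through (\ref{equation:j-split}) and (\ref{equation:j-elliptic}).

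The second step treats the $q^{12}$-Appell terms, grouped by the common factor $j(-q^{6s+23};q^{16})$ and $j(-q^{31+6s};q^{16})$ respectively.
\begin{itemize}
\item For $x=-q^{6s+23}$ the arguments are $-q^{2-6s}$ (with $y=-q^{13}$) and $-q^{-2-6s}$ (with $y=-q^9$). Applying (\ref{equation:mxqz-flip}) converts these to $-q^{6s-2}$ and $-q^{6s+2}$, and factoring out a power of $q$ gives the bracket $m(-q^{2+6s},-1;q^{12})-q^{2s-2}m(-q^{-2+6s},-1;q^{12})$.
\item For $x=-q^{31+6s}$ the arguments are $-q^{-4-6s}$ and $-q^{-8-6s}$. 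After flipping and applying (\ref{equation:mxqz-fnq-x}) once, to shift $-q^{8+6s}$ down to $-q^{-4+6s}$, this yields a constant $1$ together with $m(-q^{4+6s},-1;q^{12})-q^{4s-4}m(-q^{-4+6s},-1;q^{12})$.
\end{itemize}
This produces the nested structure ``$1-q^{2s}(\cdots)$'' in the statement. The prefactors $q^{7+8s}$ and $-q^{17+8s}$ will be checked by collecting exponents.

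The main obstacle is bookkeeping rather than anything conceptual. First, the $1$ appears in the statement but not in the earlier levels. I must determine whether it comes from the $q^{12}$-shift or the $q^{48}$-terms, and make sure the $q^{48}$ contributions genuinely cancel rather than leaving a stray theta quotient. If they leave a residue, I would try to absorb it using Proposition \ref{proposition:changing-z-theorem}, or recheck against the pairing with $j(-q^{23+6s};q^{16})$. Second, the signs and exponents from each application of (\ref{equation:mxqz-flip}) and (\ref{equation:mxqz-fnq-x}) must be tracked carefully.
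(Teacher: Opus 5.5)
Your overall route is the paper's: substitute the four pairs into $h_{4,4,1}(x,y;q^4)$, dispose of the $q^{48}$ Appell terms, then flip and shift the $q^{12}$ terms. Your second step (both bullets) is exactly right. That step is also the sole source of the isolated $1$: applying (\ref{equation:mxqz-fnq-x}) to $m(-q^{8+6s},-1;q^{12})$ gives $1+q^{6s-4}m(-q^{6s-4},-1;q^{12})$.

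Your first step, however, is wrong as written.

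\textbf{Arguments.} The second term of $h_{4,4,1}(x,y;q^4)$ is $j(y;q^4)m(q^{24}xy^{-4},-1;q^{48})$, with no extra sign since $(-y)^{-4}=y^{-4}$. The four $q^{48}$-arguments are therefore $-q^{6s-5}$, $-q^{6s+11}$, $-q^{6s-5}$, $-q^{6s+11}$, not the values you list. So within each of your (correctly chosen) pairs the Appell functions are literally identical. There is no $x\to q^{48}x$ shift, and (\ref{equation:mxqz-fnq-x}) plays no role here.

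\textbf{Coefficients.} By (\ref{equation:j-elliptic}) and (\ref{equation:j-flip}):
\begin{itemize}
\item $q^{7+4s}j(-q^{13};q^4)=q^{4s-8}j(-q;q^4)$ cancels $-q^{13+4s}j(-q^{15};q^4)=-q^{4s-8}j(-q;q^4)$;
\item $-q^{5+2s}j(-q^{9};q^4)=-q^{2s-1}j(-q;q^4)$ cancels $q^{9+2s}j(-q^{11};q^4)=q^{2s-1}j(-q;q^4)$.
\end{itemize}
All four terms cancel outright, leaving no constant and no residual theta quotient. Proposition \ref{proposition:changing-z-theorem} is never needed; this exact cancellation is what the paper does.

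\textbf{Why your mechanism fails.} The mechanism you anticipated could not have worked anyway. First, $j(-q;q^4)$ does not depend on $s$ and is not a monomial multiple of $j(-q^{31+6s};q^{16})$. The splitting (\ref{equation:j-split-m2}) with $q\to q^4$ only gives $j(-q;q^4)=j(-q^6;q^{16})+qj(-q^{14};q^{16})$. Second, any constant surviving from the $q^{48}$ terms would produce the $1$ a second time, on top of the one your second bullet already finds.

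Replace step one by the exact pairwise cancellation above, and your proof goes through.
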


For the case $s$ even, we replace $s\to 2s$ to have  
\begin{corollary} \label{corollary:level23EvenSpinFirstQuad-sEvenAppellForm} We have
{\allowdisplaybreaks \begin{align*}
&q^{7+8s}
 h_{4,4,1}(-q^{12s+23},-q^{13};q^4)
   - q^{5+4s} h_{4,4,1}(-q^{12s+23},-q^{9};q^4)\\
&\quad  -q^{13+8s}
 h_{4,4,1}(-q^{31+12s},-q^{15};q^4)
  + q^{9+4s} h_{4,4,1}(-q^{31+12s},-q^{11};q^4)\\
 &=  q^{7+16s}   j(-q^{12s+23};q^{16})\\
 &\qquad  \times 
 \left( \sum_{k=0}^{s-1}q^{12sk-12\binom{k+1}{2}}\left (q^{2k} -q^{-2k+4s-2}\right ) 
   +q^{2s+12\binom{s}{2}}q^2\omega_{3}(-q^2) \right ) \\
&\qquad \qquad +q^{7+16s}   j(-q^{12s+23};q^{16})2q^{2s+12\binom{s}{2}}\left (  -\frac{1}{2}q^2\frac{J_{12}^3}{J_{4}\overline{J}_{6,12}}
+\frac{J_{12}^3\overline{J}_{4,12}J_{6,12}}{\overline{J}_{0,12}J_{2,12}J_{4,12}\overline{J}_{6,12}}\right )  \\
 &\quad 
 -q^{17+16s} j(-q^{31+12s};q^{16}) \\
 &\qquad  \times  \left ( 1 
  -q^{4s}\left (  \sum_{k=0}^{s-1}q^{12sk-12\binom{k+1}{2}}\left (q^{4k} -q^{-4k+8s-4}\right ) +q^{4s+12\binom{s}{2}}
  \frac{1}{2}f_3(q^4) \right )\right )  \\
&\qquad \qquad   +q^{17+16s} j(-q^{31+12s};q^{16}) q^{4s}2q^{4s+12\binom{s}{2}}
 \left ( -\frac{1}{4}\frac{J_{12,24}^2}{J_{4}}
+\frac{J_{12}^3}{\overline{J}_{0,12}J_{4,12}}\right ).
\end{align*}}%
\end{corollary}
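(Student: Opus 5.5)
The plan is to start from Proposition \ref{proposition:level23EvenSpinFirstQuadAppellForm} and replace $s$ by $2s$. After the replacement the two Appell differences are
\begin{gather*}
m(-q^{2+12s},-1;q^{12})-q^{4s-2}m(-q^{-2+12s},-1;q^{12}),\\
m(-q^{4+12s},-1;q^{12})-q^{8s-4}m(-q^{-4+12s},-1;q^{12}),
\end{gather*}
and the prefactors become $q^{7+16s}j(-q^{12s+23};q^{16})$ and $q^{17+16s}j(-q^{31+12s};q^{16})$, with an inner factor $q^{4s}$ in the second term. These are exactly the shapes in Lemma \ref{lemma:generalAppellSums}, read with $q\to q^2$. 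Indeed, (\ref{equation:generalAppellSumOneModSix}) with $q\to q^2$ gives
\[
m(-q^{2+12s},-1;q^{12})-q^{4s-2}m(-q^{-2+12s},-1;q^{12})=\sum_{k=0}^{s-1}q^{12sk-12\binom{k+1}{2}}\bigl(q^{2k}-q^{-2k+4s-2}\bigr)+2q^{2s+12\binom{s}{2}}m(-q^2,-1;q^{12}).
\]
Similarly, (\ref{equation:generalAppellSumTwoModSix}) with $q\to q^2$ handles the second difference. It produces the finite sum with $q^{4k}-q^{-4k+8s-4}$ and the term $2q^{4s+12\binom{s}{2}}m(-q^4,-1;q^{12})$.

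Next I convert the two remaining Appell functions using Lemma \ref{lemma:alternateAppellForms} with $q\to q^2$. From (\ref{equation:alternateAppellForm3rd-w}) I get
\[
2m(-q^2,-1;q^{12})=q^2\omega_3(-q^2)+2\Bigl(-\tfrac12 q^2\tfrac{J_{12}^3}{J_4\overline{J}_{6,12}}+\tfrac{J_{12}^3\overline{J}_{4,12}J_{6,12}}{\overline{J}_{0,12}J_{2,12}J_{4,12}\overline{J}_{6,12}}\Bigr).
\]
From (\ref{equation:alternateAppellForm3rd-f}) I get
\[
2m(-q^4,-1;q^{12})=\tfrac12 f_3(q^4)+2\Bigl(-\tfrac14\tfrac{J_{12,24}^2}{J_4}+\tfrac{J_{12}^3}{\overline{J}_{0,12}J_{4,12}}\Bigr).
\]
Under $q\to q^2$ the subscripts double: $J_{a,b}\to J_{2a,2b}$, $\overline{J}_{a,b}\to\overline{J}_{2a,2b}$ and $J_a\to J_{2a}$.

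Then I substitute these into the two brackets and distribute the prefactors. The first bracket is multiplied by $q^{7+16s}j(-q^{12s+23};q^{16})$. The second bracket carries the minus sign and the extra $q^{4s}$, and its theta-quotient piece appears with sign $+$ because of the double negation $-(1-q^{4s}(\cdots))$. This reproduces the four displayed lines of the corollary.

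There is no genuine obstacle here; this is bookkeeping. The one step to check carefully is the exponent and sign tracking in the second group. Proposition \ref{proposition:level23EvenSpinFirstQuadAppellForm} has $q^{2s}$ outside and $q^{4s-4}$ inside, and after $s\to 2s$ these must become $q^{4s}$ and $q^{8s-4}$, matching (\ref{equation:generalAppellSumTwoModSix}) at $q\to q^2$. I also need to confirm that the rescaled theta-quotient correction terms match those written in the corollary.
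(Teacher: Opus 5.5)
Your proposal is correct and is the paper's own argument. The paper also substitutes $s\to 2s$ in Proposition~\ref{proposition:level23EvenSpinFirstQuadAppellForm}, applies Lemma~\ref{lemma:generalAppellSums} to the two Appell differences, rewrites $m(-q^{2},-1;q^{12})$ and $m(-q^{4},-1;q^{12})$ with Lemma~\ref{lemma:alternateAppellForms}, and distributes. Your exponent and sign checks are right: $q^{2s}\to q^{4s}$ and $q^{4s-4}\to q^{8s-4}$, both lemmas are read at $q\to q^{2}$, and the double negation gives the $+$ sign on the last theta-quotient term.
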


\begin{proof}[Proof of Theorem \ref{theorem:level23EvenSpinFirstQuad-sEven}]
Proposition \ref{proposition:level23evenSpinFirstQuad-sEvenFuncEqn} tells us that
\begin{equation*}
F(s+4)-G(s+4)=q^{27+12s}\left (F(s)-G(s) \right ), 
\end{equation*}
so we only need to prove Theorem  \ref{theorem:level23EvenSpinFirstQuad-sEven} for $0\le s\le 3$.  However, Corollary \ref{corollary:f441-HeckeExpansion} and Corollary \ref{corollary:level23EvenSpinFirstQuad-sEvenAppellForm} tell us that this is equivalent to Proposition \ref{proposition:level23EvenSpinFirstQuad-sEvenThetaId}.
\end{proof}

\begin{proof}[Proof of Proposition \ref{proposition:level23EvenSpinFirstQuadAppellForm}]
From Corollary \ref{corollary:f441-HeckeExpansion}, we have that
 \begin{equation*}
h_{4,4,1}(x,y;q)=j(x;q^4)m\big ( -q^{3}yx^{-1},-1;q^{3}  \big)
 +j(y;q)m \big( q^{6}xy^{-4},-1;q^{12}  \big),
\end{equation*}
but in our case we will need the version with $q\to q^{4}$:
 \begin{equation*}
h_{4,4,1}(x,y;q^4)=j(x;q^{16})m\big ( -q^{12}yx^{-1},-1;q^{12} \big )
 +j(y;q)m \big( q^{24}xy^{-4},-1;q^{48} \big ).
\end{equation*}
Hence
{\allowdisplaybreaks \begin{align*}
A(s):&=q^{7+4s}
 h_{4,4,1}(-q^{6s+23},-q^{13};q^4)
   - q^{5+2s} h_{4,4,1}(-q^{6s+23},-q^{9};q^4)\\
&\qquad  -q^{13+4s}
 h_{4,4,1}(-q^{31+6s},-q^{15};q^4)
  + q^{9+2s} h_{4,4,1}(-q^{31+6s},-q^{11};q^4)\\
 &=q^{7+4s}\left ( j(-q^{6s+23};q^{16})m ( -q^{2-6s},-1;q^{12}  )
 +j(-q^{13};q^4)m ( -q^{-5+6s},-1;q^{48}  )\right) \\
 &\qquad -q^{5+2s}  \left ( j(-q^{6s+23};q^{16})m ( -q^{-2-6s},-1;q^{12}  )
 +j(-q^9;q^4)m (- q^{11+6s},-1;q^{48}  )\right) \\
 &\qquad -q^{13+4s}\left ( j(-q^{31+6s};q^{16})m ( -q^{-4-6s},-1;q^{12}  )
 +j(-q^{15};q^4)m ( -q^{-5+6s},-1;q^{48}  )\right) \\
 &\qquad +q^{9+2s}\left ( j(-q^{31+6s};q^{16})m ( -q^{-8-6s},-1;q^{12}  )
 +j(-q^{11};q^4)m ( -q^{11+6s},-1;q^{48}  )\right).
\end{align*}}%
Four of the Appell function terms cancel.  Using (\ref{equation:j-elliptic}) we see that
\begin{equation*}
j(-q^{13};q^4)=q^{-15}j(-q;q^{4}) \ \textup{and} \
j(-q^{15};q^4)=q^{-21}j(-q^3;q^{4}),
\end{equation*}
as well as 
\begin{equation*}
j(-q^{9};q^4)=q^{-6}j(-q;q^4), \ \textup{and} \ 
j(-q^{11};q^4)=q^{-10}j(-q^3;q^4).
\end{equation*}
It follows that four of the Appell functions cancel, and we get
{\allowdisplaybreaks \begin{align*}
A(s)&=  -q^{5+2s}   j(-q^{6s+23};q^{16})\left ( m ( -q^{-2-6s},-1;q^{12}  ) 
 -q^{2+2s}m ( -q^{2-6s},-1;q^{12}  ) \right ) \\
 &\qquad 
 +q^{9+2s} j(-q^{31+6s};q^{16})  \left ( m ( -q^{-8-6s},-1;q^{12}  ) 
  -q^{4+2s} m ( -q^{-4-6s},-1;q^{12} )\right ) .
\end{align*}}%
Using the Appell function property (\ref{equation:mxqz-flip}), gives us
{\allowdisplaybreaks \begin{align*}
A(s)
&=  -q^{5+2s}   j(-q^{6s+23};q^{16})\left ( -q^{2+6s}m ( -q^{2+6s},-1;q^{12}  ) 
 +q^{8s}m ( -q^{-2+6s},-1;q^{12}  ) \right ) \\
 &\qquad 
 +q^{9+2s} j(-q^{31+6s};q^{16})  \left ( -q^{8+6s}m ( -q^{8+6s},-1;q^{12}  ) 
  +q^{8+8s} m ( -q^{4+6s},-1;q^{12} )\right ).
\end{align*}}%
Simplifying gives
{\allowdisplaybreaks \begin{align*}
A(s)&=  q^{7+8s}   j(-q^{6s+23};q^{16})\left ( m ( -q^{2+6s},-1;q^{12}  ) 
 -q^{2s-2}m ( -q^{-2+6s},-1;q^{12}  ) \right ) \\
 &\qquad 
 -q^{17+8s} j(-q^{31+6s};q^{16})  \left ( m ( -q^{8+6s},-1;q^{12}  ) 
  -q^{2s} m ( -q^{4+6s},-1;q^{12} )\right ).
\end{align*}}%
Employing the Appell function property (\ref{equation:mxqz-fnq-x}) yields
{\allowdisplaybreaks \begin{align*}
A(s)&=  q^{7+8s}   j(-q^{6s+23};q^{16})\left ( m ( -q^{2+6s},-1;q^{12}  ) 
 -q^{2s-2}m ( -q^{-2+6s},-1;q^{12}  ) \right ) \\
 &\quad 
 -q^{17+8s} j(-q^{31+6s};q^{16})  \left ( 1+q^{-4+6s}m ( -q^{-4+6s},-1;q^{12}  ) 
  -q^{2s} m ( -q^{4+6s},-1;q^{12} )\right ).
\end{align*}}%
Rearranging terms produces the result.
\end{proof}

\begin{proof}[Proof of Corollary  \ref{corollary:level23EvenSpinFirstQuad-sEvenAppellForm}]
We consider the case $s$ even of Proposition \ref{proposition:level23EvenSpinFirstQuadAppellForm}.  Replacing $s$ with $2s$ gives
{\allowdisplaybreaks \begin{align*}
&q^{7+8s}
 h_{4,4,1}(-q^{12s+23},-q^{13};q^4)
   - q^{5+4s} h_{4,4,1}(-q^{12s+23},-q^{9};q^4)\\
&\qquad  -q^{13+8s}
 h_{4,4,1}(-q^{31+12s},-q^{15};q^4)
  + q^{9+4s} h_{4,4,1}(-q^{31+12s},-q^{11};q^4)\\
 &=  q^{7+16s}   j(-q^{12s+23};q^{16})\left ( m ( -q^{2+12s},-1;q^{12}  ) 
 -q^{4s-2}m ( -q^{-2+12s},-1;q^{12}  ) \right ) \\
 &\qquad 
 -q^{17+16s} j(-q^{31+12s};q^{16}) \\
 &\qquad \qquad \times  \left ( 1 
  -q^{4s}\left (  m ( -q^{4+12s},-1;q^{12} ) - q^{8s-4}m ( -q^{-4+12s},-1;q^{12}  )\right ) \right ).
\end{align*}}%
Using Lemma \ref{lemma:generalAppellSums}, gives us
{\allowdisplaybreaks \begin{align*}
&q^{7+8s}
 h_{4,4,1}(-q^{12s+23},-q^{13};q^4)
   - q^{5+4s} h_{4,4,1}(-q^{12s+23},-q^{9};q^4)\\
&\quad  -q^{13+8s}
 h_{4,4,1}(-q^{31+12s},-q^{15};q^4)
  + q^{9+4s} h_{4,4,1}(-q^{31+12s},-q^{11};q^4)\\
 &=  q^{7+16s}   j(-q^{12s+23};q^{16})\\
 &\qquad  \times 
 \left ( \sum_{k=0}^{s-1}q^{12sk-12\binom{k+1}{2}}\left (q^{2k} -q^{-2k+4s-2}\right ) 
 +2q^{2s+12\binom{s}{2}}m ( -q^2,-1;q^{12}  )\right ) \\
 &\quad 
 -q^{17+16s} j(-q^{31+12s};q^{16}) \\
 &\qquad  \times  \left ( 1 
  -q^{4s}\left (  \sum_{k=0}^{s-1}q^{12sk-12\binom{k+1}{2}}\left (q^{4k} -q^{-4k+8s-4}\right ) 
 +2q^{4s+12\binom{s}{2}}m ( -q^{4},-1;q^{12}  )\right ) \right ).
\end{align*}}%
Using Lemma \ref{lemma:alternateAppellForms} gives the result.
\end{proof}


\begin{proof}[Proof of Proposition \ref{proposition:level23evenSpinFirstQuad-sEvenFuncEqn} for $F(r)$]
Let us specialize Proposition  \ref{proposition:f-functionaleqn} to $(a,b,c)=(4,4,1)$ with $\ell=-1$, $k=4$, and then use our summation convention (\ref{equation:sumconvention}) on the first sum.  This gives
\begin{align*}
f_{4,4,1}(x,y;q)&=-x^{-1}y^4q^{-6}f_{4,4,1}(q^{12}x,y;q)\\
&\quad -\sum_{m=-1}^{-1}(-x)^mq^{4\binom{m}{2}}j(q^{4m}y;q)+\sum_{m=0}^{3}(-y)^mq^{\binom{m}{2}}j(q^{4m}x;q^4).
\end{align*}
Replacing $q\to q^{4}$ gives
\begin{align}
f_{4,4,1}(x,y;q^4)&=-x^{-1}y^4q^{-24}f_{4,4,1}(q^{48}x,y;q^4)\label{equation:f441-FirstQuadPreFunc}\\
&\quad -\sum_{m=-1}^{-1}(-x)^mq^{16\binom{m}{2}}j(q^{16m}y;q^4)+\sum_{m=0}^{3}(-y)^mq^{4\binom{m}{2}}j(q^{16m}x;q^{16}).
\notag
\end{align}

We substitute $(x,y)=(-q^{23+12r},-q^{13})$ into (\ref{equation:f441-FirstQuadPreFunc}).  We have
\begin{align*}
f_{4,4,1}(-q^{23+12r},-q^{13};q^4)&=q^{5-12r}f_{4,4,1}(-q^{71+12r},-q^{13};q^4)\\
&\quad -q^{-7-12r}j(-q^{-3};q^4)
 +\sum_{m=0}^{3}q^{2m^2+11m}j(-q^{16m+23+12r};q^{16}).
 \end{align*}
Applying (\ref{equation:j-elliptic}) to both theta functions gives
\begin{align}
f_{4,4,1}(-q^{23+12r},-q^{13};q^4)
 &=q^{5-12r}f_{4,4,1}(-q^{71+12r},-q^{13};q^4)-q^{-10-12r}j(-q^{3};q^4)
 \label{equation:f441-firstQuadSum1A}\\
&\quad +q^{-7-12r}j(-q^{7+12r};q^{16})\sum_{m=0}^{3}q^{-6m^2-12mr-4m}.\notag
\end{align}

We substitute $(x,y)=(-q^{23+12r},-q^{9})$ into (\ref{equation:f441-FirstQuadPreFunc}) and apply (\ref{equation:j-elliptic}) to the two theta functions.  This gives
\begin{align}
f_{4,4,1}(-q^{23+12r},-q^{9};q^4)&=q^{-11-12r}f_{4,4,1}(-q^{71+12r},-q^{9};q^4)-q^{-17-12r}j(-q^{3};q^4)
 \label{equation:f441-firstQuadSum2A}\\
&\quad +q^{-7-12r}j(-q^{7+12r};q^{16})\sum_{m=0}^{3}q^{-6m^2-12mr-8m}.\notag
\end{align}

We substitute $(x,y)=(-q^{31+12r},-q^{15})$ into (\ref{equation:f441-FirstQuadPreFunc}) and apply (\ref{equation:j-elliptic}) to the two theta functions.  This gives
\begin{align}
f_{4,4,1}(-q^{31+12r},-q^{15};q^4)&=q^{5-12r}f_{4,4,1}(-q^{79+12r},-q^{15};q^4)-q^{-16-12r}j(-q;q^4)
 \label{equation:f441-firstQuadSum3A}\\
&\quad +q^{-15-12r}j(-q^{15+12r};q^{16})\sum_{m=0}^{3}q^{-6m^2-12mr-10m}.\notag
\end{align}

We substitute $(x,y)=(-q^{31+12r},-q^{11})$ into (\ref{equation:f441-FirstQuadPreFunc}) and apply (\ref{equation:j-elliptic}) to the two theta functions.  This gives
\begin{align}
f_{4,4,1}(-q^{31+12r},-q^{11};q^4)&=q^{-11-12r}f_{4,4,1}(-q^{79+12r},-q^{11};q^4)- q^{-21-12r}j(-q;q^4)
 \label{equation:f441-firstQuadSum4A}\\
&\quad +q^{-15-12r}j(-q^{15+12r};q^{16})\sum_{m=0}^{3}q^{-6m^2-12mr-14m}.\notag
\end{align}

In each of the four identities (\ref{equation:f441-firstQuadSum1A})-(\ref{equation:f441-firstQuadSum4A}), we isolate the double-sum on the right-hand side of the equation.  The four rewritten equations are then
\begin{align}
f_{4,4,1}(-q^{71+12r},-q^{13};q^4) &=q^{12r-5}f_{4,4,1}(-q^{23+12r},-q^{13};q^4)+q^{-15}j(-q^{3};q^4)
 \label{equation:f441-firstQuadSum1B}\\
&\quad -q^{-12}j(-q^{7+12r};q^{16})\sum_{m=0}^{3}q^{-6m^2-12mr-4m},\notag
\end{align}

\begin{align}
f_{4,4,1}(-q^{71+12r},-q^{9};q^4)&=q^{12r+11}f_{4,4,1}(-q^{23+12r},-q^{9};q^4) +q^{-6}j(-q^{3};q^4)
 \label{equation:f441-firstQuadSum2B}\\
&\quad -q^{4}j(-q^{7+12r};q^{16})\sum_{m=0}^{3}q^{-6m^2-12mr-8m}.\notag
\end{align}

\begin{align}
f_{4,4,1}(-q^{79+12r},-q^{15};q^4)&=q^{12r-5}f_{4,4,1}(-q^{31+12r},-q^{15};q^4)
+q^{-21}j(-q;q^4)
 \label{equation:f441-firstQuadSum3B}\\
&\quad -q^{-20}j(-q^{15+12r};q^{16})\sum_{m=0}^{3}q^{-6m^2-12mr-10m},\notag
\end{align}

\begin{align}
f_{4,4,1}(-q^{79+12r},-q^{11};q^4)&=q^{12r+11}f_{4,4,1}(-q^{31+12r},-q^{11};q^4) + q^{-10}j(-q;q^4)
 \label{equation:f441-firstQuadSum4B}\\
&\quad -q^{-4}j(-q^{15+12r};q^{16})\sum_{m=0}^{3}q^{-6m^2-12mr-14m}.\notag
\end{align}
We want to add the four equations  (\ref{equation:f441-firstQuadSum1B})-(\ref{equation:f441-firstQuadSum4B}) such that the left-hand side is $F(r+4)$.  To do so, we have to multiply each by an appropriate power of $q$.  The four new equations are
\begin{align*}
q^{39+8r}f_{4,4,1}(-q^{71+12r},-q^{13};q^4) &=q^{34+20r}f_{4,4,1}(-q^{23+12r},-q^{13};q^4)
+q^{24+8r}j(-q^{3};q^4)\\
&\quad -q^{27+8r}j(-q^{7+12r};q^{16})\sum_{m=0}^{3}q^{-6m^2-12mr-4m},
\end{align*}
\begin{align*}
-q^{21+4r}f_{4,4,1}(-q^{71+12r},-q^{9};q^4)&=-q^{32+16r}f_{4,4,1}(-q^{23+12r},-q^{9};q^4) 
-q^{15+4r}j(-q^{3};q^4)\\
&\quad +q^{25+4r}j(-q^{7+12r};q^{16})\sum_{m=0}^{3}q^{-6m^2-12mr-8m},
\end{align*}
\begin{align*}
-q^{45+8r}f_{4,4,1}(-q^{79+12r},-q^{15};q^4)&=-q^{40+20r}f_{4,4,1}(-q^{31+12r},-q^{15};q^4)
-q^{24+8r}j(-q;q^4)\\
&\quad +q^{25+8r}j(-q^{15+12r};q^{16})\sum_{m=0}^{3}q^{-6m^2-12mr-10m},
\end{align*}
\begin{align*}
q^{25+4r}f_{4,4,1}(-q^{79+12r},-q^{11};q^4)&=q^{36+16r}f_{4,4,1}(-q^{31+12r},-q^{11};q^4) + q^{15+4r}j(-q;q^4)\\
&\quad -q^{21+4r}j(-q^{15+12r};q^{16})\sum_{m=0}^{3}q^{-6m^2-12mr-14m}.
\end{align*}
Adding the four equations, writing the result in terms of $F(r)$, and grouping like terms gives the desired result.
\end{proof}


\begin{proof}[Proof of Proposition \ref{proposition:level23evenSpinFirstQuad-sEvenFuncEqn} for $G(r)$]  We have
{\allowdisplaybreaks \begin{align*}
G(r+4)&=q^{71+16r}j(-q^{71+12r};q^{16})\\
&\qquad \times \Big(q^{2r+8+12\binom{r+4}{2}}q^{2}\omega_{3}(-q^2)
+\sum_{k=0}^{r+4-1}q^{12(r+4)k-12\binom{k+1}{2}}\left ( q^{2k}-q^{-2k+4(r+4)-2}\right )  \Big)\\
&\quad -q^{81+16r}j(-q^{79+12r};q^{16})\\
&\qquad \times \Big ( 1-q^{4r+16}\Big( q^{4r+16+12\binom{r+4}{2}}\frac{1}{2}f_{3}(q^4)\\
&\qquad \qquad +\sum_{k=0}^{r+4-1}q^{12(r+4)k-12\binom{k+1}{2}}\left ( q^{4k}-q^{-4k+8(r+4)-4}\right )\Big) \Big)\\
&\quad -(-1)^{r+4}\frac{q^{\binom{2r+8}{2}+2}}{2}\frac{J_{1}^2J_{2}}{J_{4}^2}j(-q^{1+4r+16};q^{16}).
\end{align*}}%
We first apply (\ref{equation:j-elliptic}) to the simple quotient of theta functions.  This gives
{\allowdisplaybreaks \begin{align*}
(-1)^{r+4}&\frac{q^{\binom{2r+8}{2}+2}}{2}\frac{J_{1}^2J_{2}}{J_{4}^2}j(-q^{1+4r+16};q^{16})\\
&=(-1)^{r}\frac{q^{\binom{2r}{2}+16r+\binom{8}{2}+2}}{2}\frac{J_{1}^2J_{2}}{J_{4}^2}q^{-1-4r}j(-q^{1+4r};q^{16})\\
&=q^{27+12r}(-1)^{r}\frac{q^{\binom{2r}{2}+2}}{2}\frac{J_{1}^2J_{2}}{J_{4}^2}j(-q^{1+4r};q^{16})
\end{align*}}%
We use (\ref{equation:j-elliptic}) and distribute to get
{\allowdisplaybreaks \begin{align*}
G(r+4)&=q^{71+16r}q^{-16\binom{3}{2}-3(23+12r)}j(-q^{23+12r};q^{16})
q^{2r+8+12\binom{r}{2}+48r+12\binom{4}{2}}q^{2}\omega_{3}(-q^2)\\
&\quad +q^{71+16r}q^{-16\binom{3}{2}-3(23+12r)}j(-q^{23+12r};q^{16})\\
&\qquad \times \sum_{k=0}^{r+4-1}q^{12(r+4)k-12\binom{k+1}{2}}\left ( q^{2k}-q^{-2k+4(r+4)-2}\right ) \\
&\quad -q^{81+16r}q^{-16\binom{3}{2}-3(31+12r)}j(-q^{31+12r};q^{16})\\
&\quad  +q^{81+16r}q^{-16\binom{3}{2}-3(31+12r)}j(-q^{31+12r};q^{16})q^{4r+16}q^{4r+16+12\binom{r}{2}+48r+12\binom{4}{2}}\frac{1}{2}f_{3}(q^4)\\
&\quad +q^{81+16r}q^{-16\binom{3}{2}-3(31+12r)}j(-q^{31+12r};q^{16})q^{4r+16}\\
&\qquad \times \sum_{k=0}^{r+4-1}q^{12(r+4)k-12\binom{k+1}{2}}\left ( q^{4k}-q^{-4k+8(r+4)-4}\right ) \\
&\quad -q^{27+12r}(-1)^{r}\frac{q^{\binom{2r}{2}+2}}{2}\frac{J_{1}^2J_{2}}{J_{4}^2}j(-q^{1+4r};q^{16}).
\end{align*}}%
We simplify the exponents to get
{\allowdisplaybreaks \begin{align*}
G(r+4)&=q^{27+12r}q^{7+16r}j(-q^{23+12r};q^{16})
q^{2r+12\binom{r}{2}}q^{2}\omega_{3}(-q^2)\\
&\quad +q^{-46-20r}j(-q^{23+12r};q^{16})\sum_{k=0}^{r+4-1}q^{12(r+4)k-12\binom{k+1}{2}}\left ( q^{2k}-q^{-2k+4(r+4)-2}\right ) \\
&\quad -q^{-60-20r}j(-q^{31+12r};q^{16})\\
&\quad  +q^{27+12r}q^{17+16r}j(-q^{31+12r};q^{16})q^{4r}q^{4r+12\binom{r}{2}}\frac{1}{2}f_{3}(q^4)\\
&\quad +q^{-44-16r}j(-q^{31+12r};q^{16})\sum_{k=0}^{r+4-1}q^{12(r+4)k-12\binom{k+1}{2}}\left ( q^{4k}-q^{-4k+8(r+4)-4}\right ) \\
&\quad -q^{27+12r}(-1)^{r}\frac{q^{\binom{2r}{2}+2}}{2}\frac{J_{1}^2J_{2}}{J_{4}^2}j(-q^{1+4r};q^{16}).
\end{align*}}%
We rewrite the above equation in terms of $G(r)$.  This brings us to
{\allowdisplaybreaks \begin{align*}
G(r+4)&=q^{27+12r}G(r)\\
&\quad -q^{27+12r}q^{7+16r}j(-q^{23+12r};q^{16})
\sum_{k=0}^{r-1}q^{12rk-12\binom{k+1}{2}}\left ( q^{2k}-q^{-2k+4r-2}\right ) \\
&\quad +q^{-46-20r}j(-q^{23+12r};q^{16})\sum_{k=0}^{r+4-1}q^{12(r+4)k-12\binom{k+1}{2}}\left ( q^{2k}-q^{-2k+4(r+4)-2}\right ) \\
&\quad +q^{27+12r}q^{17+16r}j(-q^{31+12r};q^{16})-q^{-60-20r}j(-q^{31+12r};q^{16})\\
&\quad  -q^{27+12r}q^{17+16r}j(-q^{31+12r};q^{16})q^{4r}\sum_{k=0}^{r-1}q^{12rk-12\binom{k+1}{2}}\left ( q^{4k}-q^{-4k+8r-4}\right )\\
&\quad +q^{-44-16r}j(-q^{31+12r};q^{16})\sum_{k=0}^{r+4-1}q^{12(r+4)k-12\binom{k+1}{2}}\left ( q^{4k}-q^{-4k+8(r+4)-4}\right ).
\end{align*}}%
Comparing with the functional equation in Proposition \ref{proposition:level23evenSpinFirstQuad-sEvenFuncEqn}, we see that we need to show
{\allowdisplaybreaks \begin{align*}
-q^{25+4r}&j(-q^{7+12r};q^{16})\left ( q^{4r+2}\sum_{m=0}^{3}q^{-6m^2-12mr-4m}
-\sum_{m=0}^{3}q^{-6m^2-12mr-8m}\right ) \\
&\quad +q^{21+4r}j(-q^{15+12r};q^{16})\left ( q^{4r+4}\sum_{m=0}^{3}q^{-6m^2-12mr-10m}
 -\sum_{m=0}^{3}q^{-6m^2-12mr-14m}\right ) \\
&= -q^{27+12r}q^{7+16r}j(-q^{23+12r};q^{16})
\sum_{k=0}^{r-1}q^{12rk-12\binom{k+1}{2}}\left ( q^{2k}-q^{-2k+4r-2}\right ) \\
&\quad +q^{-46-20r}j(-q^{23+12r};q^{16})\sum_{k=0}^{r+4-1}q^{12(r+4)k-12\binom{k+1}{2}}\left ( q^{2k}-q^{-2k+4(r+4)-2}\right ) \\
&\quad +q^{27+12r}q^{17+16r}j(-q^{31+12r};q^{16})-q^{-60-20r}j(-q^{31+12r};q^{16})\\
&\quad  -q^{27+12r}q^{17+16r}j(-q^{31+12r};q^{16})q^{4r}\sum_{k=0}^{r-1}q^{12rk-12\binom{k+1}{2}}\left ( q^{4k}-q^{-4k+8r-4}\right )\\
&\quad +q^{-44-16r}j(-q^{31+12r};q^{16})\sum_{k=0}^{r+4-1}q^{12(r+4)k-12\binom{k+1}{2}}\left ( q^{4k}-q^{-4k+8(r+4)-4}\right ).
\end{align*}}%
We rewrite what we need to show using $(\ref{equation:j-elliptic})$.  This gives
{\allowdisplaybreaks \begin{align*}
-q^{25+4r}&j(-q^{7+12r};q^{16})\left ( q^{4r+2}\sum_{m=0}^{3}q^{-6m^2-12mr-4m}
-\sum_{m=0}^{3}q^{-6m^2-12mr-8m}\right ) \\
&\quad +q^{21+4r}j(-q^{15+12r};q^{16})\left ( q^{4r+4}\sum_{m=0}^{3}q^{-6m^2-12mr-10m}
 -\sum_{m=0}^{3}q^{-6m^2-12mr-14m}\right ) \\
&= -q^{27+16r}j(-q^{7+12r};q^{16})
\sum_{k=0}^{r-1}q^{12rk-12\binom{k+1}{2}}\left ( q^{2k}-q^{-2k+4r-2}\right ) \\
&\quad +q^{-53-32r}j(-q^{7+12r};q^{16})\sum_{k=0}^{r+4-1}q^{12(r+4)k-12\binom{k+1}{2}}\left ( q^{2k}-q^{-2k+4(r+4)-2}\right ) \\
&\quad +q^{29+16r}j(-q^{15+12r};q^{16})-q^{-75-32r}j(-q^{15+12r};q^{16})\\
&\quad  -q^{29+16r}j(-q^{15+12r};q^{16})q^{4r}\sum_{k=0}^{r-1}q^{12rk-12\binom{k+1}{2}}\left ( q^{4k}-q^{-4k+8r-4}\right )\\
&\quad +q^{-59-28r}j(-q^{15+12r};q^{16})\sum_{k=0}^{r+4-1}q^{12(r+4)k-12\binom{k+1}{2}}\left ( q^{4k}-q^{-4k+8(r+4)-4}\right ).
\end{align*}}%
We turn our focus to the coefficients of the two theta functions.  We first consider the coefficient of $j(-q^{7+12r};q^{16})$.  Here we want to show
{\allowdisplaybreaks \begin{align*}
-q^{25+4r}&\left ( q^{4r+2}\sum_{m=0}^{3}q^{-6m^2-12mr-4m}
-\sum_{m=0}^{3}q^{-6m^2-12mr-8m}\right ) \\
&= -q^{27+16r}
\sum_{k=0}^{r-1}q^{12rk-12\binom{k+1}{2}}\left ( q^{2k}-q^{-2k+4r-2}\right ) \\
&\quad +q^{-53-32r}\sum_{k=0}^{r+4-1}q^{12(r+4)k-12\binom{k+1}{2}}\left ( q^{2k}-q^{-2k+4(r+4)-2}\right ).
\end{align*}}%
Let us rewrite this a little bit.  We have
{\allowdisplaybreaks \begin{align*}
-q^{25+4r}&\left ( q^{4r+2}\sum_{m=0}^{3}q^{-6m^2-12mr-4m}
-\sum_{m=0}^{3}q^{-6m^2-12mr-8m}\right ) \\
&= -q^{27+16r}
\sum_{k=0}^{r-1}q^{12rk-12\binom{k+1}{2}}\left ( q^{2k}-q^{-2k+4r-2}\right ) \\
&\quad +q^{-53-32r}\sum_{k=0}^{3}q^{12(r+4)k-12\binom{k+1}{2}}\left ( q^{2k}-q^{-2k+4(r+4)-2}\right )\\
&\quad +q^{-53-32r}\sum_{k=4}^{r+4-1}q^{12(r+4)k-12\binom{k+1}{2}}\left ( q^{2k}-q^{-2k+4(r+4)-2}\right ).
\end{align*}}%
It suffices to show two separate identities are true.  For the first, we want to show
{\allowdisplaybreaks \begin{align}
0&= -q^{27+16r}\sum_{k=0}^{r-1}q^{12rk-12\binom{k+1}{2}}\left ( q^{2k}-q^{-2k+4r-2}\right ) 
\label{equation:level23evenSpinFirstQuadRHSFuncEqnId1}\\
&\quad +q^{-53-32r}\sum_{k=4}^{r+4-1}q^{12(r+4)k-12\binom{k+1}{2}}\left ( q^{2k}-q^{-2k+4(r+4)-2}\right ),\notag
\end{align}}%
and for the second we want to show
{\allowdisplaybreaks \begin{align}
-q^{25+4r}&\left ( q^{4r+2}\sum_{m=0}^{3}q^{-6m^2-12mr-4m}
-\sum_{m=0}^{3}q^{-6m^2-12mr-8m}\right ) 
\label{equation:level23evenSpinFirstQuadRHSFuncEqnId2}\\
&= q^{-53-32r}\sum_{k=0}^{3}q^{12(r+4)k-12\binom{k+1}{2}}\left ( q^{2k}-q^{-2k+4(r+4)-2}\right ).\notag
\end{align}}%
To show (\ref{equation:level23evenSpinFirstQuadRHSFuncEqnId1}), we see that a shift in the summation index gets us
{\allowdisplaybreaks \begin{align*}
q^{-53-32r}&\sum_{k=4}^{r+4-1}q^{12(r+4)k-12\binom{k+1}{2}}\left ( q^{2k}-q^{-2k+4(r+4)-2}\right )\\
&=q^{-53-32r}\sum_{k=0}^{r-1}q^{12(r+4)(k+4)-12\binom{k+4+1}{2}}q^{2(k+4)}\\
&\quad - q^{-53-32r}\sum_{k=0}^{r-1}q^{12(r+4)(k+4)-12\binom{k+4+1}{2}}q^{-2(k+4)+4(r+4)-2}\\
&=q^{27+16r}\sum_{k=0}^{r-1}q^{12kr-12\binom{k+1}{2}+2k}
- q^{25+20r}\sum_{k=0}^{r-1}q^{12kr-12\binom{k+1}{2}-2k}\\
&=q^{27+16r}\left ( \sum_{k=0}^{r-1}q^{12kr-12\binom{k+1}{2}+2k}
-\sum_{k=0}^{r-1}q^{12kr-12\binom{k+1}{2}-2k+4r-2}\right ).
\end{align*}}%

To show (\ref{equation:level23evenSpinFirstQuadRHSFuncEqnId2}), we reverse the order of summation with $k\to 3-m$ to get
{\allowdisplaybreaks \begin{align*}
q^{-53-32r}&\sum_{k=0}^{3}q^{12(r+4)k-12\binom{k+1}{2}}\left ( q^{2k}-q^{-2k+4(r+4)-2}\right )\\
&=q^{25+4r}\sum_{m=0}^{3}q^{-6m^2-12mr-8m}-q^{8r+27}\sum_{m=0}^{3}q^{-6m^2-12mr-4m}\\
&=-q^{25+4r}\left ( q^{4r+2}\sum_{m=0}^{3}q^{-6m^2-12mr-4m}-\sum_{m=0}^{3}q^{-6m^2-12mr-8m}\right).
\end{align*}}%

Now we turn our focus to the coefficient of $j(-q^{15+12r};q^{16})$.  We want to show
{\allowdisplaybreaks \begin{align*}
q^{21+4r}&\left ( q^{4r+4}\sum_{m=0}^{3}q^{-6m^2-12mr-10m}
 -\sum_{m=0}^{3}q^{-6m^2-12mr-14m}\right ) \\
&= q^{29+16r}-q^{-75-32r}\\
&\quad  -q^{29+20r}\sum_{k=0}^{r-1}q^{12rk-12\binom{k+1}{2}}\left ( q^{4k}-q^{-4k+8r-4}\right )\\
&\quad +q^{-59-28r}\sum_{k=0}^{r+4-1}q^{12(r+4)k-12\binom{k+1}{2}}\left ( q^{4k}-q^{-4k+8(r+4)-4}\right ).
\end{align*}}%
Let us rewrite this as
{\allowdisplaybreaks \begin{align*}
q^{21+4r}&\left ( q^{4r+4}\sum_{m=0}^{3}q^{-6m^2-12mr-10m}
 -\sum_{m=0}^{3}q^{-6m^2-12mr-14m}\right ) \\
&= q^{29+16r}-q^{-75-32r}\\
&\quad  -q^{29+20r}\sum_{k=0}^{r-1}q^{12rk-12\binom{k+1}{2}}\left ( q^{4k}-q^{-4k+8r-4}\right )\\
&\quad +q^{-59-28r}\sum_{k=0}^{3}q^{12(r+4)k-12\binom{k+1}{2}}\left ( q^{4k}-q^{-4k+8(r+4)-4}\right )\\
&\quad +q^{-59-28r}\sum_{k=4}^{r+4-1}q^{12(r+4)k-12\binom{k+1}{2}}\left ( q^{4k}-q^{-4k+8(r+4)-4}\right ).
\end{align*}}%
We proceed as we did for the first coefficient.  We note that with the shift $k\to k+4$, that
{\allowdisplaybreaks \begin{align*}
q^{-59-28r}&\sum_{k=4}^{r+4-1}q^{12(r+4)k-12\binom{k+1}{2}}\left ( q^{4k}-q^{-4k+8(r+4)-4}\right )\\
&=q^{29+20r}\sum_{k=0}^{r-1}q^{12rk-12\binom{k+1}{2}}\left ( q^{4k}-q^{-4k+8r-4}\right ).
\end{align*}}%
Secondly, we reverse the summation index with $k\to 3-m$ to get
{\allowdisplaybreaks \begin{align*}
q^{-59-28r}&\sum_{k=0}^{3}q^{12(r+4)k-12\binom{k+1}{2}}\left ( q^{4k}-q^{-4k+8(r+4)-4}\right )\\
&=q^{25+8r}\sum_{m=0}^{3}q^{-6m^2-12mr-10m}
 - q^{29+16r}\sum_{m=0}^{3}q^{-6m^2-12mr-2m}.
\end{align*}}%
We then tweak the summation indices on the right-hand side.  First we shift the sum in the second summation to being from $m=-1$ to $m=2$.  Then we add and subtract terms to bring the summation back to $m=0$ to $m=3$.  This gives
{\allowdisplaybreaks \begin{align*}
q^{-59-28r}&\sum_{k=0}^{3}q^{12(r+4)k-12\binom{k+1}{2}}\left ( q^{4k}-q^{-4k+8(r+4)-4}\right )\\
&=q^{25+8r}\sum_{m=0}^{3}q^{-6m^2-12mr-10m}
 - q^{21+4r}\sum_{m=-1}^{2}q^{-6m^2-12mr-14m}\\
&=q^{25+8r}\sum_{m=0}^{3}q^{-6m^2-12mr-10m}
 - q^{21+4r}\left ( q^{8+12r}+\sum_{m=0}^{3}q^{-6m^2-12mr-14m}- q^{-96-36r}\right )\\ 
&=q^{25+8r}\sum_{m=0}^{3}q^{-6m^2-12mr-10m}-q^{16r+29}+q^{-75-32r}
 - q^{21+4r}\left (\sum_{m=0}^{3}q^{-6m^2-12mr-14m}\right ),
\end{align*}}%
which is what we want.
\end{proof}

\subsection{The $2/3$-level string functions: Theorem \ref{theorem:level23EvenSpinFirstQuad-sOdd}}  We state the necessary propositions to prove Theorem \ref{theorem:level23EvenSpinFirstQuad-sOdd}, then we give the proof, and then we give the proofs of the propositions.  

\begin{proposition}\label{proposition:level23evenSpinFirstQuad-sOddFuncEqn}   In Theorem \ref{theorem:level23EvenSpinFirstQuad-sOdd}, we define the left-hand side of the identity to be $F(r)$ and the right-hand side to be $G(r)$.  We have that both $F(r)$ and $G(r)$ satisfy the same functional equation
\begin{align*}
H&(r+4)=q^{33+12r}H(r)\\
& +q^{27+4r}j(-q^{13+12r};q^{16})\left ( \sum_{m=0}^{3}q^{-6m^2-12mr-14m}
-q^{4+4r}\sum_{m=0}^{3}q^{-6m^2-12mr-10m}\right ) \\
& -q^{18-8r}j(-q^{5+12r};q^{16})\left ( \sum_{m=0}^{3}q^{-6m^2-12mr-20m}
-q^{6+4r}\sum_{m=0}^{3}q^{-6m^2-12mr-16m}\right ).
\end{align*}
\end{proposition}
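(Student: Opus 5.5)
The argument runs exactly parallel to the proof of Proposition \ref{proposition:level23evenSpinFirstQuad-sEvenFuncEqn}, with the shifted arguments of Theorem \ref{theorem:level23EvenSpinFirstQuad-sOdd}. Write
\begin{align*}
F(r)&=q^{11+8r}f_{4,4,1}(-q^{29+12r},-q^{13};q^4)-q^{7+4r}f_{4,4,1}(-q^{29+12r},-q^9;q^4)\\
&\quad -q^{17+8r}f_{4,4,1}(-q^{37+12r},-q^{15};q^4)+q^{11+4r}f_{4,4,1}(-q^{37+12r},-q^{11};q^4),
\end{align*}
and let $G(r)$ be the right-hand side of Theorem \ref{theorem:level23EvenSpinFirstQuad-sOdd}. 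We treat $F$ and $G$ separately.

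\emph{The functional equation for $F(r)$.} Take (\ref{equation:f441-FirstQuadPreFunc}), i.e. Proposition \ref{proposition:f-functionaleqn} with $(a,b,c)=(4,4,1)$, $\ell=-1$, $k=4$ and $q\to q^4$. Substitute the four pairs $(x,y)=(-q^{29+12r},-q^{13})$, $(-q^{29+12r},-q^{9})$, $(-q^{37+12r},-q^{15})$ and $(-q^{37+12r},-q^{11})$.
\begin{itemize}
\item Each substitution produces one theta function on base $q^4$: $j(-q^{-3};q^4)$ or $j(-q^{-1};q^4)$, and after (\ref{equation:j-elliptic}) this is $j(-q^3;q^4)$ or $j(-q;q^4)$.
\item Each also produces a four-term sum of $j(-q^{16m+29+12r};q^{16})$ or $j(-q^{16m+37+12r};q^{16})$. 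Using (\ref{equation:j-elliptic}), reduce these to $j(-q^{13+12r};q^{16})$ and $j(-q^{5+12r};q^{16})$ respectively, each times $\sum_{m=0}^3 q^{-6m^2-12mr-cm}$ for the appropriate $c$.
\end{itemize}
Isolate the shifted double-sums and multiply by the powers of $q$ that turn the four left-hand sides into $F(r+4)$. Then check two things:
\begin{itemize}
\item the stray $j(-q;q^4)$ and $j(-q^3;q^4)$ terms cancel in pairs, as in the $s$-even case;
\item the double-sum terms collect into $q^{33+12r}F(r)$.
\end{itemize}
The remaining terms should match the two bracketed expressions in the stated functional equation.

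\emph{The functional equation for $G(r)$.} Write out $G(r+4)$. Apply (\ref{equation:j-elliptic}) three times:
\begin{itemize}
\item to the theta quotient $j(-q^{3+4(r+4)};q^{16})$, to extract the factor $q^{33+12r}$;
\item to $j(-q^{29+12(r+4)};q^{16})$, reducing it to $j(-q^{29+12r};q^{16})$;
\item to $j(-q^{37+12(r+4)};q^{16})$, reducing it to $j(-q^{37+12r};q^{16})$.
\end{itemize}
Check that the $f_3(q^4)$, $\omega_3(-q^2)$ and theta-quotient terms pick up exactly the factor $q^{33+12r}$, and subtract $q^{33+12r}G(r)$.

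What remains consists of the finite sums $\sum_{k=0}^{r+3}$, which come from the nested parentheses $1-q^{4s}(\cdots)$ and $1-q^{4s+2}(1-q^{8s}(\cdots))$, together with the constant terms $1$ carried by those parentheses. Convert $j(-q^{29+12r};q^{16})$ and $j(-q^{37+12r};q^{16})$ into $j(-q^{13+12r};q^{16})$ and $j(-q^{5+12r};q^{16})$, and compare coefficients of each theta function separately. For each coefficient:
\begin{itemize}
\item Split $\sum_{k=0}^{r+3}$ into $k\le 3$ and $k\ge 4$.
\item The tail $k\ge 4$ is shifted by $k\to k+4$ and cancels the $-q^{33+12r}\sum_{k=0}^{r-1}$ term coming from $G(r)$.
\item The head $k\le 3$ is reversed by $k\to 3-m$, which yields the $\sum_{m=0}^3$ expressions.
\end{itemize}
In the coefficient carrying constant terms, the reversed sum should land on a shifted range such as $m=-1,\dots,2$. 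Adding and subtracting the endpoint monomials should then absorb the leftover constants $q^{a+bR}-q^{-c-dr}$, as at the end of the previous proof.

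\emph{The main obstacle} is the exponent bookkeeping in the $G(r)$ part. Because of the doubly nested brackets, the coefficient of $j(-q^{5+12r};q^{16})$ receives a constant term, a $\omega_3$-type sum with $q^{2k}$, and an extra monomial prefactor $q^{4s+2}q^{8s}$. Getting the signs of the shifted-range endpoint corrections to match $-q^{18-8r}(\cdots)$ is the delicate step. If the terms do not match directly, I would first recheck the reduction of the theta functions via (\ref{equation:j-elliptic}), since a sign $(-1)^n$ error there is the likeliest source of discrepancy.
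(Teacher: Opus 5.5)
Your proposal is correct and matches the argument the paper intends; the paper omits this proof as parallel to the $s$-even case. The $F$-part closes because the stray base-$q^4$ terms cancel via $j(-q;q^4)=j(-q^3;q^4)$, and the $G$-part closes by the tail-shift and head-reversal bookkeeping you describe, except that here both theta coefficients carry constant terms. So the shift to the range $m=-1,\dots,2$ with endpoint corrections is needed for each coefficient, twice for $j(-q^{5+12r};q^{16})$, and it absorbs all the leftover monomials.
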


In the previous subsection, we considered the case $s$ even in Proposition \ref{proposition:level23EvenSpinFirstQuadAppellForm}.  For the case $s$ odd, we replace $s\to 2s+1$ to get 
\begin{corollary} \label{corollary:level23EvenSpinFirstQuad-sOddAppellForm} We have
{\allowdisplaybreaks \begin{align*}
&q^{11+8s}h_{4,4,1}(-q^{29+12s},-q^{13};q^4)-q^{7+4s}h_{4,4,1}(-q^{29+12s},-q^9;q^4)\\
&\qquad -q^{17+8s}h_{4,4,1}(-q^{37+12s},-q^{15};q^4)+q^{11+4s}h_{4,4,1}(-q^{37+12s},-q^{11};q^{4})\\
&\quad =q^{15+16s}j(-q^{29+12s};q^{16})\\
&\qquad \qquad \times \Big(1-q^{4s}\Big ( \sum_{k=0}^{s-1}q^{12sk-12\binom{k+1}{2}}\left (q^{4k} -q^{-4k+8s-4}\right ) 
+2q^{4s+12\binom{s}{2}}m ( -q^{4},-1;q^{12}  )\Big ) \Big)\\
&\qquad -q^{25+16s}j(-q^{37+12s};q^{16})\\
&\qquad \qquad \times \Big ( 1-q^{4s+2}\Big ( 1-q^{8s} \Big ( \sum_{k=0}^{s-1}q^{12sk-12\binom{k+1}{2}}\left (q^{2k} -q^{-2k+4s-2}\right ) \\
&\qquad \qquad \qquad +2q^{2s+12\binom{s}{2}}m ( -q^{2},-1;q^{12}  )\Big )\Big )   \Big ).
\end{align*}}%
\end{corollary}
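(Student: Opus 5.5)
The statement is Corollary \ref{corollary:level23EvenSpinFirstQuad-sOddAppellForm}, the $s$ odd companion of Corollary \ref{corollary:level23EvenSpinFirstQuad-sEvenAppellForm}. I would prove it the same way: start from Proposition \ref{proposition:level23EvenSpinFirstQuadAppellForm} with $s\to 2s+1$, then reduce the resulting Appell-function differences with Lemma \ref{lemma:generalAppellSums}.

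\textbf{Step 1: substitute.} Putting $s\to 2s+1$ into Proposition \ref{proposition:level23EvenSpinFirstQuadAppellForm} turns the left-hand side exactly into the stated left-hand side. The prefactors become $q^{11+8s}$, $q^{7+4s}$, $q^{17+8s}$, $q^{11+4s}$, and the arguments become $-q^{29+12s}$ and $-q^{37+12s}$. The right-hand side becomes
\begin{align*}
&q^{15+16s}j(-q^{29+12s};q^{16})\big(m(-q^{8+12s},-1;q^{12})-q^{4s}m(-q^{4+12s},-1;q^{12})\big)\\
&\quad -q^{25+16s}j(-q^{37+12s};q^{16})\big(1-q^{4s+2}(m(-q^{10+12s},-1;q^{12})-q^{8s}m(-q^{2+12s},-1;q^{12}))\big).
\end{align*}
The Appell arguments now sit in the wrong residue classes for Lemma \ref{lemma:generalAppellSums}, so they must be shifted.

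\textbf{Step 2: first bracket.} Apply (\ref{equation:mxqz-fnq-x}) with base $q^{12}$:
\[
m(-q^{8+12s},-1;q^{12})=1+q^{-4+12s}m(-q^{-4+12s},-1;q^{12}).
\]
The first bracket then becomes
\[
1-q^{4s}\big(m(-q^{4+12s},-1;q^{12})-q^{8s-4}m(-q^{-4+12s},-1;q^{12})\big).
\]
Apply (\ref{equation:generalAppellSumTwoModSix}) with $q\to q^2$. Its left-hand side is exactly the inner difference, and it gives
\[
\sum_{k=0}^{s-1}q^{12sk-12\binom{k+1}{2}}\left(q^{4k}-q^{-4k+8s-4}\right)+2q^{4s+12\binom{s}{2}}m(-q^4,-1;q^{12}).
\]
This is the first displayed term of the corollary.

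\textbf{Step 3: second bracket.} Similarly,
\[
m(-q^{10+12s},-1;q^{12})=1+q^{-2+12s}m(-q^{-2+12s},-1;q^{12}).
\]
The inner expression becomes
\[
1-q^{8s}\big(m(-q^{2+12s},-1;q^{12})-q^{4s-2}m(-q^{-2+12s},-1;q^{12})\big).
\]
Apply (\ref{equation:generalAppellSumOneModSix}) with $q\to q^2$. It gives
\[
\sum_{k=0}^{s-1}q^{12sk-12\binom{k+1}{2}}\left(q^{2k}-q^{-2k+4s-2}\right)+2q^{2s+12\binom{s}{2}}m(-q^2,-1;q^{12}),
\]
which matches the nested form $1-q^{4s+2}\big(1-q^{8s}(\cdots)\big)$ in the statement.

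\textbf{Main obstacle.} The only real difficulty is bookkeeping of signs and exponents, especially in Step 3. There the sign flip from (\ref{equation:mxqz-fnq-x}) must produce the inner minus sign, and the factor $q^{8s}$ must come out as the prefactor $q^{-2+12s}$ divided by the lemma's normalisation $q^{4s-2}$. I would verify each step at $s=0$, where the finite sums vanish, and check the exponents against the $s$ even case as a sanity check.
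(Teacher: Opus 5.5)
Your proposal is correct and follows the paper's proof essentially line for line. You substitute $s\to 2s+1$ into Proposition \ref{proposition:level23EvenSpinFirstQuadAppellForm}, shift $m(-q^{8+12s},-1;q^{12})$ and $m(-q^{10+12s},-1;q^{12})$ down with (\ref{equation:mxqz-fnq-x}), and then apply (\ref{equation:generalAppellSumTwoModSix}) and (\ref{equation:generalAppellSumOneModSix}) with $q\to q^2$. Your Step 1 is in fact slightly more careful than the paper's displayed substitution, since you keep the outer factor $1-q^{4s+2}(\cdots)$ that the proposition produces and the final statement requires.
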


\begin{proof}[Proof of Theorem  \ref{theorem:level23EvenSpinFirstQuad-sOdd}]
Proposition \ref{proposition:level23evenSpinFirstQuad-sOddFuncEqn} tells us that
\begin{equation*}
F(s+4)-G(s+4)=q^{33+12s}\left (F(s)-G(s) \right ), 
\end{equation*}
so we only need to prove Theorem  \ref{theorem:level23EvenSpinFirstQuad-sOdd} for $0\le s\le 3$.  However, Corollary \ref{corollary:level23EvenSpinFirstQuad-sOddAppellForm} and Lemma \ref{lemma:alternateAppellForms} give us that
{\allowdisplaybreaks \begin{align*}
&q^{11+8s}h_{4,4,1}(-q^{29+12s},-q^{13};q^4)-q^{7+4s}h_{4,4,1}(-q^{29+12s},-q^9;q^4)\\
&\qquad -q^{17+8s}h_{4,4,1}(-q^{37+12s},-q^{15};q^4)+q^{11+4s}h_{4,4,1}(-q^{37+12s},-q^{11};q^{4})\\
&\quad =q^{15+16s}j(-q^{29+12s};q^{16})\\
&\qquad \quad \times \left(1-q^{4s}\left ( \sum_{k=0}^{s-1}q^{12sk-12\binom{k+1}{2}}\left (q^{4k} -q^{-4k+8s-4}
+q^{4s+12\binom{s}{2}} \frac{1}{2}f_3(q^4)\right ) \right )\right ) \\
&\qquad \qquad -2q^{4s}q^{4s+12\binom{s}{2}}q^{15+16s}j(-q^{29+12s};q^{16})\left (-\frac{1}{4}\frac{J_{12,24}^2}{J_{4}}
+\frac{J_{12}^3}{\overline{J}_{0,12}J_{4,12}}\right )  \\
&\qquad -q^{25+16s}j(-q^{37+12s};q^{16})\\
&\qquad \quad \times \Big ( 1-q^{4s+2}\Big ( 1-q^{8s} \left ( \sum_{k=0}^{s-1}q^{12sk-12\binom{k+1}{2}}\left (q^{2k} -q^{-2k+4s-2}\right ) +2q^{2s+12\binom{s}{2}} \frac{1}{2}q^2\omega_{3}(-q^2)\right ) \\
&\qquad \qquad  -2q^{25+16s}q^{4s+2}q^{8s} q^{2s+12\binom{s}{2}}j(-q^{37+12s};q^{16})
\left ( -\frac{1}{2}q^2\frac{J_{12}^3}{J_{4}\overline{J}_{6,12}}
+\frac{J_{12}^3\overline{J}_{4,12}J_{6,12}}{\overline{J}_{0,12}J_{2,12}J_{4,12}\overline{J}_{6,12}}\right ) .
\end{align*}}%
Corollary \ref{corollary:f441-HeckeExpansion} and  Proposition \ref{proposition:level23EvenSpinFirstQuad-sOddThetaId} then give us the result.
\end{proof}

\begin{proof}[Proof of Corollary  \ref{corollary:level23EvenSpinFirstQuad-sOddAppellForm}]
We consider the case $s$ odd of Proposition \ref{proposition:level23EvenSpinFirstQuadAppellForm}.  Replacing $s$ with $2s+1$ gives
{\allowdisplaybreaks \begin{align*}
q^{11+8s}&h_{4,4,1}(-q^{29+12s},-q^{13};q^4)-q^{7+4s}h_{4,4,1}(-q^{29+12s},-q^9;q^4)\\
&\quad -q^{17+8s}h_{4,4,1}(-q^{37+12s},-q^{15};q^4)+q^{11+4s}h_{4,4,1}(-q^{37+12s},-q^{11};q^{4})\\
&=q^{15+16s}j(-q^{29+12s};q^{16}) \left( m ( -q^{8+12s},-1;q^{12}  )  -q^{4s}m ( -q^{4+12s},-1;q^{12}  )\right)\\
&\quad -q^{25+16s}j(-q^{37+12s};q^{16})
 \left ( m ( -q^{10+12s},-1;q^{12} ) - q^{8s}m ( -q^{2+12s},-1;q^{12}  ) \right ) .
\end{align*}}%
For the first line, Appell function property (\ref{equation:mxqz-fnq-x}) and factoring gives us
\begin{align*}
&m ( -q^{8+12s},-1;q^{12}  )  -q^{4s}m ( -q^{4+12s},-1;q^{12}  )\\
&\qquad = 1+q^{-4+12s}m ( -q^{-4+12s},-1;q^{12}  )  -q^{4s}m ( -q^{4+12s},-1;q^{12}  )\\
&\qquad = 1 -q^{4s}\left ( m ( -q^{4+12s},-1;q^{12}  )-q^{-4+8s}m ( -q^{-4+12s},-1;q^{12}  ) \right ). 
\end{align*}
Lemma \ref{lemma:generalAppellSums} (\ref{equation:generalAppellSumTwoModSix}) then gives
\begin{align*}
&m ( -q^{8+12s},-1;q^{12}  )  -q^{4s}m ( -q^{4+12s},-1;q^{12}  )\\
&\qquad =1-q^{4s}\left ( \sum_{k=0}^{s-1}q^{12sk-12\binom{k+1}{2}}\left (q^{4k} -q^{-4k+8s-4}\right ) 
 +2q^{4s+12\binom{s}{2}}m ( -q^{4},-1;q^{12}  )\right ). 
\end{align*}
For the second line, we use (\ref{equation:mxqz-fnq-x}) and Lemma \ref{lemma:generalAppellSums} (\ref{equation:generalAppellSumOneModSix})to get
{\allowdisplaybreaks \begin{align*}
&m ( -q^{10+12s},-1;q^{12} ) - q^{8s}m ( -q^{2+12s},-1;q^{12}  )\\
&\qquad = 1+q^{-2+12s}m ( -q^{-2+12s},-1;q^{12} ) - q^{8s} m ( -q^{2+12s},-1;q^{12}  )\\
&\qquad = 1- q^{8s} \left ( m ( -q^{2+12s},-1;q^{12}  ) -q^{4s-2}m ( -q^{-2+12s},-1;q^{12} ) \right ) \\
&\qquad = 1-q^{8s} \left ( \sum_{k=0}^{s-1}q^{12sk-12\binom{k+1}{2}}\left (q^{2k} -q^{-2k+4s-2}\right ) 
 +2q^{2s+12\binom{s}{2}}m ( -q^{2},-1;q^{12}  )\right ). 
\end{align*}}%
Assembling the two pieces gives the result.
\end{proof}


\begin{proof}[Proof of Proposition \ref{proposition:level23evenSpinFirstQuad-sOddFuncEqn}]  This is similar to the proof of Proposition \ref{proposition:level23evenSpinFirstQuad-sEvenFuncEqn}, so it will be omitted.
\end{proof}

\subsection{The $2/3$-level string functions: Theorem \ref{theorem:level23EvenSpinSecondQuad-sEven}}  We state the necessary propositions to prove Theorem \ref{theorem:level23EvenSpinSecondQuad-sEven}, then we give the proof, and then we give the proofs of the propositions.

\begin{proposition}\label{proposition:level23evenSpinSecondQuad-sEvenFuncEqn}  In Theorem \ref{theorem:level23EvenSpinSecondQuad-sEven}, we define the left-hand side of the identity to be $F(r)$ and the right-hand side to be $G(r)$.  We have that both $F(r)$ and $G(r)$ satisfy the same functional equation:
\begin{align*}
H&(r+4)=q^{27+12r}H(r)\\
& +q^{23+2r}j(-q^{7+12r};q^{16})\left ( \sum_{m=0}^{3}q^{-6m^2-12mr-10m}
-q^{4+8r}\sum_{m=0}^{3}q^{-6m^2-12mr-2m}\right ) \\
& -q^{18+2r}j(-q^{15+12r};q^{16})\left ( \sum_{m=0}^{3}q^{-6m^2-12mr-16m}  
-q^{8+8r}\sum_{m=0}^{3}q^{-6m^2-12mr-8m}\right ). 
\end{align*}
\end{proposition}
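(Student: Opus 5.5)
The plan follows the proof of Proposition \ref{proposition:level23evenSpinFirstQuad-sEvenFuncEqn}: establish the functional equation separately for $F(r)$ and for $G(r)$, and check that the inhomogeneous terms agree.

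\textbf{The equation for $F(r)$.} Here
\begin{align*}
F(r)&=q^{7+10r}f_{4,4,1}(-q^{12r+23},-q^{15};q^4)-q^{3+2r}f_{4,4,1}(-q^{12r+23},-q^{7};q^4)\\
&\qquad -q^{14+10r}f_{4,4,1}(-q^{31+12r},-q^{17};q^4)+q^{6+2r}f_{4,4,1}(-q^{31+12r},-q^{9};q^4).
\end{align*}
\begin{itemize}
\item Apply (\ref{equation:f441-FirstQuadPreFunc}), i.e. Proposition \ref{proposition:f-functionaleqn} with $(a,b,c)=(4,4,1)$, $\ell=-1$, $k=4$ and $q\to q^4$, to each of the four double-sums, with $(x,y)$ running through $(-q^{23+12r},-q^{15})$, $(-q^{23+12r},-q^{7})$, $(-q^{31+12r},-q^{17})$ and $(-q^{31+12r},-q^{9})$.
\item In each resulting identity, use (\ref{equation:j-elliptic}) to rewrite the four theta functions $j(-q^{16m+23+12r};q^{16})$ as $q$-powers times $j(-q^{7+12r};q^{16})$. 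Likewise rewrite the $j(-q^{16m+31+12r};q^{16})$ as $q$-powers times $j(-q^{15+12r};q^{16})$.
\item Each $m$-sum should then become $\sum_{m=0}^{3}q^{-6m^2-12mr-cm}$ with $c\in\{2,10\}$ or $c\in\{8,16\}$, matching the statement.
\item The boundary terms $j(-q^{16m'}y;q^4)$ reduce via (\ref{equation:j-elliptic}) to multiples of $j(-q;q^4)$ and $j(-q^3;q^4)$. After multiplying by the prefactors needed to rebuild $F(r+4)$, they should cancel in pairs, exactly as they did in the first-quad case. I will check this cancellation first; it is the sanity check that the exponents in $F$ are correct.
\item Isolate the shifted double-sums and multiply each identity by the power of $q$ that turns its left side into a summand of $F(r+4)$. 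The right-hand sides should then all carry the common factor $q^{27+12r}$ times the corresponding summand of $F(r)$. Adding the four identities gives the equation for $F$.
\end{itemize}

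\textbf{The equation for $G(r)$.} Write out $G(r+4)$ from the right-hand side of Theorem \ref{theorem:level23EvenSpinSecondQuad-sEven}.
\begin{itemize}
\item Apply (\ref{equation:j-elliptic}) to $j(-q^{12r+71};q^{16})$, $j(-q^{79+12r};q^{16})$ and $j(-q^{13+4r};q^{16})$. This should show that the mock theta terms ($\frac12 f_3(q^4)$ and $q^2\omega_3(-q^2)$) and the theta quotient $\frac{J_1^2J_2}{J_4^2}$ term each scale by exactly $q^{27+12r}$.
\item Subtract $q^{27+12r}G(r)$. What remains is a combination of:
\begin{itemize}
\item the finite sums $\sum_{k=0}^{r+3}$ (for $G(r+4)$) against $\sum_{k=0}^{r-1}$ (for $G(r)$), each multiplying $j(-q^{7+12r};q^{16})$ or $j(-q^{15+12r};q^{16})$ after normalization by (\ref{equation:j-elliptic});
\item the constant term $1$ appearing in the $j(-q^{31+12r};q^{16})$ bracket.
\end{itemize}
\item Split each sum over $k\in[0,r+3]$ into $k\in[0,3]$ and $k\in[4,r+3]$.
\item The tail $k\in[4,r+3]$, after the shift $k\to k+4$, should exactly cancel the $\sum_{k=0}^{r-1}$ term.
\item The head $k\in[0,3]$, after reversing with $k\to 3-m$, should produce the two $m$-sums of the statement.
\end{itemize}

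\textbf{Main obstacle.} The hard part is the coefficient of $j(-q^{15+12r};q^{16})$. Here the reversed sum lands on a shifted range of $m$; in the first-quad case it was $m=-1$ to $2$. The two boundary terms this creates must cancel the constant contributions coming from the $1$ in $G$. The first-quad case produced exactly the pair $q^{16r+29}$ and $q^{-75-32r}$, and I expect a similar pair here. I will first compute the exponents of these boundary terms and confirm that they match the constant contributions. Only then will I finish by comparing coefficients of the two theta functions.
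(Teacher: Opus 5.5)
Your proposal is correct and follows the same route the paper intends; the paper omits this proof as analogous to the first-quad case. For $F$, the $(\ell,k)=(-1,4)$ functional equation works: every boundary term is a multiple of $j(-q;q^4)$, and they cancel first-with-third and second-with-fourth. For $G$, the tail/head split works: in the $j(-q^{15+12r};q^{16})$ coefficient, the $k\to 3-m$ reversal gives a $-4m$ sum, and shifting it to the $-16m$ sum produces exactly the pair $q^{28+14r}$, $q^{-84-34r}$ that cancels the constant contributions. One small slip: in $G(r+4)$ the theta-quotient term involves $j(-q^{25+4r};q^{16})$, not $j(-q^{13+4r};q^{16})$, though it does scale by $q^{27+12r}$ as you claim.
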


The next proposition is similar to what we found in previous subsections, and we will again have to consider the cases $s$ even and $s$ odd.  We consider $s$ even here and $s$ odd in the next section.
\begin{proposition}\label{proposition:level23EvenSpinSecondQuadAppellForm} We have
\begin{align*}
&q^{7+5s}
 h_{4,4,1}(-q^{6s+23},-q^{15};q^4)
   - q^{3+s} h_{4,4,1}(-q^{6s+23},-q^{7};q^4)\\
&\qquad  -q^{14+5s}
 h_{4,4,1}(-q^{31+6s},-q^{17};q^4)
  + q^{6+s} h_{4,4,1}(-q^{31+6s},-q^{9};q^4)\\
&=
 q^{7+7s}  j(-q^{6s+23};q^{16})\left ( m ( -q^{4+6s},-1;q^{12}  ) - q^{4s-4}m ( -q^{-4+6s},-1;q^{12}  ) \right ) \\
&\qquad  
 - q^{16+7s}  j(-q^{31+6s};q^{16})\left ( 1-q^{4s} \left ( m ( -q^{2+6s},-1;q^{12}  ) -q^{2s-2}m ( -q^{-2+6s},-1;q^{12}  ) \right ) \right ) .
\end{align*}
\end{proposition}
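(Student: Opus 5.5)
The plan is to mirror the proof of Proposition \ref{proposition:level23EvenSpinFirstQuadAppellForm} essentially line by line. I first write out $h_{4,4,1}(x,y;q^4)=j(x;q^{16})m(-q^{12}yx^{-1},-1;q^{12})+j(y;q^4)m(q^{24}xy^{-4},-1;q^{48})$ for each of the four pairs $(x,y)$. These pairs are $(-q^{6s+23},-q^{15})$, $(-q^{6s+23},-q^{7})$, $(-q^{31+6s},-q^{17})$ and $(-q^{31+6s},-q^{9})$. The first Appell arguments come out as $m(-q^{4-6s},-1;q^{12})$, $m(-q^{-4-6s},-1;q^{12})$, $m(-q^{-2-6s},-1;q^{12})$ and $m(-q^{-10-6s},-1;q^{12})$. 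The second ones have base $q^{48}$ and arguments $-q^{-13+6s}$, $-q^{19+6s}$ (twice) and $-q^{-13+6s}$, so the base-$q^{48}$ Appell functions pair off. One pair is $(y_1,y_4)=(-q^{15},-q^9)$, both with argument $-q^{-13+6s}$. The other pair is $(y_2,y_3)=(-q^7,-q^{17})$, both with argument $-q^{19+6s}$.

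The first step is to check that these base-$q^{48}$ terms cancel. By (\ref{equation:j-elliptic}) I compute $j(-q^{15};q^4)=q^{-24}j(-q^{3};q^4)$ and $j(-q^{9};q^4)=q^{-6}j(-q;q^4)$. By (\ref{equation:j-flip}) I have $j(-q;q^4)=j(-q^3;q^4)$, so both equal multiples of the same theta function. Next I compute $j(-q^{7};q^4)=q^{-4}j(-q^3;q^4)$ and $j(-q^{17};q^4)=q^{-32}j(-q;q^4)$. With the prefactors $q^{7+5s},-q^{3+s},-q^{14+5s},q^{6+s}$, the coefficients must cancel in pairs. For the first pair, $q^{7+5s-24}=q^{-17+5s}$ would have to match $q^{6+s-6}=q^{s}$, which is not immediately equal. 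So I will carefully recheck the elliptic shifts and the pairing. I expect the correct pairing to be the one dictated by equal Appell arguments together with matching powers, as in the even-spin first-quad computation. The cancellation should hold as in the paper's analogous step. If the exponents do not match exactly, the true source of cancellation may instead be $j(q^n;q)$-type vanishing or $m(x,z;q)=m(x,qz;q)$. This bookkeeping is the main obstacle.

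Once those terms are gone, what remains are the two groups attached to $j(-q^{6s+23};q^{16})$ and $j(-q^{31+6s};q^{16})$. For the first group I apply (\ref{equation:mxqz-flip}), which turns $m(-q^{4-6s},-1;q^{12})$ into $-q^{-4+6s}m(-q^{-4+6s},-1;q^{12})$ and $m(-q^{-4-6s},-1;q^{12})$ into $-q^{4+6s}m(-q^{4+6s},-1;q^{12})$. Factoring gives $q^{7+7s}j(-q^{6s+23};q^{16})\big(m(-q^{4+6s},-1;q^{12})-q^{4s-4}m(-q^{-4+6s},-1;q^{12})\big)$, after checking that the powers $q^{3+s}q^{4+6s}=q^{7+7s}$ and $q^{7+5s}q^{-4+6s}=q^{3+11s}=q^{7+7s}q^{4s-4}$ agree.

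For the second group, flipping gives $-q^{14+5s}\cdot(-q^{2+6s})m(-q^{2+6s},-1;q^{12})+q^{6+s}\cdot(-q^{10+6s})m(-q^{10+6s},-1;q^{12})$. I then use (\ref{equation:mxqz-fnq-x}) to write $m(-q^{10+6s},-1;q^{12})=1+q^{-2+6s}m(-q^{-2+6s},-1;q^{12})$. Collecting the terms yields $-q^{16+7s}j(-q^{31+6s};q^{16})\big(1-q^{4s}(m(-q^{2+6s},-1;q^{12})-q^{2s-2}m(-q^{-2+6s},-1;q^{12}))\big)$, which is the stated form. The overall signs and powers are routine to verify.
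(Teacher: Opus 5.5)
Your route is the paper's: expand each $h_{4,4,1}(x,y;q^4)$, cancel the four base-$q^{48}$ Appell terms, then flip and shift the base-$q^{12}$ ones. Your last two paragraphs, treating the $j(-q^{6s+23};q^{16})$ and $j(-q^{31+6s};q^{16})$ groups, are correct.

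The gap is the cancellation step. You leave it unresolved, and as written it has both the wrong pairing and wrong quasi-periodicity shifts. The second Appell argument is $q^{24}xy^{-4}$, so:
\begin{itemize}
\item the terms with $y=-q^{15}$ and $y=-q^{17}$ both give $m(-q^{6s-13},-1;q^{48})$;
\item the terms with $y=-q^{7}$ and $y=-q^{9}$ both give $m(-q^{6s+19},-1;q^{48})$.
\end{itemize}
So the pairs are (first, third) and (second, fourth), not $(y_1,y_4)$ and $(y_2,y_3)$.

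Three of your four shifts are also off. By (\ref{equation:j-elliptic}) one has $j(-q^{15};q^4)=q^{-21}j(-q^{3};q^4)$, $j(-q^{7};q^4)=q^{-3}j(-q^{3};q^4)$ and $j(-q^{17};q^4)=q^{-28}j(-q;q^4)$. Only your $j(-q^{9};q^4)=q^{-6}j(-q;q^4)$ was right.

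With these values, $q^{7+5s}\cdot q^{-21}=q^{14+5s}\cdot q^{-28}=q^{5s-14}$ and $q^{3+s}\cdot q^{-3}=q^{6+s}\cdot q^{-6}=q^{s}$. Also $j(-q;q^4)=j(-q^{3};q^4)$ by (\ref{equation:j-flip}), and the prefactors in each pair have opposite signs. Hence the four base-$q^{48}$ terms cancel exactly in pairs.

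The fallbacks you suggest would not have rescued the argument. None of $j(-q^{7};q^4)$, $j(-q^{9};q^4)$, $j(-q^{15};q^4)$, $j(-q^{17};q^4)$ vanishes; all are nonzero multiples of $j(-q;q^4)$. And $m(x,z;q)=m(x,qz;q)$ only moves $z$, whereas what must match is the first argument $x$.

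With the cancellation corrected as above, the rest of your proposal goes through verbatim and agrees with the paper's proof.
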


For the case $s$ even, we replace $s\to 2s$ to have  
\begin{corollary}  \label{corollary:level23EvenSpinSecondQuad-sEvenAppellForm}  We have
{\allowdisplaybreaks \begin{align*}
&q^{7+10s}
 h_{4,4,1}(-q^{12s+23},-q^{15};q^4)
   - q^{3+2s} h_{4,4,1}(-q^{12s+23},-q^{7};q^4)\\
&\quad  -q^{14+10s}
 h_{4,4,1}(-q^{31+12s},-q^{17};q^4)
  + q^{6+2s} h_{4,4,1}(-q^{31+12s},-q^{9};q^4)\\
  &= q^{7+14s}  j(-q^{12s+23};q^{16})\\
  &\qquad \times \left (\sum_{k=0}^{s-1}q^{12sk-12\binom{k+1}{2}}\left (q^{4k} -q^{-4k+8s-4}\right ) 
 +2q^{4s+12\binom{s}{2}}m ( -q^{4},-1;q^{12}  )\right ) \\
 &\quad - q^{16+14s}  j(-q^{31+12s};q^{16})\\
 &\qquad \times \left (  1-q^{8s} \left ( 
 \sum_{k=0}^{s-1}q^{12sk-12\binom{k+1}{2}}\left (q^{2k} -q^{-2k+4s-2}\right ) 
 +2q^{2s+12\binom{s}{2}}m ( -q^2,-1;q^{12}  ) \right )\right ).
\end{align*}}%
\end{corollary}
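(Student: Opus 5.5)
The plan is to specialize Proposition \ref{proposition:level23EvenSpinSecondQuadAppellForm} by the substitution $s\to 2s$, and then to reduce each of the two Appell-function differences with Lemma \ref{lemma:generalAppellSums}. This mirrors the proof of Corollary \ref{corollary:level23EvenSpinFirstQuad-sEvenAppellForm}.

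\textbf{Substitution.} Replacing $s$ with $2s$ in Proposition \ref{proposition:level23EvenSpinSecondQuadAppellForm} gives the following.
\begin{itemize}
\item The exponents become $q^{7+10s}$, $q^{3+2s}$, $q^{14+10s}$ and $q^{6+2s}$, and the double-sum arguments become $-q^{12s+23}$ and $-q^{12s+31}$, matching the left-hand side of the corollary.
\item The first term becomes
\[
q^{7+14s}j(-q^{12s+23};q^{16})\left(m(-q^{4+12s},-1;q^{12})-q^{8s-4}m(-q^{-4+12s},-1;q^{12})\right).
\]
\item The second term becomes
\[
-q^{16+14s}j(-q^{31+12s};q^{16})\Bigl(1-q^{8s}\bigl(m(-q^{2+12s},-1;q^{12})-q^{4s-2}m(-q^{-2+12s},-1;q^{12})\bigr)\Bigr).
\]
\end{itemize}

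\textbf{Appell-function reductions.} Both reductions use Lemma \ref{lemma:generalAppellSums} with $q\to q^2$.
\begin{itemize}
\item In (\ref{equation:generalAppellSumTwoModSix}), the substitution $q\to q^2$ turns $m(-q^{2+6s},-1;q^6)-q^{4s-2}m(-q^{-2+6s},-1;q^6)$ into $m(-q^{4+12s},-1;q^{12})-q^{8s-4}m(-q^{-4+12s},-1;q^{12})$. The right-hand side becomes $\sum_{k=0}^{s-1}q^{12sk-12\binom{k+1}{2}}(q^{4k}-q^{-4k+8s-4})+2q^{4s+12\binom{s}{2}}m(-q^4,-1;q^{12})$. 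This is exactly the first bracket.
\item In (\ref{equation:generalAppellSumOneModSix}), the substitution $q\to q^2$ produces the second bracket, with the sum $\sum_{k=0}^{s-1}q^{12sk-12\binom{k+1}{2}}(q^{2k}-q^{-2k+4s-2})$ and the term $2q^{2s+12\binom{s}{2}}m(-q^2,-1;q^{12})$.
\end{itemize}

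\textbf{Bookkeeping.} The step most likely to go wrong is checking that the powers and binomials transform correctly under $q\to q^2$. In particular, $6\binom{s}{2}$ should become $12\binom{s}{2}$, and $q^{6sk}$ should become $q^{12sk}$. Once these are verified, substituting the two reductions into the specialized proposition yields the stated identity.
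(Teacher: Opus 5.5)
Your proposal is correct and follows the paper's own proof: replace $s$ by $2s$ in Proposition \ref{proposition:level23EvenSpinSecondQuadAppellForm}, then reduce the two Appell-function differences with Lemma \ref{lemma:generalAppellSums}. Your explicit identification of \eqref{equation:generalAppellSumTwoModSix} for the first bracket and \eqref{equation:generalAppellSumOneModSix} for the second, each with $q\to q^2$, is exactly what the paper's one-line ``Using Lemma \ref{lemma:generalAppellSums}'' leaves implicit, and the exponents transform as you claim.
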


\begin{proof}[Proof of Theorem \ref{theorem:level23EvenSpinSecondQuad-sEven}]
Proposition \ref{proposition:level23evenSpinSecondQuad-sEvenFuncEqn} tells us that
\begin{equation*}
F(s+4)-G(s+4)=q^{27+12s}\left (F(s)-G(s) \right ), 
\end{equation*}
so we only need to prove Theorem  \ref{theorem:level23EvenSpinSecondQuad-sEven} for $0\le s\le 3$.  However,  Corollary \ref{corollary:level23EvenSpinFirstQuad-sEvenAppellForm}  and Lemma \ref{lemma:alternateAppellForms} give us that
{\allowdisplaybreaks \begin{align*}
&q^{7+10s}
 h_{4,4,1}(-q^{12s+23},-q^{15};q^4)
   - q^{3+2s} h_{4,4,1}(-q^{12s+23},-q^{7};q^4)\\
&\quad  -q^{14+10s}
 h_{4,4,1}(-q^{31+12s},-q^{17};q^4)
  + q^{6+2s} h_{4,4,1}(-q^{31+12s},-q^{9};q^4)\\
  &= q^{7+14s}  j(-q^{12s+23};q^{16})\\
  &\qquad \times \left (\sum_{k=0}^{s-1}q^{12sk-12\binom{k+1}{2}}\left (q^{4k} -q^{-4k+8s-4}\right ) 
 +q^{4s+12\binom{s}{2}} \frac{1}{2}f_3(q^4)\right ) \\
 &\qquad \qquad +2q^{7+14s} q^{4s+12\binom{s}{2}} j(-q^{12s+23};q^{16})
 \left (-\frac{1}{4}\frac{J_{12,24}^2}{J_{4}}
+\frac{J_{12}^3}{\overline{J}_{0,12}J_{4,12}}\right )  \\
 &\quad - q^{16+14s}  j(-q^{31+12s};q^{16})\\
 &\qquad \times \left (  1-q^{8s} \left ( 
 \sum_{k=0}^{s-1}q^{12sk-12\binom{k+1}{2}}\left (q^{2k} -q^{-2k+4s-2}\right ) 
 +q^{2s+12\binom{s}{2}}q^2\omega_{3}(-q^2)\right )\right )  \\
&\qquad \qquad  +2q^{8s}q^{16+14s} q^{2s+12\binom{s}{2}} j(-q^{31+12s};q^{16})\left ( -\frac{1}{2}q^2\frac{J_{12}^3}{J_{4}\overline{J}_{6,12}}
+\frac{J_{12}^3\overline{J}_{4,12}J_{6,12}}{\overline{J}_{0,12}J_{2,12}J_{4,12}\overline{J}_{6,12}} \right ).
\end{align*}}%
Corollary \ref{corollary:f441-HeckeExpansion} and  Proposition \ref{proposition:level23EvenSpinSecondQuad-sEvenThetaId} then give us the result.
\end{proof}


\begin{proof}[Proof of Proposition \ref{proposition:level23EvenSpinSecondQuadAppellForm}]
From Corollary \ref{corollary:f441-HeckeExpansion}, we have that
 \begin{equation*}
h_{4,4,1}(x,y;q^4)=j(x;q^{16})m\big ( -q^{12}yx^{-1},-1;q^{12} \big )
 +j(y;q)m \big( q^{24}xy^{-4},-1;q^{48} \big ).
\end{equation*}
Hence
{\allowdisplaybreaks \begin{align*}
A(s):&=q^{7+5s}
 h_{4,4,1}(-q^{6s+23},-q^{15};q^4)
   - q^{3+s} h_{4,4,1}(-q^{6s+23},-q^{7};q^4)\\
&\qquad  -q^{14+5s}
 h_{4,4,1}(-q^{31+6s},-q^{17};q^4)
  + q^{6+s} h_{4,4,1}(-q^{31+6s},-q^{9};q^4)\\
&=q^{7+5s}\left ( j(-q^{6s+23};q^{16})m ( -q^{4-6s},-1;q^{12}  )
 +j(-q^{15};q^{4})m ( -q^{-13+6s},-1;q^{148}  )\right ) \\
&\quad   - q^{3+s} \left (j(-q^{6s+23};q^{16})m ( -q^{-4-6s},-1;q^{12}  )
 +j(-q^7;q^4)m ( -q^{19+6s},-1;q^{48}  ) \right ) \\
&\quad  -q^{14+5s} \left (j(-q^{31+6s};q^{16})m ( -q^{-2-6s},-1;q^{12}  )
 +j(-q^{17};q^4)m ( q^{-13+6s},-1;q^{48}  ) \right ) \\
&\quad  + q^{6+s} \left ( j(-q^{31+6s};q^{16})m ( -q^{-10-6s},-1;q^{12}  )
 +j(-q^9;q^{4})m ( q^{19+6s},-1;q^{48}  )\right ) .
\end{align*}}
Four of the Appell function terms summands cancel.  Using (\ref{equation:j-elliptic}), we have that
\begin{equation*}
j(-q^{15};q^{4})=q^{-21}j(-q^{3};q^{4}), \ \textup{and} \ j(-q^{17};q^{4})=q^{-28}j(-q;q^{4})
\end{equation*}
as well as
\begin{equation*}
j(-q^{7};q^{4})=q^{-3}j(-q^{3};q^{4}), \ \textup{and} \ j(-q^{9};q^{4})=q^{-6}j(-q;q^{4}).
\end{equation*}
It follows that four of the Appell functions cancel, and we get
{\allowdisplaybreaks \begin{align*}
A(s)
&=
- q^{3+s}  j(-q^{6s+23};q^{16})\left ( m ( -q^{-4-6s},-1;q^{12}  ) - q^{4+4s}m ( -q^{4-6s},-1;q^{12}  ) \right ) \\
&\quad  
 + q^{6+s}  j(-q^{31+6s};q^{16})\left ( m ( -q^{-10-6s},-1;q^{12}  )-q^{8+4s} m ( -q^{-2-6s},-1;q^{12}  )  \right ) .
\end{align*}}%
Using the Appell function property (\ref{equation:mxqz-flip}), we get
{\allowdisplaybreaks \begin{align*}
A(s)&=
- q^{3+s}  j(-q^{6s+23};q^{16})\left ( -q^{4+6s}m ( -q^{4+6s},-1;q^{12}  ) + q^{10s}m ( -q^{-4+6s},-1;q^{12}  ) \right ) \\
&\quad  
 + q^{6+s}  j(-q^{31+6s};q^{16})\left ( -q^{10+6s}m ( -q^{10+6s},-1;q^{12}  )+q^{10+10s} m ( -q^{2+6s},-1;q^{12}  )  \right ) .
\end{align*}}%
Simplifying gives
{\allowdisplaybreaks \begin{align*}
A(s)&=
 q^{7+7s}  j(-q^{6s+23};q^{16})\left ( m ( -q^{4+6s},-1;q^{12}  ) - q^{4s-4}m ( -q^{-4+6s},-1;q^{12}  ) \right ) \\
&\qquad  
 - q^{16+7s}  j(-q^{31+6s};q^{16})\left ( m ( -q^{10+6s},-1;q^{12}  )-q^{4s} m ( -q^{2+6s},-1;q^{12}  )  \right ).
\end{align*}}%
Employing the Appell function property (\ref{equation:mxqz-fnq-x}) yields
{\allowdisplaybreaks \begin{align*}
A(s)&=
 q^{7+7s}  j(-q^{6s+23};q^{16})\left ( m ( -q^{4+6s},-1;q^{12}  ) - q^{4s-4}m ( -q^{-4+6s},-1;q^{12}  ) \right ) \\
&\quad  
 - q^{16+7s}  j(-q^{31+6s};q^{16})\left ( 1+q^{-2+6s}m ( -q^{-2+6s},-1;q^{12}  )-q^{4s} m ( -q^{2+6s},-1;q^{12}  )  \right ).
\end{align*}}%
Rearranging terms gives the result.  
\end{proof}


\begin{proof}[Proof of Corollary \ref{corollary:level23EvenSpinSecondQuad-sEvenAppellForm}]
We consider the case $s$ even of Proposition \ref{proposition:level23EvenSpinSecondQuadAppellForm}.  Replacing $s$ with $2s$ gives
\begin{align*}
&q^{7+10s}
 h_{4,4,1}(-q^{12s+23},-q^{15};q^4)
   - q^{3+2s} h_{4,4,1}(-q^{12s+23},-q^{7};q^4)\\
&\qquad  -q^{14+10s}
 h_{4,4,1}(-q^{31+12s},-q^{17};q^4)
  + q^{6+2s} h_{4,4,1}(-q^{31+12s},-q^{9};q^4)\\
  &= q^{7+14s}  j(-q^{12s+23};q^{16})\left (m ( -q^{4+12s},-1;q^{12}  ) - q^{8s-4}m ( -q^{-4+12s},-1;q^{12}  )\right ) \\
 &\qquad - q^{16+14s}  j(-q^{31+12s};q^{16})\\
 &\qquad \qquad \times  \left (   1-q^{8s} \left ( m ( -q^{2+12s},-1;q^{12}  ) -q^{4s-2}m ( -q^{-2+12s},-1;q^{12}  ) \right )\right ).
\end{align*}
Using Lemma \ref{lemma:generalAppellSums} gives the result.
\end{proof}


\begin{proof}[Proof of Proposition \ref{proposition:level23evenSpinSecondQuad-sEvenFuncEqn}]  This is similar to the proof of Proposition \ref{proposition:level23evenSpinFirstQuad-sEvenFuncEqn}, so it will be omitted.
\end{proof}


\subsection{The $2/3$-level string functions: Theorem \ref{theorem:level23EvenSpinSecondQuad-sOdd}}   We state the necessary propositions to prove Theorem \ref{theorem:level23EvenSpinSecondQuad-sEven}, then we give the proof, and then we give the proofs of the propositions.

\begin{proposition}\label{proposition:level23evenSpinSecondQuad-sOddFuncEqn}    In Theorem \ref{theorem:level23EvenSpinSecondQuad-sOdd}, we define the left-hand side of the identity to be $F(r)$ and the right-hand side to be $G(r)$.  We have that both $F(r)$ and $G(r)$ satisfy the same functional equation:
\begin{align*}
H&(r+4)=q^{33+12r}H(r)\\
  & +q^{24+2r}j(-q^{13+12r};q^{16})\left ( \sum_{m=0}^{3}q^{-6m^2-12mr-16m}
 -q^{8+8r}\sum_{m=0}^{3}q^{-6m^2-12mr-8m} \right ) \\
& -q^{14-10r}j(-q^{5+12r};q^{16})\left (\sum_{m=0}^{3}q^{-6m^2-12mr-22m}
-q^{12+8r}\sum_{m=0}^{3}q^{-6m^2-12mr-14m}\right ).
\end{align*}
\end{proposition}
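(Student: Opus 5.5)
We follow the proof of Proposition \ref{proposition:level23evenSpinFirstQuad-sEvenFuncEqn}, treating $F(r)$ and $G(r)$ separately. Here $F(r)$ is the left-hand side of Theorem \ref{theorem:level23EvenSpinSecondQuad-sOdd},
\begin{align*}
F(r)&=q^{12+10r}f_{4,4,1}(-q^{12r+29},-q^{15};q^4)-q^{4+2r}f_{4,4,1}(-q^{12r+29},-q^{7};q^4)\\
&\quad -q^{19+10r}f_{4,4,1}(-q^{37+12r},-q^{17};q^4)+q^{7+2r}f_{4,4,1}(-q^{37+12r},-q^{9};q^4).
\end{align*}

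\textbf{The functional equation for $F(r)$.} We specialize Proposition \ref{proposition:f-functionaleqn} to $(a,b,c)=(4,4,1)$ with $\ell=-1$, $k=4$, replace $q\to q^4$, and use the convention (\ref{equation:sumconvention}). This gives (\ref{equation:f441-FirstQuadPreFunc}), which relates $f_{4,4,1}(x,y;q^4)$ to $f_{4,4,1}(q^{48}x,y;q^4)$. Replacing $r$ by $r+4$ shifts the $x$-arguments by $q^{48}$, so (\ref{equation:f441-FirstQuadPreFunc}) is exactly the tool needed.

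We substitute the four pairs $(x,y)=(-q^{29+12r},-q^{15})$, $(-q^{29+12r},-q^{7})$, $(-q^{37+12r},-q^{17})$ and $(-q^{37+12r},-q^{9})$. Each substitution produces two extra terms:
\begin{itemize}
\item a single theta function $j(-q^{16m\pm\cdots}y;q^4)$ from the $m=-1$ term, which (\ref{equation:j-elliptic}) reduces to a power of $q$ times $j(-q;q^4)$ or $j(-q^3;q^4)$;
\item a sum $\sum_{m=0}^{3}$ of theta functions $j(-q^{16m+29+12r};q^{16})$ or $j(-q^{16m+37+12r};q^{16})$. Using (\ref{equation:j-elliptic}) we pull these back to $j(-q^{13+12r};q^{16})$ and $j(-q^{5+12r};q^{16})$ respectively, with coefficients $q^{-6m^2-12mr-\alpha m}$.
\end{itemize}
Next we isolate the shifted double-sums and multiply each equation by the power of $q$ that makes the left-hand sides sum to $F(r+4)$. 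Since $f_{4,4,1}$ has prefactor $-x^{-1}y^4q^{-24}$, the right-hand double-sums assemble to $q^{33+12r}F(r)$. The lone $j(-q;q^4)$ and $j(-q^3;q^4)$ terms should cancel in pairs, as they did in the even case.

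\textbf{The functional equation for $G(r)$.} We write out $G(r+4)$. The simple theta quotient $-(-1)^{r}\tfrac{q^{\binom{2r+2}{2}}}{2}\tfrac{J_1^2J_2}{J_4^2}j(-q^{11+4r};q^{16})$ picks up exactly the factor $q^{33+12r}$ under (\ref{equation:j-elliptic}); this is a direct exponent check. For the terms $j(-q^{29+12(r+4)};q^{16})$ and $j(-q^{37+12(r+4)};q^{16})$, we apply (\ref{equation:j-elliptic}) to return to $j(-q^{29+12r};q^{16})$ and $j(-q^{37+12r};q^{16})$. We then check that the mock theta terms $q^2\omega_3(-q^2)$ and $f_3(q^4)$ scale by $q^{33+12r}$, using $\binom{r+4}{2}=\binom{r}{2}+4r+6$.

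What remains is the finite sums $\sum_{k=0}^{r+3}$ together with the constant terms ($1$, and $q^{8s+4}$ inside the nested bracket). We split each $\sum_{k=0}^{r+3}$ as $\sum_{k=0}^{3}+\sum_{k=4}^{r+3}$.
\begin{itemize}
\item The tail $\sum_{k=4}^{r+3}$, after the shift $k\to k+4$, cancels against $q^{33+12r}$ times the corresponding sum in $G(r)$, exactly as in (\ref{equation:level23evenSpinFirstQuadRHSFuncEqnId1}).
\item The head $\sum_{k=0}^{3}$, after the reversal $k\to 3-m$, produces the sums $\sum_{m=0}^{3}q^{-6m^2-12mr-\alpha m}$ appearing in the proposition.
\end{itemize}
Finally, we move everything to the bases $j(-q^{13+12r};q^{16})$ and $j(-q^{5+12r};q^{16})$ and compare coefficients separately.

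\textbf{Main obstacle.} The hard part is the nested constant terms $1-q^{8s+4}(1-q^{4s}(\cdots))$. These generate stray monomials, such as the $q^{29+16r}-q^{-75-32r}$ seen in the even case. Matching them requires shifting the range of one reversed sum, for instance from $m\in[0,3]$ to $m\in[-1,2]$ or $[-2,1]$, and adding and subtracting boundary terms. I would first compute the $j(-q^{5+12r};q^{16})$ coefficient, since that is where the double nesting lives. I would then determine the needed index shift by matching leading exponents against the target sums with exponents $-22m$ and $-14m$.
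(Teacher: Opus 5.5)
Your proposal is correct and follows the route the paper intends; the paper omits this proof as analogous to Proposition~\ref{proposition:level23evenSpinFirstQuad-sEvenFuncEqn}. For $F$, the $(\ell,k)=(-1,4)$ functional equation gives $F(r+4)=q^{33+12r}F(r)+\cdots$, and the lone $j(-q;q^4)=j(-q^3;q^4)$ terms cancel first-with-third and second-with-fourth. For $G$, the tails cancel after $k\to k+4$ and the heads are reversed by $k\to 3-m$. One precision: after reversal the heads give exponents $-8m,-4m$ on $j(-q^{13+12r};q^{16})$ and $-10m,-2m$ on $j(-q^{5+12r};q^{16})$, so the $-4m$ sum and \emph{both} of the latter need the reindexing $m\to m+1$ (range $[-1,2]$), not just one sum. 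The resulting boundary terms then cancel the stray monomials $q^{-78-34r}-q^{34+14r}$ and $-q^{-106-46r}+q^{-70-38r}+q^{30+2r}-q^{34+10r}$, respectively.
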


In the previous subsection, we considered the case $s$ even in Proposition \ref{proposition:level23EvenSpinSecondQuadAppellForm}.   For the case $s$ odd, we replace $s\to 2s+1$ to have
\begin{corollary} \label{corollary:level23EvenSpinSecondQuad-sOddAppellForm} We have
{\allowdisplaybreaks \begin{align*}
&q^{12+10s}
 h_{4,4,1}(-q^{12s+29},-q^{15};q^4)
   - q^{4+2s} h_{4,4,1}(-q^{12s+29},-q^{7};q^4)\\
&\quad  -q^{19+10s}
 h_{4,4,1}(-q^{37+12s},-q^{17};q^4)
  + q^{7+2s} h_{4,4,1}(-q^{37+12s},-q^{9};q^4)\\
&=
 q^{14+14s}  j(-q^{12s+29};q^{16})\\
 &\qquad \times \Big ( 1- q^{8s}\Big ( \sum_{k=0}^{s-1}q^{12sk-12\binom{k+1}{2}}\left (q^{2k} -q^{-2k+4s-2}\right ) 
 +2q^{2s+12\binom{s}{2}}m ( -q^2,-1;q^{12}  ) \Big) \Big ) \\
&\quad  
 - q^{23+14s}  j(-q^{37+12s};q^{16})\\
 &\qquad \times \Big ( 1-q^{8s+4} \Big ( 1-q^{4s}\Big ( \sum_{k=0}^{s-1}q^{12sk-12\binom{k+1}{2}}\left (q^{4k} -q^{-4k+8s-4}\right ) 
 +2q^{4s+12\binom{s}{2}}m ( -q^{4},-1;q^{12}  )  \Big ) \Big ) \Big ).
\end{align*}}%
\end{corollary}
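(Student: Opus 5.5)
The plan is to specialize Proposition \ref{proposition:level23EvenSpinSecondQuadAppellForm} to odd $s$, that is, to replace $s$ by $2s+1$ there, and then reduce the resulting Appell-function pairs with the elementary Appell properties and Lemma \ref{lemma:generalAppellSums}. This mirrors the proof of Corollary \ref{corollary:level23EvenSpinFirstQuad-sOddAppellForm}.

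After the substitution $s\to 2s+1$, the prefactors become $q^{7+5(2s+1)}=q^{12+10s}$, $q^{3+(2s+1)}=q^{4+2s}$, $q^{14+5(2s+1)}=q^{19+10s}$ and $q^{6+(2s+1)}=q^{7+2s}$. The arguments become $-q^{12s+29}$ and $-q^{37+12s}$, so the left-hand side matches the statement. On the right we obtain
\begin{align*}
&q^{14+14s}j(-q^{12s+29};q^{16})\bigl(m(-q^{10+12s},-1;q^{12})-q^{8s}m(-q^{2+12s},-1;q^{12})\bigr)\\
&\quad -q^{23+14s}j(-q^{37+12s};q^{16})\Bigl(1-q^{8s+4}\bigl(m(-q^{8+12s},-1;q^{12})-q^{4s}m(-q^{4+12s},-1;q^{12})\bigr)\Bigr).
\end{align*}

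Both brackets are exactly the combinations already handled in the proof of Corollary \ref{corollary:level23EvenSpinFirstQuad-sOddAppellForm}. For the first, apply (\ref{equation:mxqz-fnq-x}) to get $m(-q^{10+12s},-1;q^{12})=1+q^{-2+12s}m(-q^{-2+12s},-1;q^{12})$. This turns the bracket into $1-q^{8s}\bigl(m(-q^{2+12s},-1;q^{12})-q^{4s-2}m(-q^{-2+12s},-1;q^{12})\bigr)$, which is (\ref{equation:generalAppellSumOneModSix}) with $q\to q^2$. For the second, the same property gives $m(-q^{8+12s},-1;q^{12})=1+q^{-4+12s}m(-q^{-4+12s},-1;q^{12})$. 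The inner difference then becomes $1-q^{4s}\bigl(m(-q^{4+12s},-1;q^{12})-q^{8s-4}m(-q^{-4+12s},-1;q^{12})\bigr)$, and (\ref{equation:generalAppellSumOneModSix}) with $q\to q^4$ (in the form (\ref{equation:generalAppellSumTwoModSix}) after $q\to q^2$) evaluates it.

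Substituting both evaluations yields the nested expression in the statement. The only real obstacle is bookkeeping: the exponents must be tracked carefully, in particular $q^{2s}$ becoming $q^{4s}$ when $q\to q^2$ is applied inside Lemma \ref{lemma:generalAppellSums}, together with the factor $q^{8s+4}$.
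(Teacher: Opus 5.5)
Your proposal is correct and follows the paper's own proof: substitute $s\to 2s+1$ in Proposition \ref{proposition:level23EvenSpinSecondQuadAppellForm}, apply (\ref{equation:mxqz-fnq-x}) to $m(-q^{10+12s},-1;q^{12})$ and $m(-q^{8+12s},-1;q^{12})$, factor, and finish with Lemma \ref{lemma:generalAppellSums}. One small slip: the inner difference $m(-q^{4+12s},-1;q^{12})-q^{8s-4}m(-q^{-4+12s},-1;q^{12})$ is (\ref{equation:generalAppellSumTwoModSix}) with $q\to q^2$, as your parenthetical says, and not (\ref{equation:generalAppellSumOneModSix}) with $q\to q^4$, which would produce modulus $q^{24}$ and the wrong finite sum.
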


\begin{proof}[Proof of Theorem \ref{theorem:level23EvenSpinSecondQuad-sOdd}]
Proposition \ref{proposition:level23evenSpinSecondQuad-sOddFuncEqn} tells us that
\begin{equation*}
F(s+4)-G(s+4)=q^{33+12s}\left (F(s)-G(s) \right ), 
\end{equation*}
so we only need to prove Theorem  \ref{theorem:level23EvenSpinSecondQuad-sOdd} for $0\le s\le 3$.  However,  Corollary \ref{corollary:level23EvenSpinSecondQuad-sOddAppellForm}  and Lemma \ref{lemma:alternateAppellForms} give us that
{\allowdisplaybreaks \begin{align*}
&q^{12+10s}
 h_{4,4,1}(-q^{12s+29},-q^{15};q^4)
   - q^{4+2s} h_{4,4,1}(-q^{12s+29},-q^{7};q^4)\\
&\quad  -q^{19+10s}
 h_{4,4,1}(-q^{37+12s},-q^{17};q^4)
  + q^{7+2s} h_{4,4,1}(-q^{37+12s},-q^{9};q^4)\\
&=
 q^{14+14s}  j(-q^{12s+29};q^{16})\\
 &\qquad \times \left( 1- q^{8s}\left( \sum_{k=0}^{s-1}q^{12sk-12\binom{k+1}{2}}\left (q^{k} -q^{-2k+4s-2}\right )
 +q^{2s+12\binom{s}{2}} q^2\omega_{3}(-q^2)\right ) \right )  \\
&\qquad \qquad  -2q^{8s}q^{2s+12\binom{s}{2}} q^{14+14s}  j(-q^{12s+29};q^{16})\left ( -\frac{1}{2}q^2\frac{J_{12}^3}{J_{4}\overline{J}_{6,12}}
+\frac{J_{12}^3\overline{J}_{4,12}J_{6,12}}{\overline{J}_{0,12}J_{2,12}J_{4,12}\overline{J}_{6,12}}   \right ) \\
&\quad  
 - q^{23+14s}  j(-q^{37+12s};q^{16})\\
 &\qquad \times \left ( 1-q^{8s+4} \left ( 1-q^{4s}\left ( \sum_{k=0}^{s-1}q^{12sk-12\binom{k+1}{2}}\left (q^{4k} -q^{-4k+8s-4}\right )
 +q^{4s+12\binom{s}{2}} \frac{1}{2}f_3(q^4) \right )\right ) \right ) \\ 
&\qquad \qquad   - 2q^{23+14s}  q^{8s+4} q^{4s}q^{4s+12\binom{s}{2}}j(-q^{37+12s};q^{16})\left (-\frac{1}{4}\frac{J_{12,24}^2}{J_{4}}
+\frac{J_{12}^3}{\overline{J}_{0,12}J_{4,12}}\right ).
\end{align*}}%
Corollary \ref{corollary:f441-HeckeExpansion} and Proposition \ref{proposition:level23EvenSpinSecondQuad-sOddThetaId} then give us the result.
\end{proof}

\begin{proof}[Proof of Corollary \ref{corollary:level23EvenSpinSecondQuad-sOddAppellForm}]
We consider the case $s$ odd of Proposition \ref{proposition:level23EvenSpinSecondQuadAppellForm}.  Replacing $s$ with $2s+1$ gives
{\allowdisplaybreaks \begin{align*}
&q^{12+10s}
 h_{4,4,1}(-q^{12s+29},-q^{15};q^4)
   - q^{4+2s} h_{4,4,1}(-q^{12s+29},-q^{7};q^4)\\
&\qquad  -q^{19+10s}
 h_{4,4,1}(-q^{37+12s},-q^{17};q^4)
  + q^{7+2s} h_{4,4,1}(-q^{37+12s},-q^{9};q^4)\\
&=
 q^{14+14s}  j(-q^{12s+29};q^{16})\left ( m ( -q^{10+12s},-1;q^{12}  ) - q^{8s}m ( -q^{2+12s},-1;q^{12}  ) \right ) \\
&\qquad  
 - q^{23+14s}  j(-q^{37+12s};q^{16})\left ( 1-q^{8s+4} \left ( m ( -q^{8+12s},-1;q^{12}  ) -q^{4s}m ( -q^{4+12s},-1;q^{12}  ) \right ) \right ). 
 \end{align*}}%
 Using Appell function property (\ref{equation:mxqz-fnq-x}) on both lines gives
{\allowdisplaybreaks  \begin{align*}
&q^{12+10s}
 h_{4,4,1}(-q^{12s+29},-q^{15};q^4)
   - q^{4+2s} h_{4,4,1}(-q^{12s+29},-q^{7};q^4)\\
&\qquad  -q^{19+10s}
 h_{4,4,1}(-q^{37+12s},-q^{17};q^4)
  + q^{7+2s} h_{4,4,1}(-q^{37+12s},-q^{9};q^4)\\
 &=
 q^{14+14s}  j(-q^{12s+29};q^{16})\left ( 1+q^{-2+12s}m ( -q^{-2+12s},-1;q^{12}  ) - q^{8s}m ( -q^{2+12s},-1;q^{12}  ) \right ) \\
&\qquad  
 - q^{23+14s}  j(-q^{37+12s};q^{16})\\
 &\qquad \qquad \times \left ( 1-q^{8s+4} \left ( 1+q^{-4+12s}m ( -q^{-4+12s},-1;q^{12}  ) -q^{4s}m ( -q^{4+12s},-1;q^{12}  ) \right ) \right ).
\end{align*}}%
Pulling out common factors yields
{\allowdisplaybreaks \begin{align*}
&q^{12+10s}
 h_{4,4,1}(-q^{12s+29},-q^{15};q^4)
   - q^{4+2s} h_{4,4,1}(-q^{12s+29},-q^{7};q^4)\\
&\qquad  -q^{19+10s}
 h_{4,4,1}(-q^{37+12s},-q^{17};q^4)
  + q^{7+2s} h_{4,4,1}(-q^{37+12s},-q^{9};q^4)\\
&=
 q^{14+14s}  j(-q^{12s+29};q^{16})\left ( 1- q^{8s}\left ( m ( -q^{2+12s},-1;q^{12}  ) -q^{4s-2}m ( -q^{-2+12s},-1;q^{12}  ) \right ) \right ) \\
&\qquad  
 - q^{23+14s}  j(-q^{37+12s};q^{16})\\
 &\qquad \qquad \times \left ( 1-q^{8s+4} \left ( 1-q^{4s}\left ( m ( -q^{4+12s},-1;q^{12}  ) -q^{8s-4}m ( -q^{-4+12s},-1;q^{12}  )  \right ) \right ) \right ).
\end{align*}}%
Lemma \ref{lemma:generalAppellSums} gives the result
\end{proof}


\begin{proof}[Proof of Proposition \ref{proposition:level23evenSpinSecondQuad-sOddFuncEqn}]  This is similar to the proof of Proposition \ref{proposition:level23evenSpinFirstQuad-sEvenFuncEqn}, so it will be omitted.
\end{proof}

\section{Mock theta function identities for double-sum coefficients from odd-spin}\label{section:mockThetaTheoremsOddSpin}
In this section we consider the generalized Euler identity in the case of odd-spin.  We prove the mock theta conjecture-like identities found in Theorem \ref{theorem:genEulerOneHalfOddSpin} for $1/2$-level, Theorem \ref{theorem:genEulerOneThirdOddSpin} for $1/3$-level, and Theorem \ref{theorem:genEulerTwoThirdsOddSpin} for $2/3$-level.  The proofs are all similar, but here we do not use functional equations, so the proofs are now much shorter.

\subsection{The $1/2$-level string functions: Theorem \ref{theorem:genEulerOneHalfOddSpin}}
We state the necessary propositions to prove Theorem \ref{theorem:genEulerOneHalfOddSpin}, then we give the proof, and then we give the proofs of the propositions.

\smallskip
We compute the case $s=0$ of the pair of double-sums found in Corollary \ref{corollary:levelOneHalfOddSpin}.   We first evaluate the Appell function expressions in Corollary \ref{corollary:f551-HeckeExpansion} for the respective double-sums in Corollary \ref{corollary:levelOneHalfOddSpin}.  We have

\begin{proposition}\label{proposition:level12OddSpinAppellForm}  We have that
\begin{align*}
-q^{3}h_{5,5,1}(q^9,q^4;q)+qh_{5,5,1}(q^9,q^2;q)= j(q;q^5)\left ( 1 -  2m(-q,-1;q^4)\right ).
\end{align*}
\end{proposition}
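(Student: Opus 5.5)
We follow the proof of Proposition \ref{proposition:level12EvenSpinAppellForm} specialized to the odd-spin data $(x,y)=(q^9,q^4)$ and $(x,y)=(q^9,q^2)$. By Corollary \ref{corollary:f551-HeckeExpansion},
\begin{align*}
h_{5,5,1}(q^9,q^4;q)&=j(q^9;q^5)m(-q^{-1},-1;q^4)+j(q^4;q)m(-q^{-1},-1;q^{20}),\\
h_{5,5,1}(q^9,q^2;q)&=j(q^9;q^5)m(-q^{-3},-1;q^4)+j(q^2;q)m(-q^{9},-1;q^{20}).
\end{align*}
Since $j(q^n;q)=0$ for $n\in\mathbb{Z}$ and the Appell functions $m(-q^{-1},-1;q^{20})$, $m(-q^{9},-1;q^{20})$ are defined (no poles, as $-q^{-1}\cdot(-1)=q^{-1}$ and $q^{9}$ are not integral powers of $q^{20}$ colliding with the pole condition), the second terms vanish. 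This leaves
\begin{equation*}
-q^{3}h_{5,5,1}(q^9,q^4;q)+qh_{5,5,1}(q^9,q^2;q)=q\,j(q^9;q^5)\big(m(-q^{-3},-1;q^4)-q^{2}m(-q^{-1},-1;q^4)\big).
\end{equation*}

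Next we rewrite the theta factor and the Appell functions. By (\ref{equation:j-elliptic}), $j(q^9;q^5)=j(q^{5+4};q^5)=-q^{-4}j(q^4;q^5)$, and by (\ref{equation:j-flip}) $j(q^4;q^5)=j(q;q^5)$, so $q\,j(q^9;q^5)=-q^{-3}j(q;q^5)$. For the Appell functions, apply (\ref{equation:mxqz-flip}) and then (\ref{equation:mxqz-fnq-z}) (using $z=-1$ is fixed under inversion): $m(-q^{-1},-1;q^4)=-q\,m(-q,-1;q^4)$ and $m(-q^{-3},-1;q^4)=-q^{3}m(-q^{3},-1;q^4)$. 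Then (\ref{equation:mxqz-fnq-x}) with base $q^4$ gives $m(-q^{3},-1;q^4)=m(q^4\cdot(-q^{-1}),-1;q^4)=1+q^{-1}m(-q^{-1},-1;q^4)=1-m(-q,-1;q^4)$.

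Assembling: the bracket becomes
\begin{equation*}
-q^{3}\big(1-m(-q,-1;q^4)\big)+q^{3}m(-q,-1;q^4)=-q^{3}\big(1-2m(-q,-1;q^4)\big),
\end{equation*}
and multiplying by $-q^{-3}j(q;q^5)$ yields $j(q;q^5)\big(1-2m(-q,-1;q^4)\big)$, as claimed. The only step requiring care is the sign and power bookkeeping in the chain flip/shift for $m(-q^{-3},-1;q^4)$ (equivalently, the $s=0$ analogue of Lemma \ref{lemma:generalAppellSums}, which here produces the extra constant $1$); I would double-check it by comparing the first few $q$-coefficients.
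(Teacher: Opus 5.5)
Your proposal is correct and is essentially the paper's proof. You drop the two terms with $j(q^4;q)=j(q^2;q)=0$, flip both Appell functions via (\ref{equation:mxqz-flip}), and use (\ref{equation:mxqz-fnq-x}) to get $m(-q^3,-1;q^4)=1-m(-q,-1;q^4)$. You then convert $j(q^9;q^5)$ into $-q^{-4}j(q;q^5)$ with (\ref{equation:j-elliptic}) and (\ref{equation:j-flip}); all of your signs and powers check out. The appeal to (\ref{equation:mxqz-fnq-z}) is unnecessary, since the flip already sends $z=-1$ to $-1$.
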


\begin{proof}[Proof of Theorem \ref{theorem:genEulerOneHalfOddSpin}]
Corollary \ref{corollary:f551-HeckeExpansion}  gives us
\begin{align*}
-q^{3}&f_{5,5,1}(q^9,q^4;q)+qf_{5,5,1}(q^9,q^2;q)\\
&=-q^{3}h_{5,5,1}(q^9,q^4;q)+qh_{5,5,1}(q^9,q^2;q)\\
&\qquad -\frac{1}{\overline{J}_{0,4}\overline{J}_{0,20}}
\left ( -q^{3}\theta_{5,5,1}(q^9,q^4;q)+q\theta_{5,5,1}(q^9,q^2;q)\right ). 
\end{align*}
Proposition \ref{proposition:level12OddSpinAppellForm} and (\ref{equation:2nd-mu(q)Alt}) then give
\begin{align*}
-q^{3}&f_{5,5,1}(q^9,q^4;q)+qf_{5,5,1}(q^9,q^2;q)\\
&=j(q;q^5)\left ( 1 -  2\left (\frac{1}{4}\mu_{2}(q)+\frac{1}{4}\frac{J_{2,4}^4}{J_{1}^3} \right ) \right )\\
&\qquad -\frac{1}{\overline{J}_{0,4}\overline{J}_{0,20}}
\left ( -q^{3}\theta_{5,5,1}(q^9,q^4;q)+q\theta_{5,5,1}(q^9,q^2;q)\right ). 
\end{align*}
The result then follows from Proposition \ref{proposition:level12OddSpinThetaId}.
\end{proof}

\begin{proof}[Proof of Proposition \ref{proposition:level12OddSpinAppellForm}]
 Corollary \ref{corollary:f551-HeckeExpansion} gives us 
\begin{equation}
h_{5,5,1}(x,y;q)=j(x;q^5)m(-q^4x^{-1}y,-1;q^4)+j(y;q)m(-q^{10}xy^{-5},-1;q^{20}),
\end{equation}
Hence
\begin{align*}
-q^{3}&h_{5,5,1}(q^9,q^4;q)+qh_{5,5,1}(q^9,q^2;q)\\
&=-q^{3}\left (j(q^{9};q^5)m(-q^{-1},-1;q^4)+j(q^4;q)m(-q^{-1},-1;q^{20}) \right ) \\
&\qquad +q\left (j(q^{9};q^5)m(-q^{-3},-1;q^4)+j(q^2;q)m(-q^{9},-1;q^{20}) \right ).
\end{align*}
We note that $j(q^{n};q)=0$ for $n\in\mathbb{Z}$.  Hence
\begin{align*}
-q^{3}&h_{5,5,1}(q^9,q^4;q)+qh_{5,5,1}(q^9,q^2;q)\\
&=-q^{3} j(q^{9};q^5)m(-q^{-1},-1;q^4) +q j(q^{9};q^5)m(-q^{-3},-1;q^4).
\end{align*}
Using the Appell function property (\ref{equation:mxqz-flip}) and factoring yields
\begin{equation*}
-q^{3}h_{5,5,1}(q^9,q^4;q)+qh_{5,5,1}(q^9,q^2;q)
=q^{4} j(q^{9};q^5)\left ( m(-q,-1;q^4) - m(-q^{3},-1;q^4)\right ) .
\end{equation*}
Appell function properties (\ref{equation:mxqz-fnq-x}) and then (\ref{equation:mxqz-flip}) give
\begin{equation*}
-q^{3}h_{5,5,1}(q^9,q^4;q)+qh_{5,5,1}(q^9,q^2;q)
=q^{4} j(q^{9};q^5)\left ( - 1 +  2m(-q,-1;q^4)\right ).
\end{equation*}
The result then follows from (\ref{equation:j-elliptic}).
\end{proof}


\subsection{The $1/3$-level string functions: Theorem  \ref{theorem:genEulerOneThirdOddSpin}}
We state the necessary propositions to prove Theorem \ref{theorem:genEulerOneThirdOddSpin}, then we give the proof, and then we give the proofs of the propositions.

\smallskip
We compute the cases $s=0$ of the two pairs of double-sums found in Corollary \ref{corollary:levelOneThirdOddSpin}.   We first evaluate the Appell function expressions in Corollary \ref{corollary:f771-HeckeExpansion} for the respective double-sums in Corollary \ref{corollary:levelOneThirdOddSpin}.  We have
\begin{proposition}\label{proposition:level13OddSpinAppellForm} We have that
\begin{gather*}
-q^{4} h_{7,7,1}(q^{13},q^{5};q) +q^{2} h_{7,7,1}(q^{13},q^{3};q)
=j(q^{6};q^7)\left ( 1- 2m\big({ -}q^{2},-1;q^{6} \big )\right),\\
-q^{5} h_{7,7,1}(q^{13},q^{6};q) +q h_{7,7,1}(q^{13},q^{2};q)
= j(q^{6};q^7)\left ( 1-2m\big( {-}q,-1;q^{6} \big )\right ).
\end{gather*}
\end{proposition}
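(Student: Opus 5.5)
The plan mirrors the proof of Proposition \ref{proposition:level12OddSpinAppellForm}. I would start from the formula for $h_{7,7,1}(x,y;q)$ in Corollary \ref{corollary:f771-HeckeExpansion} and substitute $x=q^{13}$ together with each of $y\in\{q^5,q^3\}$ for the first identity and $y\in\{q^6,q^2\}$ for the second. In every case the second summand of $h_{7,7,1}$ carries the factor $j(y;q)=j(q^n;q)=0$, so it drops out. This leaves only the terms $j(q^{13};q^7)\,m(-q^{6}y q^{-13},-1;q^6)$.

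For the first identity, the surviving Appell functions are $m(-q^{-2},-1;q^6)$ with coefficient $-q^4$, and $m(-q^{-4},-1;q^6)$ with coefficient $q^2$. I would apply (\ref{equation:mxqz-flip}) to each, giving $m(-q^{-2},-1;q^6)=-q^2m(-q^2,-1;q^6)$ and $m(-q^{-4},-1;q^6)=-q^4m(-q^4,-1;q^6)$. Factoring out the common power of $q$ then yields
\[
q^{6}\,j(q^{13};q^7)\bigl(m(-q^2,-1;q^6)-m(-q^4,-1;q^6)\bigr).
\]
Next, (\ref{equation:mxqz-fnq-x}) with $x=-q^{-2}$ in base $q^6$ gives $m(-q^4,-1;q^6)=1+q^{-2}m(-q^{-2},-1;q^6)$. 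Flipping once more turns this into $1-m(-q^2,-1;q^6)$, so the bracket becomes $2m(-q^2,-1;q^6)-1$. Finally, (\ref{equation:j-elliptic}) gives $j(q^{13};q^7)=-q^{-6}j(q^{6};q^7)$, and the signs combine into $j(q^6;q^7)\bigl(1-2m(-q^2,-1;q^6)\bigr)$.

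The second identity is handled the same way. The surviving Appell functions are $m(-q^{-1},-1;q^6)$ with coefficient $-q^5$, and $m(-q^{-5},-1;q^6)$ with coefficient $q$. Flipping gives $q^6 j(q^{13};q^7)\bigl(m(-q,-1;q^6)-m(-q^5,-1;q^6)\bigr)$. Then $m(-q^5,-1;q^6)=1+q^{-1}m(-q^{-1},-1;q^6)=1-m(-q,-1;q^6)$, and the same elliptic shift finishes the argument.

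The only real obstacle is the exponent and sign bookkeeping: checking that the flip produces exactly the compensating powers $q^2,q^4$ (respectively $q,q^5$), and that $j(q^{13};q^7)$ shifts by exactly $-q^{-6}$. There is no deeper difficulty beyond that.
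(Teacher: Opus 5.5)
Your proposal is correct and follows the paper's proof essentially step for step. You substitute into $h_{7,7,1}$, drop the terms carrying $j(q^n;q)=0$, apply the flip and the $x\mapsto qx$ relation for $m$, and use the quasi-elliptic shift $j(q^{13};q^7)=-q^{-6}j(q^{6};q^7)$; the paper applies that shift first rather than last, which is immaterial. Your explicit treatment of the second identity, which the paper omits as similar, also checks out.
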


\begin{proof}[Proof of Theorem  \ref{theorem:genEulerOneThirdOddSpin}] 
We prove the first identity.   Corollary \ref{corollary:f771-HeckeExpansion} gives us
\begin{align*}
-q^{4} &f_{7,7,1}(q^{13},q^{5};q) +q^{2} f_{7,7,1}(q^{13},q^{3};q)\\
&=-q^{4} h_{7,7,1}(q^{13},q^{5};q) +q^{2} h_{7,7,1}(q^{13},q^{3};q)\\
&\qquad  -\frac{1}{\overline{J}_{0,6}\overline{J}_{0,42}}\left ( -q^4\theta_{7,7,1}(q^{13},q^5;q) +q^2\theta_{7,7,1}(q^{13},q^3;q)\right ).
\end{align*}
Proposition \ref{proposition:level13OddSpinAppellForm} and Lemma \ref{lemma:alternateAppellForms} (\ref{equation:alternateAppellForm3rd-f}) then give us 
\begin{align*}
-q^{4} &f_{7,7,1}(q^{13},q^{5};q) +q^{2} f_{7,7,1}(q^{13},q^{3};q)\\
&= j(q;q^7)\left ( 1-\frac{1}{2}f_3(q^2)\right )+\frac{1}{2}\frac{J_{6,12}^2J_{1,7}}{J_{2}}
-2\frac{J_{6}^3J_{1,7}}{\overline{J}_{0,6}J_{2,6}}\\
&\qquad  -\frac{1}{\overline{J}_{0,6}\overline{J}_{0,42}}\left ( -q^4\theta_{7,7,1}(q^{13},q^5;q) +q^2\theta_{7,7,1}(q^{13},q^3;q)\right).
\end{align*}
The result then follows from Proposition \ref{proposition:level13OddSpinFirstPairThetaId}. 

We prove the second identity.  Corollary \ref{corollary:f771-HeckeExpansion} gives us
\begin{align*}
-q^{5} &f_{7,7,1}(q^{13},q^{6};q) +q f_{7,7,1}(q^{13},q^{2};q)\\
&=-q^{5} h_{7,7,1}(q^{13},q^{6};q) +q h_{7,7,1}(q^{13},q^{2};q)\\
&\qquad -\frac{1}{\overline{J}_{0,6}\overline{J}_{0,42}}\left ( -q^5\theta_{7,7,1}(q^{13},q^6;q) +q\theta_{7,7,1}(q^{13},q^2;q)\right ).
\end{align*}
Proposition \ref{proposition:level13OddSpinAppellForm} and Lemma \ref{lemma:alternateAppellForms} (\ref{equation:alternateAppellForm3rd-w}) then give us 
\begin{align*}
-q^{5} &f_{7,7,1}(q^{13},q^{6};q) +q f_{7,7,1}(q^{13},q^{2};q)\\
&= j(q^{6};q^7)\left ( 1-q\omega_{3}(-q)\right ) +q\frac{J_{6}^3J_{1,7}}{J_{2}\overline{J}_{3,6}}
-2\frac{J_{6}^3\overline{J}_{2,6}J_{3,6}J_{1,7}}{\overline{J}_{0,6}J_{1,6}J_{2,6}\overline{J}_{3,6}} \\
&\qquad -\frac{1}{\overline{J}_{0,6}\overline{J}_{0,42}}\left ( -q^5\theta_{7,7,1}(q^{13},q^6;q) +q\theta_{7,7,1}(q^{13},q^2;q)\right ).
\end{align*}
The result then follows from Proposition \ref{proposition:level13OddSpinSecondPairThetaId}. 
\end{proof}

\begin{proof}[Proof of Proposition \ref{proposition:level13OddSpinAppellForm}]
We prove the first identity.  The proof of the second identity is similar and will be omitted.  From Corollary \ref{corollary:f771-HeckeExpansion}, we have that
\begin{align*}
h_{7,7,1}(x,y;q)=j(x;q^7)m\big({ -}q^{6}yx^{-1},-1;q^{6} \big )
 +j(y;q)m\big({ -}q^{21}xy^{-7},-1;q^{42} \big ).
\end{align*}
Hence
\begin{align*}
-q^{4} &h_{7,7,1}(q^{13},q^{5};q) +q^{2} h_{7,7,1}(q^{13},q^{3};q)\\
&=-q^{4}  j(q^{13};q^7)m\big( {-}q^{-2},-1;q^{6} \big )
+q^{2}j(q^{13};q^7)m\big({-}q^{-4},-1;q^{6} \big ).
\end{align*}
Using (\ref{equation:j-elliptic}) and then (\ref{equation:mxqz-flip}) yields
\begin{align*}
-q^{4} &h_{7,7,1}(q^{13},q^{5};q) +q^{2} h_{7,7,1}(q^{13},q^{3};q)\\
&=q^{-2}  j(q^{6};q^7)m\big({ -}q^{-2},-1;q^{6} \big )
-q^{-4}j(q^{6};q^7)m\big({ -}q^{-4},-1;q^{6} \big )\\
&=-  j(q^{6};q^7)m\big( {-}q^{2},-1;q^{6} \big )
+j(q^{6};q^7)m\big( {-}q^{4},-1;q^{6} \big ).
\end{align*}
Using (\ref{equation:mxqz-fnq-x}) and (\ref{equation:mxqz-flip}) gives
\begin{align*}
-q^{4} &h_{7,7,1}(q^{13},q^{5};q) +q^{2} h_{7,7,1}(q^{13},q^{3};q)\\
&=-  j(q^{6};q^7)m\big( {-}q^{2},-1;q^{6} \big )
+j(q^{6};q^7)\left ( 1+q^{-2}m\big( {-}q^{-2},-1;q^{6} \big )\right) \\
&=-  j(q^{6};q^7)m\big( {-}q^{2},-1;q^{6} \big )
+j(q^{6};q^7)\left ( 1- m\big({ -}q^{2},-1;q^{6} \big )\right),
\end{align*}
and the result follows.
\end{proof}


\subsection{The $2/3$-level string functions: Theorem \ref{theorem:genEulerTwoThirdsOddSpin}}
We state the necessary propositions to prove Theorem \ref{theorem:genEulerTwoThirdsOddSpin}, then we give the proof, and then we give the proofs of the propositions.

\smallskip
We compute the cases $s=0$ of the two quads of double-sums found in Corollary \ref{corollary:levelTwoThirdsOddSpin}.   We first evaluate the Appell function expressions in Corollary \ref{corollary:f441-HeckeExpansion} for the respective double-sums in Corollary \ref{corollary:levelTwoThirdsOddSpin}. 

\smallskip
We consider the first quad of double-sums in Corollary \ref{corollary:levelTwoThirdsOddSpin}.  We have
\begin{proposition}\label{proposition:level23OddSpinFirstQuadAppellForm} We have that
{\allowdisplaybreaks \begin{align*}
 q^{8}& h_{4,4,1}(-q^{26},-q^{13};q^4)
  - q^{5}h_{4,4,1}(-q^{26},-q^{9};q^4)\\
 &\qquad  -q^{14} h_{4,4,1}(-q^{34},-q^{15};q^4)
  +q^{9}h_{4,4,1}(-q^{34},-q^{11};q^4)\\
&=      j(-q^{10};q^{16})\left (m ( -q^{5},-1;q^{12} )-q^{-1}m ( -q,-1;q^{12}  ) \right ) \\
&\qquad  -  j(-q^{2};q^{16})\left ( 1-q
+q\left (  m ( -q^{5},-1;q^{12}  )-q^{-1}m ( -q,-1;q^{12}  )\right ) \right ),
\end{align*}}%
\end{proposition}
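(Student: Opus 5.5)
The plan follows the proofs of Propositions \ref{proposition:level23EvenSpinFirstQuadAppellForm} and \ref{proposition:level23EvenSpinSecondQuadAppellForm}, now specialized to the odd-spin arguments. I would start from the $q\to q^4$ form of $h_{4,4,1}$ from Corollary \ref{corollary:f441-HeckeExpansion}:
\begin{equation*}
h_{4,4,1}(x,y;q^4)=j(x;q^{16})m\big({-}q^{12}yx^{-1},-1;q^{12}\big)+j(y;q^4)m\big(q^{24}xy^{-4},-1;q^{48}\big).
\end{equation*}
Substituting the four pairs $(x,y)=(-q^{26},-q^{13})$, $(-q^{26},-q^{9})$, $(-q^{34},-q^{15})$ and $(-q^{34},-q^{11})$ produces eight terms. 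Four are $j(x;q^{16})$ times Appell functions of base $q^{12}$, namely $m(-q^{-1},-1;q^{12})$, $m(-q^{-5},-1;q^{12})$, $m(-q^{-7},-1;q^{12})$ and $m(-q^{-11},-1;q^{12})$. The other four are $j(y;q^4)$ times Appell functions of base $q^{48}$.

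Next I would show that the four base-$q^{48}$ terms cancel pairwise.
\begin{itemize}
\item The first and third terms share $m(-q^{-26},-1;q^{48})$; this follows from $q^{24}xy^{-4}=-q^{50-52}$ and $-q^{58-60}$ respectively.
\item The second and fourth terms share $m(-q^{14},-1;q^{48})$.
\end{itemize}
Using (\ref{equation:j-elliptic}), one rewrites $j(-q^{13};q^4)=q^{-15}j(-q;q^4)$ and $j(-q^{15};q^4)=q^{-21}j(-q^3;q^4)$, and similarly for $j(-q^{9};q^4)$ and $j(-q^{11};q^4)$. With these, the prefactors $q^8,-q^{14}$ and $-q^5,q^9$ should cancel exactly, as in the even-spin case. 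I would check this exponent bookkeeping first, since the rest of the proof depends on it.

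For the surviving terms, I would pair them by their theta functions.
\begin{itemize}
\item The $j(-q^{26};q^{16})$ pair gives $q^8m(-q^{-1},-1;q^{12})-q^5m(-q^{-5},-1;q^{12})$. Applying (\ref{equation:mxqz-flip}) turns this into $-q^{9}m(-q,-1;q^{12})+q^{10}m(-q^{5},-1;q^{12})$.
\item By (\ref{equation:j-elliptic}), $j(-q^{26};q^{16})=q^{-10}j(-q^{10};q^{16})$. The first line of the proposition then follows immediately.
\item The $j(-q^{34};q^{16})$ pair gives $-q^{14}m(-q^{-7},-1;q^{12})+q^9m(-q^{-11},-1;q^{12})$. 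After (\ref{equation:mxqz-flip}) this involves $m(-q^{7},-1;q^{12})$ and $m(-q^{11},-1;q^{12})$.
\item To bring these down to $m(-q^{5})$ and $m(-q)$, I would use (\ref{equation:mxqz-fnq-x}) in base $q^{12}$, $m(-q^{12}\cdot q^{-5})=1+q^{-5}m(-q^{-5})$, and then flip again. This yields $m(-q^{7},-1;q^{12})=1-m(-q^{5},-1;q^{12})$ and $m(-q^{11},-1;q^{12})=1-m(-q,-1;q^{12})$.
\item Finally, $j(-q^{34};q^{16})$ is reduced via (\ref{equation:j-elliptic}) to a power of $q$ times $j(-q^{2};q^{16})$.
\end{itemize}
Collecting the constants from the two uses of (\ref{equation:mxqz-fnq-x}) should produce the factor $1-q$, together with $q\big(m(-q^5,-1;q^{12})-q^{-1}m(-q,-1;q^{12})\big)$.

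The main obstacle is purely bookkeeping in this last step: the signs and $q$-exponents coming from the elliptic shift of $j(-q^{34};q^{16})$ must combine with the flips so that the overall sign is $-j(-q^2;q^{16})$ and the constant is exactly $1-q$. I would verify this by tracking the leading coefficient of each term separately.
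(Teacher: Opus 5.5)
Your proposal is correct and follows the paper's proof essentially step for step. You expand via the $q\to q^4$ form of $h_{4,4,1}$, cancel the four base-$q^{48}$ terms using (\ref{equation:j-elliptic}) together with $j(-q;q^4)=j(-q^3;q^4)$ from (\ref{equation:j-flip}), then reduce the surviving base-$q^{12}$ terms with (\ref{equation:mxqz-flip}), (\ref{equation:mxqz-fnq-x}), and $j(-q^{26};q^{16})=q^{-10}j(-q^{10};q^{16})$, $j(-q^{34};q^{16})=q^{-20}j(-q^{2};q^{16})$. One small slip: as your own computation $-q^{50-52}$ shows, the Appell function shared by your first and third terms is $m(-q^{-2},-1;q^{48})$, not $m(-q^{-26},-1;q^{48})$.
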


For the second quad of double-sums in Corollary \ref{corollary:levelTwoThirdsOddSpin}, we have
\begin{proposition}\label{proposition:level23OddSpinSecondQuadAppellForm} We have that
{\allowdisplaybreaks  \begin{align*}
  q^{9}& h_{4,4,1}(-q^{26},-q^{15};q^4)
 - q^{3+s}h_{4,4,1}(-q^{26},-q^{7};q^4)\\
 &\qquad   -q^{16}h_{4,4,1}(-q^{34},-q^{17};q^4)
   +q^{6}h_{4,4,1}(-q^{34},-q^{9};q^4)\\
&=   j(-q^{10};q^{16})\left (1 - \left( m ( -q^{5},-1;q^{12}  )  -q^{-1}m ( -q,-1;q^{12}  )\right )\right )  \\
& \qquad -q j(-q^{2};q^{16})\left (q^{-2} -\left ( m ( -q^{5},-1;q^{12}  )  -q^{-1}m ( -q,-1;q^{12}  ) \right ) \right ).
 \end{align*}}%
\end{proposition}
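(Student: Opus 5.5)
The plan mirrors the proof of Proposition \ref{proposition:level23EvenSpinSecondQuadAppellForm}, specialized to the odd-spin parameters with $s=0$. (The stray $q^{3+s}$ in the statement should be read as $q^{3}$, matching Theorem \ref{theorem:genEulerTwoThirdsOddSpin}.) First I would substitute the $q\to q^4$ form of Corollary \ref{corollary:f441-HeckeExpansion},
\begin{equation*}
h_{4,4,1}(x,y;q^4)=j(x;q^{16})m(-q^{12}yx^{-1},-1;q^{12})+j(y;q^4)m(q^{24}xy^{-4},-1;q^{48}),
\end{equation*}
into each of the four terms. The first Appell arguments come out as $m(-q^{1},-1;q^{12})$, $m(-q^{-7},-1;q^{12})$, $m(-q^{-7},-1;q^{12})$ and $m(-q^{-13},-1;q^{12})$. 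The second ones pair off: $(-q^{26},-q^{15})$ with $(-q^{34},-q^{17})$, and $(-q^{26},-q^{7})$ with $(-q^{34},-q^{9})$, each pair sharing the same argument $x y^{-4}$ up to the sign.

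Using (\ref{equation:j-elliptic}) to reduce $j(-q^{15};q^4)$, $j(-q^{17};q^4)$, $j(-q^7;q^4)$ and $j(-q^9;q^4)$, I expect the weights $q^{9},q^{16},q^{3},q^{6}$ to make these four $q^{48}$-Appell terms cancel in pairs, exactly as in the even-spin case. I would verify this carefully, since $j(-q^{15};q^4)$ and $j(-q^{17};q^4)$ reduce to different base thetas ($j(-q^3;q^4)$ versus $j(-q;q^4)$), so the pairing must be checked rather than assumed.

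What remains is
\begin{equation*}
j(-q^{26};q^{16})\big(q^{9}m(-q,-1;q^{12})-q^{3}m(-q^{-7},-1;q^{12})\big)-j(-q^{34};q^{16})\big(q^{16}m(-q^{-7},-1;q^{12})-q^{6}m(-q^{-13},-1;q^{12})\big).
\end{equation*}
Next I would apply (\ref{equation:j-elliptic}), giving $j(-q^{26};q^{16})=q^{-10}j(-q^{10};q^{16})$ and $j(-q^{34};q^{16})=q^{-36}j(-q^{2};q^{16})$. For the Appell functions I would use (\ref{equation:mxqz-flip}) to turn $m(-q^{-7})$ into $-q^{7}m(-q^{7})$ and $m(-q^{-13})$ into $-q^{13}m(-q^{13})$. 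Then (\ref{equation:mxqz-fnq-x}) reduces these to the base cases: $m(-q^{13})=1+q m(-q)$, and $m(-q^{7})=1+q^{-5}m(-q^{-5})=1-m(-q^{5})$, the last step again by (\ref{equation:mxqz-flip}). Everything is then expressed in $m(-q^5,-1;q^{12})-q^{-1}m(-q,-1;q^{12})$ plus constants, and collecting coefficients of $j(-q^{10};q^{16})$ and $j(-q^{2};q^{16})$ should produce the stated form.

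The main obstacle is bookkeeping. The powers of $q$ and signs must combine into exactly the combination $m(-q^5)-q^{-1}m(-q)$ with constant terms $1$ and $q^{-2}$. If a stray constant appears, I would recheck the cancellation of the $q^{48}$ terms and the quasi-periodicity exponents first.
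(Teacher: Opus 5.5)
Your strategy is the paper's. The paper omits this proof and refers to the first-quad argument: insert $h_{4,4,1}$, cancel the $q^{48}$-Appell terms, then apply (\ref{equation:mxqz-flip}), (\ref{equation:mxqz-fnq-x}) and (\ref{equation:j-elliptic}). Reading $q^{3+s}$ as $q^{3}$ is right. The pairing you wanted to check does hold: $q^{9}j(-q^{15};q^4)=q^{16}j(-q^{17};q^4)$ and $q^{3}j(-q^{7};q^4)=q^{6}j(-q^{9};q^4)$, by (\ref{equation:j-elliptic}) together with $j(-q^3;q^4)=j(-q;q^4)$ from (\ref{equation:j-flip}).

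However, two of the explicit values you commit to are wrong, and with them the final collection step does not close.

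\emph{Third Appell argument.} In the third term $x=-q^{34}$ and $y=-q^{17}$, so $-q^{12}yx^{-1}=-q^{12+17-34}=-q^{-5}$. The Appell function there is $m(-q^{-5},-1;q^{12})$, not $m(-q^{-7},-1;q^{12})$; you appear to have used $y=-q^{15}$.

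\emph{Theta factor.} Applying (\ref{equation:j-elliptic}) with $n=2$ on base $q^{16}$ gives $j(-q^{34};q^{16})=q^{-16}(-q^{2})^{-2}j(-q^{2};q^{16})=q^{-20}j(-q^{2};q^{16})$, not $q^{-36}j(-q^{2};q^{16})$. The factor is $q^{-16\binom{2}{2}}$, not $q^{-16\cdot 2}$.

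\emph{Why the collection fails.} Write $M_a$ for $m(-q^{a},-1;q^{12})$. With your values, the coefficient of $j(-q^{2};q^{16})$ comes out as $-(q^{-17}-q^{-13}+q^{-13}M_5+q^{-16}M_1)$. This is not a constant plus a multiple of $M_5-q^{-1}M_1$. Correcting only the theta exponent, as your fallback suggests, still leaves the $M_5$-term with the wrong sign and power.

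\emph{Corrected computation.} Your reductions give $M_{-5}=-q^{5}M_5$, $M_{-7}=-q^{7}+q^{7}M_5$ and $M_{-13}=-q^{13}-q^{14}M_1$. Hence
\begin{align*}
q^{9}M_1-q^{3}M_{-7}&=q^{10}\left(1-(M_5-q^{-1}M_1)\right),\\
q^{16}M_{-5}-q^{6}M_{-13}&=q^{21}\left(q^{-2}-(M_5-q^{-1}M_1)\right).
\end{align*}
Multiply the first line by $j(-q^{26};q^{16})=q^{-10}j(-q^{10};q^{16})$ and the second by $-j(-q^{34};q^{16})=-q^{-20}j(-q^{2};q^{16})$. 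This gives exactly the stated right-hand side.
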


\begin{proof}[Proof of Theorem \ref{theorem:genEulerTwoThirdsOddSpin}]  We prove the first identity (\ref{equation:level23OddSpinFirstQuadMockTheta}).   Corollary \ref{corollary:f441-HeckeExpansion} and Proposition \ref{proposition:level23OddSpinFirstQuadAppellForm} then give us
{\allowdisplaybreaks \begin{align*}
 q^{8}& f_{4,4,1}(-q^{26},-q^{13};q^4)
  - q^{5}f_{4,4,1}(-q^{26},-q^{9};q^4)\\
 &\qquad  -q^{14} f_{4,4,1}(-q^{34},-q^{15};q^4)
  +q^{9}f_{4,4,1}(-q^{34},-q^{11};q^4)\\
 &=   j(-q^{10};q^{16})\left (m ( -q^{5},-1;q^{12} )-q^{-1}m ( -q,-1;q^{12}  ) \right ) \\
&\qquad  -  j(-q^{2};q^{16})\left ( 1-q
+q\left (  m ( -q^{5},-1;q^{12}  )-q^{-1}m ( -q,-1;q^{12}  )\right ) \right )\\
 &\qquad -\frac{1}{\overline{J}_{0,3}\overline{J}_{0,12}}
 \Big ( q^{8} \theta_{4,4,1}(-q^{26},-q^{13};q^4)
  - q^{5}\theta_{4,4,1}(-q^{26},-q^{9};q^4)\\
 &\qquad \qquad  -q^{14} \theta_{4,4,1}(-q^{34},-q^{15};q^4)
  +q^{9}\theta_{4,4,1}(-q^{34},-q^{11};q^4)\Big).  
\end{align*}}%
The result then follows from Lemma \ref{lemma:alternateAppellFormsChiPsi} and Proposition \ref{proposition:level23OddSpinFirstQuadThetaId}.

We prove the second identity (\ref{equation:level23OddSpinSecondQuadMockTheta}).   Corollary \ref{corollary:f441-HeckeExpansion} and Proposition \ref{proposition:level23OddSpinSecondQuadAppellForm} then give us
{\allowdisplaybreaks \begin{align*}
q^{9}&
 f_{4,4,1}(-q^{26},-q^{15};q^4)
 - q^{3}
 f_{4,4,1}(-q^{26},-q^{7};q^4)\\
 &\qquad   -q^{16}
 f_{4,4,1}(-q^{34},-q^{17};q^4)
   +q^{6}
 f_{4,4,1}(-q^{34},-q^{9};q^4)\\
 &= j(-q^{10};q^{16})\left (1 - \left( m ( -q^{5},-1;q^{12}  )  -q^{-1}m ( -q,-1;q^{12}  )\right )\right )  \\
& \qquad -q j(-q^{2};q^{16})\left (q^{-2} -\left ( m ( -q^{5},-1;q^{12}  )  -q^{-1}m ( -q,-1;q^{12}  ) \right ) \right )\\
&\qquad -\frac{1}{\overline{J}_{0,3}\overline{J}_{0,12}}\Big ( 
q^{9} \theta_{4,4,1}(-q^{26},-q^{15};q^4)
 - q^{3}\theta_{4,4,1}(-q^{26},-q^{7};q^4)\\
 &\qquad \qquad  -q^{16}\theta_{4,4,1}(-q^{34},-q^{17};q^4)
   +q^{6} \theta_{4,4,1}(-q^{34},-q^{9};q^4)\Big ). 
\end{align*}}%
The result then follows from Lemma \ref{lemma:alternateAppellFormsChiPsi} and Proposition \ref{proposition:level23OddSpinSecondQuadThetaId}.
\end{proof}

\begin{proof}[Proof of Proposition \ref{proposition:level23OddSpinFirstQuadAppellForm}]
From Corollary \ref{corollary:f441-HeckeExpansion}, we have that
 \begin{align*}
h_{4,4,1}(x,y;q^4)=j(x;q^{16})m ( -q^{12}yx^{-1},-1;q^{12}  )
 +j(y;q^4)m ( q^{24}xy^{-4},-1;q^{48}  ).
\end{align*}
Hence
{\allowdisplaybreaks \begin{align*}
 A(s):&=q^{8} h_{4,4,1}(-q^{26},-q^{13};q^4)
  - q^{5}h_{4,4,1}(-q^{26},-q^{9};q^4)\\
 &\qquad  -q^{14} h_{4,4,1}(-q^{34},-q^{15};q^4)
  +q^{9}h_{4,4,1}(-q^{34},-q^{11};q^4)\\
 &=q^{8} \left ( j(-q^{26};q^{16})m ( -q^{-1},-1;q^{12}  )
 +j(-q^{13};q^4)m ( q^{-2},-1;q^{48}  )\right ) \\
&\qquad   - q^{5}\left (j(-q^{26};q^{16})m ( -q^{-5},-1;q^{12}  )
 +j(-q^{9};q^4)m ( q^{14},-1;q^{48}  ) \right ) \\
 &\qquad  -q^{14} \left (j(-q^{34};q^{16})m ( -q^{-7},-1;q^{12}  )
 +j(-q^{15};q^4)m ( q^{-2},-1;q^{48}  ) \right ) \\
&\qquad  +q^{9} \left ( j(-q^{34};q^{16})m ( -q^{-11},-1;q^{12}  )
 +j(-q^{11};q^4)m ( q^{14},-1;q^{48}  )\right ). 
\end{align*}}%
Four of the summands cancel.  We rewrite the theta functions using (\ref{equation:j-elliptic}).  This gives
\begin{equation*}
q^{8}j(-q^{13};q^4)=q^{-7}j(-q;q^{4}),\ \textup{and} \ q^{5}j(-q^{9};q^{4})=q^{-1}j(-q;q^{4}),
\end{equation*}
as well as
\begin{equation*}
q^{14}j(-q^{15};q^{4})=q^{-7}j(-q^3;q^4), \ \textup{and} \  q^{9}j(-q^{11};q^4)=q^{-1}j(-q^3;q^{4}).
\end{equation*}
So, four summands cancel, and we arrive at
{\allowdisplaybreaks \begin{align*}
A(s)&=  - q^{5} j(-q^{26};q^{16})\left (m ( -q^{-5},-1;q^{12} )-q^{3}m ( -q^{-1},-1;q^{12}  ) \right ) \\
&\qquad  +q^{9}  j(-q^{34};q^{16})\left ( m ( -q^{-11},-1;q^{12}  )-q^{5}m ( -q^{-7},-1;q^{12}  )\right ).
\end{align*}}%
We then use the Appell function property (\ref{equation:mxqz-flip}) to obtain
{\allowdisplaybreaks \begin{align*}
A(s)&=  - q^{5} j(-q^{26};q^{16})\left (-q^{5}m ( -q^{5},-1;q^{12} )+q^{4}m ( -q,-1;q^{12}  ) \right ) \\
&\qquad  +q^{9}  j(-q^{34};q^{16})\left (-q^{11} m ( -q^{11},-1;q^{12}  )+q^{12}m ( -q^{7},-1;q^{12}  )\right ). 
\end{align*}}%
Using the Appell function property (\ref{equation:mxqz-fnq-x}) and then (\ref{equation:mxqz-flip}) gives
{\allowdisplaybreaks \begin{align*}
A(s)&= - q^{5} j(-q^{26};q^{16})\left (-q^{5}m ( -q^{5},-1;q^{12} )+q^{4}m ( -q,-1;q^{12}  ) \right ) \\
&\qquad  +q^{9}  j(-q^{34};q^{16})\left (-q^{11}\left(  1-m ( -q,-1;q^{12}  )\right )
 +q^{12}\left (1- m ( -q^{5},-1;q^{12}  )\right ) \right ). 
\end{align*}}%
This can be rewritten as
{\allowdisplaybreaks \begin{align*}
A(s)&=  q^{10} j(-q^{26};q^{16})\left (m ( -q^{5},-1;q^{12} )-q^{-1}m ( -q,-1;q^{12}  ) \right ) \\
&\qquad  -q^{20}  j(-q^{34};q^{16})\left ( 1-q
+q\left (  m ( -q^{5},-1;q^{12}  )-q^{-1}m ( -q,-1;q^{12}  )\right ) \right ).
\end{align*}}%
The result then follows from (\ref{equation:j-elliptic}).
\end{proof}


\begin{proof}[Proof of Proposition \ref{proposition:level23OddSpinSecondQuadAppellForm}]
The proof is similar to that of Proposition \ref{proposition:level23OddSpinFirstQuadAppellForm}, so it will be omitted.
\end{proof}


\section*{Acknowledgements}
The work is supported by the Ministry of Science and Higher Education of the Russian
Federation (agreement no. 075-15-2025-343).   We would like to thank Nicolay Borozenets for some helpful comments and suggestions.


\begin{thebibliography}{999999}

\bibitem{AM09} D. Adamovi\'{c}, A. Milas, {\em The $N= 1$ triplet vertex operator superalgebras}, Comm. Math. Phys. 288.1 (2009), no. 1, pp. 225--270.

\bibitem{ACT} C. Ahn, S. W. Chung, S.-H. Tye, {\em New parafermion, $\operatorname{SU}(2)$ coset and $N=2$ superconformal field theories}, Nuclear Phys. B {\bf 365} (1991), no. 1, pp. 191--240.

\bibitem{And1981} G. E. Andrews, {\em Mordell integrals and Ramanujan's ``lost'' notebook,} pp. 10--48, Analytic Number theory, Philadelphia (1980), Lect. Notes Math. {\bf 889} (1981).

\bibitem{And1986} G. E. Andrews, {\em The fifth and seventh order mock theta functions}, Trans. Amer. Math. Soc. {\bf 293} (1986), pp. 113--134.

\bibitem{And2012} G. E. Andrews, {\em $q$-orthogonal polynomials, Rogers--Ramanujan identities, and mock theta functions}, Proc. Stek. Inst. Math. {\bf 276}(1) (2012) 21--32.

\bibitem{AB2018} G. E. Andrews, B. C. Berndt, {\em Ramanujan's lost notebook: Part V.}, Springer, Berlin, 2018.

\bibitem{AG} G. E. Andrews, F. Garvan, {\em Ramanujan's ``lost'' notebook VI:  The mock theta conjectures,} Adv. Math. {\bf 73} (1989), no. 2,  242--255.

\bibitem{ACR}  J. Auger, T. Creutzig, D. Ridout, {\em Modularity of logarithmic parafermion vertex algebras}, Lett. Math. Phys. {\bf 108} (2018), no. 12, pp. 2543--2587.

\bibitem{BoMo2026}  N. Borozenets, E. T. Mortenson,   {\em On string functions of the generalized parafermionic theories, mock theta functions, and false theta functions}, Adv. Math. {\bf 484} (2026), 110684.

\bibitem{BoMo2025} N. Borozenets, E. T. Mortenson,  {\em On string functions of the generalized parafermionic theories, mock theta functions, and false theta functions, II}, arXiv:2502.03074.

\bibitem{BKMZ} K. Bringmann, J. Kaszian, A. Milas, S. Zwegers, {\em Rank two false theta functions and Jacobi forms of negative definite matrix index}, Adv. Appl. Math. {\em 112} (2020), 101946.

\bibitem{BM} K. Bringmann, A. Milas, {\em $\mathcal{W}$-algebras, false theta functions and quantum modular forms, I}, Int. Math. Res. Not. {\bf 2015} (2015), no. 21, pp. 11351--11387.

\bibitem{BKMN} K. Bringmann, J. Kaszian, A. Milas, C. Nazaroglu, {\em Integral representations of rank two false theta functions and their modularity properties}, Res. Math. Sci. {\bf 8} (2021), no. 4, 54.

\bibitem{BrO1} K.  Bringmann,  K.  Ono, {\em The $f(q)$ mock theta function conjecture and partition ranks}, Invent. Math., {\bf 165} (2006), pp. 243--266.

\bibitem{BrO2}  K.  Bringmann,  K. Ono, {\em Dyson's Ranks and Maass forms}, Ann. Math., {\bf 171} (2010), pp. 419--449.

 \bibitem{DMZ}  A. Dabholkar, S. Murthy, D. Zagier, {\em Quantum black holes, wall crossing, and mock modular forms}, arXiv:1208.4074 (2012).
 
 \bibitem{Dyson1944} F. J. Dyson, {\em Some guesses in the theory of partitions}, Eureka, Cambridge {\bf 8} (1944), pp. 10--15.

\bibitem{FG} J. Frye, F. Garvan, {\em Automatic proof of theta-function identities}, Elliptic integrals, elliptic functions and modular forms in quantum field theory, pp. 195--258, Texts Monogr. Symbol. Comput., Springer, Cham, 2019. 

\bibitem{GM} B. Gordon, R. McIntosh, {\em Some eighth order mock theta functions,} J. London Math. Soc. (2) {\bf 62} (2000), pp. 321-335.

\bibitem{H1} D. R. Hickerson, {\em A proof of the mock theta conjectures}, Invent. Math. {\bf 94} (1988), no. 3,  pp. 639--660.

\bibitem{H2} D. R. Hickerson, {\em On the seventh order mock theta functions}, Invent. Math. {\bf 94} (1988), no. 3, pp. 661--677.

\bibitem{HM} D. R. Hickerson, E. T. Mortenson,
 {\em Hecke-type double sums, Appell--Lerch sums, and mock theta functions,~I},
 Proc. London Math. Soc. (3) {\bf 109} (2014), no. 2, 382--422. 
  
 \bibitem{KP84} V. Kac, D. Peterson, {\em Infinite-dimensional Lie algebras, theta functions and modular forms}, Adv. Math. {\bf 53} (1984), 125--264.

\bibitem{KW88advmath} V. Kac, M. Wakimoto, {\em Modular and conformal invariance constraints in representation theory of affine algebras}, Adv. Math. {\bf 70} (1988), no. 2, 156--236.

 \bibitem{KW88} V. Kac, M. Wakimoto, {\em Modular invariant representations of infinite-dimensional Lie algebras and superalgebras}, Proc. Nat. Acad. Sci. USA {\bf 85} (1988), 4956-4960.
 
  
  \bibitem{KW90} V. Kac, M. Wakimoto, {\em Branching functions for winding subalgebras and tensor products}, Acta Appl. Math. {\bf 21} (1990), 3--39. 
  
\bibitem{KM25} S. Konenkov, E. T. Mortenson,  {\em Two $2/5$-level mock theta conjecture-like identities},
 Res. in Number Theory {\bf 11}, 93 (2025)

  \bibitem{L}  S. Lu, {\em Some results on modular invariant representations}, Infinite-dimensional Lie algebras and groups, Adv. Ser. in Math. Phys. {\bf 7} (1988), pp. 235--253. 

\bibitem{M2} R. McIntosh, {\em Second order mock theta functions}, Can. Math. Bull., {\bf 50} (2007), no. 2, pp. 284-290.

\bibitem{MoSa} E. T. Mortenson, A. Sahu,
{\em Expressing $q$-series in terms of building blocks of Hecke-type double-sums},
Int. J. Number Theory {\bf 16} (2023), no. 6, 1429--1451.

\bibitem{MZ2023} E. T. Mortenson, S. Zwegers,
{\em The mixed mock modularity of certain duals of generalized quantum modular forms of Hikami and Lovejoy}, Adv. Math. {\bf 418} (2023), 108944.

\bibitem{RLN} S. Ramanujan, 
{\em The lost notebook and other unpublished papers}, 
Narosa Publishing House, New Delhi, 1987.

\bibitem{SW} A. Schilling, S. O. Warnaar, {\em Conjugate Bailey Pairs}, Contemp. Math., {\bf 197} (2002), pp. 227--255.

\bibitem{Selb1938} A. Selberg, {\em \"Uber die Mock-Thetafunktionen siebenter Ordnung}, Arch. Math. og Naturvidenskab {\bf 41} (1938), pp. 3--15.

\bibitem{W3} G. N. Watson, {\em The final problem: an account of the mock theta functions,} J. London Math. Soc., {\bf 11} (1936), pp. 55-80.

\bibitem{Zag2007} D. B. Zagier, {\em Ramanujan's mock theta functions and their applications [d'apr\`es Zwegers and Bringmann-Ono]}, S\' eminaire Bourbaki, 2007-2008, no. 986.

\bibitem{Zw1} S. P. Zwegers, {\em Mock $\vartheta$-functions and real analytic modular forms}, $q$-series with applications to combinatorics, number theory, and physics (Ed. B. C. Berndt and K. Ono), Contempt. Math. {\bf 291}, Amer. Math. Soc. (2001), pp. 269--277.

\bibitem{Zw2} S. P. Zwegers, {\em Mock theta functions}, Ph.D. Thesis, Universiteit Utrecht, 2002.





\end{thebibliography}
\end{document}